\theoremstyle{definition}
\newtheorem{proposition}{Proposition}
\numberwithin{proposition}{section}
\newtheorem{lemma}{Lemma}
\numberwithin{lemma}{section}
\newtheorem{theorem}{Theorem}
\numberwithin{theorem}{section}
\renewcommand\@seccntformat[1]{\csname the#1\endcsname.\quad}
\newcommand{\vast}{\bBigg@{3}}
\newcommand{\Vast}{\bBigg@{4}}
\DeclareMathOperator{\charac}{char}
\DeclareFontFamily{U}{mathx}{\hyphenchar\font45}
\DeclareFontShape{U}{mathx}{m}{n}{<-> mathx10}{}
\DeclareSymbolFont{mathx}{U}{mathx}{m}{n}
\DeclareMathAccent{\widebar}{0}{mathx}{"73}
\newcounter{picno}
\newcommand{\pic}[1]{\refstepcounter{picno}\label{#1}}
\title{A Combinatorial Formula for Affine Hall-Littlewood Functions via a Weighted Brion Theorem}
\author{Boris Feigin and Igor Makhlin} \date{}
\begin{document}

\maketitle

\begin{abstract}
We present a new combinatorial formula for Hall-Littlewood functions associated with the affine root system of type $\tilde A_{n-1}$, i.e. corresponding to the affine Lie algebra $\widehat{\mathfrak{sl}}_n$. Our formula has the form of a sum over the elements of a basis constructed by Feigin, Jimbo, Loktev, Miwa and Mukhin in the corresponding irreducible representation. 

Our formula can be viewed as a weighted sum of exponentials of integer points in a certain infinite-dimensional convex polyhedron. We derive a weighted version of Brion's theorem and then apply it to our polyhedron to prove the formula.
\end{abstract}

\tableofcontents
\normalfont

\section{Introduction}

We start off by recalling the definition of Hall-Littlewood functions in the context of a general symmetrizable Kac-Moody algebra (see, for example,~\cite{carter}). Let $\mathfrak g$ be such an algebra with Cartan subalgebra $\mathfrak h$. Let $\Phi\subset \mathfrak h^*$ be its root system and $\Phi^+$ be the subset of positive roots, $\alpha\in\Phi$ having multiplicity $m_\alpha$. Finally, let $\lambda\in\mathfrak h^*$ be an integral dominant weight and $W$ be the Weyl group with length function $l$. The corresponding Hall-Littlewood function is then defined as
\begin{equation}\label{def}
P_\lambda=\frac 1{W_\lambda(t)}\sum\limits_{w\in W} w\left(e^\lambda \prod\limits_{\alpha\in\Phi^+}\left(\frac{1-t e^{-\alpha}}{1-e^{-\alpha}}\right)^{m_\alpha}\right).
\end{equation}
Here $W_\lambda(t)$ is the Poincaré series of the stabilizer $W_\lambda\subset W$, i.e. $$W_\lambda(t)=\sum\limits_{w\in W_\lambda}t^{l(w)}$$ (in particular, $W_\lambda(t)=1$ for regular $\lambda$).

Both sides of~(\ref{def}) should be viewed as elements of $\mathfrak R_t=\mathfrak R\otimes\mathbb{Z}[t]$, where $\mathfrak R$ is the ring of characters the support of which is contained in the union of a finite number of lower sets with respect to the standard ordering on $\mathfrak h^*$. It is easy to show that $P_\lambda$ is indeed a well-defined element of $\mathfrak R_t$ (see, for instance,~\cite{vis1}).

The definition~(\ref{def}) could be given only in terms of the corresponding root system eliminating any mention of Lie algebras and thus giving Hall-Littlewood functions a purely combinatorial flavor. The language of Kac-Moody algebras and their representations is, however, very natural when dealing with these objects.

It is worth noting that $P_\lambda$ specializes to the Kac-Weyl formula for the character of the irreducible representation $L_\lambda$ with highest weight $\lambda$ when $t=0$ and to $\sum_{w\in W}e^{w\lambda}$ when $t=1$. Thus it can be viewed as an interpolation between the two.

Another important observation is that once we've chosen a basis $\gamma_1,\ldots,\gamma_n$ in the lattice of integral weights, the $P_\lambda$ turn into formal Laurent series in corresponding variables $x_1,\ldots,x_n$ with coefficients in $\mathbb{Z}[t]$  (as does any other element of $\mathfrak R_t$). In the case of $\mathfrak{g}$ having finite type these Laurent series are, in fact, Laurent polynomials (the characters $P_\lambda$ have finite support) and are often referred to as ``Hall-Littlewood polynomials". We are, however, primarily interested in the affine case.

Our main result is a new combinatorial formula for the functions $P_\lambda$ in the case of $\mathfrak g=\mathfrak{\widehat{sl}_n}$ (root system of type $\tilde A_{n-1}$). One geometrical motivation for considering these expressions is as follows. Consider the group $\widehat{G}=\widetilde{SL}_n(\mathbb{C}[t,t^{-1}]),$ the central extension of the loop group of $SL_n(\mathbb{C})$ defined in the standard way. Next, consider the flag variety $F=\widehat{G}/B_+$, where $B_+$ is the Borel subgroup of $\widehat{G}$. On $F$ we have the sheaf of differentials $\Omega^*$ as well as the equivariant linear bundle $\mathcal L_\lambda$. It can be shown that the equivariant Euler characteristic of the sheaf $\Omega^*\otimes\mathcal L_\lambda$, namely $$\sum\limits_{i,j\ge 0}(-1)^i t^j\charac(H^i(F,\Omega^j\otimes\mathcal L_\lambda))$$ is precisely $W_\lambda(-t)P_\lambda(-t)$. In fact, in order to get rid of the factor $W_\lambda(-t)$ for singular $\lambda$, one may consider the corresponding parabolic flag variety with its sheaf of differential forms twisted by the corresponding equivariant linear bundle. In this context affine Hall-Littlewood functions appear, for example, in~\cite{gro}.

Another topic in which Hall-Littlewood functions of type $\tilde A$ appear is the representation theory of the double affine Hecke algebra (see~\cite{che}).

Our formula turns out to be similar in spirit to the combinatorial formula for classic Hall-Littlewood functions, that is of type $A$. The latter formula, found already in~\cite{mac}, is a sum over Gelfand-Tsetlin patterns, combinatorial objects enumerating a basis in $L_\lambda$. The formula we present is the sum over a basis in an irreducible integrable representation of the affine algebra $\widehat{\mathfrak{sl}}_n$, which was obtained in the works~\cite{fs,fjlmm1,fjlmm2}. Moreover, although at first the combinatorial set enumerating the latter basis seems to be very different from the set of Gelfand-Tsetlin patterns, a certain correspondence may be constructed which lets one then define the summands in the formula similarly to the classic case.

We consider it essential to review the finite case as well as the affine one in order to both illustrate the more complicated affine case and to emphasize the deep analogies between the two cases. For this last reason we will deliberately introduce certain conflicting notations, i.e. analogous objects in the finite and affine cases may be denoted by the same symbol. However, which case is being considered should always be clear from the context.

Our approach to proving the formula is based on Brion's theorem for convex polyhedra, originally due to~\cite{bri}. This formula expresses the sum of exponentials of integer points inside a rational polyhedron as a sum over the polyhedron's vertices. 

Let us first explain how the approach works in the classic case.

The set of Gelfand-Tsetlin patterns associated with weight $\lambda$ may be viewed as the integer points of the Gelfand-Tsetlin polytope. This means that the character of $L_\lambda$ is in fact a sum of certain exponentials of these integer points and may thus be computed via Brion's theorem. It turns that the contributions of most vertices are zero, while the remaining vertices provide the summands in the classic formula for Schur polynomial. This scenario is discussed in the paper~\cite{me1}.

Further, the mentioned combinatorial formula for Hall-Littlewood polynomials of type $A$ implies that, in this case, $P_\lambda$ is the sum of these same exponentials but this time with coefficients which are polynomials in $t$. We derive and employ a generalization of Brion's theorem which expresses weighted sums of exponentials of a certain type as, again, a sum over the vertices. Our weights turn out to be of this very type and we may thus apply our weighted version of Brion's theorem. Once again most vertices contribute zero, while the remaining contributions add up to give formula~(\ref{def}). 

Now it can be said that in the case of $\widehat{\mathfrak{sl}}_n$ the situation is similar. The set parametrizing the basis vectors can be, again, viewed as the set of integer points of a ``convex polyhedron"{}, this time, however, infinite dimensional. Moreover, the summand corresponding to each point is once more a certain exponential. One can prove a Brion-type formula for this infinite-dimensional polyhedron, which expresses the sum of exponentials as a sum over the vertices. Again, the contributions of most vertices are zero and the remaining contributions add up to the Kac-Weyl formula for $\charac{L_\lambda}$. This scenario is presented in~\cite{me2}. (To be accurate,~\cite{me2} deals with the Feigin-Stoyanovsky subspace and its character but the transition to the whole representation can be carried out rather simply, as shown in~\cite{fjlmm2}.)

Finally, our formula for affine Hall-Littlewood functions is, just like in the classic case, a sum of the same exponentials of integer points of the same polyhedron but with coefficients which are polynomials in $t$. We show that using our weighted version of Brion's theorem we may decompose this sum as a sum over the vertices. The same distinguished set of vertices will provide nonzero contributions which add up to formula~(\ref{def}), proving the result.

The below text is structured as follows. In Part~\ref{part1} we recall the preliminary results mentioned in the introduction and give the statement of our main result. Then we introduce our generalization of Brion's theorem and explain in more detail how it can be applied to proving our formula. In Part~\ref{tools} we develop the combinatorial arsenal needed to implement our proof. We introduce a family of polyhedra naturally generalizing Gelfand-Tsetlin polytopes and prove two key facts concerning those polyhedra. From the author's viewpoint, the topics discussed in Part~\ref{tools} are of some interest in their own right. In the last part we show how to obtain the weighted Brion-type formula for the infinite-dimensional polyhedron and then prove our central theorem concerning the contributions of vertices.

\part{Preliminaries, The Result and Idea of Proof}\label{part1}

\section{The Combinatorial Formula for Finite Type $A$}\label{gtcomb}

Let $g=\mathfrak{sl}_n$ and $\lambda\in\mathfrak h^*$ be an integral dominant nonzero weight. Let $$\lambda=(a_1,\ldots,a_{n-1})$$ with respect to a chosen basis of fundamental weights. In the appropriate basis $\lambda$ has coordinates $$\lambda_i=a_i+\ldots+a_{n-1}.$$ This will be our basis of choice in the lattice of integral weights, and we will view characters as Laurent polynomials in corresponding variables $x_1,\ldots,x_{n-1}$.

The Gelfand-Tsetlin basis in $L_\lambda$ is parametrized by the following objects known as Gelfand-Tsetlin patterns (abbreviated as GT-patterns.). Each such pattern is a number triangle $\{s_{i,j}\}$ with $0\le i\le n-1$ and $1\le j\le n-i$. The top row is given by $s_{0,j}=\lambda_j$ (with $s_{0,n}=0$) while the other elements are arbitrary integers satisfying the inequalities
\begin{equation}\label{gt}
s_{i,j}\ge s_{i+1,j}\ge s_{i,j+1}.
\end{equation}
The standard way to visualize these patterns is the following:
\begin{center}
\begin{tabular}{ccccccc}
$s_{0,1}$ &&$ s_{0,2}$&& $\ldots$ && $s_{0,n}$\\
&$s_{1,1}$ &&$ \ldots$&& $s_{1,n-1}$ &\\
&&$\ldots$ &&$ \ldots$& &\\
&&&$s_{n,1}$ &&&
\end{tabular}
\end{center}
Thus each number is no greater than the one immediately to its upper-left and no less than the one immediately to its upper-right, except for the numbers in row $0$ (row $i$ is comprised of the numbers $s_{i,*}$).

Let us denote the set of GT-patterns $\mathbf{GT}_\lambda$. For $A\in \mathbf{GT}_\lambda$ let $v_A$ be the corresponding basis vector, $v_A$ is a weight vector with weight $\mu_A$. If $A=(s_{i,j})$, then in the chosen basis $\mu_A$ has coordinates $$(\mu_A)_i=\sum\limits_j s_{i,j}-\sum\limits_j s_{i-1,j}.$$

Each $A\in \mathbf{GT}_\lambda$ also determines a polynomial in $t$ denoted $p_A$. We have
\begin{equation}\label{hlcoeff}
p_A=\prod\limits_{l=1}^{n-1} (1-t^l)^{d_l}
\end{equation}
where $d_l$ is the following statistic. It is the number of pairs consisting of $1\le i\le n-1$ and $a\in\mathbb{Z}$ such that the integer $a$ occurs $l$ times in row $i$ of $A$ and $l-1$ times in row $i-1$. The combinatorial formula is then as follows.
\begin{theorem}\label{combfin}
$$P_\lambda=\sum\limits_{A\in \mathbf{GT}_\lambda} p_A e^{\mu_A}.$$
\end{theorem}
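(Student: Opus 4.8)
The plan is to prove Theorem~\ref{combfin} by specializing the general Hall–Littlewood definition~(\ref{def}) to $\mathfrak g = \mathfrak{sl}_n$ and reorganizing the Weyl-group sum into a sum over GT-patterns. I would start from the classical identity of Macdonald expressing the type $A$ Hall–Littlewood polynomial as a sum over $W = S_n$ and then pass, via the branching rule $\mathfrak{sl}_n \downarrow \mathfrak{sl}_{n-1}$, to a sum over interlacing sequences; iterating the branching $n-1$ times produces exactly the GT-patterns $A \in \mathbf{GT}_\lambda$. The weight of $v_A$ is tautologically $\mu_A$ as defined by the successive row-difference formula, so the only real content is identifying the coefficient $p_A$ with the product $\prod_{l}(1-t^l)^{d_l}$ appearing in~(\ref{hlcoeff}).

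\smallskip

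Concretely, I would proceed by induction on $n$. For $n=1$ the statement is trivial. For the inductive step, write $P_\lambda^{(n)}$ for the rank $n-1$ Hall–Littlewood polynomial and use the known Pieri-type / branching formula that expresses $P_\lambda^{(n)}$ as $\sum_\mu \psi_{\lambda/\mu}(t)\, x_?^{|\lambda|-|\mu|}\, P_\mu^{(n-1)}$, the sum over $\mu$ interlacing with $\lambda$ (i.e.\ $\lambda_j \ge \mu_j \ge \lambda_{j+1}$), where $\psi_{\lambda/\mu}(t) = \prod_{i}(1-t^{m_i(\mu)})$ with the product over those $i$ such that $\mu$ has a part equal to $i$ but $\lambda$ does not (equivalently, a column statistic on the skew shape $\lambda/\mu$). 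The second row of the GT-pattern records $\mu$, and the factor $\psi_{\lambda/\mu}(t)$ is precisely the contribution to $p_A$ coming from the pair of rows $(0,1)$: a value $a$ occurring $l$ times in row $1$ and $l-1$ times in row $0$ is exactly the combinatorial event detected by $\psi_{\lambda/\mu}$. Applying the inductive hypothesis to $P_\mu^{(n-1)}$ and multiplying the coefficients then assembles $\prod_{l}(1-t^l)^{d_l}$ over all consecutive pairs of rows, which is $p_A$; the exponent combines to $\mu_A$ by a telescoping check on the coordinates $(\mu_A)_i = \sum_j s_{i,j} - \sum_j s_{i-1,j}$.

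\smallskip

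The step I expect to be the main obstacle is the careful bookkeeping that matches the branching coefficient $\psi_{\lambda/\mu}(t)$ with the ``$l$ times in row $i$, $l-1$ times in row $i-1$'' description of $d_l$; the subtlety is that the $\psi$-factor is naturally phrased in terms of multiplicities of \emph{parts} of the partitions $\lambda$ and $\mu$ (and requires handling the padding $s_{0,n}=0$, i.e.\ the trailing zeros, consistently), whereas $d_l$ is phrased symmetrically in terms of repeated values across two adjacent rows. One must check that only the pattern ``exactly one fewer occurrence in the row above'' contributes a nontrivial factor, and that occurrences differing by two or more, or by zero, contribute $1$ — this follows from the explicit form of $\psi_{\lambda/\mu}$ but needs the interlacing inequalities~(\ref{gt}) to rule out the other cases. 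Once this dictionary is established, the remainder is the routine induction sketched above; as a sanity check one recovers Theorem~\ref{combfin} at $t=0$ as the statement that $\dim L_\lambda = |\mathbf{GT}_\lambda|$ with each $v_A$ of weight $\mu_A$, and at $t=1$ one gets $\sum_{w \in S_n} e^{w\lambda}$ since $p_A$ then vanishes unless all rows are strict rearrangements, leaving exactly the $|S_n/\mathrm{Stab}|$ ``extremal'' patterns.
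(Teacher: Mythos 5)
Your proposal is correct and takes essentially the same route as the paper, which simply invokes Macdonald's branching rule for Hall--Littlewood polynomials and the substitution $P_{(\lambda_1,\ldots,\lambda_{n-1},0)}(x_1,\ldots,x_{n-1},1;t)$ to pass from $\mathfrak{gl}_n$ to $\mathfrak{sl}_n$; iterating the branching over interlacing rows and multiplying the $\psi$-factors is exactly the intended argument. One small correction: $\psi_{\lambda/\mu}(t)$ is the product of $1-t^{m_i(\mu)}$ over \emph{all} $i\ge 1$ with $m_i(\mu)=m_i(\lambda)+1$ (not only those $i$ that are parts of $\mu$ but not of $\lambda$, which would miss the factors $1-t^l$ with $l\ge 2$) --- this is precisely the row-multiplicity condition defining $d_l$, so your dictionary goes through verbatim, with the interlacing inequalities ruling out $m_i(\mu)>m_i(\lambda)+1$ and showing the padded value $0$ never contributes.
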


This theorem is a direct consequence of the branching rule for classic Hall-Littlewood polynomials, which can be found in~\cite{mac}. One needs to note, however, that the definition in~\cite{mac} corresponds to the case of $\mathfrak{gl}_n$ rather than $\mathfrak{sl}_n$. Fortunately, this adjustment is fairly simple to make: the polynomial we have obtained is, in the notations of~\cite{mac}, just $$P_{(\lambda_1,\ldots,\lambda_{n-1},0)}(x_1,\ldots,x_{n-1},1;t).$$

\section{The Monomial Basis}\label{monbasis}

Just like the combinatorial formula for type $A$ is a sum over the elements of a basis in the irreducible $\mathfrak{sl}_n$-module, our formula for type $\tilde{A}$ is a sum over the elements of a certain basis in the irreducible $\widehat{\mathfrak{sl}}_n$-module. This basis was constructed by Feigin, Jimbo, Loktev, Miwa and Mukhin in the papers~\cite{fjlmm1,fjlmm2}, in this section we give a concise review of its properties.

Let $\lambda$ be an integral dominant $\widehat{\mathfrak{sl}}_n$-weight with coordinates $$(a_0,\ldots,a_{n-1})$$ with respect to a chosen basis of fundamental weights. The level of $\lambda$ is $k=\sum a_i$. 

The basis in $L_\lambda$ is parametrized by the elements of the following set $\mathbf{\Pi}_\lambda$. Each element $A$ of $\mathbf{\Pi}_\lambda$ is a sequence of integers $(A_i)$ infinite in both directions which satisfies the following three conditions.
\begin{enumerate}[label=\roman*)]
\item For $i\gg 0$ we have $A_i=0$.
\item For $i\ll 0$ we have $A_{i}=a_{i\bmod n}$.
\item For all $i$ we have $A_i\ge 0$ and $A_{i-n+1}+A_{i-n+2}+\ldots+A_i\le k$ (sum of any $n$ consecutive terms).
\end{enumerate}

The basis vector corresponding to $A\in\mathbf{\Pi}_\lambda$ is a weight vector with weight $\mu_A$. We will need an explicit description of $\mu_A$. First, observe that, since $\mu_A$ is in the support of $\charac(L_\lambda)$, the weight $\mu_A-\lambda$ is in the root lattice. In other words, we may fix a basis in the root lattice and describe $\mu_A-\lambda$ with respect to this basis. If $\alpha_0,\ldots,\alpha_{n-1}$ are the simple roots and $\delta$ is the imaginary root, then the basis consists of the roots $$\gamma_i=\alpha_1+\ldots+\alpha_i$$ for $1\le i\le n-1$ and the root $-\delta$.

Now consider $T^0\in\mathbf{\Pi}_\lambda$ given by $T^0_i=0$ when $i>0$ and $A_{i}=a_{(i\bmod n)}$ when $i\le 0$. The coordinates of $\mu_A-\lambda$ are determined by the termwise difference $A-T^0$ in the following way. The coordinate corresponding to $\gamma_i$ is equal to 
\begin{equation}\label{zweight}
\sum\limits_{q\in\mathbb{Z}}(A_{q(n-1)+i}-T^0_{q(n-1)+i}),
\end{equation}
while the coordinate corresponding to $-\delta$ is
\begin{equation}\label{qweight}
\sum\limits_{i\in\mathbb{Z}}\left\lceil\frac{i}{n-1}\right\rceil(A_i-T^0_i).
\end{equation}
For example one may now check that $\mu_{T^0}=\lambda$, i.e. $v_{T^0}$ is the highest weight vector.

We will refrain from giving an explicit definition of the vectors $v_A$ themselves, only pointing out that the basis is monomial. That means that every $v_A$ is obtained from the highest weight vector by the action of a monomial in the root spaces of the algebra $\widehat{\mathfrak{sl}}_n$. Thus this basis is of completely different nature than the Gelfand-Tsetlin basis, which makes the deep similarities between the affine and finite cases presented below, in a way, surprising.

\section{The Main Result}

One of the keys to our main result is the transition from infinite sequences comprising $\mathbf{\Pi}_\lambda$ to Gelfand-Tsetlin patterns (of sorts), which we mentioned in the introduction. It should be noted that analogues of GT patterns for the (quantum) affine scenario have been considered in papers~\cite{jimbo, ffnl, tsym}, our construction, however, differs substantially.

The object we associate with every $A\in\mathbf{\Pi}_\lambda$ is an infinite set of numbers $s_{i,j}(A)$ with both $i$ and $j$ arbitrary integers, for any $i,j$ satisfying the inequalities~(\ref{gt}). In general, we will refer to arrays of real numbers $(s_{i,j})$ satisfying~(\ref{gt}) as ``plane-filling GT-patterns". Similarly to classic GT-patterns, we visualize them as follows.
\begin{center}
\begin{tabular}{ccccccccc}
&$\ldots$ &&$\ldots$ && $\ldots$ && $\ldots$&\\
$\ldots$ &&$ s_{-1,-1}$&& $s_{-1,0}$ && $s_{-1,1}$&&\ldots\\
&$\ldots$ &&$ s_{0,-1}$&& $s_{0,0}$ &&\ldots&\\
$\ldots$ &&$ s_{1,-2}$&& $s_{1,-1}$ && $s_{1,0}$&&\ldots\\
&$\ldots$ &&$\ldots$ && $\ldots$ && $\ldots$&
\end{tabular}
\end{center}

To generalize the definition of $T^0$, for any $m\in\mathbb{Z}$ let $T^m$ be given by $T^m_i=0$ when $i>mn$ and $A_{i}=a_{(i\bmod n)}$ when $i\le mn$. Then, by definition,
\begin{equation}\label{gtdef}
s_{i,j}(A)=\sum\limits_{l\le in+j(n-1)}(A_l-T_l^{i+j})-\sum_{l=in+j(n-1)+1}^{(i+j)n}T^{i+j}_l.
\end{equation}
Note that the first sum on the right has a finite number of nonzero summands and the second sum is nonzero only when $j>0$. A way to rephrase this definition is to say that we take the sum of all terms of the sequence obtained from $A$ by setting all terms with number greater than $in+j(n-1)$ to zero and then subtracting $T^{i+j}$.

We will also use definition~(\ref{gtdef}) in the more general context of $A$ being any sequence satisfying i) and ii) from the definition in the previous section but not necessarily iii).

\begin{proposition}
If $A\in\mathbf{\Pi}_\lambda$, then the array $(s_{i,j}(A))$ constitutes an plane-filling GT-pattern.
\end{proposition}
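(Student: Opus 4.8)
The plan is to work throughout with the ``rephrased'' form of~(\ref{gtdef}) stated just above the proposition. Writing $A^{(i,j)}$ for the sequence obtained from $A$ by replacing every term of index exceeding $in+j(n-1)$ with $0$, one has $s_{i,j}(A)=\sum_{l\in\mathbb Z}\bigl(A^{(i,j)}_l-T^{i+j}_l\bigr)$, a sum with only finitely many nonzero terms: it vanishes for $l\gg0$ by construction of $A^{(i,j)}$ and $T^{i+j}$, and for $l\ll0$ because $A$ and every $T^m$ eventually coincide with the periodic sequence $l\mapsto a_{l\bmod n}$ by conditions i)--ii). With this description the two inequalities in~(\ref{gt}) reduce to term-by-term comparisons, and I would treat them one at a time; each turns out to use exactly one half of condition iii).

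For $s_{i+1,j}(A)\ge s_{i,j+1}(A)$: both of these quantities involve $T^{i+j+1}$, so in the difference those terms cancel and only $\sum_l\bigl(A^{(i+1,j)}_l-A^{(i,j+1)}_l\bigr)$ survives. The truncation thresholds are $(i+1)n+j(n-1)$ for $(i+1,j)$ and $in+(j+1)(n-1)=(i+1)n+j(n-1)-1$ for $(i,j+1)$, differing by exactly $1$; hence the two truncated sequences agree everywhere except at the single index $l_0=(i+1)n+j(n-1)$, where $A^{(i+1,j)}_{l_0}=A_{l_0}$ and $A^{(i,j+1)}_{l_0}=0$. Thus $s_{i+1,j}(A)-s_{i,j+1}(A)=A_{l_0}\ge0$ by the nonnegativity part of iii).

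For $s_{i,j}(A)\ge s_{i+1,j}(A)$: put $m=i+j$ and split the difference as $\sum_l\bigl(A^{(i,j)}_l-A^{(i+1,j)}_l\bigr)+\sum_l\bigl(T^{m+1}_l-T^m_l\bigr)$. From the definition of $T^m$ the sequence $T^{m+1}-T^m$ is supported on the indices $mn+1,\dots,(m+1)n$, on which it equals $a_{l\bmod n}$; as these residues run through $1,\dots,n-1,0$, the second sum equals the level $k$. Meanwhile the truncation threshold moves from $mn-j$ to $mn-j+n$, so $A^{(i,j)}-A^{(i+1,j)}$ equals $-A_l$ on the $n$ consecutive indices $mn-j+1,\dots,mn-j+n$ and $0$ elsewhere, and the first sum is $-(A_{mn-j+1}+\dots+A_{mn-j+n})$. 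Altogether $s_{i,j}(A)-s_{i+1,j}(A)=k-(A_{mn-j+1}+\dots+A_{mn-j+n})\ge0$ by the ``sum of any $n$ consecutive terms is at most $k$'' part of iii). Since $i,j$ were arbitrary, this establishes both inequalities of~(\ref{gt}) for all $i,j$, i.e. $(s_{i,j}(A))$ is a plane-filling GT-pattern.

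I do not expect a genuine obstacle here: the argument is essentially careful index bookkeeping, and the two points that need attention are recognizing that the relevant shift of the truncation threshold is $1$ in the first comparison and $n$ in the second, and that $T^{m+1}-T^m$ is a single period-block of the $a_i$ summing to the level $k$. The only non-formal point — convergence of the sums defining $s_{i,j}(A)$ — has already been dealt with in the text preceding the statement, so I would simply invoke it.
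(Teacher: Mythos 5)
Your proof is correct and follows essentially the same route as the paper: both verify the two inequalities of~(\ref{gt}) by computing the differences directly from~(\ref{gtdef}), obtaining $s_{i+1,j}-s_{i,j+1}=A_{(i+1)n+j(n-1)}\ge 0$ and $s_{i,j}-s_{i+1,j}=k-(\text{sum of }n\text{ consecutive }A_l)\ge 0$. Your write-up merely makes the truncation bookkeeping and the role of $T^{m+1}-T^m$ more explicit than the paper's two-line computation.
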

\begin{proof}
One observes that $$s_{i,j}(A)-s_{i-1,j+1}(A)=A_{in+j(n-1)}\ge 0$$ and 
\begin{multline*}
s_{i,j}(A)-s_{i-1,j}(A)=A_{(i-1)n+j(n-1)+1}+\ldots+A_{in+j(n-1)}+\\-T^{i+j}_{(i+j-1)n+1}-\ldots-T^{i+j}_{(i+j)n}=A_{(i-1)n+j(n-1)+1}+\ldots+A_{in+j(n-1)}-k\le 0.
\end{multline*}
In other words, the numbers $s_{i,j}(A)$ satisfy the inequalities~(\ref{gt}).  
\end{proof}
The proof shows that $s_{i,j}(A)=s_{i-1,j+1}(A)$ if and only if $A_{in+j(n-1)}=0$ and $s_{i,j}(A)=s_{i-1,j}(A)$ if and only if $A_{in+(j-1)(n-1)}+\ldots+A_{in+j(n-1)}=k$ (sum of $n$ consecutive terms). This observation should be kept in mind when dealing with plane-filling GT-patterns $(s_{i,j}(A))$.

Now to give the statement of our main theorem we associate with every sequence $A$ satisfying i) and ii) a weight $p(A)$ of form $\prod_{l=1}^n (1-t^l)^{d_l}$. The integers $d_l$ are defined in terms of the associated plane-filling GT-pattern. Once again, to define $d_l$ we consider the set of pairs of integers $(x,i)$ such that the number $x$ appears $l-1$ times in row $i-1$ and $l$ times in row $i$ of $(s_{i,j}(A))$. The set of such pairs is, however, likely to be infinite and $d_l$ is, in fact, the size of a factor set with respect to a certain equivalence relation which we now describe.

One of the key features of the array $(s_{i,j})=(s_{i,j}(A))$ is the easily verified equality
\begin{equation}\label{shift}
s_{i-n+1,j+n}=s_{i,j}-k
\end{equation}
holding for any $i,j$. Now consider the set $X_l$ of pairs $(i,j)$ for which $$s_{i-1,j}\neq s_{i-1,j+1}=\ldots=s_{i-1,j+l-1}\neq s_{i-1,j+l}$$ and $$s_{i,j-1}\neq s_{i,j}=\ldots=s_{i,j+l-1}\neq s_{i,j+l}.$$ $X_l$ is in an obvious bijection with the set of pairs $(x,i)$ defined above. Equality~(\ref{shift}) shows that if $(i,j)\in X_l$, then $(i-\alpha(n-1),j+\alpha n)\in X_l$ for any integer $\alpha$. Our relation is defined by $(i,j)\sim (i-n+1,j+n)$.
\begin{proposition}\label{finite}
The set $X_l/\sim$ is finite.
\end{proposition}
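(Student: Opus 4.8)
The plan is to show that the equivalence relation $\sim$ on $X_l$ collapses it to a finite set by exploiting the two-directional stabilization conditions i) and ii) from the definition of $\mathbf{\Pi}_\lambda$, translated into statements about the plane-filling GT-pattern $(s_{i,j})=(s_{i,j}(A))$. The orbits of $\sim$ are exactly the orbits of the $\mathbb Z$-action $(i,j)\mapsto(i-n+1,j+n)$, so it suffices to produce a finite ``fundamental domain'': a bounded region of the $(i,j)$-plane that every $\sim$-orbit meeting $X_l$ intersects. A natural choice is a diagonal strip, say the set of $(i,j)$ with $0\le i+j<n-1$ (since the shift $(i,j)\mapsto(i-n+1,j+n)$ increases $i+j$ by exactly $1$, each orbit meets this strip in exactly one point). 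What remains is to show that $X_l$ meets only finitely many such orbits, i.e. that within this strip $X_l$ is finite; equivalently, that for all but finitely many pairs $(i,j)$ with $i+j$ in a fixed residue range, the defining equalities/inequalities of $X_l$ fail.

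The key step is to control the entries $s_{i,j}(A)$ for $i$ or $j$ large in absolute value using i) and ii). First I would recast~(\ref{gtdef}) and the differences computed in the proof of the preceding proposition: $s_{i,j}-s_{i-1,j+1}=A_{in+j(n-1)}$ and $s_{i,j}-s_{i-1,j}=\big(\sum_{l=(i-1)n+j(n-1)+1}^{in+j(n-1)}A_l\big)-k$. Fix the diagonal $i+j=m$. As we move along a fixed row $i$ (varying $j$, hence varying the ``base index'' $in+j(n-1)$ over an arithmetic progression with common difference $n-1$), condition i) says $A_l=0$ for $l\gg 0$ and condition iii) gives $A_l\ge 0$ and bounded partial sums; condition ii) says $A_l=a_{l\bmod n}$ for $l\ll 0$. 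Consequently, for $j\gg 0$ the relevant $A$-values vanish, so $s_{i,j}=s_{i-1,j+1}$ and the ``$\ne$'' conditions at the right end of the run fail; for $j\ll 0$ the $A$-values are eventually periodic with all of $a_0,\dots,a_{n-1}$, so the pattern of equalities among consecutive $s_{i,j-1},s_{i,j},\dots$ is eventually periodic and the membership condition for $X_l$ becomes a periodic condition in $j$ — but periodicity in $j$ along a fixed row, combined with the shift relation~(\ref{shift}), means these points lie in finitely many $\sim$-orbits. So along each fixed row only finitely many $\sim$-classes occur, and a fixed diagonal strip meets only finitely many rows in a controlled way.

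The main obstacle is the bookkeeping for $j\to-\infty$: there $s_{i,j}$ does not stabilize (indeed~(\ref{gtdef}) shows it grows, and~(\ref{shift}) shows $s_{i,j}$ decreases by $k$ under the shift), so one cannot simply say ``$X_l$ is empty far out''; instead one must argue that the \emph{local equality pattern} of the row — which is what $X_l$ detects — is eventually periodic with period dividing $n-1$ in $j$ (equivalently period $n$ in the base index $in+j(n-1)$, reflecting the $n$-periodicity of $a_{l\bmod n}$), and that this periodicity is precisely the $\sim$-periodicity. The cleanest way to finish is probably to prove a single uniform statement: there exist integers $N_+,N_-$ (depending on $\lambda,l$) such that if $(i,j)\in X_l$ then after applying a suitable power of the shift we land in the box $\{N_-\le i\le N_+\}\cap\{$strip $0\le i+j<n-1\}$, which is finite. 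Establishing $N_+$ (large $i$, equivalently large $j$ after shifting, where entries vanish) is the easy direction; establishing $N_-$ requires the periodicity argument above, using that for $i\ll 0$ the whole pattern of $s_{i,j}(T^0)$-type entries is governed by the fixed data $(a_0,\dots,a_{n-1})$, so only the finitely many ``transitional'' rows near where $A$ differs from $T^{i+j}$ can contribute new $\sim$-classes.
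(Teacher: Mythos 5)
Your overall strategy (pick a fundamental domain for the shift, then use conditions i) and ii) to rule out membership in $X_l$ far out in both directions) is the paper's strategy, but your treatment of the periodic direction contains a genuine gap. You reduce the $j\to-\infty$ side to the assertion that the equality pattern of a row is eventually periodic in $j$ and that ``this periodicity is precisely the $\sim$-periodicity.'' That is false: the shift defining $\sim$ is $(i,j)\mapsto(i-n+1,j+n)$, which changes the row index by $n-1$, so two elements of $X_l$ lying in the \emph{same} row are never $\sim$-equivalent, whatever their $j$'s. Consequently, if the (indeed eventually periodic) membership condition were satisfied for infinitely many $j\ll 0$ in a fixed row, you would obtain infinitely many distinct classes and the proposition would be false; periodicity by itself proves nothing. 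What is actually needed — and what the paper proves — is that membership \emph{fails} outright for $j\ll 0$ (with $i$ restricted to a fixed band of $n-1$ rows): where $A$ agrees with its periodic tail, one has $s_{i,j+1}=s_{i,j}$ if and only if $a_{j\bmod n}=0$ if and only if $s_{i-1,j+1}=s_{i-1,j}$, so rows $i-1$ and $i$ have identical equality patterns there, and a value cannot occur $l$ times in row $i$ while occurring only $l-1$ times in row $i-1$. Your handling of the other direction (where the relevant $A$'s vanish and consecutive row entries drop by $k$, killing the required runs) is essentially right.

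There are also two smaller slips in the fundamental-domain bookkeeping. The diagonal strip $0\le i+j<n-1$ meets each orbit $n-1$ times, not once (harmless, since ``at least once and finitely often'' suffices), but your detailed analysis is written for a \emph{fixed row} with $j\to\pm\infty$, which corresponds to taking the band $1\le i\le n-1$ as fundamental domain (the paper's choice), not the diagonal strip: inside the strip the two tails are swapped, since along $i+j=m$ the base index $in+j(n-1)=i+m(n-1)$ tends to $-\infty$ as $j\to+\infty$, i.e.\ large $j$ there lands in the periodic region, contrary to what you wrote. This is easily repaired by working with the band of rows, but the substantive point above — replacing the periodicity-matching argument by the ``patterns of adjacent rows coincide, hence membership fails'' argument — still has to be supplied.
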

\begin{proof}
First, every equivalence class in $X_l$ contains exactly one representative $(i,j)$ with $1\le i\le n-1$. Therefore, it suffices to show that the number of $(i,j)\in X_l$ with $i$ within these bounds is finite. Further, the following two facts are straightforward from definition~(\ref{gtdef}) and $A$ satisfying i) and ii).
\begin{enumerate}[label=\arabic*)]
\item If $1\le i\le n-1$, then for $j\gg 0$ one has $s_{i,j+1}=s_{i,j}-k$.
\item If $1\le i\le n-1$, then for $j\ll 0$ one has $s_{i,j+1}=s_{i,j}$ if and only if $a_{j\bmod n}=0$ and thus if and only if $s_{i-1,j+1}=s_{i-1,j}$ holds as well.
\end{enumerate}
However, 1) shows that if $(i,j)\in X_l$ and $i\in[1,n-1]$, then $j$ can not be arbitrarily large, while 2) shows that $-j$ can not be arbitrarily large.
\end{proof}

We can now define $d_l=|X_l/\sim|$ and state our main result.
\begin{theorem}\label{main}
For an integral dominant nonzero $\widehat{\mathfrak{sl}}_n$-weight $\lambda$ one has 
\begin{equation}\label{mainformula}
P_\lambda=\sum\limits_{A\in\mathbf{\Pi}_\lambda}p(A) e^{\mu_A}.
\end{equation}
\end{theorem}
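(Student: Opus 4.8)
The plan is to prove~(\ref{mainformula}) by reading its right-hand side as a weighted sum of exponentials over the lattice points of an infinite-dimensional polyhedron and then collapsing that sum onto the polyhedron's vertices via a weighted Brion theorem. The conditions defining $\mathbf{\Pi}_\lambda$ separate into the boundary conditions i) and ii), which cut out an affine-linear subspace $V_\lambda$ of the space of real sequences, and the genuine inequalities iii), $A_i\ge 0$ and $A_{i-n+1}+\ldots+A_i\le k$, which carve out a polyhedron $\Pi_\lambda\subset V_\lambda$ whose lattice points are exactly the elements of $\mathbf{\Pi}_\lambda$. By~(\ref{zweight}) and~(\ref{qweight}) the assignment $A\mapsto\mu_A$ is affine-linear, so $e^{\mu_A}$ is literally the exponential attached to the point $A$; and $p(A)$, already defined for every sequence satisfying i) and ii), supplies a lattice-point weight that extends to a neighbourhood of $\Pi_\lambda$ inside $V_\lambda$. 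That the resulting series $\sum_{A\in\mathbf{\Pi}_\lambda}p(A)e^{\mu_A}$ lies in $\mathfrak R_t$ follows from i), ii) exactly as for $\charac(L_\lambda)$ (the $t=0$ specialization), so it is a legitimate target for a Brion-type decomposition.

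Two engines, both prepared beforehand, drive the argument. First, the generalized Gelfand--Tsetlin polytopes of Part~\ref{tools} describe $\Pi_\lambda$ locally: near a vertex $v$ the tangent cone of $\Pi_\lambda$ is, after a coordinate change, a product of cones over such polytopes, and the two structural facts proved there --- a combinatorial description of the vertices of these polytopes and of their tangent cones at those vertices --- let one present each vertex cone as a manageable, essentially simplicial object on which exponential and weighted-exponential sums can be evaluated. Second, in the final part one establishes the weighted Brion theorem itself: for lattice-point weights of the special multiplicative type realized by $p(A)$ --- where the exponent $d_l$ counts local blocks, i.e.\ prescribed patterns of active equality constraints in the associated plane-filling GT-pattern --- the weighted sum over $\Pi_\lambda\cap\mathbb Z$ equals $\sum_v C_v$, where $C_v$ is the analogous weighted sum over the lattice points of the tangent cone at $v$ (with $p$ replaced by its local version there), the identity holding in $\mathfrak R_t$. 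The passage from the finite-dimensional weighted Brion theorem to the infinite-dimensional $\Pi_\lambda$ is handled by the truncation/limiting device already used in~\cite{me2} for the unweighted case.

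It then remains to evaluate the $C_v$, and here one expects the same dichotomy as in the finite case and in~\cite{me2}: for all vertices outside one distinguished family the local weighted cone sum vanishes identically, so $C_v=0$; and the remaining vertices contribute, collectively, $\tfrac1{W_\lambda(t)}\sum_{w\in W}w\!\left(e^\lambda\prod_{\alpha\in\Phi^+}\bigl(\tfrac{1-te^{-\alpha}}{1-e^{-\alpha}}\bigr)^{m_\alpha}\right)$, which is exactly the right-hand side of~(\ref{def}). Summing over all vertices therefore gives $\sum_{A\in\mathbf{\Pi}_\lambda}p(A)e^{\mu_A}=P_\lambda$. Matching the two expressions requires reconciling the block statistics $d_l$ --- and the equivalence relation of Proposition~\ref{finite} that keeps them finite --- with the product over the entire infinite set of positive roots, in particular reproducing the imaginary roots with their multiplicity $m_{j\delta}=n-1$ and keeping track of the normalizing factor $W_\lambda(t)$ when $\lambda$ is singular.

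The main obstacle is this last step. Proving that the non-distinguished vertices contribute zero and that the distinguished ones contribute precisely the Weyl-group summands of~(\ref{def}) is the heart of the proof: it rests on the structure of generalized GT polytopes from Part~\ref{tools} and on a careful, case-sensitive identification of the local data $d_l$ at a good vertex with the exponents $m_\alpha$ in~(\ref{def}). By comparison the weighted Brion theorem, once the correct class of weights is isolated, and the infinite-dimensional bookkeeping, once framed in $\mathfrak R_t$, are relatively soft; I expect the bulk of the work to lie in Part~\ref{tools} and in the vertex computation.
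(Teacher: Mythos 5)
Your plan coincides with the paper's actual proof: the paper rewrites the right-hand side as $e^\lambda G(S_{\bar\varphi}(\bar\Pi))$ for the infinite-dimensional polyhedron $\Pi$, proves the weighted Brion-type identity (Theorem~\ref{infbrion}) by truncation as in~\cite{me2}, and then establishes exactly the dichotomy you describe (Theorem~\ref{contrib}) using the generalized Gelfand--Tsetlin polyhedra of Part~\ref{tools}, with the vanishing of non-relevant vertices and the degeneration lemma handling singular $\lambda$ and the factor $W_\lambda(t)$. So your proposal is correct in outline and follows essentially the same route, with the heavy lifting placed where the paper places it.
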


{\bf Remark.} As one can see, in the case of $\lambda=0$ the set $\mathbf\Pi_\lambda$ consists of a single zero sequence. The corresponding plane-filling GT pattern is also identically zero and our definition of $p(A)$ falls apart. In a sense, the case of $\lambda=0$ being exceptional is caused by the fact that for an affine root system the stabilizer of $0$ is infinite, unlike any other integral weight. This ultimately leads to definition~(\ref{def}) rendering $P_0$ not equal to $1$, unlike any root system of finite type.

\section{Brion's Theorem and its Generalization}

In this section we give a concise introduction to Brion's theorem and then present our generalization. After that we will elaborate on the connection between these subjects and our formula.

Consider a vector space $\mathbb{R}^m$ with a fixed basis and corresponding lattice of integer points $\mathbb{Z}^m\subset\mathbb{R}^m$.  For any set $P\subset\mathbb{R}^m$ one may consider its characteristic series $$S(P)=\sum\limits_{a\in P\cap\mathbb{Z}^m} e^a,$$ a formal Laurent series in the variables $x_1,\ldots,x_m$. (Once we have assigned a formal variable to each basis vector we may define the monomial $e^a=x_1^{a_1}\ldots x_m^{a_m}$.)

If $P$ is a rational convex polyhedron (a set defined by a finite number of non-strict linear inequalities with integer coefficients, not necessarily bounded) it can be shown that there exists a Laurent polynomial $q\in\mathbb{Z}[x_1^{\pm 1},\ldots,x_m^{\pm 1}]$ such that $qS(P)$ is also some Laurent polynomial. Moreover, the rational function $\tfrac{qS(P)}q$ does not depend on the choice of $q$ and is denoted $\sigma(P)$. This function is known as the integer point transform (IPT) of $P$.

For a vertex $v$ of $P$ let $C_v$ be the tangent cone to $P$ at $v$. Brion's theorem is then the following identity.
\begin{theorem}[\cite{bri,kp}]\label{brion}
In the field of rational functions we have $$\sigma(P)=\sum\limits_{v\text{ vertex of }P}\sigma(C_v).$$
\end{theorem}

A nice presentation of these topics can be found in the books~\cite{barv,beckrob}.

Our generalization of Theorem~\ref{brion} is stated in the following setting. Suppose we have a convex rational polyhedron $P\subset\mathbb{R}^n$. Let $R$ be an arbitrary commutative ring, and consider any map $\varphi:\mathcal{F}_P\rightarrow R$, where we use $\mathcal F_P$ to denote the set of faces of $P$. The map $\varphi$ defines a function $g:P\rightarrow R$, where for $x\in P$ we have $g(x)=\varphi(f)$ with $f$ being the face of minimal dimension containing $x$.

Next, consider the weighted generating function $$S_\varphi(P)=\sum\limits_{a\in P\cap\mathbb{Z}^n}g(a)\exp(a)\in R[[x_1^{\pm1},\ldots,x_n^{\pm 1}]].$$
\begin{proposition}
There exists a polynomial $Q\in R[x_1,\ldots,x_n]$ such that $QS_\varphi(P)\in R[x_1^{\pm1},\ldots,x_n^{\pm1}]$.
\end{proposition}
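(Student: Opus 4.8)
The plan is to reduce the weighted statement to the unweighted one (Brion's Theorem~\ref{brion}, or rather the fact it relies on: that $S(P)$ is a rational function with polynomial denominator) by decomposing $S_\varphi(P)$ according to faces. First I would note the partition $P\cap\mathbb{Z}^n=\bigsqcup_{f\in\mathcal F_P}(\mathrm{relint}(f)\cap\mathbb{Z}^n)$, where $\mathrm{relint}(f)$ is the relative interior of the face $f$; the defining property of $g$ is precisely that $g\equiv\varphi(f)$ on $\mathrm{relint}(f)$. Hence
\begin{equation*}
S_\varphi(P)=\sum_{f\in\mathcal F_P}\varphi(f)\,S(\mathrm{relint}(f)),
\end{equation*}
a \emph{finite} sum since a polyhedron has finitely many faces.

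Next I would handle each term $S(\mathrm{relint}(f))$. Inclusion–exclusion over the face lattice expresses the generating function of a relatively open face in terms of those of closed faces: $S(\mathrm{relint}(f))=\sum_{f'\subseteq f}(-1)^{\dim f-\dim f'}S(f')$ (this is the standard Möbius relation for the face poset of a polyhedron; alternatively one can peel off facets one at a time). Each $f'$ is itself a rational convex polyhedron in $\mathbb{R}^n$, so by the unweighted theory there is a Laurent polynomial $q_{f'}$ with $q_{f'}S(f')\in\mathbb{Z}[x_1^{\pm1},\dots,x_n^{\pm1}]$; clearing denominators of the signs and combining, each $S(\mathrm{relint}(f))$ is a rational function whose denominator is a product of finitely many such $q_{f'}$.

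Finally I would assemble the result. Let $Q_0\in\mathbb{Z}[x_1,\dots,x_n]$ be a common polynomial multiplier — for instance the product over all faces $f'\in\mathcal F_P$ of (a polynomial multiple of) $q_{f'}$, cleared of negative exponents — so that $Q_0\,S(f')\in\mathbb{Z}[x_1,\dots,x_n]$ for every face $f'$. Then $Q_0\,S_\varphi(P)=\sum_{f}\varphi(f)\sum_{f'\subseteq f}(-1)^{\dim f-\dim f'}Q_0S(f')$ lies in $R[x_1,\dots,x_n]$, since it is a finite $R$-linear combination of elements of $\mathbb{Z}[x_1,\dots,x_n]$. Taking $Q=Q_0\in R[x_1,\dots,x_n]$ (via $\mathbb{Z}\to R$) proves the proposition. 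The only delicate point is making sure the various manipulations make sense at the level of formal Laurent series rather than just rational functions: $S_\varphi(P)$ is a genuine element of $R[[x_1^{\pm1},\dots,x_n^{\pm1}]]$ (its support lies in $P$, which is contained in finitely many translated orthants, so multiplication by a polynomial is well-defined and term-by-term summable), and the face decomposition above is literally an identity of supported families, so the equation $Q_0S_\varphi(P)=\sum_f\varphi(f)\sum_{f'\subseteq f}(-1)^{\dim f-\dim f'}Q_0S(f')$ holds coefficientwise. I expect the bookkeeping around relative interiors versus closed faces — and checking that the inclusion–exclusion is valid for unbounded polyhedra, where some ``faces'' may be empty or the face poset is slightly subtler — to be the main thing requiring care, though it is entirely standard.
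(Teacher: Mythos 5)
Your proposal is correct, and it shares the paper's overall strategy (both start from the observation that $g$ is constant on the relative interior of each face, so $S_\varphi(P)=\sum_{f\in\mathcal F_P}\varphi(f)S(\mathrm{relint}(f))$ is a finite sum, and both reduce to the unweighted rationality statement), but the technical device differs. The paper avoids relative interiors as such: it shrinks each face by a rational dilation with ratio $\alpha<1$ centered at a relative interior point, obtaining a finite family of pairwise disjoint closed rational subpolyhedra (together with the vertices) that capture exactly the lattice points on which $g$ takes each value, and takes $Q$ to be the product of the denominators of their integer point transforms. You instead keep the closed faces and use M\"obius inversion over the face poset, $S(\mathrm{relint}(f))=\sum_{f'\subseteq f}(-1)^{\dim f-\dim f'}S(f')$, then clear denominators face by face. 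The point you flag as delicate is indeed the only one needing justification, and it does go through for unbounded polyhedra: for nonempty faces $f''\subseteq f$ the interval $[f'',f]$ is the face poset of the tangent cone of $f$ along $f''$ modulo the affine span of $f''$, which is a pointed cone, so the Euler relation gives the alternating-sum identity coefficientwise; the paper's dilation trick is precisely a way of sidestepping this Euler-characteristic argument. Conversely, your route recovers with no extra work the paper's closing remark (used later) that $Q$ may be taken as the product of $1-e^\varepsilon$ over minimal integer edge directions $\varepsilon$ of $P$, at least when all faces are pointed, since every edge of a face of $P$ is an edge of $P$; with the dilation argument this requires the small observation that the $P_i$ are translated dilates of faces and hence have the same edge directions.
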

\begin{proof}
This follows from the fact that we may present a finite set of nonintersecting subpolyhedra of $P$, the union of which contains any lattice point in $P$ and on each of which $g$ is constant.

Namely, for each face we may consider its image under a dilation centered at its interior point with rational coefficient $0<\alpha<1$ large enough for the image to contain all of the face's interior lattice points.  $\{P_i\}$, the union of the obtained set of polyhedra with the set of all of $P$'s vertices has the desired properties. Thus $Q$ may be taken equal to the product of all the denominators of the rational functions $\sigma(P_i)$.

An important observation is that we may, therefore, actually take $Q$ to equal the product of of $1-e^\varepsilon$ over all minimal integer direction vectors $\varepsilon$ of edges of $P$, just like in the unweighted case.
\end{proof}

Thus we obtain a (well-defined) weighted integer point transform $$\sigma_\varphi(P)=\frac{QS_v(P)}Q\in R(x_1,\ldots,x_n).$$

Now note that if $v$ is a vertex of $P$ with tangent cone $C_v$, then there is a natural embedding $\mathcal F_{C_v}\hookrightarrow F_{P}$. If we allow ourselves to also use $\varphi$ to denote the restriction of $\varphi$ to $\mathcal F_{C_v}$ then our weighted Brion theorem can be stated as follows.

\begin{theorem}\label{wbrion}
$\sigma_\varphi(P)=\sum\limits_{v \text{ vertex of } P} \sigma_\varphi(C_v).$
\end{theorem}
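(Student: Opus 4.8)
The plan is to reduce the weighted statement to the unweighted Brion theorem (Theorem~\ref{brion}) by decomposing both sides along the faces of $P$, exploiting the fact that the weight function $g$ is constant on the relative interior of each face. Concretely, for a face $f$ of $P$ let $f^\circ$ denote its relative interior; then $P\cap\mathbb{Z}^n$ is the disjoint union of the sets $f^\circ\cap\mathbb{Z}^n$ over all $f\in\mathcal F_P$, and on each such set $g$ takes the constant value $\varphi(f)$. Hence
\begin{equation*}
S_\varphi(P)=\sum_{f\in\mathcal F_P}\varphi(f)\sum_{a\in f^\circ\cap\mathbb{Z}^n}\exp(a).
\end{equation*}
The inner sum is, up to the usual clearing of denominators, the integer point transform of the relatively open polyhedron $f^\circ$; call it $\sigma(f^\circ)$. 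By inclusion-exclusion over the face lattice (equivalently, by the standard fact that $\sigma(f)=\sum_{f'\subseteq f}\sigma((f')^\circ)$ and Möbius inversion), each $\sigma(f^\circ)$ is an integer combination of the $\sigma(f')$ for $f'\subseteq f$, so everything lands in the subring generated by the honest IPTs of faces; this is also what the proposition preceding the theorem guarantees, namely that a single common denominator $Q$ — the product of $1-e^\varepsilon$ over minimal edge direction vectors $\varepsilon$ — works simultaneously for $P$ and all of its faces, hence also for all the $f^\circ$.

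Next I would do exactly the same decomposition for each tangent cone $C_v$. Using the embedding $\mathcal F_{C_v}\hookrightarrow\mathcal F_P$ and the identification of $\varphi$ with its restriction, we get
\begin{equation*}
S_\varphi(C_v)=\sum_{f\in\mathcal F_{C_v}}\varphi(f)\,\sigma(f^\circ),
\end{equation*}
where now $f$ ranges over faces of $C_v$, which correspond to the faces of $P$ containing $v$. The key combinatorial point is then to sum the unweighted Brion identity over faces in the right way: for a \emph{fixed} face $f$ of $P$, the relatively open polyhedron $f^\circ$ has its own tangent cones at its own vertices, but $f^\circ$ — being relatively open — is better handled by applying ordinary Brion to the closed face $f$ and then using Möbius inversion. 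Specifically, ordinary Brion applied to the closed polyhedron $f$ reads $\sigma(f)=\sum_{v\in\mathrm{vert}(f)}\sigma(C^f_v)$, where $C^f_v$ is the tangent cone to $f$ at $v$; and $C^f_v$ is exactly the face of $C_v$ corresponding to $f$. Combining this with the face-decomposition of both sides, the claimed identity $\sigma_\varphi(P)=\sum_v\sigma_\varphi(C_v)$ becomes, after collecting the coefficient of each $\varphi(f)$, precisely the statement that $\sigma(f^\circ)=\sum_{v\in\mathrm{vert}(P),\,v\in f}\sigma((C_v\text{-face for }f)^\circ)$, which is ordinary Brion for the closed face $f$ transported through the same Möbius inversion on the face lattice of $f$.

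So the skeleton is: (1) decompose $S_\varphi$ into face contributions for $P$ and for each $C_v$; (2) pass from relatively open faces to closed faces via Möbius inversion on the face lattice, uniformly in the common denominator $Q$; (3) apply unweighted Brion to each closed face $f$ of $P$, matching the tangent cone of $f$ at a vertex $v$ with the appropriate face of the tangent cone $C_v$ of $P$; (4) reassemble, noting the $\varphi(f)$ coefficients match on both sides term by term. The main obstacle I anticipate is purely bookkeeping: making sure the correspondence "faces of $C_v$ at $v$" $\leftrightarrow$ "faces of $P$ containing $v$" is compatible with both the weight assignment and the tangent-cone operation, i.e.\ that the tangent cone to the face $f$ of $P$ at $v$ really is the face of $C_v$ that the embedding $\mathcal F_{C_v}\hookrightarrow\mathcal F_P$ sends to $f$ — this is true but needs a clean statement — and, relatedly, handling faces $f$ that do not contain $v$ (these contribute $0$ to $S_\varphi(C_v)$ and must be excluded consistently). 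Everything else is an application of Theorem~\ref{brion} and linearity over the ring $R$.
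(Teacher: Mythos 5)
Your proposal is correct and follows the same overall strategy as the paper — reduce the weighted statement to the unweighted Brion theorem by decomposing $S_\varphi$ along faces and summing face-by-face identities with coefficients $\varphi(f)$ — but the implementation is genuinely different. The paper never touches relative interiors: it replaces each face $f$ by a slightly dilated copy $P_f\subset\mathrm{relint}(f)$ chosen so that $P_f\cap\mathbb{Z}^n$ consists exactly of the lattice points whose minimal face is $f$, applies ordinary (closed) Brion to each $P_f$, and adds these identities with coefficients $\varphi(f)$; the tangent cones of $P_f$ at its vertices are translates of the faces of the cones $C_v$, which is how the right-hand side reassembles into $\sum_v\sigma_\varphi(C_v)$. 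You instead decompose by relative interiors and prove, for each face $f$, an ``open Brion'' identity $\sigma(f^\circ)=\sum_{v\in\mathrm{vert}(f)}\sigma\bigl((C^f_v)^\circ\bigr)$, where $C^f_v$ is the tangent cone to $f$ at $v$, identified with the face of $C_v$ indexed by $f$. Your route requires the Euler/M\"obius relation $\mu(f',f)=(-1)^{\dim f-\dim f'}$ for face posets of pointed (possibly unbounded) rational polyhedra, and you need it twice: once to invert $\sigma(f)=\sum_{f'\subseteq f}\sigma((f')^\circ)$, and once more to reassemble $\sum_{f'\ni v,\,f'\subseteq f}(-1)^{\dim f-\dim f'}\sigma(C^{f'}_v)$ into $\sigma\bigl((C^f_v)^\circ\bigr)$ after substituting closed Brion for every subface $f'$; this second use is only gestured at in your sketch and should be made explicit. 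In exchange, your bookkeeping of lattice points is cleaner than the paper's, whose dilation argument tacitly needs the shrinking parameters small enough that the translated tangent cones of the $P_f$ also capture exactly the relative-interior lattice points of the corresponding faces of $C_v$. One caution on wording: the values $\varphi(f)\in R$ are not independent, so you cannot literally ``collect the coefficient of each $\varphi(f)$''; the valid (and sufficient) direction is to prove each per-face open-Brion identity first and then sum them with coefficients $\varphi(f)$.
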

\begin{proof}
Consider once again the set $\{P_i\}$ of polyhedra from the proof of the proposition. These polyhedra are in one-to-one correspondence with with $P$'s faces. Evidently, if we write down the regular Brion theorem for each of these polyhedra and then add these identities up with coefficients equal to the values of $\varphi$ at the corresponding faces, we end up with precisely the statement of our theorem.
\end{proof}

With the necessary adjustments, $R$ could actually be any abelian group. We, however, are interested in the specific case of $R=\mathbb{Z}[t]$.

\section{Employing the Weighted Brion Theorem in the Finite Case}\label{gtbrion}

First, we explain how this works in the classic case.

Consider the ${n+1}\choose 2$-dimensional real space with its coordinates labeled by pairs of integers $(i,j)$ such that $i\in[0,n-1]$ and $j\in[1,n-i]$. We then may view the elements of $\mathbf{GT}_\lambda$ as the integer points of the Gelfand-Tsetlin polytope, which we  denote $GT_\lambda$. This polytope consists of points with coordinates $s_{i,j}$ satisfying~(\ref{gt}). (Visibly, $GT_\lambda$ is contained in a $n\choose2$-dimensional affine subspace obtained by fixing the coordinates in the row 0.)

With each GT-pattern we now associate two Laurent monomials. One is $e^{\mu_A}$, a monomial in ${x_1,\ldots,x_{n-1}}$ as explained in Section~\ref{gtcomb}. The other one is $e^A$, a monomial in ${n+1} \choose 2$ variables, the exponential of a point in $\mathbb{R}^{{n+1} \choose 2}$. We denote these variables $\{t_{i,j}\}$. 

Now it is easily seen that $e^{\mu_A}$ is obtained from $e^A$ by the specialization 
\begin{equation}\label{specfin}
t_{i,j}\longrightarrow x_i^{-1} x_{i+1}
\end{equation}
(within this section we set $x_0=x_n=1$).
In general, for a rational function $$Q\in\mathbb{Z}[t](\{t_{i,j}\})$$ we denote the result of applying~(\ref{specfin}) to $Q$ by $F(Q)$ which, when well-defined, is an element of $\mathbb{Z}[t](x_1,\ldots,x_{n-1})$.

To make use of Theorem~\ref{wbrion} we need one more simple observation. For a GT-pattern $A$, the weight $p_A$ depends only on which of the inequalities~(\ref{gt}) are actually equalities for this specific pattern. These inequalities, however, define our polytope and therefore $p_A$ only depends on the minimal face of $GT_\lambda$ containing $A$. Therefore we have a weight function $$\varphi:\mathcal F_{GT_\lambda}\rightarrow\mathbb{Z}[t]$$ as discussed in the previous section.

We now see that the right-hand side in Theorem~\ref{combfin} can be expressed by applying our weighted Brion theorem to $GT_\lambda$ and $\varphi$ and then applying specialization $F$. The result of this procedure is described by the following theorem, which visibly implies Theorem~\ref{combfin}.
\begin{theorem}\label{contribfin}
There's a distinguished subset of vertices of $GT_\lambda$ parametrized by elements of the orbit $W\lambda$. For vertex $v$ corresponding to some $\mu\in W\lambda$ we have $$F(\sigma_{\varphi}(C_v))=\sum\limits_{w\lambda=\mu} w\left(e^\lambda \prod\limits_{\alpha\in\Phi^+}\left(\frac{1-t e^{-\alpha}}{1-e^{-\alpha}}\right)^{m_\alpha}\right).$$ For any $v$ outside this distinguished subset we have $F(\sigma_{\varphi}(C_v))=0$.
\end{theorem}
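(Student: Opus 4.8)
The plan is to compute $F(\sigma_\varphi(C_v))$ directly for each vertex $v$ of $GT_\lambda$, using the explicit description of tangent cones at vertices of Gelfand--Tsetlin polytopes together with the multiplicativity of integer point transforms. First I would recall that a vertex $v$ of $GT_\lambda$ is a GT-pattern all of whose entries are determined by the top row via the equalities among~(\ref{gt}); concretely, each $s_{i,j}$ equals some $\lambda_{c(i,j)}$ for a function $c$ recording which ``chain'' of equalities reaches it. The tangent cone $C_v$ is a simplicial cone: its edges correspond to the $\binom n2$ free coordinates, and for each such coordinate there is a primitive edge direction $\varepsilon$. Because the cone is simplicial and unimodular (a standard fact for GT-polytopes — the edge directions form a lattice basis of the ambient lattice of the affine span), the integer point transform factors as
\begin{equation*}
\sigma(C_v)=\frac{e^v}{\prod_\varepsilon(1-e^\varepsilon)},
\end{equation*}
and more generally, by Theorem~\ref{wbrion} applied face-by-face inside $C_v$, the weighted transform $\sigma_\varphi(C_v)$ is a sum over the $2^{\binom n2}$ faces of $C_v$, each face contributing $\varphi(f)$ times the transform of the corresponding ``partial'' cone. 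Collecting terms, this becomes a product over the edges: each edge $\varepsilon$ contributes a factor of the form $1+(\text{something})\cdot\frac{e^\varepsilon}{1-e^\varepsilon}$ if the weight $\varphi$ picks up a factor $(1-t)$ precisely when that edge-inequality becomes an equality, i.e. the weighted transform of a unimodular simplicial cone with ``per-edge'' weight is
\begin{equation*}
\sigma_\varphi(C_v)=e^v\prod_\varepsilon\frac{1-t\,e^\varepsilon}{1-e^\varepsilon}.
\end{equation*}
Here the exponent $d_l$ of $(1-t^l)$ in $p_A$ being governed by runs of equal entries in consecutive rows is exactly what makes $\varphi$ split as a product of per-edge contributions — this is the combinatorial heart of why a clean product formula appears, and I would verify it by checking that a run of $l$ equal entries in row $i$ sitting above $l-1$ equal entries in row $i-1$ corresponds to exactly one edge of the cone becoming degenerate, with the $(1-t^l)$ telescoping into $l$ successive $(1-t)$'s along that edge as the face dimension drops.

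Next I would apply the specialization $F$ from~(\ref{specfin}), $t_{i,j}\mapsto x_i^{-1}x_{i+1}$. Under $F$, the monomial $e^v$ goes to $e^{\mu_v}$, which is $e^{w\lambda}$ for the Weyl group element $w$ reading off the vertex (this is the standard identification of vertices of $GT_\lambda$ with the orbit $W\lambda$, with $w$ determined by the permutation that the chain-function $c$ induces on the $\lambda_i$). The key computation is then to show that $F$ sends the product $\prod_\varepsilon\frac{1-t\,e^\varepsilon}{1-e^\varepsilon}$ over the edge directions of $C_v$ to $w\big(\prod_{\alpha\in\Phi^+}\frac{1-t\,e^{-\alpha}}{1-e^{-\alpha}}\big)$ when $v$ is one of the distinguished vertices, and to $0$ otherwise. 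For the distinguished vertices — those at which the edge directions $\varepsilon$ specialize under $F$ to the $w$-images of the negative roots $-\alpha$, each appearing exactly once — this is immediate from the product formula. For a non-distinguished vertex, some edge direction $\varepsilon$ satisfies $F(e^\varepsilon)=1$ (the specialization collapses it to a nonzero constant giving $1$, because the corresponding difference of $x$-variables cancels), which makes the factor $\frac{1-t\cdot 1}{1-1}$ ill-defined — but one must argue more carefully: the correct statement is that $F(\sigma_\varphi(C_v))$, computed as a genuine rational function via the common-denominator procedure in the Proposition, vanishes because the numerator acquires a factor of $(1-t\cdot 1)=(1-t)$ times zero, or rather because two edge contributions specialize to reciprocal-up-to-sign directions and the combined $\frac{(1-te^\beta)(1-te^{-\beta})}{(1-e^\beta)(1-e^{-\beta})}$-type factor degenerates. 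I would handle this the way it is done in~\cite{me1}: identify, for each non-distinguished vertex, a specific pair of edges whose directions become negatives of one another (or one becomes $0$) under $F$, and show the contribution is killed — the cleanest route is to note that $\sigma_\varphi(C_v)$ before specialization is a product of the per-edge factors, $F$ is a ring homomorphism on the subring where denominators survive, and on the locus where it doesn't, one computes the limit and gets $0$ by an explicit L'Hôpital-type cancellation or by the symmetry $w\mapsto ws_\alpha$.

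The main obstacle I anticipate is twofold. The first difficulty is establishing rigorously that the weighted transform of the tangent cone factors as the advertised product $e^v\prod_\varepsilon\frac{1-te^\varepsilon}{1-e^\varepsilon}$: this requires knowing both that $C_v$ is unimodular simplicial \emph{and} that the face-weight function $\varphi$ restricted to $\mathcal F_{C_v}$ is ``separable'' over the edges in the precise sense that $\varphi(f)=\prod_{\varepsilon\not\subset f}(1-t)^{(\text{multiplicity})}$ — and the multiplicities have to be tracked through the run-length statistic $d_l$, which is where the $(1-t^l)=(1-t)(1+t+\dots+t^{l-1})$ bookkeeping could get delicate. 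The second difficulty is the vanishing argument for non-distinguished vertices: one needs a clean combinatorial criterion (in terms of the chain-function $c$ or the associated permutation) distinguishing the $|W\lambda|$ good vertices from the rest, and then a uniform reason the specialization kills the bad ones. I expect the distinguished vertices to be exactly those where the pattern is ``monotone along rows'' in the sense that reading the pattern from top to bottom never reverses an inequality that was strict — equivalently, the vertices where all the $s_{i,j}$ with $i\ge 1$ are forced to equal one of finitely many values in a way that matches a reduced expression — and the bad vertices always contain a ``local non-monotonicity'' producing two edges with opposite $F$-images. Assembling the surviving contributions over the distinguished vertices then reproduces exactly $\sum_{w\in W}w\big(e^\lambda\prod_\alpha(\cdots)\big)$, grouped by the value $\mu=w\lambda$, which is the statement; dividing by $W_\lambda(t)$ recovers~(\ref{def}) and hence Theorem~\ref{combfin}.
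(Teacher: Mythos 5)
Your proposal rests on a premise that fails: that at \emph{every} vertex $v$ of $GT_\lambda$ the tangent cone is unimodular simplicial and that the weighted transform factors as $e^v\prod_\varepsilon\frac{1-te^\varepsilon}{1-e^\varepsilon}$. Gelfand--Tsetlin polytopes are not simple in general; in fact, for regular $\lambda$ the distinguished vertices are \emph{precisely} the simple ones, so at exactly the vertices whose contribution you must show vanishes, the cone is not simplicial, the weight $\varphi$ takes values involving $(1-t^l)$ with $l\ge 2$, and it does not split as a per-edge product, so no such closed formula is available. Your proposed vanishing mechanism also cannot work: the specialization $F$ never sends an edge generator to $1$ (for any edge $e$ the graph $\Delta_e$ has exactly one component missing the top row, and $F(e^\varepsilon)$ carries a nonzero power of the corresponding variable $x_r$), which is exactly why $F(\sigma_\varphi(C_v))$ is a well-defined rational function at every vertex; there is no pole/zero collision or L'H\^opital-type degeneration to exploit, and no pair of edges with reciprocal $F$-images is guaranteed. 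The vanishing at non-distinguished vertices is a genuine cancellation among perfectly finite terms: in the paper one factors the tangent cone over the connected components of $\Delta_v$, so that $F(\sigma_\varphi(C_v))=\prod_i\psi_{\Gamma_i}(\ldots)$, and then invokes Theorem~\ref{zero}, whose proof is an induction whose base case is an explicit (machine-checked) computation --- nothing like the formal cancellation you sketch.

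Even at the distinguished vertices, your argument only covers regular $\lambda$. When $\lambda$ is singular the distinguished vertices are not in bijection with $W$, and the asserted contribution $\sum_{w\lambda=\mu}w(\cdots)$ is a sum over a coset of $W_\lambda$; the paper obtains it by viewing $GT_\lambda$ as a degeneration of $GT_{\lambda^1}$ for a regular $\lambda^1$ and applying the weighted degeneration Lemma~\ref{gensingular}, whose combinatorial core (Lemma~\ref{graphsum}) is what produces the $[l_i]_t!$ factors; ``dividing by $W_\lambda(t)$ at the end'' is not a substitute for this step. The portion of your proposal that does match the paper is the regular-case analysis of a distinguished vertex: there the tangent cone is indeed unimodular simplicial, $\varphi(f)=(1-t)^{\dim f}$, the edge generators specialize under $F$ to $\{e^{-w\alpha},\alpha\in\Phi^+\}$, and the product formula gives the stated contribution. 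But the two hard points of the theorem --- the vanishing at non-simple vertices and the singular case --- are exactly the points where your outline has no working argument.
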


Interestingly enough, for a regular weight $\lambda$ this distinguished subset of vertices is precisely the set of simple vertices. As mentioned in the introduction, how and why this works out in the case of $t=0$ is shown in the preprint~\cite{me1}.

Since Theorem~\ref{combfin} itself is a well known result, we will not give a detailed proof of Theorem~\ref{contribfin}. However, it is rather easily deduced from the statements we do prove as will be briefly explained in the end Part~\ref{tools}.

\section{Employing the Weighted Brion Theorem in the Affine Case}\label{affbrion}

We now move on to the main affine case which is ideologically very similar but, of course, infinite-dimensional and thus technically more complicated.

Consider the real countable dimensional space $\Omega$ of sequences $x$ infinite in both directions for which one has $x_i=0$ when $i\gg 0$ and $x_{i}=x_{i-n}$ when $i\ll 0$ (for $x\in\Omega$ we denote $x_i$ the terms of this sequence). We denote the lattice of integer sequences $\mathbb{Z}^{{}^\infty}\subset\Omega$. In $\Omega$ we also have the affine subspace $V$ of sequences $x$ for which $x_i=a_{i\bmod n}$ when $i\ll 0$. Note that the functions $s_{i,j}(x)$ and $p(x)$ are defined precisely for $x\in V$.

Define the functionals $\chi_i$ on $\Omega$ taking $x$ to $x_{i-n+1}+\ldots+x_i$.  In these notations, the set $\mathbf{\Pi}_\lambda$ is precisely $\Pi\cap\mathbb{Z}^{{}^\infty}$, where $\Pi\subset V$ is the ``polyhedron"{} defined by the inequalities $x_i\ge 0$ and $\chi_i(x)\le k$ for all $i$.

It will often be more convenient to consider the translated polyhedron $$\bar{\Pi}=\Pi-T^0.$$ Geometrically and combinatorially the two polyhedra are identical, the advantage of $\bar{\Pi}$ is that it is contained in the linear subspace $\bar V\subset\Omega$ of sequences with a finite number of nonzero terms. For compactness use the $\ \bar{}\ $ notation to denote the $-T^0$ translation in general in the following two ways. If $X$ is a point or subset in $V$ we denote $\bar X=X-T^0$. If $\Phi$ is a map the domain of which consists of points or subsets in $V$, we define $\bar\Phi(\bar X)=\Phi(X)$.

To any integer sequence $x\in \bar V$ we may assign its formal exponent $e^x$, a (finite!) monomial in the infinite set of variables $\{t_i,i\in\mathbb{Z}\}$. Also, for $A\in\mathbf{\Pi}_\lambda$ the weight $\mu_A-\lambda$ is an integral linear combination of $\gamma_1,\ldots,\gamma_{n-1},-\delta$. Consequently, we may view $e^{\mu_A-\lambda}$ as a monomial in the corresponding variables $z_1,\ldots,z_{n-1},q$. Formulas~(\ref{zweight}) and~(\ref{qweight}) show that $e^{\mu_A-\lambda}$ is obtained from $e^{\bar A}$ by the specialization 
\begin{equation}\label{affspec}
t_i\longrightarrow z_{i\bmod (n-1)}q^{\left\lceil\tfrac{i}{n-1}\right\rceil},
\end{equation}
where the remainder is taken from $[1,n-1]$. In general we will denote the above specialization $G$, it being applicable to (some) expressions in the $t_i$.

Now we present a (weighted) Brion-type formula for $\bar\Pi$. One may define the faces of $\Pi$ and $\bar\Pi$ in a natural way (which will be done below). Of course, $f\subset\Pi$ is a face if and only if $\bar f\subset\bar\Pi$ is a face. One will see that $p(x)$ depends only on the minimal face of $\Pi$ containing $x$. In other words there is a map $$\varphi:\mathcal F_{\Pi}\rightarrow\mathbb{Z}[t]$$ such that $p(x)=\varphi(f)$ for the minimal face $f$ containing $x$. Denote $$S_{\bar\varphi}(\bar\Pi)=\sum_{x\in\bar\Pi\cap\mathbb{Z}^{{}^\infty}}\bar p(x)e^x.$$

Our formula will be an identity in the ring $\mathfrak S$ of those Laurent series in $q$ with coefficients in the field $\mathbb{Z}[t](z_1,\ldots,z_{n-1})$ which contain only a finite number of negative powers of $q$. This ring is convenient for the following reason. Consider a sequence of monomials $y_1,y_2,\ldots$ in variables $z_1,\ldots,z_{n-1},q$. If only a finite number of the $y_i$ contain a non-positive power of $q$ and none of them are equal to 1, then the product 
\begin{equation}\label{invprod}
(1-y_1)(1-y_2)\ldots
\end{equation}
is a well-defined element of $\mathfrak S$ and, most importantly, is invertible therein.

With each vertex $\bar v$ of $\bar\Pi$ we will associate a series $\tau_{\bar v}\in\mathfrak S$. This series will, in a certain sense, be the result of applying $G$ to an ``integer point transform" of the tangent cone $C_{\bar v}$ (also defined below). Our formula will then simply read as follows.
\begin{theorem}\label{infbrion}
In $\mathfrak S$ one has the identity $$G(S_{\bar\varphi}(\bar\Pi))=\sum_{\substack{\bar v\text{ vertex}\\\text{of }\bar\Pi}}\tau_{\bar v}.$$
\end{theorem}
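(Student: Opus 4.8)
The plan is to reduce Theorem~\ref{infbrion} to the finite-dimensional weighted Brion theorem (Theorem~\ref{wbrion}) by an exhaustion/truncation argument, exploiting the structure that makes $\mathfrak S$ well-behaved. First I would make precise the notion of faces of $\Pi$ (equivalently $\bar\Pi$) and of the tangent cone $C_{\bar v}$ at a vertex $\bar v$: a face is obtained by turning some of the defining inequalities $x_i\ge 0$ and $\chi_i(x)\le k$ into equalities, and one checks — using i), ii) and the shift relation~(\ref{shift}) as in Propositions~\ref{finite} — that only finitely many of these are ``active'' on any given face modulo the periodicity $x\mapsto x$ shifted by $n$ in index, so that each face and each tangent cone is, after quotienting out the forced periodic tail, an honest finite-dimensional rational polyhedron. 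This lets me define $\tau_{\bar v}$ as $G$ applied to the (finite-variable) integer point transform $\sigma_{\bar\varphi}(C_{\bar v})$ of that finite-dimensional model, and I must verify that $G$ is actually applicable, i.e. that after specialization~(\ref{affspec}) the denominators — products of $1-e^\varepsilon$ over edge directions $\varepsilon$ of $C_{\bar v}$ — become products of the form~(\ref{invprod}) with only finitely many non-positive powers of $q$, hence invertible in $\mathfrak S$. This last point is where the geometry of $\bar\Pi$ near a vertex must be understood: edges pointing ``into the periodic tail'' carry strictly positive $q$-degree under $G$ because of the ceiling weight in~(\ref{qweight}), while only finitely many edges point the other way.

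Next I would set up the truncation. For each $N\gg 0$ let $\bar\Pi_N$ be the finite-dimensional polytope obtained from $\bar\Pi$ by freezing all coordinates $x_i$ with $i\le -Nn$ to their forced values (namely $0$ in the $\bar{}$ picture) and all coordinates with $i> Nn$ to $0$; this is a genuine rational polytope in the finitely many remaining coordinates, and the weight function $\varphi$ restricts to it. Apply Theorem~\ref{wbrion} to $\bar\Pi_N$ and $\bar\varphi$, then apply the specialization $G$ (which is legitimate in finite dimensions since everything is a ring element, not merely a series). The left-hand side $G(S_{\bar\varphi}(\bar\Pi_N))$ converges to $G(S_{\bar\varphi}(\bar\Pi))$ in $\mathfrak S$ as $N\to\infty$: the point here is that the $q$-grading on $\mathbb Z^{{}^\infty}\cap\bar\Pi$ induced by~(\ref{qweight}) has finite graded pieces — this is essentially the defining property of $\mathfrak R_t$ recalled in the introduction — so that for each power of $q$ the coefficient stabilizes once $N$ is large enough. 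On the right-hand side I would show that vertices of $\bar\Pi_N$ are, for $N$ large, in bijection with vertices of $\bar\Pi$ together with some ``spurious'' vertices created by the truncating hyperplanes, that the tangent cone at a genuine vertex $\bar v$ in $\bar\Pi_N$ agrees with $C_{\bar v}$ in the relevant coordinate range so that $G(\sigma_{\bar\varphi}(C_v^{(N)}))\to\tau_{\bar v}$, and that the spurious vertices contribute terms whose $q$-degree tends to $+\infty$, hence vanish in the limit in $\mathfrak S$.

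Putting these together gives the identity; the final step is a consistency check that the limits on the two sides are compatible, i.e. that one may pass to the limit termwise in each fixed $q$-degree on both sides simultaneously — which follows once the grading arguments above are in place. The main obstacle, I expect, is the control of vertices and tangent cones of $\bar\Pi$ itself: one must prove $\bar\Pi$ has ``locally finite'' vertex structure (each vertex is cut out by finitely many of the inequalities modulo periodicity, and the tangent cone is a finitely generated rational cone of the periodic-plus-finite type described above), and must show the truncated polytopes $\bar\Pi_N$ approximate $\bar\Pi$ well enough near each vertex without creating problematic new faces. This is the same circle of ideas handled in~\cite{me2} for the Feigin-Stoyanovsky subspace, and I would follow that treatment, the only genuinely new ingredient being the bookkeeping of the weights $\bar p(x)$ along faces — but that is pure combinatorics controlled by Propositions~\ref{finite}, since $\bar p$ is constant on relative interiors of faces and the relevant $d_l$ are finite.
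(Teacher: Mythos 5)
Your overall skeleton does match the paper's proof: the paper exhausts $\bar\Pi$ by the finite-dimensional truncations $\bar\Pi_l=\bar\Pi\cap\bigcap_{i<-l}\widebar{H_i}\cap\bigcap_{i>l}\widebar{E_i}$, applies Theorem~\ref{wbrion} to each, and passes to the limit coefficient-wise in the $q$-grading. (One small simplification you miss: these truncations are themselves \emph{faces} of $\bar\Pi$, so every vertex of $\bar\Pi_l$ is already a vertex of $\bar\Pi$ and no ``spurious'' vertices ever arise.) However, there is a genuine gap at the heart of your argument: the claim that each tangent cone of $\bar\Pi$ is, ``after quotienting out the forced periodic tail, an honest finite-dimensional rational polyhedron'' is false. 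By Proposition~\ref{vert}, at a vertex $v$ \emph{every} index $i$ has at least one active constraint ($x_i=0$ or $\chi_i(x)=k$), and the tangent cone $C_{\bar v}$ has infinitely many edge generators $\varepsilon_{v,i}$; it is only an increasing union of finite-dimensional faces $C_{v,l}$, not a finite-dimensional cone in disguise. Consequently $\tau_{\bar v}$ cannot be defined as $G$ applied to a finite-variable integer point transform, and precisely this point carries the real technical weight of the theorem.

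Concretely, the paper must: (i) form the infinite product $Q_v=S_{\bar\varphi}(C_{\bar v})(1-e^{\varepsilon_{v,1}})(1-e^{\varepsilon_{v,2}})\ldots$ and prove that $G(Q_v)$ is a well-defined element of $\mathfrak S$ (Lemma~\ref{welldef}), which uses the fundamental-parallelepiped decomposition of Proposition~\ref{fundpar}, the fact that the generators have coordinates in $\{-1,0,1\}$, and a careful analysis of the coordinates at the truncation indices $\pm(l+1)$; (ii) prove that for every $N$ only finitely many $\varepsilon_{v,i}$ satisfy $\deg G(e^{\varepsilon_{v,i}})<N$ (Proposition~\ref{qbig}), which is not a formal consequence of~(\ref{qweight}) but uses the character of $L_\lambda$ and the bound $K\le k$ on how far an edge travels before hitting another vertex; (iii) check $G(e^{\varepsilon_{v,i}})\neq 1$ for all generators; and (iv) to interchange the limit in $l$ with the \emph{infinite} sum over vertices, establish that $\tau_{\bar v}$ and all the truncated contributions $\tau_{\bar v,l}$ contain only $q$-powers $\ge\deg G(e^{\bar v})$, together with the finiteness of the set of vertices with $\deg G(e^{\bar v})<N$. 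Your proposal gestures at some of this (``only finitely many edges point the other way'', ``grading arguments''), but because it starts from the incorrect finite-dimensionality premise it never confronts how to make sense of the numerator of the IPT of an infinite-dimensional cone, nor does it justify the well-definedness of the right-hand side as an infinite sum in $\mathfrak S$; these are exactly the items (Lemmas~\ref{welldef} and~\ref{limit}, Propositions~\ref{qbig} and~\ref{fundpar}) that the paper has to prove before the short limiting argument you describe can be run.
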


Now, Theorem~\ref{main} may be rewritten as
\begin{equation}\label{rewrmain}
P_\lambda=e^{\lambda}G(S_{\bar\varphi}(\bar\Pi)).
\end{equation}
In view of this, Theorem~\ref{main} now follows from the following statement which is the affine analogue of Theorem~\ref{contribfin}.

\begin{theorem}\label{contrib}
There's a distinguished subset of vertices of $\Pi$ parametrized by elements of the orbit $W\lambda$ with the following two properties.
\begin{enumerate}[label=\alph*)]
\item For $v$ from this distinguished subset corresponding to $\mu\in W\lambda$ one has $$\tau_{\bar v}=\frac1{W_\lambda(t)}\sum\limits_{w\lambda=\mu}\frac{e^{w\lambda-\lambda} w\left(\prod\limits_{\alpha\in\Phi^+}\left(1-t e^{-\alpha}\right)^{m_\alpha}\right)}{w\left(\prod\limits_{\alpha\in\Phi^+}\left(1-e^{-\alpha}\right)^{m_\alpha}\right)},$$ where $v$ corresponds to weight $\mu\in W\lambda$.
\item For any other vertex $v$ of $\Pi_\lambda$ one has $\tau_{\bar v}=0$.
\end{enumerate}
\end{theorem}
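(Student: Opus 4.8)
The plan is to proceed in close analogy with the finite-type situation of Theorem~\ref{contribfin}, but carrying out the analysis directly in the ring $\mathfrak S$. First I would set up an explicit description of the vertices of $\bar\Pi$ (equivalently $\Pi$). Since $\Pi$ is cut out by the inequalities $x_i\ge 0$ and $\chi_i(x)\le k$, a vertex is determined by a choice, for each $i$, of one of the two bounding hyperplanes being active, subject to the consistency constraints coming from conditions i) and ii) on sequences in $V$; via the dictionary~(\ref{gtdef}) this translates into a plane-filling GT-pattern in which all of the inequalities~(\ref{gt}) relevant near a given position are equalities. I would first identify, among all these vertices, the distinguished family parametrized by $W\lambda$: these should be the vertices whose associated plane-filling GT-pattern is ``frozen'' in the sense that it is eventually periodic with the shift~(\ref{shift}) and is the image of a genuine weight $\mu\in W\lambda$ under the natural map, mirroring the fact that in the finite case the distinguished vertices correspond to permuted copies of $\lambda$ and, for regular $\lambda$, are exactly the simple vertices.

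The second step is the vanishing statement b). For a vertex $v$ not in the distinguished family, I would show that the tangent cone $C_{\bar v}$ has an edge direction vector $\varepsilon$ that is killed by the specialization $G$, i.e. $G(e^\varepsilon)=1$; then in the product formula for the (weighted) integer-point transform of $C_{\bar v}$ — which by the proposition following Theorem~\ref{brion}, applied face by face as in the proof of Theorem~\ref{wbrion}, has the shape of a numerator over a product of factors $1-e^\varepsilon$ ranging over the minimal integer edge directions — one of the denominator factors specializes to $1-1=0$ while the numerator stays finite, forcing $\tau_{\bar v}=0$ in $\mathfrak S$. Concretely, a non-distinguished vertex is one whose GT-pattern has a ``defect'' that is not forced by the boundary behavior of $\lambda$; such a defect produces a lattice edge of $C_{\bar v}$ supported on finitely many coordinates whose total contribution to both the $\gamma_i$-weight~(\ref{zweight}) and the $-\delta$-weight~(\ref{qweight}) is zero, which is exactly the condition $G(e^\varepsilon)=1$. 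Checking that this always happens outside the distinguished set, and that $G$ is genuinely applicable (the relevant products lie in the subclass~(\ref{invprod}) guaranteeing invertibility in $\mathfrak S$), is the technical heart of this step.

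The third step is the computation a) of the nonzero contributions. For a distinguished vertex $v\leftrightarrow\mu=w\lambda$, I would write the tangent cone $C_{\bar v}$ in terms of its minimal integer edge directions, compute $\sigma_{\bar\varphi}(C_{\bar v})$ as a product of the form $(\text{monomial})\cdot\prod(1-t^{?})/\prod(1-e^\varepsilon)$ — where the exponents of $t$ in the numerator come from the weight function $\varphi$ evaluated on the faces of $C_{\bar v}$, which by the remark after Proposition~\ref{finite} is governed by the same ``$l$ in row $i$, $l-1$ in row $i-1$'' statistic — and then apply $G$. The edge directions of $C_{\bar v}$ should correspond, under $G$, precisely to the negative roots $-\alpha$ (with the right multiplicities $m_\alpha$) appearing in~(\ref{def}), and the numerator should collapse to $w\big(\prod_\alpha(1-te^{-\alpha})^{m_\alpha}\big)$ up to the factor $e^{w\lambda-\lambda}$; the $W_\lambda(t)^{-1}$ prefactor arises because, for singular $\lambda$, several distinct $w$ with $w\lambda=\mu$ give the same vertex, and summing over the stabilizer coset introduces exactly $W_\lambda(t)$. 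This is the step where the analogy with~\cite{me1,me2} and Theorem~\ref{contribfin} must be made rigorous in the infinite-dimensional setting.

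I expect the main obstacle to be step two, and within it the precise combinatorial characterization of which vertices are distinguished together with the proof that every non-distinguished vertex genuinely possesses a $G$-null edge direction. The subtlety is that $\bar\Pi$ is infinite-dimensional, so a vertex can have infinitely many incident edges, and one must locate a \emph{finitely supported} edge direction with vanishing $G$-image; ruling this out for the distinguished vertices (so that a) does not accidentally also vanish) and guaranteeing it for all others requires a careful case analysis of the local structure of plane-filling GT-patterns at a vertex, using facts 1) and 2) from the proof of Proposition~\ref{finite} to control the behavior as $j\to\pm\infty$. Once this combinatorial dichotomy is pinned down, both the vanishing in b) and the product computation in a) follow by the face-wise application of Theorem~\ref{wbrion} exactly as in the finite case.
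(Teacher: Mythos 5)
Your step two rests on a mechanism that is both factually and logically wrong. You propose to show that every non-distinguished vertex has an edge direction $\varepsilon$ with $G(e^{\varepsilon})=1$, so that a denominator factor $1-G(e^{\varepsilon})$ vanishes and ``forces $\tau_{\bar v}=0$''. First, a vanishing denominator factor with a finite nonzero numerator does not force the value zero --- it makes the expression undefined (in $\mathfrak S$ the denominator must be a product of type~(\ref{invprod}) to be invertible, and a factor equal to $0$ destroys this); zero contributions can only come from the \emph{numerator} cancelling. Second, the paper proves the opposite of your claim: $G(e^{\varepsilon_{v,l}})\neq 1$ for \emph{every} vertex $v$ and every edge generator (this is exactly what makes $\tau_{\bar v}$ well defined via~(\ref{tau}) in the first place), so no such $G$-null edge direction exists, at relevant or non-relevant vertices. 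The actual vanishing in part b) is a genuinely nontrivial cancellation: the paper approximates $C_{\bar v}$ by finite-dimensional faces $D_l$ (with a coefficient-wise limit statement, Lemma~\ref{limit}), factors each $D_l$ as a product of generalized Gelfand--Tsetlin cones $D_{\Gamma_r^l}(b_r,\ldots,b_r)$ with multiplicative weights, and then invokes Theorem~\ref{zero} --- $\psi_\Gamma=0$ whenever some row of $\Gamma$ has more vertices than the row above --- whose proof is an induction using Lemma~\ref{singular} and an explicit base-case computation, not a pole of a single factor. Nothing in your outline supplies a substitute for this finite-dimensional vanishing theorem.

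Your step three also glosses over two points the paper has to work for. The passage from the finite-dimensional Theorem~\ref{wbrion} to the infinite-dimensional cone is not automatic: one needs the $D_l$-approximation and the degree estimates (Lemma~\ref{welldef}, Proposition~\ref{qbig}) to even make sense of $\tau_{\bar v}$ and of the limit. And the prefactor $1/W_\lambda(t)$ for singular $\lambda$ does not come from ``summing over the stabilizer coset'': the relevant vertices of $\Pi$ for singular $\lambda$ are obtained by degenerating those of $\Pi^1$ for a regular $\lambda^1$, and the factor $[l_1]_t!\cdots[l_{m(\lambda)}]_t!=W_\lambda(t)$ is produced by the weighted degeneration formula (Lemma~\ref{wdegen}) combined with the alternating-sum identity of Lemma~\ref{graphsum}; the naive count of coinciding vertices only explains the value at $t=1$. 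So while your intended parametrization of the distinguished vertices and the simplicial/unimodular computation for regular $\lambda$ point in the right direction, the core of the argument --- the finite-dimensional vanishing theorem and the $t$-weighted degeneration machinery --- is missing, and the mechanism you offer in its place would fail.
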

The expression in the right-hand side in part a) is an element of $\mathfrak S$ because its denominator is a product of the type concerned in~(\ref{invprod}).

\part{Combinatorial Tools: Generalized Gelfand-Tsetlin Polyhedra}\label{tools}

In this Part we discuss certain finite-dimensional polyhedra which are seen to generalize Gelfand-Tsetlin polytopes.  The acquired tools will be applied to to the proof of Theorem~\ref{contrib} in the next part.

\section{Ordinary Subgraphs and Associated Polyhedra}

Consider an infinite square lattice as a graph $\mathcal R$ the vertices being the vertices of the lattice and the edges being the segments joining adjacent vertices. We visualize this lattice being rotated by $45^\circ$, i.e. the segments forming a $45^\circ$ angle with the horizontal.

We enumerate the vertices in accordance with our numbering of the elements of plane-filling GT-patterns. That is the vertices are enumerated by pairs of integers $(i,j)$. The set of vertices $(i,\cdot)$ form a row, they are the set of vertices situated on the same horizontal line. Within a row the second index increases from left to right and the two vertices directly above $(i,j)$ are $(i-1,j)$ and $(i-1,j+1)$.

We term a subgraph $\Gamma$ of $\mathcal R$ ``ordinary"{} if it has the following properties.
\begin{enumerate}
\item $\Gamma$ is a finite connected full subgraph.
\item  Whenever both $(i,j)\in\Gamma$ (short for $(i,j)$ is a vertex of $\Gamma$) and $(i,j+1)\in\Gamma$ we also have $(i+1,j)\in\Gamma$.
\item Let $a_\Gamma$ be the number of the top row containing vertices of $\Gamma$. If $i>a_\Gamma$, then whenever both $(i,j)\in\Gamma$ and $(i,j+1)\in\Gamma$ we also have $(i-1,j+1)\in\Gamma$.
\end{enumerate}
Note that $(i-1,j+1)$ and $(i+1,j)$ are the two common neighbors of $(i,j)$ and $(i,j+1)$. Below are some examples of what such a subgraph may look like.

\setlength{\unitlength}{1mm}
\begin{picture}(95,50)
\put(5,45){\line(1,-1){5}} \put(15,45){\line(-1,-1){5}} \put(15,45){\line(1,-1){5}} \put(25,45){\line(-1,-1){5}}
\put(10,40){\line(1,-1){5}} \put(20,40){\line(-1,-1){5}} \put(20,40){\line(1,-1){5}}
\put(15,35){\line(-1,-1){5}} \put(15,35){\line(1,-1){5}} \put(25,35){\line(-1,-1){5}} \put(25,35){\line(1,-1){5}}
\put(10,30){\line(1,-1){5}} \put(20,30){\line(-1,-1){5}} \put(20,30){\line(1,-1){5}} \put(30,30){\line(-1,-1){5}}
\put(15,25){\line(1,-1){5}} \put(25,25){\line(-1,-1){5}}
\put(20,20){\line(-1,-1){5}}
\pic{ex1}
\put(15,5){Figure \ref{ex1}}

\put(55,40){\line(-1,-1){5}} \put(55,40){\line(1,-1){5}}
\put(50,35){\line(-1,-1){5}} \put(50,35){\line(1,-1){5}} \put(60,35){\line(-1,-1){5}}
\put(45,30){\line(1,-1){5}} \put(55,30){\line(-1,-1){5}}
\put(50,25){\line(1,-1){5}}
\pic{ex2}
\put(45,5){Figure \ref{ex2}}

\put(80,45){\line(1,-1){5}} \put(90,45){\line(-1,-1){5}}
\put(85,40){\line(-1,-1){5}}
\put(80,35){\line(-1,-1){5}} \put(80,35){\line(1,-1){5}}
\put(75,30){\line(-1,-1){5}} \put(75,30){\line(1,-1){5}} \put(85,30){\line(-1,-1){5}} \put(85,30){\line(1,-1){5}}
\put(70,25){\line(1,-1){5}} \put(80,25){\line(-1,-1){5}} \put(80,25){\line(1,-1){5}} \put(90,25){\line(-1,-1){5}}
\put(75,20){\line(1,-1){5}} \put(85,20){\line(-1,-1){5}}
\pic{ex3}
\put(75,5){Figure \ref{ex3}}
\end{picture}

Note that every ordinary graph has one vertex in its last nonempty row.

Suppose $\Gamma$ has $l_\Gamma$ vertices in its top row. With each $\Gamma$ and nonincreasing sequence of integers $b_1,\ldots,b_{l_\Gamma}$ we associate a finite-dimensional rational polyhedron $D_\Gamma(b_1,\ldots,b_{l_\Gamma})$ in the countable-dimensional real space with coordinates enumerated by the vertices of $\mathcal R$. Consider a point $s$ in this space with its $(i,j)$-coordinate equal to $s_{i,j}$. By definition, $s\in D_\Gamma(b_1,\ldots,b_{l_\Gamma})$ if it satisfies the following requirements.
\begin{enumerate}
\item If $(i,j)\not\in\Gamma$, then $s_{i,j}=0$.
\item The $l_\Gamma$ coordinates in row $a_\Gamma$ are equal to $b_1,\ldots,b_{l_\Gamma}$ in that order from left to right.
\item\label{facets} For any $(i,j)\in\Gamma$ we have $s_{i-1,j}\ge s_{i,j}$ whenever $(i-1,j)\in\Gamma$ and $s_{i,j}\ge s_{i-1,j+1}$ whenever $(i-1,j+1)\in\Gamma$. In other words, for any two adjacent vertices of $\Gamma$ the corresponding inequality of type~(\ref{gt}) holds.
\end{enumerate}
Such polyhedra are a natural generalization of Gelfand-Tsetlin polytopes, the latter being $D_{\mathcal T}(b_1,\ldots,b_n)$, where $\mathcal T\subset\mathcal R$ is the ordinary subgraph with vertices $(i,j)$ for $0\le i\le n-1$ and $1\le j\le n-i$.

Any $s\in D_\Gamma(b_1,\ldots,b_{l_\Gamma})$ defines a subgraph of $\Gamma$ the vertices of which are the vertices of $\Gamma$ and edges are edges of $\Gamma$ for which the two corresponding coordinates in $s$ are equal. Since the polyhedron $D_\Gamma(b_1,\ldots,b_{l_\Gamma})$ is defined by the inequalities in correspondence with the edges of $\Gamma$, one sees that two points define the same subgraph if and only if the minimal faces containing them coincide. For this reason we have the following description of the faces of $D_\Gamma(b_1,\ldots,b_{l_\Gamma})$.

\begin{proposition}\label{faces}
The faces of $D_\Gamma(b_1,\ldots,b_{l_\Gamma})$ are in bijection with subgraphs of $\Gamma$ containing all vertices of $\Gamma$ and with the following properties.
\begin{enumerate}
\item Whenever two adjacent vertices of $\Gamma$ are in the same connected component of the subgraph they are also adjacent in the subgraph.
\item Whenever $(i,j)$ and $(i,j+1)$ are in the same component of the subgraph so are $(i+1,j)$ and $(i-1,j+1)$ (the latter when $i>a_\Gamma$).
\item The $i$-th and $j$-th vertex in row $a_\Gamma$ (counting from left to right) are in the same component of the subgraph if and only if $b_i=b_j$.
\end{enumerate}
The face corresponding to subgraph $\Delta$ consists of the points for which any two coordinates corresponding to adjacent vertices of $\Delta$ are equal. The dimension of the face is the number of those connected components in $\Delta$, which do not contain a vertex from row $a_\Gamma$.
\end{proposition}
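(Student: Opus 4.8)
The plan is to build on the fact recorded just before the statement: since $D_\Gamma(b_1,\dots,b_{l_\Gamma})$ is cut out by the edge-inequalities of $\Gamma$, the subgraph $\Delta_s$ defined by a point $s$ (all vertices of $\Gamma$, plus the edges $uv$ with $s_u=s_v$) depends only on the minimal face containing $s$, and distinct minimal faces give distinct subgraphs. Writing $\Delta_F:=\Delta_s$ for $s$ in the relative interior of a face $F$, the map $F\mapsto\Delta_F$ is therefore an injection from the faces of $D_\Gamma(b_1,\dots,b_{l_\Gamma})$ into the spanning subgraphs of $\Gamma$, and the proposition reduces to three assertions: (A) every $\Delta_F$ satisfies conditions 1--3; (B) every spanning subgraph of $\Gamma$ satisfying 1--3 equals some $\Delta_F$; (C) the dimension formula.

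Parts (A) and (C) are routine. For (A), condition 1 is immediate from the definition of $\Delta_s$; conditions 2 and 3 follow by squeezing. If $(i,j)$ and $(i,j+1)$ lie in one component of $\Delta_s$, then $s_{i,j}=s_{i,j+1}$, and running the two chains of inequalities of type~(\ref{gt}) through the common neighbours $(i+1,j)$ and $(i-1,j+1)$ forces $s_{i+1,j}=s_{i-1,j+1}=s_{i,j}$, with axioms 2 and 3 of an ordinary graph placing these neighbours in $\Gamma$ (the latter only for $i>a_\Gamma$); this gives condition 2, and for condition 3 one uses in addition that the row-$a_\Gamma$ coordinates are fixed to the nonincreasing sequence $b_1,\dots,b_{l_\Gamma}$. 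For (C): a point of the face $F$ with $\Delta_F=\Delta$ is exactly a point of $D_\Gamma(b_1,\dots,b_{l_\Gamma})$ that is constant on each component of $\Delta$; by condition 3 the components meeting row $a_\Gamma$ are pinned to a single value $b_c$ apiece, so near a relative-interior point of $F$ such points sweep out an affine space of dimension equal to the number of components of $\Delta$ not meeting row $a_\Gamma$. This is the claimed value of $\dim F$, provided $F$ has a relative-interior point at all --- which is exactly what part (B) provides.

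Part (B) is the heart of the matter, and I would argue it as follows. Given $\Delta$ satisfying 1--3, contract each connected component of $\Delta$; by condition 1 every edge of $\Gamma$ not in $\Delta$ joins two distinct components, and its inequality of type~(\ref{gt}) descends to a constraint between the two corresponding parameters. Orienting each edge of $\Gamma$ in the direction of its inequality produces a digraph $H$ on the vertices of $\Gamma$ along whose arcs the affine functional $\phi(i,j)=i+2j$ rises by exactly $1$, so $H$ is acyclic. The decisive claim is that the quotient digraph obtained by contracting the components of $\Delta$ is still acyclic and admits a real labelling of its vertices that strictly decreases along every arc and assigns the value $b_c$ to the component of the $c$-th vertex of row $a_\Gamma$. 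Granting this, reading the labelling back as coordinates gives a point $s\in D_\Gamma(b_1,\dots,b_{l_\Gamma})$ with $\Delta_s=\Delta$: the edges of $\Delta$ become equalities and all remaining edges of $\Gamma$ strict inequalities. This proves (B) and at the same time supplies the relative-interior point needed in (C).

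The main obstacle is this quotient-acyclicity-and-labelling claim. Contracting the components of $\Delta$ could a priori create oriented cycles, and ruling this out --- together with checking that the pinned values are consistent along oriented paths --- is exactly where conditions 1 and 2 enter essentially: they force the components of $\Delta$ to be ``staircase-convex'' (a horizontal gap inside a component being filled both above and below, in the manner of axioms 2 and 3 of an ordinary graph), while condition 3 and the inequality $b_1\ge\cdots\ge b_{l_\Gamma}$ handle the behaviour at row $a_\Gamma$. I expect the cleanest implementation to be an induction on the number of vertices of $\Gamma$ --- removing, say, the unique vertex of its last nonempty row, noting that the result is again an ordinary graph, and tracking how a component of $\Delta$ and the quotient digraph are affected --- although a direct structural analysis of the components of $\Delta$ via condition 2 should also work.
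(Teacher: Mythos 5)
Your reduction into (A) injectivity plus admissibility of $\Delta_F$, (B) realizability of every admissible subgraph, and (C) the dimension count is the right skeleton, and your squeezing arguments for (A) and (C) are correct and essentially what the paper does. The problem is that (B) --- the only substantive assertion, which the paper settles by constructing a point $(s_{i,j})$ whose equalities are exactly the edges of $\Delta$ --- is not actually proved in your proposal. You translate it into the claim that the digraph obtained by contracting the components of $\Delta$ is acyclic and admits a labelling strictly decreasing along every arc and taking the prescribed value $b_c$ on each component meeting row $a_\Gamma$, you yourself call this ``the decisive claim'' and ``the main obstacle'', and then you stop: ``I expect the cleanest implementation to be an induction\dots'' is a statement of intent, not an argument. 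Everything before that point is either already contained in the discussion preceding the proposition (the injectivity) or routine, so the surjectivity has been reformulated, not established.

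The reformulated claim is genuinely where conditions 1--3 must do work, so it cannot be waved through. For example, if a component $E$ of a candidate subgraph contained $(i,j)$ and $(i,j+2)$ while $(i,j+1)\in\Gamma$ lay in a different component, then chaining the inequalities of type~(\ref{gt}) through $(i+1,j)$ and $(i+1,j+1)$ (which lie in $\Gamma$ by ordinariness) gives $s_{i,j}\ge s_{i,j+1}\ge s_{i,j+2}$ for every point of the candidate face, so equality of the ends propagates to $(i,j+1)$ and no point realizes $\Delta$; in your language the contraction has produced a nontrivial closed directed walk, i.e.\ a directed cycle. Thus one must show that conditions 1 and 2 force the components to be row-convex (no such gaps), that contraction therefore creates no directed cycles, and that no directed path in the quotient runs from the class pinned at $b_j$ to a distinct class pinned at $b_i\ge b_j$; only then does the topological-sort labelling exist, and only then do (B) and the relative-interior point needed for (C) follow. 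That structural analysis is exactly what you defer, and it is the entire content of the proposition beyond the remarks already made in the text, so as it stands the proposal has a genuine gap.
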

\begin{proof}
If subgraph $\Delta$ has these properties it is straightforward to define a point $(s_{i,j})\in D_\Gamma(b_1,\ldots,b_{l_\Gamma})$ such that for two vertices $(i_1,j_1)$ and $(i_2,j_2)$ one has $s_{i_1,j_1}=s_{i_2,j_2}$ if and only if these vertices are in the same connected component of $\Delta$.

The statement concerning the dimension follows from the following observation. If $\Delta$ corresponds to face $f$, then for any point in $f$ all its coordinates in a component of $\Delta$ containing the $i$-th vertex from the top row are fixed and equal to $b_i$. Thus, when choosing a point in $f$, the degree of freedom is the number of components without a vertex from the top row.
\end{proof}
If $f$ is a face of some $D_\Gamma(b_1,\ldots,b_{l_\Gamma})$ we denote corresponding subgraph simply $\Delta_f$, the graph $\Gamma$ and values $b_1,\ldots,b_{l_\Gamma}$ being implicit. Note that, in particular, any connected component of $\Delta_f$ is itself an ordinary graph.

We now define a weight function $$\varphi_\Gamma(b_1,\ldots,b_{l_\Gamma}):\mathcal F_{D_\Gamma(b_1,\ldots,b_{l_\Gamma})}\rightarrow \mathbb{Z}[t].$$ The value of $\varphi_\Gamma(b_1,\ldots,b_{l_\Gamma})(f)$ is defined in terms of the graph $\Delta_f$. Namely, it is the product $\prod(1-t^l)^{d_l}$, where $d_l$ is the following statistic. It is the number of pairs $(E,i)$ where $E$ is a connected component of $\Delta_f$ and $i>a_\Gamma$ is an integer, such that $E$ contains exactly $l-1$ vertices from row $i-1$ and $l$ vertices from row $i$.

Here are three subgraphs of the three examples above accompanied by the dimension and weight of the corresponding face.

\begin{picture}(95,50)
\put(5,45){\line(1,-1){5}} \put(15,45){\line(-1,-1){5}} \put(15,45){\line(1,-1){5}} \put(25,45){\line(-1,-1){5}}
\put(10,40){\line(1,-1){5}} \put(20,40){\line(-1,-1){5}} \multiput(20,40)(1,-1){5}{\line(1,-1){0.2}}
\put(15,35){\line(-1,-1){5}} \multiput(15,35)(1,-1){5}{\line(1,-1){0.2}} \multiput(25,35)(-1,-1){5}{\line(-1,-1){0.2}} \put(25,35){\line(1,-1){5}}
\multiput(10,30)(1,-1){5}{\line(1,-1){0.2}} \put(20,30){\line(-1,-1){5}} \put(20,30){\line(1,-1){5}} \multiput(30,30)(-1,-1){5}{\line(-1,-1){0.2}}
\put(15,25){\line(1,-1){5}} \put(25,25){\line(-1,-1){5}}
\put(20,20){\line(-1,-1){5}}
\put(15,10){$\dim=2$}
\put(10,5){$(1-t)^2(1-t^2)$}

\multiput(55,40)(-1,-1){5}{\line(-1,-1){0.2}} \put(55,40){\line(1,-1){5}}
\multiput(50,35)(-1,-1){5}{\line(-1,-1){0.2}} \put(50,35){\line(1,-1){5}} \multiput(60,35)(-1,-1){5}{\line(-1,-1){0.2}}
\put(45,30){\line(1,-1){5}} \multiput(55,30)(-1,-1){5}{\line(-1,-1){0.2}}
\put(50,25){\line(1,-1){5}}
\put(45,10){$\dim=2$}
\put(45,5){$(1-t)^2$}

\put(80,45){\line(1,-1){5}} \put(90,45){\line(-1,-1){5}}
\multiput(85,40)(-1,-1){5}{\line(-1,-1){0.2}}
\put(80,35){\line(-1,-1){5}} \put(80,35){\line(1,-1){5}}
\put(75,30){\line(-1,-1){5}} \put(75,30){\line(1,-1){5}} \put(85,30){\line(-1,-1){5}} \put(85,30){\line(1,-1){5}}
\put(70,25){\line(1,-1){5}} \put(80,25){\line(-1,-1){5}} \put(80,25){\line(1,-1){5}} \put(90,25){\line(-1,-1){5}}
\put(75,20){\line(1,-1){5}} \put(85,20){\line(-1,-1){5}}
\put(75,10){$\dim=1$}
\put(65,5){$(1-t)(1-t^2)(1-t^3)$}
\end{picture}

For integers $b_1\ge\ldots\ge b_{l_\Gamma}$ the expression 
\begin{equation}\label{ipt}
\sigma_{\varphi_\Gamma(b_1,\ldots,b_{l_\Gamma})}(D_\Gamma(b_1,\ldots,b_{l_\Gamma}))
\end{equation}
is a rational function in variables $\{t_{i,j}\}$ which are in correspondence with the vertices of $\mathcal R$. However, we're interested in the result of applying the specialization $$t_{i,j}\longrightarrow x_i^{-1}x_{i+1}$$ to~(\ref{ipt}). We denote this specialization $F$, since it formally coincides with the specialization $F$ defined above when $i\in[1,n-1]$ and $j\in[1,n-i]$. We denote the obtained rational function in variables $\{x_i\}$ simply $\psi_\Gamma(b_1,\ldots,b_{l_\Gamma})$.

(Note that for any array $s=(s_{i,j})$ with a finite number of nonzero elements the power of $x_i$ monomial $F(e^s)$ is the sum of the elements of $s$ in row $i-1$ minus the sum of its elements in row $i$.) 

First of all, its worth mentioning that the functions $\psi_\Gamma(b_1,\ldots,b_{l_\Gamma})$ are well-defined, i.e. the reduced denominator of~(\ref{ipt}) does not vanish under $F$. To see this for any edge $e$ of $C$ consider the subgraph $\Delta_e$ and let $\varepsilon$ be the direction vector of $e$. Proposition~\ref{faces} shows that $\Delta_e$ contains exactly one component without a vertex in the top row, let $r$ be the row containing the single top vertex of that component. One may easily deduce that $F(e^\varepsilon)$ contains a nonzero power of $x_r$ and then invoke the remark at the end of the proof of Theorem~\ref{wbrion}.

Now we are ready to present the statement which will turn out to be the key to the proof of part b) of Theorem~\ref{contrib}.
\begin{theorem}\label{zero}
If $\Gamma$ is an ordinary subgraph and for some $i\ge a_\Gamma$ the number of its vertices in row $i+1$ is greater than in row $i$, then $\psi_\Gamma(b_1,\ldots,b_{l_\Gamma})=0$ for any integers $b_1\ge\ldots\ge b_{l_\Gamma}$.
\end{theorem}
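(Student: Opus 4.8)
The plan is to express $\psi_\Gamma(b_1,\ldots,b_{l_\Gamma})$ via the weighted Brion theorem (Theorem~\ref{wbrion}) applied to $D_\Gamma(b_1,\ldots,b_{l_\Gamma})$ and the weight $\varphi_\Gamma(b_1,\ldots,b_{l_\Gamma})$, and to exhibit a single lattice vector $\varepsilon$ such that translation by $\varepsilon$ is a symmetry of the whole situation but acts on $F(e^x)$ by multiplication by a nontrivial power of one of the $x_r$. More precisely, under the hypothesis, pick the smallest $i\ge a_\Gamma$ with row $i+1$ of $\Gamma$ strictly longer than row $i$. The key geometric observation is that $D_\Gamma(b_1,\ldots,b_{l_\Gamma})$ then contains a ray: one can increase all coordinates in rows $\ge i+1$ that lie ``below the overhang'' by a common parameter $u\ge 0$ while staying inside the polyhedron (the type-(\ref{gt}) inequalities crossing from row $i$ into row $i+1$ are preserved because the extra vertices in row $i+1$ have no constraining neighbor above them in $\Gamma$, by property 3 of ordinary subgraphs failing exactly at such overhangs). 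Equivalently, $D_\Gamma(b_1,\ldots,b_{l_\Gamma})$ is invariant (as a set) under translation by the $0/1$ integer vector $\varepsilon$ supported on those rows, i.e. $D_\Gamma + \varepsilon \subseteq D_\Gamma$, and in fact the polyhedron is a ``prism'' in the $\varepsilon$-direction.

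Granting this, the argument runs as follows. First I would check that the weight function is invariant under the $\varepsilon$-translation: since $\varphi_\Gamma(\ldots)(f)$ depends only on the subgraph $\Delta_f$, i.e. on which inequalities are tight, and translation by $u\varepsilon$ for $u$ in the relative interior of the ray keeps tight exactly the same set of inequalities as the generic point of whatever face we started on, the map $\varphi$ is constant along $\varepsilon$-lines. Hence the weighted generating function $S_{\varphi}(D_\Gamma)$ factors: summing over lattice points, each fiber of the projection along $\varepsilon$ contributes a geometric-type series $\sum_{u\ge u_0} g(\cdot)\, e^{x_0+u\varepsilon}$, so $S_\varphi(D_\Gamma)$ — and therefore $\sigma_\varphi(D_\Gamma)$ — has the factor $\tfrac{1}{1-e^{\varepsilon}}$ times something, or more robustly, the vertex-sum form of Theorem~\ref{wbrion} shows every tangent cone $C_v$ has an edge in direction $\varepsilon$ (a prism has all its edge-fans containing the lineality-like direction $\varepsilon$ of the recession cone), so every $\sigma_\varphi(C_v)$ carries a factor $\tfrac{1}{1-e^{\varepsilon}}$ in its reduced form. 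Now apply $F$: I claim $F(e^{\varepsilon})\ne 1$. By the parenthetical remark after the definition of $\psi_\Gamma$, the power of $x_r$ in $F(e^{s})$ is (sum of entries of $s$ in row $r-1$) minus (sum in row $r$); taking $s=\varepsilon$ and $r=i+1$, row $i$ contributes $0$ and row $i+1$ contributes the positive number of overhang vertices, so $F(e^{\varepsilon})$ is a nonconstant monomial in $x_{i+1}$ (and possibly lower $x$'s), in particular $\ne 1$. Therefore $1-F(e^{\varepsilon})\ne 0$, so $\tfrac{1}{1-F(e^\varepsilon)}$ makes sense, but at the same time $F(e^\varepsilon)\cdot F(S_\varphi(D_\Gamma))=F(S_\varphi(D_\Gamma))$ up to the shift by $\varepsilon$ absorbed into the sum — i.e. $F$ of the generating series is invariant under multiplication by the nontrivial monomial $F(e^\varepsilon)$, which forces $\psi_\Gamma(b_1,\ldots,b_{l_\Gamma})=0$.

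Let me restate the last step cleanly, since that is where the real content is. The set $D_\Gamma(b_1,\ldots,b_{l_\Gamma})\cap\mathbb{Z}^{\ldots}$ is stable under $x\mapsto x+\varepsilon$ (as translation maps lattice points in the polyhedron to lattice points in the polyhedron, injectively), and the weight $g$ is $\varepsilon$-invariant. Hence $e^{\varepsilon}S_\varphi(D_\Gamma)=\sum_{x\in D_\Gamma\cap\mathbb Z}g(x)e^{x+\varepsilon}=\sum_{x\in (D_\Gamma+\varepsilon)\cap\mathbb Z}g(x)e^{x}=S_\varphi(D_\Gamma)-\sum_{x\in (D_\Gamma\setminus(D_\Gamma+\varepsilon))\cap\mathbb Z}g(x)e^x$; one has to be a little careful because $D_\Gamma+\varepsilon$ is a proper subset. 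The correct formulation is at the level of rational functions: $D_\Gamma$ decomposes (by slicing transverse to $\varepsilon$) into a ``base times half-line'' so $\sigma_\varphi(D_\Gamma)=\tfrac{1}{1-e^{\varepsilon}}\cdot(\text{IPT of the base slice})$ in reduced form, equivalently in the vertex expansion every $\sigma_\varphi(C_v)$ has $1-e^\varepsilon$ dividing its denominator and nothing cancels it because $\varepsilon$ lies in the recession cone of $C_v$. Applying the ring homomorphism $F$ and using $F(e^\varepsilon)\ne1$ from the monomial count above, $\psi_\Gamma(b_1,\ldots,b_{l_\Gamma})$ is a well-defined rational function divisible — as a reduced fraction after the previous well-definedness paragraph guarantees no spurious vanishing — by $\tfrac{1}{1-F(e^\varepsilon)}$ with numerator forced to vanish; concretely, $F(e^\varepsilon)\,\psi_\Gamma=\psi_\Gamma$ in $\mathbb Z[t](x_i)$ with $F(e^\varepsilon)\ne1$ gives $\psi_\Gamma=0$.

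The main obstacle I anticipate is the bookkeeping in identifying the direction $\varepsilon$ and verifying that translation by $\varepsilon$ genuinely preserves $D_\Gamma$: one must pin down exactly which vertices of $\Gamma$ in rows $\ge i+1$ receive the shift (namely the ``shadow'' of the overhang — those $(i',j)$ with no ancestor in row $i$ of $\Gamma$, propagated downward), and check that for each inequality of type~(\ref{gt}) between two vertices of $\Gamma$ either both endpoints are shifted or neither is, or the shift only relaxes it. Ordinariness properties 2 and 3 are precisely what make this work — property 3 failing at row $i$ is what creates the overhang, and properties 1–2 keep the shadow ``downward-closed'' so the two endpoints of every internal edge move together. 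Once the combinatorics of $\varepsilon$ is nailed down, the rest is a formal consequence of Theorem~\ref{wbrion} and the monomial-degree remark, exactly as in the unweighted case treated in~\cite{me1, me2}.
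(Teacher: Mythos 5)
Your central step does not hold up. From $D_\Gamma(b_1,\ldots,b_{l_\Gamma})+\varepsilon\subseteq D_\Gamma(b_1,\ldots,b_{l_\Gamma})$ (i.e.\ $\varepsilon$ lying in the recession cone) together with invariance of the weight you only get
$(1-e^{\varepsilon})\,S_\varphi(D_\Gamma)=S_\varphi(\text{strip})$, where the strip is $D_\Gamma\setminus(D_\Gamma+\varepsilon)$; equivalently $\sigma_\varphi(D_\Gamma)=\sigma_\varphi(\text{strip})/(1-e^{\varepsilon})$. This is not a functional equation $e^{\varepsilon}\sigma_\varphi(D_\Gamma)=\sigma_\varphi(D_\Gamma)$: already for the ray $\mathbb{R}_{\ge 0}$ one has $\sigma=1/(1-x)$, the lattice points are stable under the shift $x\mapsto x+1$, yet $x\cdot\frac{1}{1-x}\neq\frac{1}{1-x}$. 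So the conclusion ``$F(e^{\varepsilon})\psi_\Gamma=\psi_\Gamma$ with $F(e^{\varepsilon})\neq 1$, hence $\psi_\Gamma=0$'' is unjustified; all your argument actually shows is that $1-F(e^{\varepsilon})\neq 0$, which proves well-definedness of the specialization (this is exactly the remark the paper already makes when it checks $\psi_\Gamma$ is well defined), not vanishing. To get vanishing along these lines you would need $F$ of the strip's weighted IPT to be zero, which is essentially the original problem again. The flaw is also visible internally: the tangent cones $C_1,C_2$ appearing in the paper's own computation (the two-column case) are stable under translation by recession directions $\xi$ with $F(e^{\xi})=x_rx_{d_\Gamma+1}^{-1}\neq 1$, and their weighted IPTs specialize to explicitly nonzero functions; your criterion would wrongly force these to vanish as well, since such recession directions exist for essentially every unbounded $D_\Gamma$, overhang or not. (A secondary inaccuracy: it is not true that every tangent cone of a polyhedron whose recession cone contains $\varepsilon$ has an edge in direction $\varepsilon$, so the claimed universal factor $1/(1-e^{\varepsilon})$ in each $\sigma_\varphi(C_v)$ is also unwarranted.)

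By contrast, the paper's proof shows the vanishing is not a formal symmetry statement: it is an induction on the number of vertices of $\Gamma$, where Case 1 (no row with more than two vertices) is settled by an explicit computation of the two relevant vertex contributions plus the degeneration identity of Lemma~\ref{singular}, ending in a nontrivial rational-function identity that sums to zero; Case 2 (at least two distinct $b_i$) uses the splitting of each tangent cone as a product of smaller $D_{G_i}$'s and the induction hypothesis; and Case 3 (all $b_i$ equal) again invokes Lemma~\ref{singular} to reduce to the regular case. Some quantitative input of this kind (the $t$-dependent cancellation encoded in $\varphi_\Gamma$ and Lemma~\ref{singular}) is genuinely needed, and your proposal contains no substitute for it.
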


Our proof of this Theorem requires an identity which relates the singular case of the $b_i$ all being the same to the regular case of them being pairwise distinct. Note that $D_\Gamma(b,\ldots,b)$ is a cone, we denote the vertex of this cone $v_\Gamma(b)$.
\begin{lemma}\label{singular}
For pairwise distinct $b_1>\ldots>b_{l_\Gamma}$ let $v_1,\ldots,v_m$ be the vertices of $D_\Gamma(b_1,\ldots,b_{l_\Gamma})$ with tangent cones $C_1,\ldots,C_m$. Then we have $$[l_\Gamma]_t!\sigma_{\varphi_\Gamma(b,\ldots,b)}(D_\Gamma(b,\ldots,b))=\sum\limits_{i=1}^m e^{v_\Gamma(b)-v_i}\sigma_{\varphi_\Gamma(b_1,\ldots,b_{l_\Gamma})}(C_i)$$ (the summands on the right are simply IPT's of the cones $C_i$ shifted by $v_\Gamma(b)-v_i$).
\end{lemma}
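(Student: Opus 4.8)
The plan is to build a bridge between the singular cone $D_\Gamma(b,\dots,b)$ and the family of polytopes $D_\Gamma(b_1,\dots,b_{l_\Gamma})$ for pairwise distinct $b_i$ by passing through the \emph{full} Gelfand–Tsetlin-style polyhedron in which the top-row coordinates are allowed to vary. Concretely, introduce the polyhedron $\widehat D_\Gamma \subset \mathbb R^{\mathcal R} \times \mathbb R^{l_\Gamma}$ whose points consist of an array $(s_{i,j})$ supported on $\Gamma$ together with a weakly decreasing tuple $(c_1 \ge \dots \ge c_{l_\Gamma})$ placed in row $a_\Gamma$, subject to the type-(\ref{gt}) inequalities along the edges of $\Gamma$. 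Then $D_\Gamma(b_1,\dots,b_{l_\Gamma})$ is the slice $c = (b_1,\dots,b_{l_\Gamma})$ and $D_\Gamma(b,\dots,b)$ is the slice $c = (b,\dots,b)$. The weight function $\varphi_\Gamma$ extends naturally to $\mathcal F_{\widehat D_\Gamma}$: on a face the value depends only on the induced subgraph $\Delta_f$ restricted to rows strictly below $a_\Gamma$, and (by Proposition \ref{faces}, condition 3) the edges joining consecutive vertices in row $a_\Gamma$ become actual equalities exactly in the singular directions. The key combinatorial point is that the $l_\Gamma$ extra ``staircase'' directions among the top-row coordinates, together with the faces of $\widehat D_\Gamma$ in which some block of them collapses, reproduce precisely the $t$-factorial $[l_\Gamma]_t!$ when one sums the weighted contributions.

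The steps, in order. First I would write down Brion's theorem (Theorem \ref{brion}), or rather its weighted form Theorem \ref{wbrion}, for the polyhedron $D_\Gamma(b_1,\dots,b_{l_\Gamma})$ with distinct $b_i$: this gives $\sigma_{\varphi_\Gamma(b_1,\dots,b_{l_\Gamma})}(D_\Gamma(b_1,\dots,b_{l_\Gamma})) = \sum_{i=1}^m \sigma_{\varphi_\Gamma(b_1,\dots,b_{l_\Gamma})}(C_i)$. Second, I would identify the tangent cone of $\widehat D_\Gamma$ at the (unique) vertex $\hat v_\Gamma(b)$ lying over $c=(b,\dots,b)$: I claim it factors, as a weighted polyhedron, into the cone $D_\Gamma(b,\dots,b)$ in the ``body'' coordinates times the simplicial cone $\{c_1 \ge \dots \ge c_{l_\Gamma}\}$ in the top-row coordinates, the latter carrying the weight assignment that on a face where a block of $j$ of these inequalities degenerates contributes $(1-t)$ per such inequality — so that its weighted integer-point transform is, after the relevant specialization, the factor $[l_\Gamma]_t!$ up to the monomial $e^{\hat v_\Gamma(b)}$ bookkeeping. (Here I use that the weighted IPT of a $d$-dimensional simplicial cone with this $(1-t)$-per-hyperplane weighting is $\prod_{j=1}^{d}\frac{1-t^{\,?}}{\dots}$ collapsing to $[d]_t!$; this is the same elementary computation that produces the Poincaré polynomial of a symmetric group, and it is exactly where the normalizing constant in the Lemma comes from.) Third, I would apply weighted Brion to $\widehat D_\Gamma$ itself and compare the vertex over $c=(b,\dots,b)$ on one side with the vertices lying over generic $c$ on the other, or — more cleanly — deform $c$ from $(b,\dots,b)$ to $(b_1,\dots,b_{l_\Gamma})$ and track how each vertex $v_i$ of the polytope $D_\Gamma(b_1,\dots,b_{l_\Gamma})$ sits inside a face of $\widehat D_\Gamma$ adjacent to $\hat v_\Gamma(b)$, with $v_\Gamma(b)$ (the apex of the singular cone) being the common ``limit'' point; the translation monomials $e^{v_\Gamma(b)-v_i}$ are precisely the change-of-origin factors relating the tangent cone of $\widehat D_\Gamma$ at $\hat v_\Gamma(b)$ to those at the $v_i$. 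Fourth, I would assemble these: the weighted IPT of $\widehat D_\Gamma$'s tangent cone at $\hat v_\Gamma(b)$ equals, by the factorization of step two, $[l_\Gamma]_t!$ times (the IPT of) $D_\Gamma(b,\dots,b)$; by Brion applied along the other vertices of $\widehat D_\Gamma$ restricted to the relevant stratum it equals $\sum_i e^{v_\Gamma(b)-v_i}\sigma_{\varphi_\Gamma(b_1,\dots,b_{l_\Gamma})}(C_i)$; equating the two is the claim.

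The main obstacle I anticipate is bookkeeping the weight function correctly through the factorization in step two. The subtlety is that when several of the top-row inequalities $c_p \ge c_{p+1}$ degenerate \emph{and} this forces (via Proposition \ref{faces}, condition 2, propagated downward by condition 1) a whole connected block in rows below $a_\Gamma$ to collapse, one must check that the $d_l$-statistic of $\varphi_\Gamma$ counts these forced collapses in a way that is compatible with the naive product decomposition ``body weight $\times$ top-row weight''. In other words, one has to verify that the weight of a face of $\widehat D_\Gamma$ genuinely multiplies as $\varphi_\Gamma(\text{body part}) \cdot (\text{top-row}\ [l_\Gamma]_t!\text{-factor})$ rather than interacting. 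I expect this to come down to the observation, already implicit in the excerpt, that $\varphi_\Gamma$ only ever looks at components relative to rows $i > a_\Gamma$, so the top-row collapses, which live in row $a_\Gamma$ itself, are ``invisible'' to the body weight and the factorization is honest — but making that rigorous, and checking the boundary row $i = a_\Gamma + 1$ carefully, is the real work. A secondary technical point is ensuring all the IPT's and specializations involved are well-defined (non-vanishing reduced denominators under $F$), which one handles exactly as in the paragraph preceding the statement of Theorem \ref{zero}, via the remark at the end of the proof of Theorem \ref{wbrion}.
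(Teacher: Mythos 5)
Your plan hinges on Step 2, and that step fails. The extended polyhedron $\widehat D_\Gamma$ is cut out entirely by homogeneous inequalities of the form $s\ge s'$, so it contains the line of constant arrays and has no vertices at all: there is no vertex $\hat v_\Gamma(b)$ ``over $c=(b,\ldots,b)$'', and the vertices of a fiber $D_\Gamma(c)$ for generic $c$ are not vertices of $\widehat D_\Gamma$ either, so the comparison in Step 3 has nothing to attach to --- Brion's theorem equates the IPT of one polyhedron with the sum over \emph{all} of its vertices and cannot be used to play one vertex's contribution off against the others'. Even after killing the lineality, the tangent cone at the diagonal point is all of $\widehat D_\Gamma$, and it does \emph{not} split as (body cone)$\,\times\,$(top-row chain cone): the inequalities of type~(\ref{gt}) along the edges joining row $a_\Gamma$ to row $a_\Gamma+1$ couple the $c$-coordinates to the body coordinates, so neither the cone nor its weighted IPT factors. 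And even granting a factorization, the weighted IPT of the chain cone $\{c_1\ge\ldots\ge c_{l_\Gamma}\}$ is a rational function with a nontrivial denominator, not the polynomial $[l_\Gamma]_t!$; under $F$ a direction vector supported in row $a_\Gamma$ goes to a nontrivial power of $x_{a_\Gamma}^{-1}x_{a_\Gamma+1}$, so no Poincar\'e-polynomial-of-$S_{l_\Gamma}$ computation produces the normalizing factor. Note also that the Lemma is an identity of rational functions in the variables $t_{i,j}$, prior to any specialization, so an argument that only works ``after the relevant specialization'' would prove less than the statement.

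The aside in your Step 3 --- deform $c$ from $(b,\ldots,b)$ to $(b_1,\ldots,b_{l_\Gamma})$ --- is in fact the route the paper takes ($D_\Gamma(b,\ldots,b)$ is a degeneration of $D_\Gamma(b_1,\ldots,b_{l_\Gamma})$ in the facet-shifting sense), but the two ingredients that make it work are absent from your sketch. One needs, first, a weighted Brion theorem for degenerations (Lemma~\ref{wdegen}): if $\Sigma'$ is a degeneration of $\Sigma$ with face map $\pi$, then $\sum_v e^{\pi(v)-v}\sigma_\varphi(C_v)=\sigma_{\varphi'}(\Sigma')$, where $\varphi'(f')=\sum_{f\in\pi^{-1}(f')}(-1)^{\dim f-\dim f'}\varphi(f)$; its proof goes through Stanley reciprocity and is not a formal consequence of Theorem~\ref{wbrion}. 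Second, one needs the genuinely combinatorial identity (Lemma~\ref{graphsum}) that for every face $f$ of $D_\Gamma(b,\ldots,b)$ the alternating sum $\sum_{g\in\pi^{-1}(f)}(-1)^{\dim g-\dim f}\varphi_\Gamma(b_1,\ldots,b_{l_\Gamma})(g)$ equals $[l_\Gamma]_t!\,\varphi_\Gamma(b,\ldots,b)(f)$: this is where the $t$-factorial actually comes from, and its proof is an induction on $\Gamma$ with a separate analysis of bounded faces and explicit recurrences, not an elementary product computation. Without these two pieces your outline does not reach the right-hand side $\sum_i e^{v_\Gamma(b)-v_i}\sigma_{\varphi_\Gamma(b_1,\ldots,b_{l_\Gamma})}(C_i)$, so as it stands the proposal has an essential gap.
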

This identity is obtained as the weighted Brion theorem applied to $D_\Gamma(b,\ldots,b)$ viewed as a degeneration $D_\Gamma(b_1,\ldots,b_{l_\Gamma})$. We thus postpone the proof of the lemma until we have discussed these topics in detail.

\begin{proof}[Proof of Theorem~\ref{zero}.] We proceed by induction on the number of vertices in $\Gamma$ considering three cases. 

{\it Case 1.} No row in $\Gamma$ contains more than two vertices. This will include the base of our induction. Unfortunately, this case is the most computational part of the paper, although, in its essence, the argument is pretty straightforward.

First of all, if we have an $i>a_\Gamma$ such that $\Gamma$ has one vertex in row $i$ and two vertices in row $i+1$, we may apply the induction hypothesis. To do this, denote $\Gamma'$ the graph obtained from $\Gamma$ by removing all vertices in rows above $i$. Now consider a section of $D_\Gamma(b_1,\ldots,b_{l_\Gamma})$ obtained by fixing all coordinates in rows $i$ and above. The contribution of any such section to $\psi_\Gamma(b_1,\ldots,b_{l_\Gamma})$ is zero by the induction hypothesis applied to $\Gamma'$.

Thus we may assume that $\Gamma$ has one vertex in row $a_\Gamma$ and two vertices in row $a_\Gamma+1$. Figure~\ref{ex2} provides an example of such a graph.
We may also assume that $b_1=0$ since any $\psi_\Gamma(b)$ is obtained from $\psi_\Gamma(0)$ by multiplication by a monomial. We will compute $\psi_\Gamma(0)$ by considering the sections of $D_\Gamma(0)$ obtained by fixing the two coordinates in row $a_\Gamma+1$. If $\Gamma'$ is $\Gamma$ with the top vertex removed we have 
\begin{equation}\label{sectsum}
\psi_\Gamma(0)=\sum\limits_{b_1\ge0,b_2\le0}c_{b_1,b_2}\psi_{\Gamma'}(b_1,b_2),
\end{equation}
where
$$
c_{b_1,b_2}=
\begin{cases}
(1-t)^2& \text{if }b_1>0>b_2,\\
(1-t)& \text{if }b_1>0=b_2\text{ or }b_1=0>b_2,\\
(1-t^2)& \text{if }b_1=b_2=0.
\end{cases}
$$
Of course,~(\ref{sectsum}) needs to be formalized in order to make sense. This is done routinely so we will not go into detail. The idea is to observe that all the functions $\psi_{\Gamma'}(b_1,b_2)$ together with $\psi_\Gamma(0)$ have a common finite denominator (this is shown below). Multiplying~(\ref{sectsum}) by that common denominator yields an identity of formal Laurent series. 

Now, for some $b_1>b_2$ consider the vertices of $D_{\Gamma'}(b_1,b_2)$. It easy to see that among the corresponding subgraphs $\Delta_v$ there are exactly two consisting of two path graph components. Here are these two subgraphs for the example in Figure~\ref{ex2}.

\begin{picture}(95,35)
\put(35,30){\circle*{0.4}}
\put(30,25){\line(-1,-1){5}} \multiput(30,25)(1,-1){5}{\line(1,-1){0.2}} \put(40,25){\line(-1,-1){5}}
\put(25,20){\line(1,-1){5}} \multiput(35,20)(-1,-1){5}{\line(-1,-1){0.2}} 
\put(30,15){\line(1,-1){5}}
\put(30,5){$\Delta_{v_1}$}

\put(80,30){\circle*{0.4}}
\put(75,25){\line(-1,-1){5}} \multiput(75,25)(1,-1){5}{\line(1,-1){0.2}} \put(85,25){\line(-1,-1){5}}
\multiput(70,20)(1,-1){5}{\line(1,-1){0.2}} \put(80,20){\line(-1,-1){5}} 
\put(75,15){\line(1,-1){5}}
\put(75,5){$\Delta_{v_2}$}
\end{picture}

Denote $r$ the least number such that $\Gamma$ has one vertex in row $r$ but two vertices in row $r-1$. The difference between the two graphs is then that in one case (vertex $v_1$) the vertex in row $r$ is connected to its upper-left neighbor and in the other case (vertex $v_2$) to its upper-right neighbor.
\begin{proposition}
Consider a vertex $v$ of $D_{\Gamma'}(b_1,b_2)$ other than $v_1$ and $v_2$. Let $C_v$ be the tangent cone. The induction hypothesis than implies $$F(\sigma_{\varphi_{\Gamma'}(b_1,b_2)}(C_v))=0.$$
\end{proposition}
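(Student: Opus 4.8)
The plan is to reduce the contribution of such a vertex $v$ to the vanishing case of Theorem~\ref{zero} already established by the induction hypothesis. The key observation is that the subgraph $\Delta_v$ associated with $v$ (via Proposition~\ref{faces}) is, by Proposition~\ref{faces}, a subgraph of $\Gamma'$ containing all its vertices, with exactly one connected component $E$ not meeting row $a_\Gamma$ (since $v$ is a vertex, $\dim C_v$ being measured by such components, and a tangent cone at a vertex is a cone whose lineality we must track carefully — more precisely, since $D_{\Gamma'}(b_1,b_2)$ has dimension equal to the number of components of the empty-coordinate pattern avoiding row $a_\Gamma$, and at a vertex every coordinate is pinned, the component structure of $\Delta_v$ describes which coordinates are forced equal). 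The cone $C_v$ then factors, as a product of cones, according to the components of $\Delta_v$: coordinates in distinct components vary independently subject only to the inequalities internal to that component, so $\sigma_{\varphi_{\Gamma'}(b_1,b_2)}(C_v)$ is a product over components, and $F$ respects this product.

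First I would make precise the claim that $C_v$ splits as a direct product of tangent cones indexed by the components of $\Delta_v$, and that under this splitting the weight function $\varphi_{\Gamma'}(b_1,b_2)$ is multiplicative (this follows immediately from the definition of $\varphi_\Gamma$ as $\prod(1-t^l)^{d_l}$ with $d_l$ counting pairs $(E,i)$, a count that is manifestly additive over components $E$). Hence $F(\sigma_{\varphi_{\Gamma'}(b_1,b_2)}(C_v))$ is a product of factors, one per component. Next I would single out the unique component $E$ of $\Delta_v$ not containing a vertex of row $a_\Gamma$; since $v$ is a vertex distinct from $v_1$ and $v_2$, the component $E$ must differ from the "path" configurations of $\Delta_{v_1},\Delta_{v_2}$, and in Case~1 — where $\Gamma$ (hence $\Gamma'$) has at most two vertices per row — the only way $\Delta_v$ can fail to be one of those two path graphs is for $E$ to "fold back," i.e. to contain, for some row index $i \ge a_{E}$, strictly more vertices in row $i+1$ than in row $i$. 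This is exactly the hypothesis of Theorem~\ref{zero} applied to the ordinary graph $E$, which has strictly fewer vertices than $\Gamma$, so the induction hypothesis gives that the corresponding factor vanishes, and therefore the whole product $F(\sigma_{\varphi_{\Gamma'}(b_1,b_2)}(C_v))$ vanishes.

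There is a subtlety I would address carefully: the factor of $F(\sigma_{\varphi_{\Gamma'}(b_1,b_2)}(C_v))$ attached to a component $E$ not touching row $a_\Gamma$ is, up to a monomial shift, exactly $\psi_E(b^E)$ for suitable integer boundary values $b^E$ on the top row $a_E$ of $E$ (equal among themselves or not depending on the component structure higher up, but that does not matter). The induction hypothesis Theorem~\ref{zero} is stated for \emph{all} integers $b_1 \ge \cdots \ge b_{l_E}$, so it applies regardless. I would also need to verify the combinatorial claim that in Case~1 any $\Delta_v$ with $v \notin \{v_1, v_2\}$ does produce such a folding-back component — this is the one genuinely case-specific step, and it is where one uses that rows of $\Gamma$ have at most two vertices: with that constraint the "straight" path down from the single top vertex of $E$ has essentially two options at the row $r$ where the graph widens (attach left, or attach right), giving $v_1$ and $v_2$; anything else forces $E$ to revisit a row or to have a row wider than the one above it.

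The main obstacle I expect is the bookkeeping in the cone-splitting step: tangent cones at a vertex of a polyhedron-with-fixed-coordinates need the "pinned" directions removed, and one must check that the remaining recession directions genuinely decouple across components of $\Delta_v$ with no hidden coupling through the ambient inequalities of $\Gamma'$ (only inequalities between adjacent vertices of $\Gamma'$ appear, and adjacent vertices in distinct components of $\Delta_v$ give strict inequalities at $v$, hence locally unconstrained differences — but one must confirm these do not re-link the cones). Once that decoupling is clean, the rest is the multiplicativity of $\varphi$, the multiplicativity of $F$ on monomials, and a direct appeal to the induction hypothesis; none of those should present difficulty.
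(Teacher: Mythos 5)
Your overall strategy---split the tangent cone as a product over the connected components of $\Delta_v$, use multiplicativity of $\varphi_{\Gamma'}(b_1,b_2)$ and of $F$ on this product, and kill one factor via the induction hypothesis (Theorem~\ref{zero}) applied to a component that is not a path graph---is exactly the paper's argument. However, your structural description of $\Delta_v$ is wrong, and the key step is built on it. By Proposition~\ref{faces} the dimension of a face equals the number of components of its subgraph containing \emph{no} vertex of the top row of $\Gamma'$ (which is row $a_\Gamma+1$, not $a_\Gamma$); for a vertex this number is zero, and since $b_1\neq b_2$ the two top-row vertices lie in different components. Hence $\Delta_v$ has exactly two components $\Gamma_1,\Gamma_2$, each containing exactly one top-row vertex of $\Gamma'$, and there is no ``unique component $E$ not meeting the top row'': that configuration describes an edge (it is precisely what the paper uses afterwards to describe the edge generators of the cones $C_1$, $C_2$), not a vertex. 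So the component to which you propose to apply the induction hypothesis does not exist, and your subtlety about boundary values $b^E$ ``determined by the component structure higher up'' and a monomial shift is moot.

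The correct output of the decomposition is simply $F(\sigma_{\varphi_{\Gamma'}(b_1,b_2)}(C_v))=\psi_{\Gamma_1}(b_1)\,\psi_{\Gamma_2}(b_2)$, with no shift, since each $\Gamma_i$ has a single vertex in its top row carrying the value $b_i$. Since $v\notin\{v_1,v_2\}$ and $\Delta_{v_1},\Delta_{v_2}$ are the only two such subgraphs whose two components are both path graphs, at least one $\Gamma_i$ is not a path graph; in Case~1 no row has more than two vertices and $\Gamma_i$ has one vertex in its top row, so this forces some row of $\Gamma_i$ to contain two vertices while the row above contains one, which is exactly the hypothesis of Theorem~\ref{zero}. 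The induction hypothesis applies because $\Gamma_i$ has fewer vertices than $\Gamma$, and that factor vanishes. With this correction (apply the induction to whichever of the two top-row-containing components fails to be a path, not to a nonexistent component avoiding the top row), your argument coincides with the paper's proof.
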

\begin{proof}
Consider the two components $\Gamma_1$ and $\Gamma_2$ of $\Delta_v$. We have $$F(\sigma_{\varphi_{\Gamma'}(b_1,b_2)}(C_v))=\psi_{\Gamma_1}(b_1)\psi_{\Gamma_2}(b_2).$$ The fact that at least one of $\Gamma_1$ and $\Gamma_2$ is not a path graph translates into that component containing one vertex in some row $i$ and two vertices in row $i+1$. The induction hypothesis then shows that the corresponding factor in the right-hand side above is zero.
\end{proof}

The weighted Brion theorem for $D_{\Gamma'}(b_1,b_2)$ is now seen to provide $$\psi_{\Gamma'}(b_1,b_2)=F(\sigma_{\varphi_{\Gamma'}(b_1,b_2)}(C_1))+F(\sigma_{\varphi_{\Gamma'}(b_1,b_2)}(C_2)),$$ where $C_1$ and $C_2$ are the corresponding tangent cones. It isn't too hard to compute the two summands on the right explicitly which is exactly what do.

Both of $C_1$ and $C_2$ cones are simplicial and unimodular. This is seen by considering the minimal integer direction vectors (generators) of their edges.  If $d_\Gamma$ is the last row containing vertices of $\Gamma$, then the set of generators for each of $C_1$ and $C_2$ satisfies the following description.
\begin{proposition}
The values of the coordinates of any such generator take only two values: 0 and either $-1$ or 1. For any $i\in[a_\Gamma+2,r-1]$ there is a single generator with exactly one nonzero coordinate in each of the rows in $[i,r-1]$ and all other coordinates 0. Also, for any $i\in[a_\Gamma+2,d_\Gamma]$ there is a single generator with exactly one nonzero coordinate in each of the rows in $[i,d_\Gamma]$ and all other coordinates 0.
\end{proposition}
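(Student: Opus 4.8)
The plan is to read off the edge directions at $v_1$ and $v_2$ directly from the combinatorial description of faces in Proposition~\ref{faces}. Recall that the tangent cone $C_{v}$ to $D_{\Gamma'}(b_1,b_2)$ at a vertex $v$ is the cone spanned by the primitive integer direction vectors of the edges of $D_{\Gamma'}(b_1,b_2)$ incident to $v$, and that by Proposition~\ref{faces} those edges correspond to the subgraphs $\Delta_e\subseteq\Delta_v$ which contain every vertex of $\Gamma'$, still satisfy properties 1--3 of that proposition, and have exactly one connected component meeting no vertex of row $a_{\Gamma'}$. Since $\Delta_{v_1}$ and $\Delta_{v_2}$ are each a disjoint union of two path graphs --- a \emph{long} one running down one column of the two-wide part of $\Gamma'$ (rows $a_{\Gamma'},\dots,r-1$) and then down the whole one-wide part (rows $r,\dots,d_\Gamma$), and a \emph{short} one running down the other column of the two-wide part only --- every such $\Delta_e$ is obtained by deleting $\mathcal R$-edges from this two-path graph; so the first step is to show that exactly one edge must be deleted, which puts the edges at $v_i$ into bijection with the internal edges of the long and short paths.

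Granting this, the direction vectors fall out at once. If $e$ is obtained by cutting one of the two paths between its vertices in rows $i-1$ and $i$, then $\Delta_e$ has as its unique ``floating'' component the tail $E$ of that path, i.e. its vertices in rows $i,i+1,\dots$, which by construction has exactly one vertex in each of those rows and none elsewhere. On the one-dimensional face $e$ all coordinates attached to vertices of $E$ are equal (they lie in one component of $\Delta_e$) and vary together, while every other coordinate is pinned to $b_1$ or $b_2$; hence the primitive direction of $e$ is $\pm\mathbbm{1}_E$ --- one entry $\pm1$ in each row spanned by $E$, zero elsewhere, the common sign being fixed by which side of $v$ the polytope lies on. Cutting the short path at row $i$ (with $i$ running from $a_\Gamma+2=a_{\Gamma'}+1$ up to $r-1$) yields the first family of generators, supported with one nonzero entry in each row of $[i,r-1]$; cutting the long path at row $i$ (with $i$ running from $a_\Gamma+2$ up to $d_\Gamma$) yields the second family, supported on $[i,d_\Gamma]$. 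These exhaust the edges --- their number, $(r-1-a_{\Gamma'})+(d_\Gamma-a_{\Gamma'})$, equals the number of vertices of $\Gamma'$ minus $2$, which by Proposition~\ref{faces} is $\dim D_{\Gamma'}(b_1,b_2)$ --- so both cones are simplicial; and ordering the coordinates by rows, outer column before inner column inside the two-wide part, makes the generator matrix block triangular with $\pm1$ on the diagonal (the outer-column coordinates are hit only by short-path generators, all the others only by long-path generators), whence also unimodular.

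The hard part will be the bijection asserted in the first paragraph: that deleting any one edge of the two-path graph produces a genuine face of $D_{\Gamma'}(b_1,b_2)$, and that no edge of the polytope comes from deleting two or more edges. The second point is immediate --- a second deletion splits off a second component missing row $a_{\Gamma'}$, giving a face of dimension $\ge 2$ rather than an edge. For the first I would check that properties 1--3 of Proposition~\ref{faces} survive a single deletion, and this is exactly where the standing hypothesis that $\Gamma$ has at most two vertices per row is used. Because $b_1\neq b_2$, property 3 forces the two vertices of row $a_{\Gamma'}$, and hence the two vertices of every two-wide row, into distinct path components of $\Delta_{v_i}$; so the hypothesis of property 2 is never satisfied, along $\Delta_{v_i}$ or any subgraph of it, while the hypothesis of property 1 can be satisfied only by consecutive vertices of a single path --- the thin strip contains no $\mathcal R$-edge joining two non-consecutive vertices of one path --- and such a pair stays adjacent after a cut made elsewhere, whereas a cut made at that very edge separates the pair since a path is a forest. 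So all three properties persist, the enumeration is complete, and the stated description of the generators follows.
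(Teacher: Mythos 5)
Your proposal is correct and takes essentially the same route as the paper: both identify the edges of $C_1$ and $C_2$ with single-edge deletions from the two-path graphs $\Delta_{v_1}$, $\Delta_{v_2}$ via Proposition~\ref{faces}, and read off each generator as $\pm 1$ on the unique resulting component not meeting row $a_\Gamma+1$ (the tail below the cut), zero elsewhere. You simply make explicit some verifications the paper leaves implicit (that properties 1--3 survive a single deletion, that multiple deletions give faces of dimension at least two, and the count yielding simpliciality and unimodularity).
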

\begin{proof}
In accordance with Proposition~\ref{faces}, for an edge $e$ of $C_1$ or $C_2$ the graph $\Delta_e$ is obtained from respectively $\Delta_{v_1}$ or $\Delta_{v_2}$ by deleting a single edge. This leaves $\Delta_e$ with exactly one component (of three) not containing a vertex in row $a_\Gamma+1$. The corresponding direction vector is obtained by setting the coordinates in this component to either $-1$ or 1 depending on the orientation of the deleted edge. All the other coordinates are zero.

The proposition now follows if we consider such vectors for each of the edges of $\Delta_{v_1}$ and $\Delta_{v_2}$ being deleted.
\end{proof}

We denote the generators described in the second sentence of the Proposition by $\varepsilon_i^1$ or $\varepsilon_i^2$ respectively. For the generators described in the third sentence we use the notations $\xi_i^1$ and $\xi_i^2$. Here are some of these vectors for our example with the cross marking the edge being deleted.

\begin{picture}(105,30)
\put(9.25,24){\footnotesize 0} \put(19.25,24){\footnotesize 0}
\put(9,24){\line(-1,-1){3}} \multiput(11,24)(1,-1){3}{\line(1,-1){0.2}} \put(19,24){\line(-1,-1){3}}
\put(4.25,19){\footnotesize 0} \put(14.25,19){\footnotesize 0}
\put(6,19){\line(1,-1){3}} \multiput(14,19)(-1,-1){3}{\line(-1,-1){0.2}} 
\put(6.5,17.5){\line(1,0){2}} \put(7.5,18.5){\line(0,-1){2}}
\put(8.75,14){\footnotesize $-1$}
\put(11,14){\line(1,-1){3}}
\put(13.75,9){\footnotesize $-1$}
\put(5,5){$\xi_{a_\Gamma+3}^1(=\xi_r^1)$}

\put(34.25,24){\footnotesize 0} \put(44.25,24){\footnotesize 0}
\put(34,24){\line(-1,-1){3}} \multiput(36,24)(1,-1){3}{\line(1,-1){0.2}} \put(44,24){\line(-1,-1){3}}
\put(41.5,22.5){\line(1,0){2}} \put(42.5,23.5){\line(0,-1){2}}
\put(29.25,19){\footnotesize 0} \put(39.25,19){\footnotesize 1}
\put(31,19){\line(1,-1){3}} \multiput(39,19)(-1,-1){3}{\line(-1,-1){0.2}} 
\put(34.25,14){\footnotesize 0}
\put(36,14){\line(1,-1){3}}
\put(39.25,9){\footnotesize 0}
\put(35,5){$\varepsilon_{a_\Gamma+2}^1$}

\put(64.25,24){\footnotesize 0} \put(74.25,24){\footnotesize 0}
\put(64,24){\line(-1,-1){3}} \multiput(66,254)(1,-1){3}{\line(1,-1){0.2}} \put(74,24){\line(-1,-1){3}}
\put(71.5,22.5){\line(1,0){2}} \put(72.5,23.5){\line(0,-1){2}}
\put(59.25,19){\footnotesize 0} \put(69.25,19){\footnotesize 1}
\multiput(61,19)(1,-1){3}{\line(1,-1){0.2}} \put(69,19){\line(-1,-1){3}}
\put(64.25,14){\footnotesize 1} 
\put(66,14){\line(1,-1){3}}
\put(69.25,9){\footnotesize 1}
\put(65,5){$\xi_{a_\Gamma+2}^2$}

\put(89.25,24){\footnotesize 0} \put(99.25,24){\footnotesize 0}
\put(89,24){\line(-1,-1){3}} \multiput(91,24)(1,-1){3}{\line(1,-1){0.2}} \put(99,24){\line(-1,-1){3}}
\put(86.5,22.5){\line(1,0){2}} \put(87.5,23.5){\line(0,-1){2}}
\put(84.25,19){\footnotesize 1} \put(94.25,19){\footnotesize 0}
\multiput(86,19)(1,-1){3}{\line(1,-1){0.2}} \put(94,19){\line(-1,-1){3}} 
\put(89.25,14){\footnotesize 0} 
\put(91,14){\line(1,-1){3}}
\put(94.25,9){\footnotesize 0}
\put(90,5){$\varepsilon_{a_\Gamma+2}^2$}
\end{picture}

It is easily seen that for all $i\in[a_\Gamma+2,r-1]$ we have $F(e^{\varepsilon_i^1})=F(e^{\varepsilon_i^2})$ and for all $i\in[a_\Gamma+2,d_\Gamma],i\neq r$ we have $F(e^{\xi_i^1})=F(e^{\xi_i^2})$. However, $F(e^{\xi_r^1})=x_r x_{d_\Gamma+1}^{-1}$ while $F(e^{\xi_r^2})=x_r^{-1} x_{d_\Gamma+1}$.

The last nuance we need to discuss to write out $\psi_{\Gamma'}(b_1,b_2)$ is how $\varphi_{\Gamma'}(b_1,b_2)$ behaves on faces of $C_1$ and $C_2$. This behavior is rather simple. 
\begin{proposition}
For a face $f$ of either cone we have $$\psi_{\Gamma'}(b_1,b_2)(f)=(1-t)^{\dim f}.$$
\end{proposition}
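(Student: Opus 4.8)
The plan is to prove the last Proposition by reading off, from Proposition~\ref{faces}, what the component structure of $\Delta_f$ looks like for a face $f$ of $C_1$ or $C_2$, and then checking that the statistic defining $\varphi_{\Gamma'}(b_1,b_2)$ collapses to $(1-t)^{\dim f}$. Recall that $\Delta_{v_1}$ and $\Delta_{v_2}$ each consist of three components: two of them contain a top-row vertex (they are the two path graphs running down from the two vertices of row $a_\Gamma+1$), and the third is the single extra vertex sitting in row $a_\Gamma$, which is its own component. Passing to a face $f$ of $C_1$ (resp.\ $C_2$) corresponds, by the proof of the previous Proposition, to deleting some subset of edges of $\Delta_{v_1}$ (resp.\ $\Delta_{v_2}$); each such deletion splits one of the two path components into two path components. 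So for every face $f$, \emph{every} component of $\Delta_f$ is a path graph contained in a single "column" of the rotated lattice, i.e.\ it has exactly one vertex in each of a consecutive range of rows.

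First I would make precise the claim that every component $E$ of $\Delta_f$ is a path with exactly one vertex per row over the rows it meets. This is immediate from the structure of $\Delta_{v_1}$ and $\Delta_{v_2}$ (each is a disjoint union of such paths) together with the fact that deleting edges from a path yields a disjoint union of paths, each again with one vertex per row over its range. Next I would apply the definition of $\varphi_{\Gamma'}(b_1,b_2)(f)=\prod_l(1-t^l)^{d_l}$: $d_l$ counts pairs $(E,i)$ with $i>a_\Gamma$ such that the component $E$ has exactly $l-1$ vertices in row $i-1$ and $l$ vertices in row $i$. Since each component of $\Delta_f$ has at most one vertex in any row, the only possibility is $l=1$, with $E$ having $0$ vertices in row $i-1$ and $1$ vertex in row $i$ — that is, $i$ is the top row of $E$ (and $i>a_\Gamma$). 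Hence $d_l=0$ for all $l\ge 2$, and $d_1$ equals the number of components of $\Delta_f$ whose top row lies strictly below $a_\Gamma$.

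It then remains to identify this number with $\dim f$. By the dimension statement in Proposition~\ref{faces}, $\dim f$ is the number of connected components of $\Delta_f$ not containing a vertex of row $a_\Gamma$. A component of $\Delta_f$ meets row $a_\Gamma$ if and only if its top row is $a_\Gamma$ (again because components are one-vertex-per-row paths, and $a_\Gamma$ is by definition the topmost row of $\Gamma'$, so no component can start above it). Therefore "component not meeting row $a_\Gamma$" is exactly "component with top row $>a_\Gamma$", which is precisely what $d_1$ counts. Combining, $\varphi_{\Gamma'}(b_1,b_2)(f)=(1-t)^{d_1}=(1-t)^{\dim f}$, as claimed.

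I do not expect a serious obstacle here: the content is entirely bookkeeping once one observes that in $C_1$ and $C_2$ all relevant subgraphs are disjoint unions of vertical path graphs. The only point requiring a little care is the bijection between faces and edge-subsets: one should note that the single isolated vertex of row $a_\Gamma$ never acquires or loses edges (it has none in $\Delta_{v_1}$ or $\Delta_{v_2}$), so deletions only ever affect the two honest path components, confirming that after any deletion the top-row of each new component is well defined and that the count of "low-topped" components matches the dimension. With that remark in place the proof is a couple of lines.
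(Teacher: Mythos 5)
Your core argument is the right one and is essentially the paper's (the paper compresses it to one sentence: $\Delta_f$ has $\dim f+2$ components, obtained from $\Delta_{v_1}$ or $\Delta_{v_2}$ by deleting $\dim f$ edges, and each deletion produces exactly one new one-vertex-per-row path component missing the top row, hence one factor $(1-t)$). However, your bookkeeping of the setup contains a concrete error: the cones $C_1,C_2$ are tangent cones of $D_{\Gamma'}(b_1,b_2)$, and $\Gamma'$ is $\Gamma$ \emph{with the top vertex removed}, so the vertex in row $a_\Gamma$ is simply not part of the ambient graph. There is no ``third component''; $\Delta_{v_1}$ and $\Delta_{v_2}$ consist of exactly two path components, as stated in the paper. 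Correspondingly, the top row of $\Gamma'$ is $a_{\Gamma'}=a_\Gamma+1$, and every occurrence of $a_\Gamma$ in your argument should be $a_{\Gamma'}$: the statistic $d_l$ in $\varphi_{\Gamma'}(b_1,b_2)$ only counts pairs $(E,i)$ with $i>a_{\Gamma'}$, and the dimension statement of Proposition~\ref{faces} counts components of $\Delta_f$ not meeting row $a_{\Gamma'}$.

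As literally written, your intermediate claims fail: with the condition $i>a_\Gamma$ the two components through row $a_\Gamma+1$ each acquire a spurious pair $(E,a_\Gamma+1)$ with $l=1$, and with ``components not containing a vertex of row $a_\Gamma$'' they are also counted toward the dimension. For $f=\{v_1\}$ your accounting would thus give weight $(1-t)^2$ and ``dimension $2$'', whereas the truth is weight $1$ and dimension $0$; the two shifts of $2$ cancel in the final formula, which is why the error is easy to miss. Once the phantom vertex is dropped and $a_\Gamma$ is replaced by $a_{\Gamma'}$ throughout, your argument is correct and coincides with the paper's: all components of $\Delta_f$ are one-vertex-per-row paths, so only $l=1$ contributes, the contributing components are exactly those whose top row lies strictly below $a_{\Gamma'}$, and by Proposition~\ref{faces} their number is $\dim f$.
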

\begin{proof}
Since the graph $\Delta_f$ has $\dim f+2$ connected components, it is obtained from respectively $\Delta_{v_1}$ or $\Delta_{v_2}$ by deleting $\dim f$ edges. The definition of $\varphi_{\Gamma'}(b_1,b_2)$ then immediately provides the weight $(1-t)^{\dim f}$.
\end{proof}

The above facts give us $$F(\sigma_{\varphi_{\Gamma'}(b_1,b_2)}(C_1))=F(e^{v_1})\frac{1-t x_r x_{d_\Gamma+1}^{-1}}{1-x_r x_{d_\Gamma+1}^{-1}}Z$$ and $$F(\sigma_{\varphi_{\Gamma'}(b_1,b_2)}(C_2))=F(e^{v_2})\frac{1-t x_r^{-1} x_{d_\Gamma+1}}{1-x_r^{-1} x_{d_\Gamma+1}}Z,$$ where $$Z=F\left(\prod\limits_{i=a_\Gamma+2}^{r-1}\frac{1-te^{\varepsilon_i^1}}{1-e^{\varepsilon_i^1}} \prod\limits_{\substack{i\in[a_\Gamma+2,d_\Gamma]\\i\neq r}} \frac{1-te^{\xi_i^1}}{1-e^{\xi_i^1}} \right).$$

Also, we can now employ Lemma~\ref{singular} to derive 
\begin{multline*}
\psi_{\Gamma'}(0,0)=F(e^{v_{\Gamma'}(0)})\frac1{1+t}\left(\frac{1-t x_r x_{d_\Gamma+1}^{-1}}{1-x_r x_{d_\Gamma+1}^{-1}}+\frac{1-t x_r^{-1} x_{d_\Gamma+1}}{1-x_r^{-1} x_{d_\Gamma+1}}\right)Z=\\F(e^{v_{\Gamma'}(0)})Z.
\end{multline*}

Since $F(e^{v_1})=x_{a_\Gamma+1}^{-b_1-b_2}x_r^{b_2}x_{d_\Gamma+1}^{b_1}$ and $F(e^{v_2})=x_{a_\Gamma+1}^{-b_1-b_2}x_r^{b_1}x_{d_\Gamma+1}^{b_2}$ and \\$e^{v_{\Gamma'}(0)}=1$ we, conclusively, have
\begin{multline*}
\frac1Z\sum\limits_{b_1\le0,b_2\ge0}c_{b_1,b_2}\psi_{\Gamma'}(b_1,b_2)=\\
\frac{1-t x_r x_{d_\Gamma+1}^{-1}}{1-x_r x_{d_\Gamma+1}^{-1}}\left((1-t)^2\sum\limits_{b_1>0>b_2}x_{a_\Gamma+1}^{-b_1-b_2}x_r^{b_2}x_{d_\Gamma+1}^{b_1}+\right.\\
\left.(1-t)\sum\limits_{b_1>0}x_{a_\Gamma+1}^{-b_1}x_{d_\Gamma+1}^{b_1}+(1-t)\sum\limits_{b_2<0}x_{a_\Gamma+1}^{-b_2}x_r^{b_2}\right)+\\
\frac{1-t x_r^{-1} x_{d_\Gamma+1}}{1-x_r^{-1}x_{d_\Gamma+1}}\left((1-t)^2\sum\limits_{b_1>0>b_2}x_{a_\Gamma+1}^{-b_1-b_2}x_r^{b_1}x_{d_\Gamma+1}^{b_2}+\right.\\
\left.(1-t)\sum\limits_{b_1>0}x_{a_\Gamma+1}^{-b_1}x_r^{b_1}+(1-t)\sum\limits_{b_2<0}x_{a_\Gamma+1}^{-b_2}x_{d_\Gamma+1}^{b_2}\right)+1-t^2=\\
\frac{1-t x_r x_{d_\Gamma+1}^{-1}}{1-x_r x_{d_\Gamma+1}^{-1}}\left((1-t)^2\frac{x_r^{-1}x_{d_\Gamma+1}}{(1-x_{a_\Gamma+1}^{-1}x_{d_\Gamma+1})(1-x_{a_\Gamma+1}x_r^{-1})}+\right.\\
\left.(1-t)\frac{x_{a_\Gamma+1}^{-1}x_{d_\Gamma+1}}{1-x_{a_\Gamma+1}^{-1}x_{d_\Gamma+1}}+(1-t)\frac{x_{a_\Gamma+1}x_r^{-1}}{1-x_{a_\Gamma+1}x_r^{-1}}\right)+\\
\frac{1-t x_r^{-1} x_{d_\Gamma+1}}{1-x_r^{-1}x_{d_\Gamma+1}}\left((1-t)^2\frac{x_r x_{d_\Gamma+1}^{-1}}{(1-x_{a_\Gamma+1}^{-1}x_r)(1-x_{a_\Gamma+1}x_{d_\Gamma+1}^{-1})}+\right.\\
\left.(1-t)\frac{x_{a_\Gamma+1}^{-1}x_r}{1-x_{a_\Gamma+1}^{-1}x_r}+(1-t)\frac{x_{a_\Gamma+1}x_{d_\Gamma+1}^{-1}}{1-x_{a_\Gamma+1}x_{d_\Gamma+1}^{-1}}\right)+1-t^2\mathbf{=0}.
\end{multline*} 
The last equality is verified directly, best on a machine.
\\

{\it Case 2.} There exist at least two distinct $b_i$ (and we are not within case 1).

It suffices to show that for any vertex $v$ of $D_\Gamma(b_1,\ldots,b_{l_\Gamma})$ with tangent cone $C_v$ the contribution $F(\sigma_{\varphi_\Gamma(b_1,\ldots,b_{l_\Gamma})}(C_v))$ is zero.

By Proposition~\ref{faces} the number of connected components in $\Delta_v$ is the number of distinct $b_i$. Let $G_1,\ldots,G_m$ be these components, with $G_i$ containing the $l_i$-th through $r_i$-th vertex from the top row of $\Gamma$. We have the decomposition $$F(\sigma_{\varphi_\Gamma(b_1,\ldots,b_{l_\Gamma})}(C_v))=\prod\limits_{i=1}^m \psi_{G_i}(b_{l_i},\ldots,b_{r_i}).$$ That is because the cone $C_v$ is (a translate of) the direct sum of the cones $D_{G_i}(b_{l_i},\ldots,b_{r_i})$ and for a face $f=\bigoplus_{i=1}^m f_i$ of $C_v$ we have $$\varphi_\Gamma(b_1,\ldots,b_{l_\Gamma})=\prod\limits_{i=1}^m \varphi_{G_i}(b_{l_i},\ldots,b_{r_i})(f_i).$$ However, with the induction hypothesis taken into account, it is clear that at least one of the factors $\psi_{G_i}(b_{l_i},\ldots,b_{r_i})$ is zero.
\\

{\it Case 3.} We have $b_1=b_{l_\Gamma}$ (and we are not within case 1).

Consider any integers $b'_1>\ldots>b'_{l_\Gamma}$ and let $v_1,\ldots,v_m$ be the vertices of $D_\Gamma(b'_1,\ldots,b'_{l_\Gamma})$ with the tangent cones being $C'_1,\ldots,C'_m$. Now Lemma~\ref{singular} in combination with the argument for case 2 show that $$\psi_\Gamma(b_1,\ldots,b_{l_\Gamma})=\frac1{[l_\Gamma]_t!}F\left(\sum\limits_{i=1}^m e^{v_\Gamma(b_1)-v_i}\sigma_{\varphi_\Gamma(b'_1,\ldots,b'_{l_\Gamma})}(C'_i)\right)=0.$$
\end{proof}

\section{Weighted Brion's Theorem for Degenerated Polyhedra}

In order to prove Lemma~\ref{singular} it turns out necessary to occupy ourselves with the following question: how does our weighted version of Brion's theorem behave when we degenerate a polyhedron by shifting some of its facets? Let us elaborate.


We start with the following definition: two polytopes in $\mathbb{R}^d$ are said to be {\it analogous} if their normal  fans coincide. The defintion of the normal fan of a polytope (also referred to as the "polar fan" or the "dual fan") may, for example, be found in any textbook on toric geometry. In other words, two polytopes are analogous if there is a combinatorial equivalence between them such that the tangent cones at two corresponding faces may be obtained from one another by a translation.

We then say that a polytope $\Sigma'\subset\mathbb{R}^d$ is a degeneration of polytope $\Sigma\subset\mathbb{R}^d$ if there is continuous deformation $\Sigma(\alpha), \alpha\in[0,1]$ such that
\begin{enumerate}
\item $\Sigma(0)=\Sigma$,
\item $\Sigma(1)=\Sigma'$ and
\item $\Sigma(\alpha)$ is a polytope analogous to $\Sigma$ for $0\le\alpha<1$.
\end{enumerate}
One may thus say that we deform $\Sigma$ by continuously shifting its facets (or, rather, the hyperplanes containing its facets) in such a way that the combinatorial structure does not change until we reach point 1 in time.

It is easy to show that if $\Sigma'$ is a degeneration of $\Sigma$, then the normal fan of $\Sigma$ is a refinement of the normal fan of $\Sigma'$. This gives us a map $\pi:\mathcal F_\Sigma\rightarrow\mathcal F_{\Sigma'}$ sending $f\in\mathcal F_\Sigma$ to the minimal face $\pi(f)$ of $\Sigma'$ such that the cone corresponding to $\pi(f)$ in the normal fan of $\Sigma'$ contains the cone corresponding to $f$ in the normal fan of $\Sigma$. The map $\pi$ is surjective and has the property $\dim f\ge\dim \pi(f)$.


A useful fact is that, in terms of Brion's theorem, we may ignore the combinatorial structure having changed as a result of the degeneration. That is to say that the following identity holds. $$\sigma(\Sigma')=\sum\limits_{v\text{ vertex of }\Sigma}e^{\pi(v)-v}\sigma(C_v),$$ where $C_v$ is the corresponding tangent cone and we abuse the notations somewhat, knowing that $\pi(v)$ is a vertex.

We now demonstrate how and why this can be generalized to the weighted setting.
\begin{lemma}\label{wdegen}
In the above setting consider a weight function $\varphi:\mathcal F_\Sigma\rightarrow R$ for some commutative ring R. Next define $\varphi':\mathcal F_{\Sigma'}\rightarrow R$ by $$\varphi'(f')=\sum\limits_{f\in\pi^{-1}(f')}(-1)^{\dim f-\dim f'}\varphi(f).$$ Then the identity $$\sigma_{\varphi'}(\Sigma')=\sum\limits_{v\text{ vertex of }\Sigma}e^{\pi(v)-v}\sigma_\varphi(C_v)$$ holds.
\end{lemma}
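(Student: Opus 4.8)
The strategy is to reduce Lemma~\ref{wdegen} to the unweighted degenerated Brion identity (stated just above the lemma) by a Möbius-inversion/telescoping argument over the face poset, mirroring how Theorem~\ref{wbrion} was deduced from Theorem~\ref{brion}. Recall that the unweighted degenerated identity reads $\sigma(\Sigma')=\sum_{v}e^{\pi(v)-v}\sigma(C_v)$, and more generally, applying the same reasoning to any face $g'$ of $\Sigma'$ in place of the whole polytope (i.e. viewing $g'$ as a degeneration of the union of faces $f$ with $\pi(f)\subseteq g'$), one gets an analogous identity at the level of each face. The key observation is that the weighted generating function $S_{\varphi'}(\Sigma')$ decomposes as a sum over faces $f'$ of $\Sigma'$ of $\varphi'(f')$ times the characteristic series of the \emph{relative interior} of $f'$, and that the relative-interior characteristic series can be written by inclusion–exclusion in terms of the IPT's of the faces themselves. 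So the real content is a bookkeeping identity relating the two systems of weights $\varphi$ and $\varphi'$ through the poset map $\pi$.

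\textbf{Step 1: Reduce to relative interiors.} First I would write $S_{\varphi}(\Sigma)=\sum_{f\in\mathcal F_\Sigma}\varphi(f)\,S(\mathrm{relint}\,f)$ and similarly $S_{\varphi'}(\Sigma')=\sum_{f'\in\mathcal F_{\Sigma'}}\varphi'(f')\,S(\mathrm{relint}\,f')$, where $S(\mathrm{relint}\,f)$ denotes the characteristic series of the lattice points in the relative interior of $f$; this is just the definition of the function $g$ attached to $\varphi$. Passing to IPT's (which is legitimate after clearing a common denominator, exactly as in the proof of the Proposition preceding Theorem~\ref{wbrion}), one has $\sigma_{\mathrm{relint}\,f}=\sum_{g\le f}(-1)^{\dim f-\dim g}\sigma(g)$ by inclusion–exclusion over faces, and dually $\sigma(f)=\sum_{g\le f}\sigma_{\mathrm{relint}\,g}$. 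So everything can be rephrased purely in terms of the $\sigma(f)$, $\sigma(f')$ and the combinatorics of $\pi$.

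\textbf{Step 2: Apply unweighted degenerated Brion face-by-face and resum.} For each face $f$ of $\Sigma$, the face $\pi(f)$ of $\Sigma'$ is a degeneration of $f$ in the same sense (its normal fan is coarsened), with the vertices of $f$ mapping to the vertices of $\pi(f)$; the unweighted degenerated identity applied to $f$ gives a relation whose left side involves $\sigma(\pi(f))$ and whose right side involves the tangent cones of $\Sigma$ at the vertices of $f$. The plan is: take the unweighted degenerated Brion identity, written for \emph{every} face $f$ of $\Sigma$, multiply the identity for $f$ by $(-1)^{\dim f}\varphi(f)$, and sum over all $f\in\mathcal F_\Sigma$. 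On the right-hand side, collecting terms by vertex $v$ and using $\sum_{f\ge v}(-1)^{\dim f}\varphi(f)\,\sigma(C_v)$-type sums against the face structure of the tangent cone $C_v$ recovers precisely $(-1)^{0}\sigma_\varphi(C_v)$ up to the shift $e^{\pi(v)-v}$ (here one uses that the faces of $\Sigma$ containing $v$ are the faces of $C_v$, and that inclusion–exclusion turns $\varphi$ into the relative-interior weighting that defines $\sigma_\varphi(C_v)$). On the left-hand side, grouping the terms $(-1)^{\dim f}\varphi(f)\sigma(\pi(f))$ according to the value $f'=\pi(f)$ and using $\dim f = \dim\pi(f) + (\dim f - \dim f')$ produces exactly $\sum_{f'}(-1)^{\dim f'}\big(\sum_{f\in\pi^{-1}(f')}(-1)^{\dim f-\dim f'}\varphi(f)\big)\sigma(f') = \sum_{f'}(-1)^{\dim f'}\varphi'(f')\sigma(f')$, which by the inclusion–exclusion of Step 1 is $\sigma_{\varphi'}(\Sigma')$ (after accounting for the same relative-interior bookkeeping on the $\Sigma'$ side). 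Matching the two sides yields the claimed identity.

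\textbf{Main obstacle.} The delicate point is the careful handling of the poset combinatorics so that the signs and dimension shifts line up — in particular verifying that, after summing the unweighted identities over all faces of $\Sigma$, the right-hand side reorganizes cleanly into $\sum_v e^{\pi(v)-v}\sigma_\varphi(C_v)$, which requires that for a fixed vertex $v$ the faces $f\ni v$ of $\Sigma$ biject with faces of the tangent cone $C_v$ and that $\varphi$ restricted to these faces is exactly the data defining $\sigma_\varphi(C_v)$. A cleaner route, which I would actually write up, is to mimic the proof of Theorem~\ref{wbrion} directly: use the finite family $\{P_i\}$ of sub-polyhedra (one per face, each a shrunken copy containing all its interior lattice points) on which the weight function is constant, apply the \emph{unweighted} degenerated Brion identity to each $P_i$ (each $P_i$ is analogous to its corresponding face and hence degenerates compatibly), and add these identities with coefficients $\varphi(f_i)$. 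The resulting left-hand side is $S_{\varphi'}(\Sigma')$ once one checks that the alternating sum defining $\varphi'$ is exactly what accumulates when the shrunken faces $P_i$ over a fibre $\pi^{-1}(f')$ are glued along $\pi$; the right-hand side is manifestly $\sum_v e^{\pi(v)-v}\sigma_\varphi(C_v)$ since the shrinking does not affect tangent cones at vertices. The one genuinely new verification compared to Theorem~\ref{wbrion} is this identification of the accumulated weight on $\Sigma'$ with the signed sum $\varphi'(f')=\sum_{f\in\pi^{-1}(f')}(-1)^{\dim f-\dim f'}\varphi(f)$, and that is where I expect to spend the most care.
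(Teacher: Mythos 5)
Your central resummation in Step~2 rests on two identifications that are false as stated. Grouping the right-hand sides of your summed identities by a vertex $v$ of $\Sigma$ you obtain $\sum_{f\ni v}(-1)^{\dim f}\varphi(f)\,\sigma(C_v^f)$ (where $C_v^f$ denotes the face of $C_v$ corresponding to $f$), and you claim this equals $\sigma_\varphi(C_v)$; similarly you claim $\sum_{f'}(-1)^{\dim f'}\varphi'(f')\,\sigma(f')=\sigma_{\varphi'}(\Sigma')$. Inclusion--exclusion goes the other way: it gives $\sigma_\varphi(C_v)=\sum_{g\ni v}\bigl(\sum_{f\ge g}(-1)^{\dim f-\dim g}\varphi(f)\bigr)\sigma(C_v^g)$, i.e.\ the M\"obius-inverted coefficient is attached to the \emph{smaller} face $g$, not the coefficient $(-1)^{\dim f}\varphi(f)$ to $f$. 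Concretely, for $\Sigma=[0,1]$ degenerating to the point $\{0\}$ and $v=0$ one has $\sum_{f\ni 0}(-1)^{\dim f}\varphi(f)\sigma(C_0^f)=\varphi(\{0\})-\varphi([0,1])\tfrac{1}{1-x}$, whereas $\sigma_\varphi(C_0)=\varphi(\{0\})+\varphi([0,1])\tfrac{x}{1-x}$; and already for a segment $\Sigma'=[0,1]$ the expression $\sum_{f'}(-1)^{\dim f'}\varphi'(f')\sigma(f')$ differs from $\sigma_{\varphi'}(\Sigma')$ by $-\varphi'([0,1])(1+x)$. If you repair the coefficients (attach $\hat\varphi(g)=\sum_{f\ge g}(-1)^{\dim f-\dim g}\varphi(f)$ to the identity written for the face $g$), the vertex side does assemble into $\sum_v e^{\pi(v)-v}\sigma_\varphi(C_v)$, but the left-hand side becomes $\sum_g\hat\varphi(g)\sigma(\pi(g))$, and identifying it with $\sigma_{\varphi'}(\Sigma')$ requires an unproven Euler-characteristic-type identity about the fibres of $\pi$, namely $\sum_{g\le f,\,\pi(g)=f'}(-1)^{\dim g}=(-1)^{\dim f'}$ whenever $f'\subseteq\pi(f)$ (and $0$ otherwise); that identity carries essentially the whole content of the lemma and does not follow from bookkeeping.

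Your fallback route has the same problem in a sharper form: summing \emph{unweighted} identities for the shrunken faces $P_i$ with the coefficients $\varphi(f_i)$ contains no mechanism that could ever produce the signs $(-1)^{\dim f-\dim f'}$ in the definition of $\varphi'$; moreover a face that collapses under the degeneration (the edge of $[0,1]$ above) admits no compatible degeneration of its shrunken copy at all --- the deformed copy would have to pass through the empty set --- and the images of distinct $P_i$ inside $\Sigma'$ overlap, so their lattice-point sums cannot add up to $S_{\varphi'}(\Sigma')$ for generic $\varphi$. The missing ingredient in both routes is a sign-producing mechanism, and this is exactly what the paper's proof supplies: by Theorem~\ref{wbrion} applied to $\Sigma'$ it suffices to prove, for each vertex $v'$ of $\Sigma'$, that $\sigma_{\varphi'}(C_{v'})=\sum_{\pi(v)=v'}e^{v'-v}\sigma_\varphi(C_v)$; then for every face $f$ with $v'\in\pi(f)$, writing $v_1,\ldots,v_m$ for the vertices of $f$ with $\pi(v_i)=v'$, $C_i$ for the face of $C_{v_i}$ corresponding to $f$ and $C'$ for the face of $C_{v'}$ corresponding to $\pi(f)$, one proves $\sum_i\sigma(\mathrm{Int}(C_i-v_i+v'))=(-1)^{\dim f-\dim\pi(f)}\sigma(\mathrm{Int}(C'))$ by applying Stanley reciprocity twice and the unweighted degenerated Brion identity to the reflected cone $-(C'-v')+v'$ viewed as a degeneration of $-\bigcap_i C_i$. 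The signs come from reciprocity, after which the faces are summed with the plain coefficients $\varphi(f)$ and everything collapses to the claimed local identity. Without reciprocity, or a proof of the fibre identity above, your argument does not close.
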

\begin{proof}
It suffices to show that for any vertex $v'$ of $\Sigma'$ with tangent cone $C_{v'}$ we have $$\sigma_{\varphi'}(C_{v'})=\sum\limits_{\substack{\text{vertex }v\\\pi(v)=v'}}e^{v'-v}\sigma_\varphi(C_v).$$

Consider a face $f$ of $\Sigma$ such that $\pi(f)$ contains $v'$. Let $v_1,\ldots,v_m$ be the vertices of $f$ with $\pi(v_i)=v'$. Let $C_i$ denote the face of $C_{v_i}$ corresponding to (containing) $f$ and let $C'$ be the face of $C_{v'}$ corresponding to $\pi(f)$. We have 
\begin{multline*}
\sum\limits_{i=1}^m\sigma(\mathrm{Int}(C_i-v_i+v'))=(-1)^{\dim f}\sum\limits_{i=1}^m\sigma(-(C_i-v_i)+v'))=\\(-1)^{\dim f}\sigma(-(C'-v')+v')=(-1)^{\dim f-\dim\pi(f)}\sigma(\mathrm{Int}(C'),
\end{multline*}
where $\mathrm{Int}$ denotes the relative interior of a polyhedron (the polyhedron minus its boundary), $X+a$ is set $X$ translated by vector $a$ and $-X$ is $X$ reflected in the origin. The first an third equalities are due to Stanley reciprocity (see~\cite{beckrob}) while the second one is Brion's theorem for the cone $-(C'-v')+v'$ viewed as a degeneration of the polyhedron $-\bigcap_{i=1}^m C_i$. We understand the relative interior of a single point to be itself (rather than the empty set).

Now it remains to point out that adding up the above equalities with coefficients $\varphi(f)$ yields the desired identity.
\end{proof}

\section{Proof of Lemma~\ref{singular}}

It is actually somewhat more convenient to prove a generalization of Lemma~\ref{singular}.

For an ordinary graph $\Gamma$ let $b_1,\ldots,b_{l_\Gamma}$ be a strictly decreasing sequence of integers and $b'_1,\ldots,b'_{l_\Gamma}$ be decreasing but not strictly, i.e. at least two of $b'_i$ coincide. Specifically, let there be $m$ distinct $b'_i$ with the $j$-th largest of those $m$ values occurring $l_i$ times.

The polyhedron $D_\Gamma(b'_1,\ldots,b'_{l_\Gamma})$ is a degeneration of the polyhedron $D_\Gamma(b_1,\ldots,b_{l_\Gamma})$ in the above sense. To see this simply consider continuous functions $b_i(\alpha)$ with $b_i(0)=b_i$, $b_i(1)=b'_i$ and $b_1(\alpha)<\ldots<b_{l_\Gamma}(\alpha)$ for $\alpha<1$. Let $$\pi:\mathcal F_{D_\Gamma(b_1,\ldots,b_{l_\Gamma})}\rightarrow \mathcal F_{D_\Gamma(b'_1,\ldots,b'_{l_\Gamma})}$$ be the corresponding map. Also, let $v_1,\ldots,v_N$ be the vertices of $D_\Gamma(b_1,\ldots,b_{l_\Gamma})$ with respective tangent cones $C_1,\ldots,C_N$. The mentioned generalization is then as follows.
\begin{lemma}\label{gensingular}
$$[l_1]_t!\ldots[l_m]_t!\sigma_{\varphi_\Gamma(b'_1,\ldots,b'_{l_\Gamma})}\left(D_\Gamma(b'_1,\ldots,b'_{l_\Gamma})\right)=\sum\limits_{i=1}^N e^{\pi(v_i)-v_i}\sigma_{\varphi_\Gamma(b_1,\ldots,b_{l_\Gamma})}(C_i).$$
\end{lemma}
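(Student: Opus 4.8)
The plan is to first recognise the right-hand side of Lemma~\ref{gensingular} as the right-hand side of Lemma~\ref{wdegen}, applied to the degeneration of $\Sigma=D_\Gamma(b_1,\dots,b_{l_\Gamma})$ into $\Sigma'=D_\Gamma(b'_1,\dots,b'_{l_\Gamma})$, with weight $\varphi=\varphi_\Gamma(b_1,\dots,b_{l_\Gamma})$, vertices $v_1,\dots,v_N$, cones $C_1,\dots,C_N$ and the map $\pi$ fixed above. (These polyhedra may be unbounded, but the degeneration only translates the hyperplanes carrying the facets, so the recession cone is preserved and the proof of Lemma~\ref{wdegen}, together with the reciprocity statements it rests on, carries over verbatim.) Lemma~\ref{wdegen} then identifies that right-hand side with $\sigma_{\varphi'}(\Sigma')$, where $$\varphi'(f')=\sum_{f\in\pi^{-1}(f')}(-1)^{\dim f-\dim f'}\,\varphi_\Gamma(b_1,\dots,b_{l_\Gamma})(f).$$ Because a weighted IPT depends on its weight function only through the values on faces, it will then suffice to establish the pointwise identity $\varphi'(f')=[l_1]_t!\cdots[l_m]_t!\,\varphi_\Gamma(b'_1,\dots,b'_{l_\Gamma})(f')$ for each face $f'$ of $\Sigma'$.

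\textbf{Step 2: describing $\pi^{-1}(f')$ and factoring over components.} Next I would fix $f'$ and its subgraph $\Delta_{f'}$ (Proposition~\ref{faces}). By part~3 of that proposition the $l_j$ top-row vertices of the $j$-th equal-value block all lie in a single connected component $E_j$ of $\Delta_{f'}$, and $\Delta_{f'}$ is the disjoint union of $E_1,\dots,E_m$ with the components $F_1,\dots,F_{\dim f'}$ that meet no top-row vertex. The key structural claim to prove is that $f\in\pi^{-1}(f')$ precisely when $\Delta_f$ arises from $\Delta_{f'}$ by keeping the $F_i$'s and the blocks with $l_j=1$ intact and replacing each $E_j$ with $l_j\ge2$ by an arbitrary subgraph $\delta_j$ on the same vertex set that obeys properties~1--2 of Proposition~\ref{faces} and separates the $l_j$ top vertices into distinct components. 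The point is that when a block's values collide every coordinate inside $E_j$ is squeezed to their common limit, so $E_j$ is recovered in the limit whatever $\delta_j$ was, whereas a genuine refinement of an $F_i$ or of a block with $l_j=1$ persists through the degeneration and would drive $\pi(f)$ into a proper face of $f'$. Granting this, and writing $w(C)=\prod_r(1-t^r)^{d_r(C)}$ for the componentwise contribution to $\varphi_\Gamma$ and $w(\delta)=\prod_C w(C)$, the additivity of the $d_r$ (hence multiplicativity of $\varphi_\Gamma$) over connected components and the fact that $\dim f-\dim f'$ counts the non-top components of the chosen $\delta_j$'s give $$\varphi'(f')=\prod_i w(F_i)\cdot\prod_{j:l_j=1}w(E_j)\cdot\prod_{j:l_j\ge2}\Big(\sum_{\delta_j}(-1)^{e(\delta_j)}w(\delta_j)\Big),$$ with $e(\cdot)$ the number of non-top components. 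Since $\varphi_\Gamma(b'_1,\dots,b'_{l_\Gamma})(f')=\prod_i w(F_i)\prod_j w(E_j)$ and $[1]_t!=1$, the whole statement collapses to the single-component identity $$\sum_{\delta}(-1)^{e(\delta)}w(\delta)=[l]_t!\;w(E)$$ for every connected ordinary graph $E$ with $l\ge2$ top vertices, summed over subgraphs $\delta$ of $E$ on the full vertex set that satisfy properties~1--2 of Proposition~\ref{faces} and separate the $l$ top vertices.

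\textbf{Step 3: the single-component identity.} This last identity is the core of the argument and the step I expect to be the main obstacle. I would attempt it by induction on the size of $E$, the recursion being governed by $[l]_t!=[l]_t\,[l-1]_t!$: peeling off the top two rows of $E$ (as in the sectioning argument used for Theorem~\ref{zero}) should isolate the factor $[l]_t$ and leave a sum of the same shape over smaller ordinary graphs, to which the inductive hypothesis applies. An alternative is to construct a sign-reversing involution on the non-rigid $\delta$'s, cancelling all but a family of surviving terms indexed by $S_l$ with weights $t^{\mathrm{inv}(w)}$, so that the survivors add up to $[l]_t!=\sum_{w\in S_l}t^{\mathrm{inv}(w)}$. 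Either way, the laborious part is the weight bookkeeping: one has to track how the ``unit up-steps'' of the row profiles encoded by $w$ get apportioned among the components of $\delta$, and check that once the alternating signs have acted the surviving contributions reassemble into exactly $[l]_t!\,w(E)$. The case $E=\mathcal T_l$, the full Gelfand--Tsetlin triangle — where $D_E$ of equal values is a single point and the identity amounts to the classical fact that the weighted Brion sum over the $l!$ vertices of $D_E(b_1,\dots,b_l)$ equals $[l]_t!$ — is the guiding example and a natural consistency check, alongside the small graphs that have to be verified by direct computation as base cases.
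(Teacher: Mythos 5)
Your Step 1 is exactly the paper's reduction: Lemma~\ref{wdegen} converts the statement into a pointwise weight identity on faces of the degenerate polyhedron (this is precisely the paper's Lemma~\ref{graphsum}). The flaw is the structural claim in Step 2. The fiber $\pi^{-1}(f')$ is \emph{not} obtained by keeping the non-top components intact and replacing each block component $E_j$ by an \emph{arbitrary} admissible subgraph separating its $l_j$ top vertices: the map $\pi$ is governed by a closure operation ($\Delta_{\pi(g)}$ is the smallest subgraph containing all edges of $\Delta_g$ and admissible for the degenerate values), and an arbitrary separating $\delta_j$ need not close up to all of $E_j$. Equivalently, in the key case where all $b'_i$ coincide and $f'$ is the apex of the cone $D_\Gamma(b,\ldots,b)$, the fiber consists only of the \emph{bounded} faces of $D_\Gamma(b_1,\ldots,b_{l_\Gamma})$, not of all faces. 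Concretely, let $\Gamma$ have vertices $(0,1),(0,2),(1,1),(2,1)$ (an ordinary graph with $l_\Gamma=2$), take $b'_1=b'_2=b$, and let $f'$ be the apex of $D_\Gamma(b,b)$, so $\Delta_{f'}=\Gamma$ and $[2]_t!\,\varphi_\Gamma(b,b)(f')=1+t$. For $b_1>b_2$ the polyhedron $D_\Gamma(b_1,b_2)$ has six faces, i.e.\ six admissible subgraphs separating $(0,1)$ from $(0,2)$; your alternating sum over all six equals $(1-t)^2-3(1-t)+2=t+t^2\neq 1+t$. The true fiber consists of the three faces whose subgraph contains the edge joining $(1,1)$ and $(2,1)$ (the bounded faces), and over these the sum is $1+1-(1-t)=1+t$, as required. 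The face whose subgraph has only the edge joining $(0,1)$ and $(1,1)$ is unbounded (the coordinate $s_{2,1}$ may tend to $-\infty$); its image under $\pi$ is the whole ray $D_\Gamma(b,b)$, not its apex — so the intuition that ``every coordinate inside $E_j$ is squeezed to the common limit'' fails exactly for such coordinates.

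Because of this, the single-component identity you reduce to in Step 3 is false as stated (the same example refutes it with $l=2$), and the part you defer is where the real work lies. One must first characterize the fiber correctly — in the connected case, bounded faces, detected by the presence of the extreme leftmost/rightmost edges in $\Delta_g$ — and then prove the restricted alternating-sum identity. The paper does this by induction on the number of vertices of $\Gamma$: the disconnected case factors over components much as you propose, while in the connected (cone) case it identifies $\pi^{-1}(f')$ with the bounded faces, removes the top row of $\Gamma$, invokes the induction hypothesis for two auxiliary degenerations of the smaller graph, and closes with a two-row bookkeeping argument (the graphs $I_g$, the coefficients $\nu(I_g)$ and a three-term recurrence) that produces the factor relating $[l_\Gamma]_t!$ to $[l_{\Gamma'}]_t!$. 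Your proposed top-row peeling would have to reproduce this bookkeeping, but as it stands it starts from an incorrect description of the fiber and hence from a false identity.
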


However, with Lemma~\ref{wdegen} taken into account, Lemma~\ref{gensingular} is an immediate consequence of the below fact.
\begin{lemma}\label{graphsum}
For a face $f$ of $D_\Gamma(b'_1,\ldots,b'_{l_\Gamma})$ we have $$[l_1]_t!\ldots[\l_m]_t!\varphi_\Gamma(b'_1,\ldots,b'_{l_\Gamma})(f)=\sum\limits_{g\in\pi^{-1}(f)}(-1)^{\dim g-\dim f}\varphi_\Gamma(b_1,\ldots,b_{l_\Gamma})(g).$$
\end{lemma}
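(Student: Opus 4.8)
\textbf{Proof proposal for Lemma~\ref{graphsum}.}

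The plan is to reduce the identity to a purely local statement about how the weight statistic $d_l$ changes as vertices in row $a_\Gamma$ are merged, and then to recognize the resulting alternating sum as a product of $t$-factorials via a standard generating-function identity. First I would unwind the definitions: a face $g$ of $D_\Gamma(b_1,\ldots,b_{l_\Gamma})$ (the regular polyhedron) corresponds by Proposition~\ref{faces} to a subgraph $\Delta_g$, and $\pi(g)=f$ precisely when $\Delta_g$, after additionally joining those vertices of row $a_\Gamma$ forced equal by the constraint $b'_i = b'_j$, collapses to $\Delta_f$. Equivalently, fix the subgraph $\Delta_f$ of the degenerate polyhedron; a preimage $g$ of $f$ under $\pi$ is obtained by \emph{un-merging} the components of $\Delta_f$ that touch row $a_\Gamma$ into smaller ordinary subgraphs in a way consistent with the edge axioms, leaving all components disjoint from row $a_\Gamma$ untouched (those are frozen, since $\pi$ only affects the fan directions coming from the row-$a_\Gamma$ equalities). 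I would make this ``un-merging'' picture precise: the set $\pi^{-1}(f)$ is in bijection with a product, over the $m$ blocks of row-$a_\Gamma$ vertices that are pairwise equal in $(b'_i)$, of the set of ways to split the corresponding ``block component(s)'' of $\Delta_f$ into ordinary subgraphs, with $\dim g - \dim f$ being additive over these blocks.

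Given that product structure, both sides of the claimed identity factor over the $m$ blocks, so it suffices to prove it one block at a time, i.e. to prove the case $m=1$: $b' = (b,\ldots,b)$ with $l$ repetitions, $b_1>\cdots>b_l$, and $[l]_t! \, \varphi_\Gamma(b,\ldots,b)(f) = \sum_{g\in\pi^{-1}(f)}(-1)^{\dim g - \dim f}\varphi_\Gamma(b_1,\ldots,b_l)(g)$ for a single face $f$. Here $f$ is determined by the component $E$ of $\Delta_f$ containing all of row $a_\Gamma$ (together with the frozen components, which contribute the same factor to $\varphi$ on both sides and cancel). The preimages $g$ correspond to the ways of deleting edges of $E$ so as to break it into $l$ ordinary subgraphs each containing exactly one row-$a_\Gamma$ vertex — call them $E_1,\ldots,E_l$ from left to right; then $\varphi_\Gamma(b_1,\ldots,b_l)(g) = \prod_j \varphi_{E_j}(b_j)$ times the common frozen factor, and $\dim g - \dim f$ equals the number of deleted edges minus (something fixed). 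So after dividing out the frozen data I must show
\begin{equation}\label{eq:blocksum}
[l]_t!\,\prod_{r} (1-t^r)^{D_r(E)} = \sum (-1)^{\#\text{deleted edges}} \prod_{j=1}^{l}\prod_r (1-t^r)^{D_r(E_j)},
\end{equation}
where $D_r(\cdot)$ is the count of (component, row $>a_\Gamma$)-pairs with $r-1$ vertices in the upper row and $r$ in the lower, and the sum is over admissible edge-deletions splitting $E$ into $l$ row-$a_\Gamma$-separated ordinary pieces.

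The heart of the matter is \eqref{eq:blocksum}, and I expect this to be the main obstacle. The key observation I would exploit is that $E$ is itself an ordinary graph with $l$ vertices in its top row $a_\Gamma$, so it has the ``staircase/brick'' shape forced by axioms 2 and 3; deleting the minimal set of edges to separate the $l$ top vertices amounts to choosing, in each row $i > a_\Gamma$, a consistent way to cut the row-$i$ bricks, and the $D_r$ statistic tracks exactly the multiplicities of bricks of each size. Concretely, I would process $E$ row by row from the top: in row $a_\Gamma+1$ the $l$ top vertices hang over some consecutive vertices, grouped into maximal runs ``brick'' patterns; the local contribution of a maximal run of $r$ equal values sitting under $r-1$ vertices above contributes a factor $(1-t^r)$ to $\varphi$ on the merged side, while on the split side summing over all ways to place the cut within that run, weighted by $(-1)$, telescopes. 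The identity that makes everything work is the finite $t$-binomial / $q$-factorial identity
\[
\sum_{\substack{c_1+\cdots+c_l = N\\ c_j\ge 1}} (-1)^{N-l}\prod_j (1-t^{c_j})^{[\,\cdots\,]} \;=\; [l]_t!\,(\text{merged weight}),
\]
more precisely the telescoping $\sum_{j=1}^{r}(-1)^{r-j}[\text{ways with a cut after position }j] = \ldots$ which is the defining recursion for the Gaussian binomial / the expansion $[l]_t! = \prod_{j=1}^{l}\frac{1-t^j}{1-t}$. I would verify this by an explicit induction on the number of rows of $E$ below $a_\Gamma$, peeling off the bottom row (or the top row below $a_\Gamma$) and checking that the alternating sum over how that row's bricks are cut produces exactly the factor $\prod_r (1-t^r)^{D_r(\text{that row})}$ on the merged side times $[l]_t!$ appearing once overall — the factor $[l]_t!$ should emerge from the very bottom vertex (the unique vertex in the last row of $E$, by the remark after Figure~\ref{ex3}), where all $l$ strands have been forced together, and the $r=l$ brick there contributes the telescoped $[l]_t!$. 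The bookkeeping of signs and of which $(E_j, \text{row } i)$ pairs survive is delicate, so I would set up careful notation for the ``cut profile'' of each row and prove a one-row lemma first, then assemble.
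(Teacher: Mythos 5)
Your reduction to a single block of equal $b'_i$'s and the stripping of ``frozen'' components is sound and runs parallel to the disconnected case of the paper's argument (it tacitly uses the fact that $\Delta_{\pi(g)}$ is the closure of $\Delta_g$ under the admissibility conditions of Proposition~\ref{faces}, which you should state and prove). The genuine gap is in your model of $\pi^{-1}(f)$, and hence in the block identity you yourself call the heart of the matter. A preimage face $g$ is \emph{not} a splitting of the block component $E$ into exactly $l$ pieces each containing one row-$a_\Gamma$ vertex: by Proposition~\ref{faces} the dimension of $g$ is the number of components of $\Delta_g$ containing no row-$a_\Gamma$ vertex, so every positive-dimensional $g$ in the preimage has extra pieces with no top vertex; these pieces contribute their own factors to $\varphi_\Gamma(b_1,\ldots,b_{l_\Gamma})(g)$ and carry the sign $(-1)^{\dim g-\dim f}$. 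Your right-hand side, a sum over splittings into exactly $l$ top-rooted pieces with sign given by the number of deleted edges, omits all of these terms. Already in the smallest case ($\Gamma$ with two top vertices over one bottom vertex, a segment degenerating to a point) the correct identity is $1+1-(1-t)=[2]_t!$, where the summand $-(1-t)$ comes precisely from the one-dimensional face whose subgraph has an isolated bottom vertex; your sum would give $2$. Relatedly, $\dim g-\dim f$ is not ``number of deleted edges minus a constant'' in general, and the preimage of the apex is exactly the set of \emph{bounded} faces of $D_\Gamma(b_1,\ldots,b_{l_\Gamma})$ --- a characterization you never isolate but will need.

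Even granting a corrected combinatorial model, the core identity is only gestured at: your displayed ``identity that makes everything work'' has a placeholder exponent, and you concede that the sign and cut-profile bookkeeping remains to be set up. This is exactly where the paper's proof does its work: it inducts on the number of vertices of $\Gamma$, peels off the top row, plays the two degenerations of $D_{\Gamma'}$ (strict values to the values $c'_i$, and to the cone) against each other, relates $\varphi_\Gamma$ to $\varphi_{\Gamma'}$ by an explicit factor depending on how the top-row sizes change, and closes with a short recurrence computation producing the factor $\kappa_\Gamma$. Your row-by-row telescoping plan may well be realizable, but as written it rests on a mis-stated key identity and an unproved assembly step, so it does not yet constitute a proof.
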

\begin{proof}
First of all, let us describe the map $\pi$ in terms of corresponding subgraphs. Consider a face $g$ of $D_\Gamma(b_1,\ldots,b_{l_\Gamma})$.

\begin{proposition}
The subgraph $\Delta_{\pi(g)}$ is the smallest subgraph containing all edges of $\Delta_g$ and indeed corresponding to some face of $D_\Gamma(b'_1,\ldots,b'_{l_\Gamma})$, i.e satisfying the respective three conditions from Proposition~\ref{faces}.
\end{proposition}
\begin{proof}
The tangent cone $C_g$ at $g$ consists of points $x$ for which all coordinates outside of $\Gamma$ are 0, the coordinates in row $l_\Gamma$ are equal to $b_1,\ldots,b_{l_\Gamma}$ and for any edge of $\Delta_g$ the corresponding inequality between coordinates of $x$ holds. For a face $h$ of $D_\Gamma(b'_1,\ldots,b'_{l_\Gamma})$ the tangent cone $C_h$ is described analogously.

The cone in the normal fan corresponding to $h$ containing the cone in the normal fan corresponding to $g$ is equivalent to $C_g-x_g$ containing $C_h-x_h$, where $x_g$ is an arbitrary point in $g$ and $x_h$ is an arbitrary point in $h$. Let $e$ be an edge of $\Delta_g$ not in $\Delta_h$. For any point $x\in C_g-x_g$ the inequality corresponding to $e$ holds (since the corresponding two coordinates of $x_g$ are equal). However, we may find a point $y\in C_h-x_h$ for which the inequality corresponding to the edge does not hold.

This shows that any edge of $\Delta_g$ must be an edge of $\Delta_{\pi_g}$ and the minimality of $\Delta_\pi(g)$ follows from the minimality of normal fan cone corresponding to $\pi(g)$. 
\end{proof}

We prove the Lemma by induction on the number of vertices in $\Gamma$. The base of the induction is the case of $\Gamma$ having three vertices, two in row $a_\Gamma$ and one in row $a_\Gamma+1$. In this case we are dealing with a segment degenerating into a point and the Lemma simply states that $1+1-(1-t)=[2]_t!\cdot 1=1+t$. We turn to the step of our induction, it being broken up into two cases.

{\it Case 1.} The graph $\Delta_f$ is not connected.

Let $G_1,\ldots,G_m$ be the connected components of $\Delta_f$ which contain vertices from the top row $a_\Gamma$. Recall that the weight $\varphi(b'_1,\ldots,b'_{l_\Gamma})(f)$ is a product over the components in $\Delta_f$. Let $R$ be the product over the components other than these $G_j$.

The above characterization of $\pi$ shows that any component of $\Delta_f$ not amongst the $G_j$ is also a connected component of $\Delta_g$ for any $g\in\pi^{-1}(f)$. Thus $\varphi(b_1,\ldots,b_{l_\Gamma})(g)$ is a product of $R$ and factors corresponding to components of $\Delta_g$ which are contained in one of the $G_j$.

Write out the induction hypotheses for each of the degenerations of \\$D_{G_j}(b_1,\ldots,b_{l_j})$ into $D_{G_j}(b,\ldots,b)$ for some integer $b$. The observation in the previous paragraph shows that the product of these $m$ identities with an additional factor of $(-1)^{\dim f}R$ is precisely the desired identity. The induction hypothesis applies since in this case all the $G_i$ have less vertices than $\Gamma$.
\\

{\it Case 2.} The graph $\Delta_f$ is connected, i.e. $\Delta_f=\Gamma$. This means that \\$D_\Gamma(b'_1,\ldots,b'_{l_\Gamma})$ is a cone (all of the $b'_i$ are the same) and $f$ is the vertex of that cone.

Denote the value of all the $b'_i$ as $b$. The preimage $\pi^{-1}(f)$ consists precisely of the bounded faces of $D_\Gamma(b_1,\ldots,b_{l_\Gamma})$ because, for any degeneration, $\pi(f)$ is bounded if and only if $f$ is.
\begin{proposition}
A face $g$ of $D_\Gamma(b_1,\ldots,b_{l_\Gamma})$ is bounded if and only if $\Delta_g$ possesses the following two properties. 

Whenever both vertices $(i,j)$ and $(i+1,j-1)$ are the leftmost within $\Gamma$ in their respective rows, then $\Delta_g$ includes the edge joining them. 

Similarly, if both vertices $(i,j)$ and $(i+1,j)$ are the rightmost within $\Gamma$ in their respective rows, then $\Delta_g$ includes the edge joining them. 
\end{proposition}
\begin{proof}
If the conditions are satisfied, then, visibly, every coordinate of any point in $g$ is between $b_1$ and $b_{l_\Gamma}$. Conversely, if the first condition is violated, then $g$ contains points for which the coordinate $i+1,j-1$ is arbitrarily large. Similarly, if the second condition is violated, then $g$ contains points for which the negative of coordinate $i+1,j$ is arbitrarily large.
\end{proof}

Let $\Gamma'$ be $\Gamma$ with its top row removed. Choose $g$, a bounded face of \\$D_\Gamma(b_1,\ldots,b_{l_\Gamma})$, and let $\Delta'$ be obtained from $\Delta_g$ by removing the vertices in row $a_\Gamma$. The graph $\Delta'$ is a subgraph of $\Gamma'$.

Since all the vertices in the top row of $\Delta_g$ are in different components, every component of $\Delta'$ contains no more than two vertices from the top row of $\Delta'$. We introduce a nonincreasing sequence of integers $c'_1,\ldots,c'_{l_{\Gamma'}}$ such that $c'_i=c'_{i+1}$ if and only if vertices number $i$ and $i+1$ from the left in the top row of $\Delta'$ are in the same component.

On top of that, let $c_1,\ldots,c_{l_{\Gamma'}}$ be a strictly decreasing sequence of integers. We have three polyhedra: one is $D_{\Gamma'}(c_1,\ldots,c_{l_{\Gamma'}})$, the second is $D_{\Gamma'}(c'_1,\ldots,c'_{l_{\Gamma'}})$ and the third is the cone $D_{\Gamma'}(c,\ldots,c)$ (where $c$ is an arbitrary integer).

The second polyhedron is a degeneration of the first, while the cone is a degeneration of both others. We have the three corresponding maps of faces:
$$\pi':\mathcal F_{D_{\Gamma'}(c_1,\ldots,c_{l_{\Gamma'}})}\rightarrow\mathcal F_{D_{\Gamma'}(c,\ldots,c)},$$
$$\rho:\mathcal F_{D_{\Gamma'}(c_1,\ldots,c_{l_{\Gamma'}})}\rightarrow\mathcal F_{D_{\Gamma'}(c'_1,\ldots,c'_{l_{\Gamma'}})}\text{ and }$$
$$\upsilon:\mathcal F_{D_{\Gamma'}(c'_1,\ldots,c'_{l_{\Gamma'}})}\rightarrow\mathcal F_{D_{\Gamma'}(c,\ldots,c)}.$$

The induction hypothesis for the vertex $f'$ of $D_{\Gamma'}(c,\ldots,c)$ reads
\begin{equation}\label{hypoth1}
[l_{\Gamma'}]_t!\varphi_{\Gamma'}(c,\ldots,c)(f')=\sum\limits_{h\in(\pi')^{-1}(f')}(-1)^{\dim h}\varphi_{\Gamma'}(c_1,\ldots,c_{l_{\Gamma'}})(h).
\end{equation}

Further, $\Delta'$ corresponds to some face of $D_{\Gamma'}(c'_1,\ldots,c'_{l_{\Gamma'}})$ which we denote $g'$ (so $\Delta'=\Delta_{g'}$). Let $d$ denote the number of pairs $c'_i=c'_{i+1}$. The induction hypothesis applied to $g'$ states that 
\begin{equation}\label{hypoth2}
(1+t)^d\varphi_{\Gamma'}(c'_1,\ldots,c'_{l_{\Gamma'}})(g')=\sum\limits_{h\in\rho^{-1}(g')}(-1)^{\dim h-\dim g'}\varphi_{\Gamma'}(c_1,\ldots,c_{l_{\Gamma'}})(h).
\end{equation}

Now denote $I_g$ the graph obtained from $\Delta_g$ by removing all vertices below row $a_\Gamma+1$, that is, leaving only the top two rows. For bounded faces of $D_\Gamma(b_1,\ldots,b_{l_\Gamma})$ write $g_1\sim g_2$ if and only if $I_{g_1}=I_{g_2}$. The faces in the equivalence class of $g$ are in bijection with the bounded faces of $D_{\Gamma'}(c'_1,\ldots,c'_{l_{\Gamma'}})$, face $g_1$ corresponding to face $g'_1$ (defined analogously to $g'$).

The previous paragraph shows that adding up identities~(\ref{hypoth2}) for all $g_1\sim g$ gives
\begin{multline}\label{total}
(1+t)^d\sum\limits_{g'_1\in\upsilon^{-1}(f')}(-1)^{\dim g'_1}\varphi_{\Gamma'}(c'_1,\ldots,c'_{l_{\Gamma'}})(g'_1)=\\
\sum\limits_{h\in(\pi')^{-1}(f')}(-1)^{\dim h}\varphi_{\Gamma'}(c_1,\ldots,c_{l_{\Gamma'}})(h),
\end{multline}
$\upsilon^{-1}(f')$ being precisely the set of bounded faces of $D_{\Gamma'}(c'_1,\ldots,c'_{l_{\Gamma'}})$. The sum in the right-hand side ranges over all of $(\pi')^{-1}(f')$ because $\pi'=\upsilon\rho$.

Denote $e$ is the number of vertices in row $a_\Gamma+1$ in $\Delta_g$ which are not connected to any vertex from the top row $a_\Gamma$. For any $g_1\sim g$ we have
\begin{equation}\label{gviag}
\varphi_\Gamma(b_1,\ldots,b_{l_\Gamma})(g_1)=(1-t)^e(1-t^2)^d\varphi_{\Gamma'}(c'_1,\ldots,c'_{l_{\Gamma'}})(g'_1).
\end{equation}

What we do now is substitute~(\ref{hypoth1}) into~(\ref{total}) and then substitute~(\ref{gviag}) into the result. Taking into account that $\dim g_1=\dim g'_1+e$, we obtain
\begin{equation}\label{subst}
\sum\limits_{g_1\sim g}(-1)^{\dim g_1}\varphi_\Gamma(b_1,\ldots,b_{l_\Gamma})(g_1)=(-1)^e(1-t)^{d+e}[l_{\Gamma'}]_t!\varphi_{\Gamma'}(c,\ldots,c)(f').
\end{equation}
We denote $\nu(I_g)$ the coefficient $(-1)^e(1-t)^{d+e}$, both $d$ and $e$ being determined by $I_g$.

However, we have $$[l_\Gamma]_t!\varphi_\Gamma(b,\ldots,b)(f)=\kappa_\Gamma[l_{\Gamma'}]_t!\varphi_{\Gamma'}(c,\ldots,c)(f'),$$ where
\begin{equation}\label{fviaf}
\kappa_\Gamma=
\begin{cases}
\frac{1-t^{l_\Gamma}}{1-t}&\text{if } l_{\Gamma'}=l_\Gamma-1,\\
1& \text{if } l_{\Gamma'}=l_\Gamma,\\
1-t& \text{if } l_{\Gamma'}=l_\Gamma+1.
\end{cases}
\end{equation}

Therefore, if we sum up identity~(\ref{subst}) with $g$ ranging over a set $S$ of representatives for relation $\sim$, all that will be left to prove is the following proposition.
\begin{proposition}
$\sum\limits_{g\in S} \nu(I_g)=\kappa_\Gamma$
\end{proposition}
\begin{proof}
$I_g$ is a graph with two rows. Each vertex from the lower row is either connected to one of the two (or one) vertices directly above it or is isolated. The fact that $g$ is bounded translates into the following two additional requirements. If the leftmost vertex in the lower row has no upper-left neighbor, it is necessarily connected to its upper-right neighbor (i.e. it is not isolated). Similarly, if the rightmost vertex in the lower row has no upper-right neighbor, it is necessarily connected to its upper-left neighbor. Here are examples for each of the three cases from definition~(\ref{fviaf}).

\begin{picture}(105,20)
\put(15,15){\line(-1,-1){5}} \put(15,15){\line(1,-1){5}}
\put(5,15){\circle*{0.4}} \put(25,15){\circle*{0.4}} \put(20,10){\circle*{0.4}}
\put(7,3){$d=1$, $e=0$}

\put(60,15){\line(-1,-1){5}} \put(60,15){\line(1,-1){5}}
\put(50,15){\circle*{0.4}} \put(40,15){\circle*{0.4}} \put(45,10){\circle*{0.4}}
\put(44,3){$d=1$, $e=1$}

\put(85,15){\line(-1,-1){5}} \put(95,15){\line(1,-1){5}} \put(105,15){\line(1,-1){5}}
\put(90,10){\circle*{0.4}}
\put(87,3){$d=0$, $e=1$}
\end{picture}

Coincidentally, in each of the three cases the number of different possible $I_g$ is $3^{l_\Gamma-1}$.

We denote
$$
\sum\limits_{g\in S} \nu(I_g)=
\begin{cases}
\Sigma_{l_\Gamma}^-&\text{if } l_{\Gamma'}=l_\Gamma-1,\\
\Sigma_{l_\Gamma}^0& \text{if } l_{\Gamma'}=l_\Gamma,\\
\Sigma_{l_\Gamma}^+& \text{if } l_{\Gamma'}=l_\Gamma+1.
\end{cases}
$$
The Proposition follows directly from the recurrence relations
$$\Sigma_{l+1}^-=-(1-t)\Sigma_{l}^-+\Sigma_{l}^-+\Sigma_{l}^0,$$
$$\Sigma_{l+1}^0=-(1-t)\Sigma_{l}^-+(1-t)\Sigma_{l}^-+\Sigma_{l}^0,$$
$$\Sigma_{l+1}^+=-(1-t)\Sigma_{l}^0+(1-t)\Sigma_{l}^0+\Sigma_{l}^+.$$
\end{proof}

We have completed the consideration of case 2 and subsequently the step of our induction. 
\end{proof}

\section{Application to the Finite Case}

With the above machinery at hand, little more effort is needed to prove Theorem~\ref{contribfin}. We give an outline of this argument, the details being filled in straightforwardly.

Let $\lambda$ be an integral dominant $\mathfrak{sl}_n$-weight. As mentioned above, the polytope $GT_\lambda$ is in a natural bijection with the polytope $D_{\mathcal T}(\lambda_1,\ldots,\lambda_{n-1},0)$. Moreover, for an integer point $A$ in $GT_\lambda$ we visibly have $$p_A=\varphi_{\mathcal T}(\lambda_1,\ldots,\lambda_{n-1},0)(f),$$ where $f$ is the minimal face containing $A$. We obtain $$\sum_{A\in\mathbf{GT}_\lambda}p_A e^{\mu_A}=\psi_{\mathcal T}(\lambda_1,\ldots,\lambda_{n-1},0)|_{x_0=x_n=1}.$$

Now, the distinguished set of vertices of $GT_\lambda$, mentioned in Theorem~\ref{contribfin} can be described in terms of $D_{\mathcal T}(\lambda_1,\ldots,\lambda_{n-1},0)$ as follows. They are those vertices $v$ for which $\Delta_v$ contains no component which has more vertices in some row $i$ than in row $i-1$ (with $i>0$). We term those vertices ``relevant", the rest being ``non-relevant".

Indeed, let the partition $(\lambda_1,\ldots,\lambda_{n-1},0)$ have type $l_1,\ldots,l_m$, i.e. $r$-th largest part occurs $l_r$ times. Choose a vertex $v$.  We see that $\Delta_v$ has $m$ connected components, denote them $\Gamma_1,\ldots,\Gamma_m$. If $C_v$ is the tangent cone to $D_{\mathcal T}(\lambda_1,\ldots,\lambda_{n-1},0)$, we have $$C_v=D_{\Gamma_1}(\lambda_1,\ldots,\lambda_{l_1})\times D_{\Gamma_2}(\lambda_{l_1+1},\ldots,\lambda_{l_1+l_2})\times\ldots$$ and, consequently, $$F(\sigma_{\varphi_{\mathcal T}(\lambda_1,\ldots,\lambda_{n-1},0)}(C_v))=\psi_{\Gamma_1}(\lambda_1,\ldots,\lambda_{l_1}) \psi_{\Gamma_2}(\lambda_{l_1+1},\ldots,\lambda_{l_1+l_2})\ldots$$

The contributions of non-relevant vertices being zero now follows from Theorem~\ref{zero}.

Now, let us consider the relevant vertices. In the case of $\lambda$ being regular the following facts can be easily deduced (and are found in~\cite{me1}). There are exactly $n!$ relevant vertices of $GT_\lambda$. For each $w\in W$ we have exactly one relevant vertex with $\mu_v=w\lambda$, denote this vertex $v_w$. The tangent cone at $v_w$ is simplicial and unimodular. Let $\varepsilon_{w,1},\ldots,\varepsilon_{w,{n\choose 2}}$ be the generators of edges of tangent cone $C_{v_w}$. The set  $$\left\{F(e^{\varepsilon_{w,1}}),\ldots,F\left(e^{\varepsilon_{w,{n\choose 2}}}\right)\right\}$$ coincides with the set $\{e^{-w\alpha},\alpha\in\Phi^+\}$. Finally, for a face $f$ of $C_{v_w}$ we have $\varphi(f)=(1-t)^{\dim f}$ (in the notations of Section~\ref{gtbrion}). 

All of this together translates into the formula for the contribution of a relevant vertex provided by Theorem~\ref{contribfin}.

If $\lambda$ is singular, we reduce to the regular case. Indeed, let $\lambda^1$ be some regular integral dominant weight. We see that $GT_{\lambda}$ is a degeneration of $GT_{\lambda^1}$. If $\pi$ is the corresponding map between face sets, we have $\pi(v_{w_1}^1)=\pi(v_{w_2}^1)$ (relevant vertices of $GT_{\lambda^1}$) if and only if $w_1\lambda=w_2\lambda$. Since our degeneration coincides with the degeneration of $D_{\mathcal T}(\lambda_1^1,\ldots,\lambda_{n-1}^1,0)$ into $D_{\mathcal T}(\lambda_1,\ldots,\lambda_{n-1},0)$, we may apply Lemma~\ref{gensingular} to show that for a vertex $v$ of $GT_\lambda$ we have $$F(\sigma_\varphi(C_v))=\frac 1{[l_1]_t!\ldots[l_m]_t!}\sum_{\pi(v_w^1)=v}F(e^{v-v_w^1}\sigma_{\varphi^1}(C_{v_w^1})).$$

With the regular case taken into account, the above identity proves Theorem~\ref{contribfin} for the case of singular $\lambda$.

The structure of the above argument is, in its essence, the same as that of the argument we give in Section~\ref{last} to prove Theorem~\ref{contrib}. However, significant care is needed to deal with the infinite dimension of the polyhedra, the tools necessary for that will be developed in the first two sections of Part~\ref{proof}.

We finish this part off by showing how applying Lemma~\ref{graphsum} in the above situation provides some identities in $\mathbb{Z}[t]$ which we find to be fascinating. Indeed, let $\lambda$ be some integral dominant $\mathfrak{sl}_n$-weight and let $\lambda^1$ be such a weight which is also regular. Apply Lemma~\ref{graphsum} to the degeneration of $D_{\mathcal T}(\lambda_1^1,\ldots,\lambda_{n-1}^1,0)$ into $D_{\mathcal T}(\lambda_1,\ldots,\lambda_{n-1},0)$ and then to the degeneration of $D_{\mathcal T}(\lambda_1^1,\ldots,\lambda_{n-1}^1,0)$ into $D_{\mathcal T}(0,\ldots,0)$ (which is a point). Combining the results provides 
\begin{theorem}
$$\sum_{\substack{f\text{ face}\\\text{of }GT_\lambda}}(-1)^{\dim f}\varphi(f)={{n\choose{l_1,\ldots,l_m}}\!}_t\,,$$ where $l_1,\ldots,l_m$ is the type of partition $(\lambda_1,\ldots,\lambda_{n-1},0)$ and we refer to the $t$-multinomial coefficient.
\end{theorem}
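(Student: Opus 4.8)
The plan is to deduce the identity by applying Lemma~\ref{graphsum} twice along the two degenerations indicated, exactly as described in the surrounding text, and then reconciling the two formulas for the same alternating sum. The left-hand side $\sum_{f}(-1)^{\dim f}\varphi(f)$ over faces of $GT_\lambda$ is, by the bijection $GT_\lambda\cong D_{\mathcal T}(\lambda_1,\ldots,\lambda_{n-1},0)$, the same as $\sum_{f}(-1)^{\dim f}\varphi_{\mathcal T}(\lambda_1,\ldots,\lambda_{n-1},0)(f)$. First I would pick a regular integral dominant weight $\lambda^1$, so that $D_{\mathcal T}(\lambda^1_1,\ldots,\lambda^1_{n-1},0)$ has all parameters distinct and $D_{\mathcal T}(\lambda_1,\ldots,\lambda_{n-1},0)$ is a degeneration of it with face map $\pi$. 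Summing the identity of Lemma~\ref{graphsum} over all faces $f$ of $D_{\mathcal T}(\lambda_1,\ldots,\lambda_{n-1},0)$, the right-hand sides assemble into $\sum_{g}(-1)^{\dim g}\varphi_{\mathcal T}(\lambda^1_1,\ldots,\lambda^1_{n-1},0)(g)$ (since $\pi$ is surjective and the signs $(-1)^{\dim g-\dim f}$ combine with $(-1)^{\dim f}$), while the left-hand sides give $[l_1]_t!\cdots[l_m]_t!\sum_{f}(-1)^{\dim f}\varphi_{\mathcal T}(\lambda_1,\ldots,\lambda_{n-1},0)(f)$, where $l_1,\ldots,l_m$ is the type of $(\lambda_1,\ldots,\lambda_{n-1},0)$.

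Next I would run the same computation for the degeneration of $D_{\mathcal T}(\lambda^1_1,\ldots,\lambda^1_{n-1},0)$ all the way down to the point $D_{\mathcal T}(0,\ldots,0)$. Here the type of the target partition $(0,\ldots,0)$ is the single block of size $n$, so the multinomial of factorials on the left is simply $[n]_t!$, and the target has a unique (zero-dimensional) face whose weight $\varphi_{\mathcal T}(0,\ldots,0)$ on that vertex is $1$ (the empty product, since no component of the trivial subgraph has strictly more vertices in a lower row — indeed the whole graph $\mathcal T$ is a single connected component and one checks directly from the definition of $\varphi_\Gamma$ that its value on the whole cone is $1$). Thus the second application yields $[n]_t! = \sum_{g}(-1)^{\dim g}\varphi_{\mathcal T}(\lambda^1_1,\ldots,\lambda^1_{n-1},0)(g)$.

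Combining the two displays, the alternating sum over faces of $D_{\mathcal T}(\lambda^1)$ is common to both, so
$$[l_1]_t!\cdots[l_m]_t!\sum_{\substack{f\text{ face}\\\text{of }GT_\lambda}}(-1)^{\dim f}\varphi(f)=[n]_t!,$$
and dividing gives $\sum_{f}(-1)^{\dim f}\varphi(f)=[n]_t!/([l_1]_t!\cdots[l_m]_t!)={{n\choose l_1,\ldots,l_m}\!}_t$, which is the claim. One subtlety to verify carefully is that the two degenerations of $D_{\mathcal T}(\lambda^1)$ — one stopping at $D_{\mathcal T}(\lambda)$, one going to the point — are genuinely compatible, i.e. that the face map to the point factors through the face map to $D_{\mathcal T}(\lambda)$; this is immediate since a degeneration to the point is, in particular, a further degeneration of $D_{\mathcal T}(\lambda)$, and the normal-fan refinement relation is transitive, so summing Lemma~\ref{graphsum} for the composite is consistent with summing it in two stages.

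The main obstacle, such as it is, is purely bookkeeping: making sure the signs $(-1)^{\dim g - \dim f}$ from Lemma~\ref{graphsum} interact correctly with the outer $(-1)^{\dim f}$ when one sums over all $f$, and that $\pi^{-1}$ partitions the face set of the finer polytope so that no $g$ is counted twice or omitted (this uses surjectivity of $\pi$ together with the fact that the fibers $\pi^{-1}(f)$ are disjoint and exhaust $\mathcal F_{D_{\mathcal T}(\lambda^1)}$). There is no hard geometry or new estimate involved; everything is a formal consequence of Lemma~\ref{graphsum} and the transitivity of degeneration, so the proof is short once these indexing points are pinned down.
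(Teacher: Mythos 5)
Your argument is correct and is precisely the paper's intended proof: sum Lemma~\ref{graphsum} with signs $(-1)^{\dim f}$ over faces for the two degenerations $D_{\mathcal T}(\lambda^1_1,\ldots,\lambda^1_{n-1},0)\rightarrow D_{\mathcal T}(\lambda_1,\ldots,\lambda_{n-1},0)$ and $D_{\mathcal T}(\lambda^1_1,\ldots,\lambda^1_{n-1},0)\rightarrow D_{\mathcal T}(0,\ldots,0)$ (a point whose unique face has weight $1$, since in $\mathcal T$ no row is larger than the row above it), then equate the common alternating sum over faces of $D_{\mathcal T}(\lambda^1)$ to get $[l_1]_t!\cdots[l_m]_t!\sum_f(-1)^{\dim f}\varphi(f)=[n]_t!$, using that the fibers of $\pi$ partition the face set of the bounded polytope. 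The only cosmetic point is that when $\lambda$ is itself regular the first degeneration is trivial (Lemma~\ref{graphsum} formally requires a repeated parameter), but then the claim is exactly the second identity, so nothing is lost.
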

 In particular, when $\lambda$ is regular on the right-hand side we simply have $[n]_t!$.

\part{Structure of $\Pi$ and Proof of Theorem~\ref{contrib}}\label{proof}

For the entirety of this Part we consider $\lambda$ to be a fixed nonzero integral dominant $\widehat{\mathfrak sl}_n$-weight, $n$ also being fixed. All the definitions from the Part~\ref{part1} should be understood with respect to these values.

\section{The Brion-type Theorem for $\bar\Pi$}

Recall the infinite-dimensional polyhedron $\Pi$ introduced in Section~\ref{affbrion}.

We call nonempty $f\subset\Pi$ a face of $\Pi$ if it is the intersection of $\Pi$ and some of the spaces $$E_i=\{x|x_i=0\}\cap V$$ and $$H_i=\{x|\chi_i(x)=k\}\cap V.$$ The faces form a lattice with respect to inclusion, a vertex is any minimal element in this lattice.
\begin{proposition}\label{vert}
The vertices of $\Pi$ are precisely those points $x\in\Pi$ for which for any $i$ at least on of $x_i=0$ or $\chi_i(x)=0$ holds.
\end{proposition}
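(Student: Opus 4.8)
The plan is to prove both inclusions between the set of vertices of $\Pi$ and the set $\mathcal V$ of points $x\in\Pi$ such that for every $i$ at least one of $x_i=0$ or $\chi_i(x)=k$ holds. Throughout I work in the translated picture $\bar\Pi\subset\bar V$ where every point has finitely many nonzero coordinates, so that the ``active constraint'' bookkeeping is finite near any given coordinate, even though infinitely many constraints are in play globally.

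First I would show $\mathcal V$ is contained in the set of vertices. Suppose $x\in\mathcal V$; I claim the only face of $\Pi$ containing $x$ that can be properly contained in the face $f_{\min}$ cut out by exactly the constraints active at $x$ is $f_{\min}$ itself, i.e. that $f_{\min}=\{x\}$. Because $x$ satisfies i) and ii), for $i\ll 0$ we have $x_i=a_{i\bmod n}$ and for $i\gg 0$ we have $x_i=0$; so all but finitely many coordinates are already pinned. For the remaining finitely many coordinates, I argue by working along the sequence: using the hypothesis, each index $i$ has either $x_i=0$ (so the constraint $E_i$ is active) or $\chi_i(x)=k$. When $\chi_i(x)=k$ and $\chi_{i-1}(x)=k$ are both active, subtracting gives $x_i=x_{i-n}$; chaining such relations together with the known behavior at $\mp\infty$ forces each free coordinate to a fixed value. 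More carefully: order the finitely many ``undetermined'' coordinates and show that the active constraints (the $E_i$'s plus the differences of consecutive active $H_i$'s) form a system whose solution set is a single point. Concretely, if $x_i\neq 0$ then $\chi_i(x)=k$; if also $x_{i+1}\neq 0$ then $\chi_{i+1}(x)=k$, whence $x_{i+1}=x_{i-n+1}$, and one propagates. The key point is that the ``runs'' of nonzero coordinates are finite (again by i)), each such run is bracketed by a zero coordinate on each side, and across a run the active $H$-constraints rigidly determine all entries from the boundary data. Hence $f_{\min}=\{x\}$ and $x$ is a vertex.

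Next, the converse: a vertex $v$ must lie in $\mathcal V$. Suppose not; then there is an index $i_0$ with $v_{i_0}\neq 0$ and $\chi_{i_0}(v)<k$, i.e. neither $E_{i_0}$ nor $H_{i_0}$ is active at $v$. I would produce a nonzero vector $u\in\bar V$ such that $v\pm\varepsilon u\in\Pi$ for small rational $\varepsilon>0$ while $v\pm\varepsilon u$ still satisfy all the constraints active at $v$; this shows the minimal face at $v$ has positive dimension, contradicting minimality. The natural choice is to perturb just the single coordinate $i_0$: set $u=e_{i_0}$ (the unit sequence supported at $i_0$). Decreasing $v_{i_0}$ slightly keeps $v_{i_0}\ge 0$ (since it was strictly positive) and only decreases the $n$ sums $\chi_{i_0},\dots,\chi_{i_0+n-1}$ containing that coordinate, so those stay $\le k$; increasing $v_{i_0}$ slightly keeps those sums $\le k$ provided none of them was already equal to $k$ — which is exactly where a subtlety arises. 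If some $\chi_j(v)=k$ with $i_0-n+1\le j\le i_0+n-1$, then the single-coordinate move in the increasing direction is blocked; but then I would instead move two coordinates in that window in opposite directions (shifting ``mass'' within a saturated window), choosing the pair so that all active equalities are preserved — this is possible precisely because $\chi_{i_0}(v)<k$ guarantees at least one coordinate in the relevant window can absorb the change, and because $v_{i_0}>0$ gives room in the other. This case analysis — single-coordinate vs. two-coordinate perturbation, keeping track of which of the infinitely many $\chi_j$ are saturated — is where the real work lies.

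The main obstacle I anticipate is precisely this last point: cleanly organizing the perturbation argument so that it respects all active constraints simultaneously when $v$ lies on many facets, and verifying that the constructed $u$ lies in $\bar V$ (finitely supported) and keeps us inside $\Pi$ for a genuine interval of $\varepsilon$. The finiteness of supports and the two-sided asymptotic conditions i), ii) are what make this tractable: they localize everything to a bounded band of indices, reducing the infinite-dimensional polyhedron to a finite-dimensional face on which standard vertex criteria apply. I would phrase the final argument as: $v$ is a vertex iff the active constraints at $v$ span (an affine complement of) a $0$-dimensional affine subspace inside $\bar V$, and then translate that linear-algebra condition into the stated combinatorial one via the observation that the active $E_i$ and the differences of consecutive active $H_i$ already pin all coordinates exactly when the stated dichotomy holds.
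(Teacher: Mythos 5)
Your first direction (the dichotomy implies vertex) is essentially sound: for $y$ in the face cut out by all constraints active at $x$, the difference $y-x$ is finitely supported, and an induction on the index, anchored in the periodic tail at $-\infty$, forces $y=x$ (use $\chi_j(y)-\chi_{j-1}(y)=y_j-y_{j-n}$ when two consecutive windows are saturated, and the full window equation when the preceding entry of $x$ vanishes). One slip: your claim that the runs of nonzero entries are finite and bracketed by zeros on both sides is false — for regular $\lambda$ the leftmost run is infinite to the left — but this is harmless since, as you also say, the propagation starts from the known behavior at $-\infty$. (Also, the $\chi_i(x)=0$ in the statement is a typo for $\chi_i(x)=k$; you read it correctly.)

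The converse direction has a genuine gap. You assert that when the single-coordinate move at a failure index $i_0$ is blocked by a saturated window, it suffices to move two coordinates of that window in opposite directions, "because $\chi_{i_0}(v)<k$ and $v_{i_0}>0$". This is not true. Take $n=2$, $k=2$, $a_0=a_1=1$, and $v$ with $v_i=1$ for $i\le 0$, $(v_1,\ldots,v_5)=(\tfrac12,\tfrac32,\tfrac12,\tfrac32,\tfrac12)$, $v_i=0$ for $i\ge 6$. Then $v\in\Pi$, the dichotomy fails only at $i_0=1$, and the active constraints are $E_i$ for $i\ge 6$, $H_j$ for $j\le 0$, and $H_2,H_3,H_4,H_5$. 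The space of finitely supported directions preserving all active equalities is one-dimensional, spanned by the sequence $u$ with $(u_1,\ldots,u_5)=(1,-1,1,-1,1)$ and all other entries $0$: every admissible perturbation has support of size five, so no one- or two-coordinate move stays in the minimal face (for instance, shifting mass inside the saturated window $\{1,2\}$ violates the active $H_3$). In general the perturbation must propagate as an alternating $\pm1$ vector along an entire chain of saturated windows and vanishing coordinates, and proving that such a chain can always be completed — i.e., that the equalities active at a point violating the dichotomy never pin that point — is precisely the nontrivial content of the proposition, which your sketch does not supply. (In the saturated case the decreasing single-coordinate move is not "fine" either: it stays in $\Pi$ but leaves the active hyperplane, so it says nothing about the minimal face.) The paper avoids all of this: it notes that the faces $\Pi^l=\Pi\cap\bigcap_{i\le l}H_i$ exhaust $\Pi$, that each $\Pi^l$ is a coordinate shift of $\Pi^0$, and quotes the corresponding statement for $\Pi^0$ from~\cite{me2}; so the combinatorial existence argument you would need is exactly what is being imported there.
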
 
\begin{proof}
Consider the face $$\Pi^l=\Pi\cap\bigcap_{i\le l}H_i.$$ In~\cite{me2} the statement of the Proposition was proved for vertices contained in $\Pi^0$. Since any $\Pi^l$ is obtained from $\Pi^0$ by the operator $(x_i)\rightarrow(x_{i+l})$, the Proposition also holds for vertices contained in any $\Pi^l$, but the $\Pi^l$ exhaust $\Pi$.
\end{proof}

We see that for $x\in V$ we have $s_{i,j}(x)=s_{i-1,j}(x)$ if and only if $x\in H_{in+j(n-1)}$ and $s_{i,j}(x)=s_{i-1,j+1}(x)$ if and only if $x\in E_{in+j(n-1)}$. This shows that the weight $p(x)$ depends only on the minimal face of $\Pi$ containing $x$. Every point is contained in some finite-dimensional face and every finite-dimensional face is a finite-dimensional polyhedron. Thus for any finite-dimensional face $f$ of positive dimension we may take a point $x$ in its relative interior and see that $f$ is the minimal face containing $x$. If we then define $\varphi(f)=p(x)$, we obtain a function $$\varphi:\mathcal F_\Pi\rightarrow\mathbb{Z}[t],$$ where $\mathcal F_\Pi$ is the set of all finite-dimensional faces of $\Pi$.

We now set out to define the series $\tau_{\bar v}$ mentioned in Section~\ref{affbrion}.

At any vertex we $v$ of $\Pi$ we have the tangent cone $$C_v=\{v+\alpha(x-v),x\in\Pi,\alpha\ge 0\}.$$ For any face $f$ containing $v$ we have the corresponding face of $C_v$: $$f_v=\{v+\alpha(x-v),x\in f,\alpha\ge 0\}.$$

For any edge (one-dimensional face) $e$ of $\Pi$ containing $v$ we have its generator, the minimal integer vector $\varepsilon$ such that $v+\varepsilon\in e$. Let $\{\varepsilon_{v,i},i>0\}$ be the set of generating vectors of all edges containing $v$. Any point of $C_v$ is obtained from $v$ by adding a non-negative linear combination of these $\varepsilon_{v,i}$.

We will make use of the following propositions.

For a Laurent monomial $y$ in $z_1,\ldots,z_{n-1},q$ denote $\deg y$ the power in which $y$ contains $q$. Also, recall the specialization $G$ given by~(\ref{affspec}).
\begin{proposition}\label{qbig}
For any vertex $v$ and any $N\in\mathbb{Z}$ there is only a finite number of $i$ such that $\deg G(e^{\varepsilon_{v,i}})<N$. 
\end{proposition}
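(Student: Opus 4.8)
The plan is to understand the edge generators $\varepsilon_{v,i}$ of the tangent cone $C_v$ concretely enough to control the $q$-degree of $G(e^{\varepsilon_{v,i}})$. Recall from~(\ref{affspec}) that $G$ sends $t_i$ to $z_{i\bmod(n-1)}q^{\lceil i/(n-1)\rceil}$, so for a finitely-supported integer sequence $\varepsilon=(\varepsilon_i)$ we have $\deg G(e^\varepsilon)=\sum_i\lceil i/(n-1)\rceil\,\varepsilon_i$; in the $\bar{}$ coordinates this is exactly the $(-\delta)$-component appearing in~(\ref{qweight}). The first step is therefore to recast the claim: I must show that only finitely many edge directions at $\bar v$ have this weighted sum bounded above by a fixed $N$.

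Next I would describe the edges through $v$ explicitly. A vertex $v$ of $\Pi$ is, by Proposition~\ref{vert}, a point where for every $i$ either $x_i=0$ or $\chi_i(x)=0$; this is a very rigid condition, and it forces $v$ itself to be eventually the ``ground state'' $T^0$ on the left and eventually $0$ on the right, with only finitely many coordinates ``in between'' differing from these patterns. Passing to the generalized Gelfand-Tsetlin picture via~(\ref{gtdef}), the tangent cone $C_v$ is governed locally by one of the polyhedra $D_\Gamma(b_1,\dots,b_{l_\Gamma})$ of Part~\ref{tools}, but globally the relevant observation is the periodicity relation~(\ref{shift}): $s_{i-n+1,j+n}=s_{i,j}-k$. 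Because of this periodicity, the edges of $\Pi$ split into finitely many ``$\sim$-orbits'' under the translation $(i,j)\mapsto(i-n+1,j+n)$ — equivalently, an edge generator $\varepsilon$ supported far to the ``right'' (large $j$, i.e. large index) is a translate of one supported near the finite ``active region'' of $v$. The key step is then to check that applying the translation $(i,j)\mapsto(i-n+1,j+n)$ to an edge direction shifts the index set of its support by a fixed positive amount in a way that strictly increases $\sum_i\lceil i/(n-1)\rceil\varepsilon_i$ — this is where I expect the argument to be, using that such a translate advances every nonzero coordinate index and that the generators have entries of one sign in each ``block'' (as seen concretely for the unimodular cones computed in Case~1 of the proof of Theorem~\ref{zero}). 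Since within a single bounded window of indices there are only finitely many edge generators, and every generator is a nonnegative-shift translate of one in the window, only finitely many can have weighted sum below $N$.

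Concretely the steps are: (1) rewrite $\deg G(e^{\varepsilon_{v,i}})$ as $\sum_l\lceil l/(n-1)\rceil(\varepsilon_{v,i})_l$; (2) describe $v$ and hence the support structure of $C_v$'s edge generators using Proposition~\ref{vert} and~(\ref{gtdef}); (3) use~(\ref{shift}) to organize the (infinitely many) edges into finitely many orbits under $(i,j)\mapsto(i-n+1,j+n)$, noting each orbit has a representative whose support meets a fixed bounded index range; (4) show that moving along an orbit away from that representative strictly increases the weighted sum, with a uniform lower bound on the increment; (5) conclude that $\{\,i:\deg G(e^{\varepsilon_{v,i}})<N\,\}$ is covered by finitely many orbit-representatives each contributing finitely many edges.

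The main obstacle I anticipate is step (4): making precise the claim that translating an edge generator by $(i,j)\mapsto(i-n+1,j+n)$ genuinely \emph{increases} the $q$-degree. One must check that this translation sends a generator of an edge of $\Pi$ to (a positive multiple of) another generator, and that the net effect on $\sum_l\lceil l/(n-1)\rceil(\varepsilon)_l$ is to add $\sum_l(\varepsilon)_l$ (from the $+n$ shift in $j$, which bumps the effective index by $n$ and hence $\lceil\cdot/(n-1)\rceil$ by exactly $1$ for indices advanced past a threshold) minus a correction from the $-\delta$ normalization — and that for a genuine edge direction of $\Pi$ this quantity $\sum_l(\varepsilon)_l$ is positive, which should follow from the defining inequalities $x_l\ge0$, $\chi_l(x)\le k$ forcing edge directions ``moving into the polyhedron'' to have positive total weight in the relevant sense. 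Once this monotonicity with a uniform positive increment is established, the finiteness is immediate.
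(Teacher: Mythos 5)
Your argument hinges on steps (3)--(4), and that is exactly where it breaks down as written. The symmetry you invoke, $(i,j)\mapsto(i-n+1,j+n)$, preserves the index $l=in+j(n-1)$ of a lattice vertex, and although relation~(\ref{shift}) shifts the values $s_{i,j}(x)$ of a point $x\in V$ by $-k$, this constant cancels as soon as you pass to a difference such as $s_{i,j}(v+\varepsilon_{v,i})-s_{i,j}(v)$: the array attached to an edge generator is \emph{invariant} under this map. So this translation does not organize the edges through a fixed vertex $v$ into nontrivial orbits at all, ``moving along an orbit'' produces no new generators, and there is no increment of $q$-degree to estimate. The symmetry that genuinely moves things, the index shift $x_l\mapsto x_{l+n}$, is an automorphism of $\Pi$ but does not fix $v$, hence does not permute the edges through $v$ either; to salvage your scheme you would have to show instead that far to the right the tangent cone $C_v$ is periodic because $v$ and its active constraints are, that no generator has support far to the left (this is the content of Proposition~\ref{upsame}, proved later in the paper), that a bounded index window carries only finitely many generators, and that the claimed increment $\sum_l(\varepsilon)_l$ is positive for the right-moving families. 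None of this is established in your text, and you yourself present (4) as an expectation rather than a proof, so what you have is a plan with its central step missing.

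For comparison, the paper's proof is a short global argument that sidesteps the classification of edges entirely: since $G(S(\bar\Pi))=e^{-\lambda}\charac L_\lambda$ lies in $\mathfrak S$, for any $M$ only finitely many lattice points of $\Pi$, in particular finitely many vertices $u$, satisfy $\deg G(e^{\bar u})<M$; every edge through $v$ is a bounded segment, $v+K\varepsilon_{v,i}=u_i$ a vertex with $K\le k$ (bound the multiplier via the first nonzero coordinate of $\varepsilon_{v,i}$ and the constraint $\chi_l\le k$); and then $\deg G(e^{\bar u_i})-\deg G(e^{\bar v})=K\deg G(e^{\varepsilon_{v,i}})$, so infinitely many edges of $q$-degree below $N$ would yield infinitely many distinct vertices of bounded $q$-degree, a contradiction. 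If you prefer a structural proof in the spirit of your proposal, the monotonicity would have to be extracted from Proposition~\ref{edges} (generators are $\pm1$ on a single component $\Gamma_e$) and Propositions~\ref{zpow}--\ref{qpow}, not from~(\ref{shift}).
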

\begin{proof}
We use the following fact. For any $M$ there is only a finite number of vertices $u$ with $\deg G(e^{\bar u})<M$. This, for example, follows from the fact that the sum of these monomials over all integer points in $\Pi$ (including all vertices) is $e^{-\lambda}\charac{L_\lambda}$.

Every edge of $\Pi$ is a segment joining two vertices. In other words, for every $\varepsilon_{v,i}$ there is a positive integer $K$ such that $v+K\varepsilon_{v,i}$ is some other vertex $u_i$. Let $l$ be the number of the first nonzero coordinate in $\varepsilon_{v,i}$ and let that coordinate be equal to $c$. We have $\chi_l(u_i)=\chi_l(v)+Kc$, which shows that $K\le k$.

However, $$\deg G(e^{\bar u_i})-\deg G(e^{\bar v})=K\deg G(e^{\varepsilon_{v,i}}).$$ Now we see that an infinite number of $\varepsilon_{v,i}$ with $\deg G(e^{\varepsilon_{v,i}})<N$ would contradict the fact in the beginning of the proof.
\end{proof}

\begin{proposition}\label{fundpar}
Consider any finite dimensional rational cone $C$ and map $\psi:\mathcal F_C\rightarrow R$ for some commutative ring $R$. Let $\varepsilon_1,\ldots,\varepsilon_m$ be the generators of the edges of $C$. Then 
\begin{equation}\label{numer}
(1-e^{\varepsilon_1})\ldots(1-e^{\varepsilon_m})S_\psi(C)
\end{equation}
is a linear combination of exponents of points of the form $$v+\alpha_1\varepsilon_1+\ldots+\alpha_m\varepsilon_m$$ with all $\alpha_i\in[0,1]$.
\end{proposition}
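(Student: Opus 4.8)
\emph{Proof proposal.} The plan is to reduce the weighted identity to the classical unweighted one --- that the numerator of the Ehrhart series of a simplicial cone is supported on its half-open fundamental parallelepiped --- in two stages: first peel off the weight function $\psi$ by decomposing $C$ into the relative interiors of its faces, then peel off non-simpliciality by triangulating. Throughout, $C$ is pointed (as is the tangent cone at any vertex), so it has a well-defined apex $v$, the point appearing in the statement, and every face of $C$ is again a pointed cone with apex $v$.

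First I would write $S_\psi(C)=\sum_{f\in\mathcal F_C}\psi(f)\,U_f$, where $U_f=\sum_{a\in\mathrm{Int}(f)\cap\mathbb Z^n}e^a$, using that every lattice point of $C$ lies in the relative interior of exactly one face. Since $S(g)=\sum_{f\le g}U_f$ for every face $g$, and the face lattice of $C$ is finite with Möbius function $\mu(f,g)=(-1)^{\dim g-\dim f}$ (a standard fact about polyhedral face lattices; see~\cite{beckrob}), Möbius inversion gives $U_f=\sum_{g\le f}(-1)^{\dim f-\dim g}S(g)$, whence $$S_\psi(C)=\sum_{g\in\mathcal F_C}c_g\,S(g),\qquad c_g=\sum_{g\le f\le C}(-1)^{\dim f-\dim g}\,\psi(f)\in R,$$ a finite $R$-linear combination. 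Each face $g$ is a pointed rational cone with apex $v$ whose extreme rays are exactly the edges of $C$ lying in $g$, so $g=v+\sum_{i\in E_g}\mathbb R_{\ge0}\varepsilon_i$ for the index set $E_g=\{i:v+\varepsilon_i\in g\}$. It thus suffices to prove the unweighted claim: for every face $g$, the Laurent polynomial $\prod_{i=1}^m(1-e^{\varepsilon_i})\,S(g)$ is supported on the box $B=\{v+\sum_{i=1}^m\alpha_i\varepsilon_i:\alpha_i\in[0,1]\}$.

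For this I would triangulate $g$ using only its own extreme rays $\{\varepsilon_i:i\in E_g\}$ (always possible, e.g. a pulling triangulation); by inclusion--exclusion over the pieces and their common faces, $S(g)$ becomes a finite $\mathbb Z$-linear combination of $S(\Delta)$ over simplicial cones $\Delta=v+\sum_{i\in J}\mathbb R_{\ge0}\varepsilon_i$ with $J\subseteq E_g$ linearly independent. For such $\Delta$ the classical computation gives $\prod_{i\in J}(1-e^{\varepsilon_i})\,S(\Delta)=\sum_b e^b$, the sum over the lattice points of the half-open parallelepiped $\{v+\sum_{i\in J}\alpha_i\varepsilon_i:\alpha_i\in[0,1)\}\subseteq B$. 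It then remains to multiply by the leftover factors $(1-e^{\varepsilon_j})$ with $j\notin J$: the elementary observation is that multiplying a sum of exponentials supported on points $v+\sum_{i\in S}\alpha_i\varepsilon_i$ (for some $S\subseteq\{1,\dots,m\}$, $\alpha_i\in[0,1]$) by one more $(1-e^{\varepsilon_j})$ with $j\notin S$ produces a sum of the same shape with $S$ enlarged to $S\cup\{j\}$, the new $\varepsilon_j$-coordinate being $0$ or $1$. Iterating over all $i\notin J$ keeps the support inside $B$; summing over $\Delta$, then over $g$ with coefficients $c_g$, finishes the proof.

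There is no deep obstacle here, only bookkeeping that must be done carefully at three places: the legitimacy of the formal manipulations with the a priori infinite sum $S_\psi(C)$ (each coefficient of $\prod_i(1-e^{\varepsilon_i})\,S_\psi(C)$ is manifestly a finite sum, and all face-lattice sums are finite, so everything is well defined); the value of the Möbius function of a polyhedral face lattice; and the existence of a triangulation of $g$ introducing no new rays, together with the inclusion--exclusion reducing to the simplicial case. The genuinely geometric content --- that clearing the edge denominators leaves only exponents inside the box spanned by the edge generators --- is exactly the classical simplicial statement, propagated through the reduction above.
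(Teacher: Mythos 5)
Your argument is correct; all three places you flag as needing care (finiteness of the coefficient-wise manipulations, the M\"obius function $(-1)^{\dim g-\dim f}$ of the face lattice, triangulating a face by its own extreme rays) are standard and go through, and the final absorption of the leftover factors $(1-e^{\varepsilon_j})$, $j\notin J$, into the box is exactly the right observation. The route differs from the paper's in how the weight is handled. You first convert the weighted series into a signed $R$-combination of \emph{closed}-face series $S(g)$ by M\"obius inversion over $\mathcal F_C$, and only then triangulate each face with a further inclusion--exclusion, invoking the classical half-open parallelepiped identity with $\alpha_i\in[0,1)$. The paper instead fixes a single triangulation of $C$ by simplicial cones generated by subsets of the $\varepsilon_i$ and decomposes the lattice points of $C$ (apart from the apex) \emph{disjointly} into the relative interiors of the faces $T$ of that triangulation; since each $\mathrm{Int}(T)$ lies in the relative interior of a unique face of $C$, the weight is literally constant on each piece, so no M\"obius function or signs ever appear, and the open-cone version of the parallelepiped identity (exponents with $\alpha_i\in(0,1]$) finishes the proof in one step. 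Your version is a bit heavier on bookkeeping (two layers of inclusion--exclusion) but uses only the closed-cone form of the classical identity; the paper's version buys a sign-free, one-pass decomposition at the cost of checking that the weight is constant on open cells of the triangulation. Both hinge on the same geometric content you isolate at the end.
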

\begin{proof}
Consider a triangulation of $C$ by simplicial cones, each cone being generated by some of the $\varepsilon_i$. Let $T$ be a face of one of the cones. We may assume that $T$ is generated by $\varepsilon_1,\ldots,\varepsilon_l$. The expression $$(1-e^{\varepsilon_1})\ldots(1-e^{\varepsilon_l})S(\mathrm{Int}(T))$$ is precisely the sum of exponentials of all integer points within the parallelepiped $$\{v+\alpha_1\varepsilon_1+\ldots+\alpha_l\varepsilon_l,\alpha_i\in(0,1]\}.$$

Let $f$ be the minimal face of $C$ containing $T$. We see that $$\psi(f)(1-e^{\varepsilon_1})\ldots(1-e^{\varepsilon_m})S(\mathrm{Int}(T))$$ is a sum of exponentials of the desired type. However~(\ref{numer}) is the sum of the above expressions over all $T$ plus $\varphi(u)e^{u}$, where $u$ is the vertex of $C$.
\end{proof}

Let us denote $C_{\bar v}=\widebar{C_v}$.  Note that the generators of edges of $C_{\bar v}$ comprise the same set $\{\varepsilon_{v,i}\}$. In the below arguments we switch somewhat freely between $C_{\bar v}$ and $C_v$ and their attributes. The reader should be attentive not to miss the $\bar{}$  and keep in mind that in most ways the structure of these cones is the same.

In any point $x\in C_v$ we now may define the function $$p_v(x)=\varphi\left(\min_{x\in f_v}f\right).$$ We have the formal Laurent series in variables $t_i$ $$S_{\bar\varphi}(C_{\bar v})=\sum_{x\in C_{\bar v}\cap\mathbb{Z}^{{}^\infty}}\widebar{p_v}(x)e^x.$$

In what follows we implicitly use the fact that $G(e^{\varepsilon_{v,i}})\neq 1$. This will be proved in the next section.

Consider the cone $C_v-v=C_{\bar v}-\bar v$ with vertex at the origin. Just like for a finite dimensional cone, Laurent series that are sums of monomials $e^x$ with $x\in C_v-v$ comprise a ring. Both $e^{-v}S_{\bar\varphi}(C_{\bar v})$ and the product $$(1-e^{\varepsilon_{v,1}})(1-e^{\varepsilon_{v,2}})\ldots$$ are elements of that ring and thus the product $$Q_v=S_{\bar\varphi}(C_{\bar v})(1-e^{\varepsilon_{v,1}})(1-e^{\varepsilon_{v,2}})\ldots$$ is well-defined.

\begin{lemma}\label{welldef}
$G(Q_v)$ is a well-defined element of $\mathfrak S$.
\end{lemma}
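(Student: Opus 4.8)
The plan is to reduce the statement to a countable collection of finite-dimensional facts controlled by the $q$-degree, then assemble them into a genuine element of $\mathfrak S$. First I would fix $\bar v$ and its set of edge generators $\{\varepsilon_{v,i}\}$, and recall that $G(e^{\varepsilon_{v,i}})\neq 1$ (granted in the next section) so that each factor $1-G(e^{\varepsilon_{v,i}})$ is nonzero; moreover by Proposition~\ref{qbig} only finitely many of the $\varepsilon_{v,i}$ have $\deg G(e^{\varepsilon_{v,i}})\le 0$, so the infinite product $\prod_i(1-G(e^{\varepsilon_{v,i}}))$ is exactly a product of the type considered in~(\ref{invprod}) and hence a well-defined \emph{invertible} element of $\mathfrak S$. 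Thus it suffices to prove that $G(S_{\bar\varphi}(C_{\bar v}))$ is well-defined in $\mathfrak S$, and then $G(Q_v)=G(S_{\bar\varphi}(C_{\bar v}))\prod_i(1-G(e^{\varepsilon_{v,i}}))$.

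The mechanism for making sense of $G(S_{\bar\varphi}(C_{\bar v}))$ is a degree filtration. For each $N$ let $\varepsilon_1,\dots,\varepsilon_{m_N}$ be those generators with $\deg G(e^{\varepsilon_{v,i}})< N$ (finitely many, by Proposition~\ref{qbig}) and let $C_{\bar v}^{(N)}$ be the subcone of $C_{\bar v}$ they generate. I would argue that any lattice point $x\in C_{\bar v}\cap\mathbb Z^{\infty}$ with $\deg G(e^x)<M$ (for $M$ fixed, relative to $\bar v$) lies in a \emph{bounded} translate of $C_{\bar v}^{(N)}$ for $N$ large enough depending on $M$: writing $x-\bar v=\sum\alpha_i\varepsilon_{v,i}$ with $\alpha_i\ge 0$, each generator with large $\deg G(e^{\varepsilon_{v,i}})$ contributes a large positive amount to $\deg G(e^{x-\bar v})$, so only finitely many $\alpha_i$ can be nonzero and each is bounded once $\deg G(e^x)$ is bounded (here I use that the $q$-degrees of the ``positive'' generators are bounded below by a positive constant, again Proposition~\ref{qbig}). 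Consequently, for each fixed power $q^N$, the coefficient of $q^N$ in $G(S_{\bar\varphi}(C_{\bar v}))$ is a \emph{finite} sum of terms coming from lattice points in a finite-dimensional slice, and it lies in $\mathbb Z[t](z_1,\dots,z_{n-1})$ by the finite-dimensional theory: on the finite-dimensional cone $C_{\bar v}^{(N)}$ the function $\bar p_v$ is constant on relative interiors of faces, so $S_{\bar\varphi}(C_{\bar v}^{(N)})$ has a rational-function integer point transform by the Proposition following Theorem~\ref{brion} (the weighted IPT), and applying $G$ is a legitimate specialization because no reduced denominator vanishes — each such denominator is a product of $1-G(e^{\varepsilon_i})\neq 1$. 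Finitely many negative powers of $q$ occur since only finitely many generators have nonpositive $q$-degree, so the outcome lies in $\mathfrak S$.

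Concretely I would phrase the argument as: define $G(S_{\bar\varphi}(C_{\bar v}))$ as the unique series whose $q^N$-coefficient agrees, for every $N$, with the $q^N$-coefficient of $G$ applied to the weighted IPT of the finite-dimensional cone cut out by the generators of $q$-degree below a threshold that grows with $N$; check this is well-defined (independence of the threshold, by comparing two admissible thresholds and noting the extra generators only affect strictly higher powers of $q$); check it has finitely many negative $q$-powers; then multiply by the invertible product $\prod_i(1-G(e^{\varepsilon_{v,i}}))$ to obtain $G(Q_v)\in\mathfrak S$. The main obstacle, and the step deserving the most care, is the boundedness claim in the second paragraph — that a bound on $\deg G(e^x)$ forces $x-\bar v$ into a bounded finite-dimensional box — since this is exactly what guarantees each $q$-coefficient is a finite sum and that truncation at a threshold commutes with extracting a given power of $q$. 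Everything else is either the finite-dimensional weighted Brion machinery already established or the formal structure of $\mathfrak S$ recalled in Section~\ref{affbrion}.
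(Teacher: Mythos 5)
Your reduction to showing that $G(S_{\bar\varphi}(C_{\bar v}))$ is itself a well-defined element of $\mathfrak S$ cannot work, and the boundedness claim that you yourself flag as the crucial step is false. Take $\lambda$ regular, $n\ge 3$, and let $v=T^0$ be the highest-weight vertex. The tangent cone $C_v$ is cut out only by the constraints active at $v$, namely $x_i\ge 0$ for $i\ge 1$ and $\chi_i(x)\le k$ for $i\le 0$; hence the points $x^{(M)}$ obtained from $T^0$ by decreasing the coordinate with index $-1$ by $M$ lie in $C_v\cap\mathbb{Z}^{{}^\infty}$ for every $M\ge 0$. Under the specialization~(\ref{affspec}) one has $G\bigl(e^{\bar x^{(M)}}\bigr)=z_{n-2}^{-M}$, so $\deg G\bigl(e^{\bar x^{(M)}}\bigr)=0$ for all $M$, while the points themselves are unbounded: a bound on the $q$-degree does not confine $x-\bar v$ to a bounded finite-dimensional box, because directions of nonpositive $q$-degree (even of degree exactly $0$) occur in the cone with arbitrarily large coefficients. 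Consequently the coefficient of $q^{0}$ in your would-be $G(S_{\bar\varphi}(C_{\bar v}))$ is an infinite sum of distinct monomials $z_{n-2}^{-M}$ with nonzero coefficients in $\mathbb{Z}[t]$, which is not an element of the field $\mathbb{Z}[t](z_1,\ldots,z_{n-1})$; so $G(S_{\bar\varphi}(C_{\bar v}))$ does not lie in $\mathfrak S$ and the factorization $G(Q_v)=G(S_{\bar\varphi}(C_{\bar v}))\prod_i\bigl(1-G(e^{\varepsilon_{v,i}})\bigr)$ has no meaning.

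This is precisely why the lemma is stated for $Q_v$: the multiplication by $\prod_i(1-e^{\varepsilon_{v,i}})$ has to be carried out before specializing, in the ring of series supported on $C_v-v$, since by Proposition~\ref{fundpar} it confines the support to points of the form $\bar v+\sum_i\alpha_i\varepsilon_{v,i}$ with all $\alpha_i\in[0,1]$, which removes exactly the runaway families exhibited above. The paper's proof then exhausts $C_{\bar v}$ by the finite-dimensional faces $C_{v,l}$, observes that the corresponding truncations $Q_{v,l}$ (finite sums, again by Proposition~\ref{fundpar}) converge coefficientwise to $Q_v$, and shows that for $l\gg 0$ the difference $Q_{v,l}-Q_{v,l-1}$ contains no monomial of $q$-degree below a prescribed $N$; that last step uses that all coordinates of the $\varepsilon_{v,i}$ are $0$ or $\pm 1$ and that any generator appearing in $C_{v,l}$ but not $C_{v,l-1}$ has coordinate $-1$ at position $-l-1$ or $+1$ at position $l+1$, forcing such generators to enter any lattice point of the fundamental box with total coefficient at least $1$. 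Your Proposition~\ref{qbig}-based degree filtration of the unspecialized cone alone cannot substitute for this mechanism, so the gap is essential rather than a matter of detail.
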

\begin{proof}
We are to show that for any integer $N$ among those monomials $e^x$ that occur in $Q_v$ with a nonzero coefficient there is only a finite number for which $\deg G(e^x)<N$.

For $l\gg 0$ the intersection $$C_{v,l}=C_{\bar v} \cap\bigcap_{i<-l}\widebar{H_i}\cap\bigcap_{i>l}\widebar{E_i}$$ is a finite-dimensional cone with vertex $\bar v$ and is a face of $C_{\bar v}$. We thus have an increasing sequence of faces that exhausts $C_{\bar v}$. Every edge of cone $C_{v,l}$ is an edge of $C_{\bar v}$. Choose some cone $C_{v,l}$ and suppose that its edges are generated by $\varepsilon_{v,1},\ldots,\varepsilon_{v,m}$. We then denote $$Q_{v,l}=(1-e^{\varepsilon_{v,1}})\ldots(1-e^{\varepsilon_{v,m}})S_{\bar\varphi}(C_{v,l}),$$ where $\bar\varphi$ is evaluated in faces of $C_{v,l}$ in the natural way. Evidently, the coefficient of $e^x$ in $Q_{v,l}$ stabilizes onto the coefficient of $e^x$ in $Q_v$ as $l$ approaches infinity. We prove the lemma by showing that for $l\gg 0$ the difference $Q_{v,l}-Q_{v,l-1}$ has a zero coefficient at any monomial $e^x$ with $\deg G(e^x)<N$.

Let $S$ be the set of those $\varepsilon_{v,i}$ for which $\deg G(e^{\varepsilon_{v,i}})<0$ and let $$K=\deg\left(\prod_{\varepsilon_{v,i}\in S}G(e^{\varepsilon_{v,i}})\right).$$ We show that $Q_{v,l}-Q_{v,l-1}$ has a zero coefficient at any monomial $e^x$ with $\deg G(e^x)<N$ whenever the following holds. For every $\varepsilon_{v,i}$ which generates an edge contained in $C_{v,l}$ but not $C_{v,l-1}$ one has $\deg G(e^{\varepsilon_{v,i}})\ge N-K$. This visibly holds for all $l\gg 0$, fix some $l$ for which it does.

Proposition~\ref{fundpar} shows that for every $e^x$ which appears in $Q_{v,l}$ the vector $x$ is of the form $$\bar v+\alpha_1\varepsilon_{v,1}+\ldots+\alpha_m\varepsilon_{v,m},\alpha_i\in[0,1].$$ If, however, $e^x$ appears in $Q_{v,l}-Q_{v,l-1}$, then we must have $\alpha_i>0$ for some $\varepsilon_{v,i}$ which generates an edge contained in $C_{v,l}$ but not $C_{v,l-1}$. This is since $C_{v,l-1}$ is a face of $C_{v,l}$.

We fix $x$ such that $e^x$ appears in $Q_{v,l}-Q_{v,l-1}$ and $$x=\bar v+\alpha_1\varepsilon_{v,1}+\ldots+\alpha_m\varepsilon_{v,m},\alpha_i\in[0,1]$$ and show that $$\sum_{\substack{i\in[1,m],\\\bar v+\varepsilon_{v,i}\not\in C_{v,l-1}}}\alpha_i\ge 1.$$ This completes the proof since we have $$\deg G(e^x)\ge K+(N-K)\sum_{\substack{i\in[1,m],\\\bar v+\varepsilon_{v,i}\not\in C_{v,l-1}}}\alpha_i.$$

To prove this last assertion we use the following fact about the $\varepsilon_{v,i}$, which may be extracted from~\cite{me2}. All the nonzero coordinates (terms) of  $\varepsilon_{v,i}$ are either $-1$ or 1. 

From the fact that $\bar v\in C_{v,l}$ we see that $\bar v_i=0$ whenever $i<-l$ or $i>l$. Further we see that if $\varepsilon_{v,i}$ which generates an edge contained in $C_{v,l}$ but not $C_{v,l-1}$, then it has a nonzero coordinate with number either $-l-1$ or $l+1$. Moreover, the fact that $v\in H_{-l-1}$ and $v\in E_{l+1}$ implies the following. If the coordinate of $\varepsilon_{v,i}$ with number $-l-1$ is nonzero, then this coordinate must be $-1$ in order to have $\chi_{-l-1}(v+\varepsilon_{v,i})\le k$. Also, if the coordinate of $\varepsilon_{v,i}$ with number $l+1$ is nonzero, then this coordinate must be 1 in order for the corresponding coordinate of $v+\varepsilon_{v,i}$ to be nonnegative. Herefrom we deduce that if $$\sum_{\substack{i\in[1,m],\\\bar v+\varepsilon_{v,i}\not\in C_{v,l}}}\alpha_i<1,$$ then the coordinate of $x$ with number either $-l-1$ or $l+1$ turns out to be non-integral. 
\end{proof}

Now we can finally define 
\begin{equation}\label{tau}
\tau_{\bar v}=\frac{G(Q_v)}{(1-G(e^{\varepsilon_{v,1}}))(1-G(e^{\varepsilon_{v,2}}))\ldots}.
\end{equation}
Proposition~\ref{qbig} shows that the denominator is indeed an invertible element of $\mathfrak S$.

The proof above shows that $G(Q_v)$ contains no monomials with powers of $q$ less than $\deg G(e^{\bar v})+K$ (the number $K$ is defined in the proof). Also, it is obvious that the denominator of~(\ref{tau}) contains no monomials with powers of $q$ less than $K$. This shows that $\tau_{\bar v}$ only contains powers of $q$ no less than $\deg G(e^{\bar v})$.

Furthermore, consider a cone $C_{v,l}$ from the proof and let it be generated by $\varepsilon_1,\ldots,\varepsilon_m$. One also sees that $G(Q_{v,l})$ contains no monomials with powers of $q$ less than $\deg G(e^{\bar v})+K$ and $$G((1-e^{\varepsilon_{v,1}})\ldots(1-e^{\varepsilon_{v,m}}))$$ contains no monomials with powers of $q$ less than $K$. Consequently, we may view the quotient of $G(Q_{v,l})$ by the above product as $\tau_{\bar v,l}\in\mathfrak S$ which only contains powers of $q$ no less than $\deg G(e^{\bar v})$. (As a rational function this quotient is, of course, $G(\sigma_{\bar\varphi}(C_{v,l}))$.)

These observations are necessary to obtain the goal of this section.
\begin{proof}[Proof of Theorem~\ref{infbrion}.]
Let $$\bar\Pi_l=\Pi\cap\bigcap_{i<-l}\widebar{H_i}\cap\bigcap_{i>l}\widebar{E_i}.$$ Theorem~\ref{wbrion} shows that 
\begin{equation}\label{finbrion}
G(S_{\bar\varphi}(\Pi_l))=\sum_{\substack{\bar v\text{ vertex}\\\text{of }\Pi_l}}\tau_{\bar v,l}.
\end{equation}

Obviously, the coefficients of the series in $q$ on the left stabilize onto the coefficients of $G(S_{\bar\varphi}(\bar\Pi))$. Also, for any $v$ the coefficients of the series $\tau_{\bar v,l}$ stabilize onto the coefficients of $\tau_{\bar v}$.

The remarks preceding the proof show that for any integer $N$ there is only a finite number of vertices $v$ for which $\tau_{\bar v,l}$ may contain a power of $q$ less than $N$. This shows that the infinite sum $$\sum_{\substack{\bar v\text{ vertex}\\\text{of }\Pi}}\tau_{\bar v}$$ is well-defined and that the coefficients of the right-hand side of~(\ref{finbrion}) stabilize onto this infinite sum's coefficients.
\end{proof}
%

\section{Assigning Lattice Subgraphs to Faces of $\Pi$}\label{proofintro}

First we define a subgraph $\Theta(x)\subset\mathcal R$ for any point $x\in\Pi$. The vertices of $\Theta(x)$ are all the vertices of $\mathcal R$. An edge of $\mathcal R$ connecting $(i_1,j_1)$ and $(i_2,j_2)$ is in $\Theta(x)$ if and only if $s_{i_1,j_1}(x)=s_{i_2,j_2}(x)$.

Now for a finite dimensional face $f$ we take a point $x$ such that $f$ is the minimal face containing $x$. We see that the subgraph $\Theta(x)$ does not depend on $x$ and we define $\Theta_f=\Theta(x)$. Visibly, whenever $f\subset g$ the graph $\Theta_g$ is a subgraph of $\Theta_f$. 

Relation~(\ref{shift}) shows that the graph $\Theta_f$ is invariant under the shift $(i,j)\rightarrow(i-n+1,j+n)$. This means that its connected components are divided into equivalence classes, with two components being equivalent if and only if they can be identified by an iteration of this shift. We choose a set of representatives and denote the union of these components $\Delta_f\subset \Theta_f$.

Moreover, relation~(\ref{shift}) shows that $(i,j)$ and $(i-n+1,j+n)$ are never in one component of $\Theta_f$. This means that for every integer $l$ there is exactly one vertex $(i,j)\in\Delta_f$ with $in+j(n-1)=l$. We denote this vertex $(\eta_f(l),\theta_f(l))$. We also see that the edges of $\Delta_f$ are in one-to-one correspondence with those hyperplanes $E_l$ and $H_l$ which contain $f$.

Now consider a vertex $v$ of $\Pi$. We can define a change of coordinates on $V$ in terms of the graph $\Delta_v$. The new coordinates will be labeled by pairs $(i,j)$ such that $(i,j)$ is a vertex of $\Delta_v$. The corresponding coordinate of $x$ is simply $s_{i,j}(x)$. Definition~(\ref{gtdef}) together with the previous paragraph show that this is indeed a nondegenerate change of coordinates and the new coordinates of a point are integral if and only if this point was integral.

\begin{proposition}\label{imcv}
For a point $x\in V$ we have $x\in C_v$ if and only if for any edge of $\Delta_v$ joining vertices $(i_1,j_1)$ and $(i_2,j_2)$ the coordinates $s_{i_1,j_1}(x)$ and $s_{i_2,j_2}(x)$ satisfy the corresponding inequality.
\end{proposition}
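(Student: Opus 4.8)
The plan is to compare the two descriptions of the tangent cone $C_v$ directly: the geometric one, $C_v=\{v+\alpha(x-v):x\in\Pi,\alpha\ge 0\}$, versus the combinatorial one asserted here, in terms of the inequalities attached to edges of $\Delta_v$. First I would recall from Proposition~\ref{vert} that a vertex is characterized by the property that for every $i$ either $x_i=0$ or $\chi_i(x)=k$ (i.e.\ $v\in E_i$ or $v\in H_i$), so that for each $i$ at least one of the two hyperplanes $E_i,H_i$ passes through $v$. Translating through the correspondence $s_{i,j}(x)=s_{i-1,j}(x)\iff x\in H_{in+j(n-1)}$ and $s_{i,j}(x)=s_{i-1,j+1}(x)\iff x\in E_{in+j(n-1)}$ established just before this proposition, this says that for every $(i,j)$ at least one of the two inequalities of type~(\ref{gt}) attached to the edges of $\mathcal R$ above $(i,j)$ is an equality at $v$, i.e.\ at least one of those two edges lies in $\Theta_v$. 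Combined with property~3 of ordinary subgraphs (or rather the ``plane-filling'' analogue), this is what makes $\Theta_v$, and hence $\Delta_v$, large enough that the change of coordinates labelled by the vertices of $\Delta_v$ is a genuine (finite-per-coordinate) coordinate system on $V$, as noted in the paragraph preceding the statement.

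Next I would argue as follows. Using that coordinate change, the polyhedron $\Pi$ near $v$ is cut out by finitely many inequalities in any finite window of coordinates: the inequalities attached to the edges of $\Theta_v$ (those already equalities at $v$) and the inequalities attached to the edges of $\mathcal R\setminus\Theta_v$ (those strict at $v$). The tangent cone at $v$ is obtained by discarding the strict inequalities and keeping the tight ones, i.e.\ exactly the inequalities indexed by edges of $\Theta_v$. So $x\in C_v$ iff for every edge of $\Theta_v$ joining $(i_1,j_1)$ and $(i_2,j_2)$ the coordinates $s_{i_1,j_1}(x)$ and $s_{i_2,j_2}(x)$ satisfy the corresponding inequality of type~(\ref{gt}). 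It then remains to replace ``edge of $\Theta_v$'' by ``edge of $\Delta_v$'': this is precisely where the shift-invariance~(\ref{shift}) enters. Since $\Theta_v$ is invariant under $(i,j)\mapsto(i-n+1,j+n)$ and $\Delta_v$ is a set of representatives of its components under this shift, the inequality attached to a shifted edge is the shift of the inequality attached to the original edge; and~(\ref{shift}) shows $s_{i-n+1,j+n}(x)=s_{i,j}(x)-k$ for all $x\in V$, so the shifted inequality is automatically equivalent to the unshifted one. Hence imposing the edge inequalities of $\Delta_v$ is equivalent to imposing those of all of $\Theta_v$, and the proposition follows.

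The point requiring the most care — and the step I expect to be the main obstacle — is the passage from ``the tangent cone is cut out by the tight inequalities'' in the finite-dimensional situation to the genuinely infinite-dimensional $\Pi$. One has to check that for any point $x$ of the infinite-dimensional cone on the combinatorial side, $x$ actually lies in $C_v$, i.e.\ $x=v+\alpha(y-v)$ for some $y\in\Pi$, $\alpha\ge 0$; and conversely that every such $x$ satisfies all the edge inequalities. The cleanest way to handle this is to exhaust $\Pi$ by the finite-dimensional faces $\Pi_l$ (equivalently $C_v$ by the finite-dimensional faces $C_{v,l}$ used in the proof of Lemma~\ref{welldef}), apply the elementary finite-dimensional fact on each $C_{v,l}$, and note that both the condition ``$x\in C_v$'' and the condition ``all edge inequalities of $\Delta_v$ hold'' are, in the coordinate system attached to $\Delta_v$, determined coordinate-window by coordinate-window, so compatibility across the $C_{v,l}$ gives the statement for $C_v$. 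Once this exhaustion argument is set up, the rest is routine bookkeeping with the dictionary between $E_l,H_l$ and the edges of $\Delta_v$.
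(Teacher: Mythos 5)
Your argument is correct and is essentially the paper's own: the paper's proof is the one-line observation that $x\in C_v$ if and only if $x_l\ge 0$ whenever $v\in E_l$ and $\chi_l(x)\le k$ whenever $v\in H_l$, combined with the dictionary (established just before the proposition) between these tight hyperplanes and the edges of $\Delta_v$. Your write-up simply fills in the details the paper treats as evident, namely the tight-inequality description of the tangent cone in the infinite-dimensional setting (handled there implicitly by the finite support of $x-v$, in your version by exhaustion) and the passage from $\Theta_v$ to $\Delta_v$ via the shift relation~(\ref{shift}).
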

\begin{proof}
This is evident from the fact that $x\in C_v$ if and only if $x_l\ge 0$ whenever $v\in E_l$ and $\chi_l(x)\le k$ whenever $v\in H_l$.
\end{proof}

We proceed to give an extensive list of properties of the introduced objects. 

\begin{proposition}\label{vertgraph}
If $v$ is a vertex of $\Pi$, then every vertex of $\Delta_v$ is connected to one if its two upper neighbors.
\end{proposition}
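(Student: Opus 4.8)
The plan is to understand the vertices of $\Delta_v$ concretely via the coordinate description $s_{i,j}(v)$, and to translate the vertex condition from Proposition~\ref{vert} into a statement about edges of $\Delta_v$ at each vertex. First I would recall that by Proposition~\ref{vert}, at a vertex $v$ of $\Pi$, for every integer $l$ at least one of $v_l=0$ or $\chi_l(v)=0$ holds; and by the remark following the Proposition on the minimal face (and the identities $s_{i,j}(x)=s_{i-1,j+1}(x)\iff x\in E_{in+j(n-1)}$, $s_{i,j}(x)=s_{i-1,j}(x)\iff x\in H_{in+j(n-1)}$), an edge of $\Delta_v$ between $(i,j)$ and its upper-left neighbor $(i-1,j)$ records exactly that $\chi_{in+j(n-1)}(v)=k$, while the edge to the upper-right neighbor $(i-1,j+1)$ records that $v_{in+j(n-1)}=0$.

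Next I would fix a vertex $(i,j)$ of $\Delta_v$ and set $l=in+j(n-1)$, so that the two upper neighbors $(i-1,j)$ and $(i-1,j+1)$ of $(i,j)$ in $\mathcal R$ correspond (via the proof of the first Proposition in Section~3) to the relations $\chi_l(v)\le k$ and $v_l\ge 0$ respectively. The condition that $(i,j)$ be joined in $\Delta_v$ to its upper-left neighbor is $\chi_l(v)=k$, and to its upper-right neighbor is $v_l=0$. So the claim is equivalent to: for $l=in+j(n-1)$, one has $v_l=0$ or $\chi_l(v)=k$. I would then invoke Proposition~\ref{vert}: $v_l=0$ or $\chi_l(v)=0$. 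If $v_l=0$ we are done immediately. If $v_l\neq 0$, then $\chi_l(v)=0$; but I need $\chi_l(v)=k$, so I must also deal with the possibility $k=0$ — which is excluded since $\lambda$ is nonzero of level $k=\sum a_i$, hence... wait, $k$ could still be $0$ only if all $a_i=0$, i.e. $\lambda=0$, which is excluded throughout Part~\ref{proof}. But $\chi_l(v)=0\neq k$ when $k>0$, so something subtler is needed: $\chi_l(v)=0$ does \emph{not} directly give an upper-left edge.

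So the real argument must be: if $v_l\neq 0$ then $v_l\ge 1$, and I must show the upper-left edge is present, i.e. $\chi_l(v)=k$. Here I would use that $(i,j)$ being a vertex of $\Delta_v$ (i.e. lying in the chosen fundamental domain) does not restrict which $l$ arises — every $l\in\mathbb Z$ occurs — so I genuinely need, for all $l$ with $v_l>0$, that $\chi_l(v)=k$. This is a structural fact about vertices proved in~\cite{me2} (referenced in Proposition~\ref{vert} and used repeatedly in Lemma~\ref{welldef}, e.g. the statements about $v\in H_{-l-1}$, $v\in E_{l+1}$): the point is that by Proposition~\ref{vert} each $l$ has $v_l=0$ or $\chi_l(v)=0$, and if $v_l>0$ then from $\chi_{l-1}(v)-\chi_l(v)=v_{l-n}-v_l$ together with the sign constraints one propagates to conclude $\chi_l(v)=k$. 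I would spell this out: from $\chi_l(v)\le k$ always, and the fact that whenever $\chi_l(v)<k$ the constraint $v_l\ge 0$ is the only active one at position $l$, one shows $v_l=0$ forces... Actually the cleanest route: the identity $\chi_{l}(v)-\chi_{l-1}(v)=v_l-v_{l-n}$ shows $\chi$ changes by $v_l-v_{l-n}$; combined with $0\le\chi_l(v)\le k$ for all $l$ and $\chi_l(v)=a_{0}+\dots$ stabilizing to $k$ as $l\to-\infty$ and to $0$ as $l\to+\infty$, plus Proposition~\ref{vert}, one deduces that the set $\{l: v_l>0\}$ and $\{l:\chi_l(v)=k\}$ coincide. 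I would present this as a short lemma-style computation.

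The main obstacle I anticipate is precisely this last structural input: showing that at a vertex, $v_l>0$ implies $\chi_l(v)=k$ (equivalently, that the ``staircase'' $\Delta_v$ has no vertex of out-degree $0$ upward). It is the part that genuinely uses the vertex characterization from~\cite{me2} rather than just formal manipulation of $s_{i,j}$. Everything else — translating between $E_l,H_l$ membership, edges of $\Delta_v$, and upper neighbors in $\mathcal R$ — is bookkeeping already set up in Sections~3 and~\ref{proofintro}, so I would keep that terse and concentrate the write-up on the sign-propagation argument, citing Proposition~\ref{vert} and the $\{-1,0,1\}$-coordinate / $H_{-l-1}$, $E_{l+1}$ facts used in the proof of Lemma~\ref{welldef}.
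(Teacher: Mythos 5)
Your reduction is exactly the paper's (one-line) argument: with $l=in+j(n-1)$, an edge from $(i,j)$ to $(i-1,j)$ in $\Delta_v$ means $v\in H_l$, i.e.\ $\chi_l(v)=k$, an edge to $(i-1,j+1)$ means $v\in E_l$, i.e.\ $v_l=0$, and since every integer $l$ occurs at exactly one vertex of $\Delta_v$, the proposition is equivalent to ``for every $l$, either $v_l=0$ or $\chi_l(v)=k$''; the paper then simply cites Proposition~\ref{vert}. The ``subtler step'' you manufacture is an artifact of a typo in the printed statement of Proposition~\ref{vert}: read literally ($x_i=0$ or $\chi_i(x)=0$) it is impossible for any point of $\Pi$ with $\lambda\neq0$, since $\chi_i(x)\ge x_i\ge0$ on $\Pi$ makes it equivalent to $x\equiv0$, so $\Pi$ would have no vertices at all (in particular $T^0$ would not be one); the intended condition, the one proved in~\cite{me2} and used later (e.g.\ $v\in H_{-l-1}$, $v\in E_{l+1}$ in the proof of Lemma~\ref{welldef}), is $x_i=0$ or $\chi_i(x)=k$. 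With that reading, your missing implication ``$v_l\neq0\Rightarrow\chi_l(v)=k$'' \emph{is} Proposition~\ref{vert}, and no propagation argument is needed. Moreover, the lemma you propose to prove instead is false as stated: at a vertex the sets $\{l:v_l>0\}$ and $\{l:\chi_l(v)=k\}$ need not coincide (for singular $\lambda$ already $v=T^0$ has $\chi_l(v)=k$ for all $l\le0$ while $v_l=0$ whenever $a_{l\bmod n}=0$, and similar examples exist for regular $\lambda$ at other vertices); only the inclusion $\{l:v_l>0\}\subseteq\{l:\chi_l(v)=k\}$ holds, and that inclusion is precisely the vertex characterization you should cite. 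So keep your bookkeeping, cite Proposition~\ref{vert} in its intended form, and drop the sign-propagation lemma.
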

\begin{proof}
This is evident from Proposition~\ref{vert}.
\end{proof}

\begin{proposition}
Whenever $(i,j)$ and $(i,j+1)$ are in the same connected component of $\Delta_v$ the vertices $(i-1,j+1)$ and $(i+1,j)$ (i.e. the two common neighbors of $(i,j)$ and $(i,j+1)$) are also in that same component of $\Delta_v$.
\end{proposition}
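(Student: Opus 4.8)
The plan is to derive everything from the defining inequalities~(\ref{gt}) applied to the two common neighbors, together with the equality of $s$-values forced by the hypothesis. First I would observe that since $(i,j)$ and $(i,j+1)$ lie in the same connected component of $\Delta_v\subset\Theta_v$, there is a path of lattice-adjacent vertices in $\Theta_v$ joining them, and along each edge of this path the function $s_{\bullet,\bullet}(v)$ is by definition constant; hence $s_{i,j}(v)=s_{i,j+1}(v)$.

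Next I would look at the common neighbor $(i-1,j+1)$, which is the upper-right neighbor of $(i,j)$ and the upper-left neighbor of $(i,j+1)$. Applying~(\ref{gt}) (once with the pair $(i,j),(i-1,j+1)$ and once with the pair $(i,j+1),(i-1,j+1)$) gives $s_{i,j}(v)\ge s_{i-1,j+1}(v)$ and $s_{i-1,j+1}(v)\ge s_{i,j+1}(v)$; combined with the equality from the previous step this sandwich forces $s_{i-1,j+1}(v)=s_{i,j}(v)$. Therefore the edge of $\mathcal R$ joining $(i,j)$ and $(i-1,j+1)$ belongs to $\Theta_v$, so $(i-1,j+1)$ lies in the same component as $(i,j)$. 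The argument for $(i+1,j)$ is entirely symmetric: it is the lower-right neighbor of $(i,j)$ and the lower-left neighbor of $(i,j+1)$, so~(\ref{gt}) yields $s_{i,j}(v)\ge s_{i+1,j}(v)\ge s_{i,j+1}(v)$, and again the two ends coincide, putting $(i+1,j)$ in the same component via its edge to $(i,j)$.

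Finally, since $\Delta_v$ is by construction a union of entire connected components of $\Theta_v$, the fact that it contains $(i,j)$ means it contains the whole component of $\Theta_v$ through $(i,j)$, hence both $(i-1,j+1)$ and $(i+1,j)$. I do not anticipate any genuine difficulty: the proof is a two-sided bound, and in fact uses nothing about $v$ being a vertex beyond that $(s_{i,j}(v))$ satisfies~(\ref{gt}). The only points requiring a bit of care are the bookkeeping of which half of~(\ref{gt}) governs each of the four relevant adjacencies (upper-left versus upper-right, lower-left versus lower-right) and the harmless passage between $\Theta_v$ and its chosen set of representative components $\Delta_v$.
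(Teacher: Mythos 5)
Your proof is correct and is exactly the argument the paper has in mind: the paper simply declares the statement ``evident from the fact that $(s_{i,j}(v))$ is a plane-filling GT-pattern,'' and your sandwich $s_{i,j}(v)\ge s_{i-1,j+1}(v),\,s_{i+1,j}(v)\ge s_{i,j+1}(v)$ combined with the constancy of $s$ along components of $\Theta_v$ (and the fact that $\Delta_v$ consists of whole components) is precisely the spelled-out version of that remark.
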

\begin{proof}
This evident from the fact that $s_{i,j}(v)$ is an plane-filling GT-pattern.
\end{proof}

Next visualize a cycle graph with $n$ vertices labeled $0,\ldots,n-1$ and its subgraph determined by the following rule. Vertices $i$ and $i+1$ are adjacent in the subgraph whenever $a_{i+1}=0$ (all indices are to be read modulo $n$). Since $\lambda\neq 0$ this subgraph is a disjoint union of $m(\lambda)$ path graphs of sizes $l_1,\ldots,l_{m(\lambda)}$. The numbers $m(\lambda)$ and $l_1,\ldots,l_{m(\lambda)}$ are important characteristics of $\lambda$. We point out straight away that, as is well-known, the stabilizer $$W_\lambda\simeq S_{l_1}\times\ldots\times S_{l_m}$$ and $$W_\lambda(t)=[l_1]_t!\ldots[l_m]_t!$$

\begin{proposition}\label{uplimit}
For any vertex $v$ of $\Pi$ the number of connected components in $\Delta_v$ is $m(\lambda)$. Moreover, they can be labeled $\Gamma_1,\ldots,\Gamma_{m(\lambda)}$ in such a way that for $i\ll 0$ component $\Gamma_r$ contains exactly $l_r$ vertices from row $i$.
\end{proposition}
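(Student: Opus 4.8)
The plan is to analyze the structure of $\Delta_v$ far up (i.e.\ in rows $i\ll 0$), where the sequence $v$ is eventually periodic with $v_i=a_{i\bmod n}$, and show that there the graph $\Delta_v$ stabilizes into a fixed periodic pattern determined entirely by the cyclic subgraph described just above the statement. First I would recall from~(\ref{gtdef}) that for $i+j\ll 0$ one has $s_{i,j}(v)=\sum_{l\le in+j(n-1)}(v_l-T^{i+j}_l)$, and since $v_l=a_{l\bmod n}=T^m_l$ for all $l\le mn$ with $m$ large, the terms $v_l-T^{i+j}_l$ vanish for $l$ very negative; so $s_{i,j}(v)$ is a finite sum whose value depends only on the ``tail'' of $v$. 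Using the proof of Proposition~1 (the statement following~(\ref{gt})), we have $s_{i,j}(v)=s_{i-1,j+1}(v)$ iff $v_{in+j(n-1)}=0$, and $s_{i,j}(v)=s_{i-1,j}(v)$ iff the sum of the $n$ consecutive terms $v_{in+(j-1)(n-1)+1},\dots,v_{in+j(n-1)}$ equals $k$. In the periodic region $v_l=a_{l\bmod n}$, the first condition becomes $a_{(in+j(n-1))\bmod n}=0$, i.e.\ $a_{-j\bmod n}=0$ (since $in\equiv 0$), and the second becomes $\sum_{r=0}^{n-1}a_r=k$, which always holds. So in rows $i\ll 0$ every vertex $(i,j)$ is connected to its upper-left neighbor $(i-1,j)$ (the ``$H$'' edge is always present), and it is connected to its upper-right neighbor $(i-1,j+1)$ precisely when $a_{-j\bmod n}=0$.

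Next I would translate this into a description of the connected components of the restriction of $\Theta_v$ to the region $i\ll 0$. Because every ``$H$''-edge is present there, each component is a union of full north-west diagonals; two adjacent NW-diagonals (those through columns giving residues $-j$ and $-j-1$) are merged exactly when $a_{-j-1\bmod n}=0$, i.e.\ exactly according to the adjacency rule of the cyclic graph on $\{0,\dots,n-1\}$ with $i,i+1$ joined when $a_{i+1}=0$. Hence the components of $\Theta_v$ in the far-up region are in bijection with the path-graph components of that cyclic subgraph, so there are $m(\lambda)$ of them, and the $r$-th one of them meets each fixed row $i\ll 0$ in exactly $l_r$ vertices (one per column in the corresponding block of residues). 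Passing to $\Delta_v$, which picks one representative per $\sim$-orbit, does not change the count of components nor the per-row vertex counts in rows $i\ll 0$, since the shift $(i,j)\mapsto(i-n+1,j+n)$ acts freely and within a fixed far-up row identifies no two vertices. This establishes the ``moreover'' part and, in particular, exhibits at least $m(\lambda)$ components.

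Finally I would argue that $\Delta_v$ has \emph{no more} than $m(\lambda)$ components, i.e.\ that every component of $\Delta_v$ reaches up into the periodic region. Here I would use Proposition~\ref{vertgraph}: at a vertex, every vertex of $\Delta_v$ is connected to one of its two upper neighbors. Fix a component $\Gamma$ of $\Delta_v$; starting from any vertex of $\Gamma$ and repeatedly moving to an upper neighbor inside $\Gamma$, we produce an infinite strictly-upward path in $\Gamma$ (the row index strictly decreases at each step), so $\Gamma$ contains vertices in arbitrarily high rows, in particular in the region $i\ll 0$ where we have already classified the components. Thus $\Gamma$ coincides with one of the $m(\lambda)$ far-up components, and the total number of components is exactly $m(\lambda)$. (One should also note the components are genuinely distinct after choosing representatives, which is immediate since distinct far-up components are already disjoint.)

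The main obstacle I anticipate is the bookkeeping in the second paragraph: making the bijection between far-up components of $\Theta_v$ and path components of the cyclic subgraph precise — in particular checking that the ``every $H$-edge present'' phenomenon really does glue each far-up region into unions of NW-diagonals with the stated merging rule, and carefully reconciling the residue indices (the shift $in\equiv 0\pmod n$, the ceiling/offset conventions in~(\ref{gtdef})) so that the block sizes come out as $l_1,\dots,l_{m(\lambda)}$ rather than some permutation or reindexing. The upward-path argument for the upper bound is robust and should go through cleanly once Proposition~\ref{vertgraph} is invoked.
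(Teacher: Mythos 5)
The central gap is your identification of the ``periodic region'' with the set of rows $i\ll 0$. The periodicity of a vertex is $v_l=a_{l\bmod n}$ for $l\ll 0$, and the sequence index attached to the pair $(i,j)$ is $l=in+j(n-1)$ (this is the $l$ in the criteria you quote: $s_{i,j}=s_{i-1,j+1}$ iff $v_l=0$, and $s_{i,j}=s_{i-1,j}$ iff $\chi_l(v)=k$). For a fixed row $i$, no matter how negative, $l$ runs over an arithmetic progression unbounded in both directions, so the far-right part of every row lies where $v_l=0$: there the ``$E$''-equalities hold and the ``$H$''-equalities fail (a sum of $n$ consecutive terms equals $0<k$). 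Hence your claim that in rows $i\ll 0$ every vertex is joined to its upper-left neighbour is false, and so is the conclusion that the restriction of $\Theta_v$ to $\{i\ll 0\}$ has $m(\lambda)$ components: even in the genuine periodic region (where $in+j(n-1)\ll 0$) the equality graph splits into infinitely many diagonal ``bands'' of widths $l_1,\dots,l_{m(\lambda)}$ repeating with period $n$ in $j$, and these fall into $m(\lambda)$ classes only after dividing by the shift $(i,j)\mapsto(i-n+1,j+n)$.

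Because of this, two verifications that the statement actually needs are missing: (i) that two bands from different shift classes cannot be absorbed into one component of $\Theta_v$ by a path running through the non-periodic part of the plane, and (ii) that a component cannot re-enter a very negative row outside its band, which is what ``exactly $l_r$ vertices in row $i$'' requires. Both can be repaired, for instance by observing that all vertices of a component of $\Theta_v$ carry one and the same value $s_{i,j}(v)$, while along a fixed sufficiently negative row the values are weakly decreasing in $j$ and drop strictly at every band boundary, so a component meets such a row exactly in its band; but as written your second paragraph asserts rather than proves the key count, and the real obstacle is not the residue bookkeeping you flag. Your third paragraph is sound (note that along an upward path the index $in+j(n-1)$ strictly decreases, so the path does eventually enter the periodic region) and coincides with the closing step of the paper's argument via Proposition~\ref{vertgraph}; otherwise the paper proceeds differently, choosing for each $r$ an explicit position $I_r$ deep in the periodic zone flanked by the appropriate zero/nonzero pattern of $v$ and reading off the $l_r$-blocks at the canonical representatives $(\eta_v(I_r),\theta_v(I_r))$ of $\Delta_v$, which avoids the issues above.
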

\begin{proof}
Consider some $r\in[1,m(\lambda)]$. Due to the definition of the integers $l_r$ we can specify an integer ${I_r}$ with the following properties.
\begin{enumerate}
\item For $l<{I_r}+n^2$ one has $v_l=a_{l\bmod n}$.
\item One has $v_{I_r}=v_{{I_r}-1}=\ldots=v_{{I_r}-l_r+2}=0.$ \\($l_r-1$ consecutive terms.) 
\item One has $v_{{I_r}+1}\neq 0$ and $v_{{I_r}-l_r+1}\neq 0$.
\end{enumerate}

The above translates into the following statement about the plane-filling GT-pattern associated with $v$. Each of the elements \[s_{\eta_v({I_r}),\theta_v({I_r})}(v),\ldots,s_{\eta_v({I_r}),\theta_v({I_r})+l_r-1}(v)\] (\(l_r\) consecutive elements in row \(\eta_v({I_r})\)) is equal to its respective upper-left neighbor. That is due to Property 1 of ${I_r}$ above. Also, $s_{\eta_v({I_r}),\theta_v({I_r})}(v)$ and the $l_r-2$ elements to its right are equal to their respective upper-right neighbors. That is due to Property 2. However, $s_{\eta_v({I_r}),\theta_v({I_r})-1}(v)$ is not equal to its upper-right neighbor $s_{\eta_v({I_r})-1,\theta_v({I_r})}(v)$ and $s_{\eta_v({I_r}),\theta_v({I_r})+l_r-1}(v)$ is not equal to its upper-right neighbor $s_{\eta_v({I_r})-1,\theta_v({I_r})+l_r}(v)$. That is by Property 3.

We have established that vertex $(\eta_v({I_r}),\theta_v({I_r}))$ is in one component with the $l_r-1$ vertices to its right, as well as its upper-left neighbor and the $l_r-1$ vertices to that neighbor's right. We have also seen that this component has no other vertices in rows $\eta_v({I_r})$ and $\eta_v({I_r})-1$.

We see that there are indeed $m(\lambda)$ components $\Gamma_1,\ldots,\Gamma_{m(\lambda)}$ such that for $$i\le \min_r(\eta_v(I_r))$$ component $\Gamma_r$ contains exactly $l_r$ vertices from row $i$. It remains to observe that for all $l\le\min_r(I_r)$ the vertex $(\eta_v(l),\theta_v(l))$ is contained in one of those $m(\lambda)$ components. This shows that there are no other components since Proposition~\ref{vertgraph} implies that every component of $\Delta_v$ has vertices in row $i$ for $i\ll 0$.
\end{proof}

\begin{proposition}\label{upsame}
For a point $x\in C_v$ one has $s_{i,j}(x)=s_{i,j}(v)$ when $(i,j)\in\Delta_v$ and $i\ll 0$.
\end{proposition}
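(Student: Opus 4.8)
The plan is to reduce the statement to the elementary fact that two points of $V$ which agree on all sufficiently negative coordinates have the same entries $s_{i,j}$ in all sufficiently negative rows, and then to use the description of $\Delta_v$ already obtained in Proposition~\ref{uplimit}.

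First I would record that $C_v\subset V$, so $x\in V$; hence $\bar x=x-T^0$ lies in $\bar V$ and has only finitely many nonzero terms, i.e.\ $x_l=a_{l\bmod n}$ for all $l$ below some bound, and the same is true for $v$. Consequently there is an integer $L$ (depending on $x$) with $x_l=v_l$ for every $l<L$. Subtracting the two instances of definition~(\ref{gtdef})---the $T^{i+j}$-terms cancel, since $s_{i,j}(x)$ and $s_{i,j}(v)$ are built from the same $T^{i+j}$---gives $s_{i,j}(x)-s_{i,j}(v)=\sum_{l\le in+j(n-1)}(x_l-v_l)$, a finite sum which vanishes whenever $in+j(n-1)<L$.

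It remains to see that $(i,j)\in\Delta_v$ together with $i\ll 0$ forces $in+j(n-1)$ to be arbitrarily negative. Here I would invoke Proposition~\ref{uplimit}, and more precisely its proof: choosing the integers $I_r$ as there, one checks by a downward induction on the row that for all $i$ below a bound depending only on $v$ the component $\Gamma_r$ meets row $i$ in exactly the $l_r$ columns $\theta_v(I_r),\dots,\theta_v(I_r)+l_r-1$, so each component of $\Delta_v$ occupies a fixed, bounded set of columns in all sufficiently negative rows. The inductive step uses only that $v_l=a_{l\bmod n}$ for $l\ll 0$, which propagates the ``comb'' structure (each vertex joined to its upper-left neighbour, and all but the rightmost also to its upper-right neighbour) one row further down without moving the column window. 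Hence there is a constant $c$, depending only on $v$, with $|j|\le c$ for every $(i,j)\in\Delta_v$ lying in a sufficiently negative row, and therefore $in+j(n-1)\le in+c(n-1)\to-\infty$ as $i\to-\infty$.

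Combining the two observations: for every $(i,j)\in\Delta_v$ with $i$ below $\min\bigl(i_0,\,(L-c(n-1))/n\bigr)$, where $i_0$ is the bound of the previous paragraph, one has $in+j(n-1)<L$, whence $s_{i,j}(x)=s_{i,j}(v)$. The only mildly delicate ingredient is the downward induction pinning the components of $\Delta_v$ to a fixed, bounded column range in all deep rows; everything else is bookkeeping. (Note that the argument in fact uses only $x\in V$, not the stronger hypothesis $x\in C_v$.)
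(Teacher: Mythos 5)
Your proposal is correct and takes essentially the same route as the paper: the paper's proof likewise observes that $s_{i,j}(x)=s_{i,j}(v)$ whenever $in+j(n-1)$ lies below the index beyond which $x$ and $v$ coincide (your first paragraph, which indeed only uses $x\in V$), and then appeals to Proposition~\ref{uplimit} for the fact that vertices of $\Delta_v$ in rows $i\ll 0$ have $in+j(n-1)$ eventually below any such bound. Your second paragraph simply fills in the column-boundedness of the components in deep rows that the paper leaves implicit in its citation of Proposition~\ref{uplimit}.
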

\begin{proof}
Obviously, there exists an integer $M$ such that $s_{\eta_v(l),\theta_v(l)}(x)=s_{\eta_v(l),\theta_v(l)}(v)$ whenever $l<M$. However, Proposition~\ref{uplimit} shows that for $i\ll 0$ we have $l<M$ whenever $\eta_v(l)=i$.
\end{proof}

We have another proposition describing $\Delta_v$ in rows $i\gg 0$.
\begin{proposition}\label{downlimit}
Only one of the $m(\lambda)$ components of $\Delta_v$ contains vertices $(i,j)$ with arbitrarily large $i$. For $i\gg 0$ this component contains a single vertex in row $i$.
\end{proposition}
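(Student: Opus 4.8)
The whole proposition is a statement about a single function: recall that the vertices of $\Delta_v$ are indexed by $\mathbb Z$, with $l$ corresponding to the unique $(\eta_v(l),\theta_v(l))\in\Delta_v$ satisfying $\eta_v(l)n+\theta_v(l)(n-1)=l$. Thus the number of vertices of $\Delta_v$ in row $i$ equals $|\eta_v^{-1}(i)|$, and a component of $\Delta_v$ has vertices in arbitrarily deep rows exactly when the parameters of its vertices have $\eta_v$-image unbounded above. So I would analyse $\eta_v(l)$ as $l\to+\infty$ and as $l\to-\infty$, proving: (i) $\eta_v(l+1)=\eta_v(l)+1$ for $l\gg 0$, and (ii) $\{\eta_v(l):l<L\}$ is bounded above, where $L$ is the threshold from (i). Both assertions of the proposition then drop out.

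\textbf{Step (i).} The south-west lower neighbour of $(\eta_v(l),\theta_v(l))$ is $(\eta_v(l)+1,\theta_v(l)-1)$, and its parameter is $l+1$. Since $(i,j)$ and $(i+1,j-1)$ are joined in $\Theta_v$ precisely when $s_{i,j}(v)=s_{i+1,j-1}(v)$, i.e.\ when $v_{in+j(n-1)+1}=0$, this neighbour is joined to $(\eta_v(l),\theta_v(l))$ whenever $v_{l+1}=0$, hence for all $l\gg 0$ by condition i). Being in the same component of $\Theta_v$ — one of the chosen representatives — it lies in $\Delta_v$, so it must be $(\eta_v(l+1),\theta_v(l+1))$. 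Therefore $\eta_v(l)=l+c$ for all $l\ge L$, with $c=\eta_v(L)-L$; in particular $\eta_v(l)\to+\infty$, the points $(\eta_v(l),\theta_v(l))$ with $l\ge L$ form a south-west ray whose consecutive vertices are adjacent in $\Theta_v$, so they all lie in one component $\Gamma_\ast$ of $\Delta_v$ (one of the $m(\lambda)$ components of Proposition~\ref{uplimit}), and this ray meets each row $i\ge L+c$ in exactly one vertex.

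\textbf{Step (ii).} Among $l<L$ only the parameters below some threshold $\ell_0$ need attention, the rest being a finite set. For $l\le\ell_0$ we are in the periodic regime governed by condition ii): the $n$ consecutive terms of $v$ ending at index $l$ are $a_0,\dots,a_{n-1}$ in some order, so $\chi_l(v)=k$ and the north-west upward edge at the parameter-$l$ vertex is always present, whereas the north-east upward edge is present iff $a_{l\bmod n}=0$; this is exactly the cycle-graph picture of Proposition~\ref{uplimit}, from which in rows $i\ll 0$ the vertices of $\Delta_v$ form $m(\lambda)$ ``thick columns'' confined to one fixed finite set $\mathcal C$ of columns. Hence $\theta_v(l)\in\mathcal C$ for $l\le\ell_0$, and therefore $\eta_v(l)=\bigl(l-\theta_v(l)(n-1)\bigr)/n$ is bounded above (indeed $\to-\infty$) as $l\to-\infty$. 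Together with the finite middle range this gives (ii), with some bound $B$.

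\textbf{Conclusion and main obstacle.} For $i>\max\{L+c,\ B\}$ the equation $\eta_v(l)=i$ forces $l\ge L$, so $l=i-c$ is its only solution; hence $\Delta_v$ has exactly one vertex in row $i$, and it is the ray vertex, lying in $\Gamma_\ast$. Consequently $\Gamma_\ast$ meets every sufficiently deep row in a single vertex, and no other component of $\Delta_v$ has any vertex in such rows, so $\Gamma_\ast$ is the unique component containing vertices with arbitrarily large $i$ — both claims of the proposition. The main obstacle is Step (ii): one cannot avoid invoking the periodicity forced by condition ii) (equivalently, the thick-column structure established in Proposition~\ref{uplimit}), since a priori $\eta_v$ could escape to $+\infty$ along negative parameters; confining the parameter-$l$ vertices, for $l\ll 0$, to a fixed band of columns is precisely what rules this out.
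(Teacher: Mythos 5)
Your proof is correct and its Step (i) is precisely the paper's own argument: for $l\gg 0$ the vanishing of the corresponding term of $v$ forces the parameter-$(l+1)$ vertex of $\Delta_v$ to be the lower-left neighbour of the parameter-$l$ vertex, producing a single descending ray lying in one component with exactly one vertex in each sufficiently deep row. Your Step (ii), which bounds $\eta_v(l)$ from above for $l<L$, is the part the paper leaves implicit — it is already contained in the thick-column structure established in the proof of Proposition~\ref{uplimit} (equivalently, in your observation that for $l\ll 0$ one has $\chi_l(v)=k$, so $\theta_v(l-n)=\theta_v(l)$ and the columns of the negative-parameter vertices are confined) — so you are only making explicit a step the authors took for granted.
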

\begin{proof}
For $l\gg 0$ we have $v_l=0$ which shows that $$s_{\eta_v(l),\theta_v(l)}(v)=s_{\eta_v(l)-1,\theta_v(l)+1}(v).$$ Consequently, for any $l>0$ we have $$(\eta_v(l),\theta_v(l))=(\eta_v(l-1)+1,\theta_v(l-1)-1)$$ and the two vertices are adjacent in $\Delta_v$. Since this holds for all $l\gg 0$, the proposition is proved.
\end{proof}

\begin{proposition}\label{downsame}
For a point $x\in C_v$ all the coordinates $s_{i,j}(x)$ with $(i,j)\in\Delta_v$ and $i\gg 0$ are the same.
\end{proposition}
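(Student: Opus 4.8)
The plan is to show that all ``large-row'' coordinates $s_{i,j}(x)$ of a point $x\in C_v$ collapse to a single value by combining Proposition~\ref{downlimit} (describing $\Delta_v$ for $i\gg 0$) with Proposition~\ref{imcv} (the defining inequalities of $C_v$) and the shift relation~(\ref{shift}). By Proposition~\ref{downlimit} there is a unique component, say $\Gamma_r$, of $\Delta_v$ with vertices in arbitrarily high rows, and for $i\gg 0$ this component has exactly one vertex in row $i$; moreover from the proof of that proposition these vertices form a chain $(\eta_v(l-1)+1,\theta_v(l-1)-1)=(\eta_v(l),\theta_v(l))$, each consecutive pair joined by an edge of $\Delta_v$. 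The edge joining $(\eta_v(l-1),\theta_v(l-1))$ and $(\eta_v(l),\theta_v(l))=(\eta_v(l-1)+1,\theta_v(l-1)-1)$ is a ``$\ge s_{i-1,j+1}$''-type edge, hence by Proposition~\ref{imcv} forces $s_{\eta_v(l),\theta_v(l)}(x)=s_{\eta_v(l)-1,\theta_v(l)+1}(x)$ for these points.

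First I would fix $x\in C_v$ and restrict attention to the vertices $(i,j)\in\Delta_v$ with $i$ large; by Proposition~\ref{downlimit} all such vertices lie on the single chain just described, so it suffices to prove that the values $s_{\eta_v(l),\theta_v(l)}(x)$ are eventually constant in $l$. Each consecutive equality $s_{\eta_v(l),\theta_v(l)}(x)=s_{\eta_v(l-1),\theta_v(l-1)}(x)$, coming from Proposition~\ref{imcv} applied to the edge of $\Delta_v$ between these two vertices, immediately gives that $l\mapsto s_{\eta_v(l),\theta_v(l)}(x)$ is constant for all $l\gg 0$. Indeed, Proposition~\ref{downlimit} tells us that for $l\gg 0$ these vertices are consecutive along $\Gamma_r$ and adjacent in $\Delta_v$, so the chain of equalities propagates the common value indefinitely upward.

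The one subtlety to address carefully is that the coordinates $s_{i,j}(x)$ with $(i,j)\in\Delta_v$ and $i\gg 0$ may, a priori, include vertices not on the representative component $\Gamma_r$ we selected — but Proposition~\ref{downlimit} explicitly rules this out: only one component of $\Delta_v$ reaches arbitrarily large rows, and it has a single vertex per high row. Combined with the fact (noted just before the proposition, via~(\ref{shift})) that distinct components are never identified by the shift, this means that ``$(i,j)\in\Delta_v$ with $i\gg 0$'' is exactly the tail of the chain on $\Gamma_r$. Thus the statement reduces precisely to the one-variable claim proved in the previous paragraph.

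I expect the main (and only) obstacle to be bookkeeping: making sure that the edge between $(\eta_v(l-1),\theta_v(l-1))$ and $(\eta_v(l),\theta_v(l))$ is genuinely an edge of $\mathcal R$ of the ``upper-right neighbor'' type and genuinely lies in $\Delta_v$, which is exactly what the proof of Proposition~\ref{downlimit} establishes (it shows $v_l=0$ for $l\gg 0$, hence $s_{\eta_v(l),\theta_v(l)}(v)=s_{\eta_v(l)-1,\theta_v(l)+1}(v)$, i.e. the edge is present). Once this is in hand, Proposition~\ref{imcv} does all the work and no computation is needed. The proof is therefore short: cite Proposition~\ref{downlimit} to identify the relevant vertices as a chain, cite Proposition~\ref{imcv} to turn each chain edge into an equality of coordinates of $x$, and conclude that the tail is constant.
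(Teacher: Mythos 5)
Your reduction of the statement to the tail of the single chain described by Proposition~\ref{downlimit} is fine, but the step that does all the work is wrong: Proposition~\ref{imcv} does \emph{not} force equalities along edges of $\Delta_v$ for a point $x\in C_v$ --- it only says that $x$ satisfies the corresponding \emph{inequalities} of type~(\ref{gt}). The edges of $\Delta_v$ record equalities that hold at the vertex $v$ itself (equivalently, on the minimal face through $v$), not on the whole tangent cone; indeed $C_v\supset\Pi$, and a point $x\in\Pi$ with $x_l>0$ for some $l$ with $v_l=0$ violates the equality on exactly such an edge. So from your chain you only obtain that $l\mapsto s_{\eta_v(l),\theta_v(l)}(x)$ is monotone (non-decreasing, since $s_{\eta_v(l),\theta_v(l)}(x)-s_{\eta_v(l)-1,\theta_v(l)+1}(x)=x_l\ge 0$ on the cone), which does not yield the proposition.

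The missing ingredient is not the cone structure at all but the ambient space: every point of $C_v$ lies in $V\subset\Omega$, hence is a sequence with $x_l=0$ for all $l\gg 0$ (threshold depending on $x$). For such $l$ one gets the exact equality $s_{\eta_v(l),\theta_v(l)}(x)=s_{\eta_v(l)-1,\theta_v(l)+1}(x)=s_{\eta_v(l-1),\theta_v(l-1)}(x)$, and then Proposition~\ref{downlimit} identifies these vertices as precisely all of $\Delta_v$ in sufficiently high rows, giving the claim. This is exactly the paper's (one-line) argument; if you replace your appeal to Proposition~\ref{imcv} by the observation $x_l=0$ for $l\gg 0$, the rest of your write-up goes through.
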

\begin{proof}
For $l\gg 0$ we have $x_l=0$ which entails $s_{\eta_v(l),\theta_v(l)}(x)=s_{\eta_v(l-1),\theta_v(l-1)}(x).$ We then apply Proposition~\ref{downlimit}.
\end{proof}

For a point $x\in C_v$ how do we express the monomial $G(e^{\bar x})$ via the coordinates $s_{i,j}(x)$? This question is best answered in terms of the array $$s_{i,j}(x,v)=s_{i,j}(x)-s_{i,j}(v).$$

\begin{proposition}\label{zpow}
For an integer point $x\in C_v$  the power in which $G(e^{\bar x-\bar v})$ contains $z_r$ is equal to $$\sum\limits_{i\equiv r\bmod(n-1)} \left(\sum\limits_{(i,j)\in\Delta_v}s_{i,j}(x,v)-\sum\limits_{(i-1,j)\in\Delta_v}s_{i-1,j}(x,v)\right).$$
\end{proposition}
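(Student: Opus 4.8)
The statement is essentially a bookkeeping identity: we must track the exponent of $z_r$ in $G(e^{\bar x - \bar v})$ in terms of the plane-filling GT coordinates. The plan is to reduce everything to the definition of $G$ in~(\ref{affspec}) and the relation between $e^{\bar x}$ (a monomial in the $t_i$) and the coordinates $s_{i,j}$. First I would recall that $\bar x - \bar v = x - v$ as an element of $\bar V$, a sequence with finitely many nonzero terms, and that $e^{x-v} = \prod_l t_l^{(x-v)_l}$. Applying $G$ sends $t_l \mapsto z_{l \bmod (n-1)} q^{\lceil l/(n-1)\rceil}$, so the power of $z_r$ in $G(e^{x-v})$ is exactly $\sum_{l \equiv r \bmod (n-1)} (x-v)_l$, where the congruence is taken with the representative in $[1,n-1]$ as stipulated after~(\ref{affspec}).

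The second step is to rewrite $\sum_{l \equiv r} (x-v)_l$ using the change of coordinates from $(x_l)$ to the $s_{i,j}$ along $\Delta_v$. Recall from Section~\ref{proofintro} that for every integer $l$ there is exactly one vertex $(i,j) \in \Delta_v$ with $in + j(n-1) = l$, namely $(\eta_v(l), \theta_v(l))$, and that this gives a nondegenerate change of coordinates. The key local identity, already used in the proof of the Proposition in Section~\ref{monbasis} and visible from~(\ref{gtdef}), is that $s_{i,j}(x) - s_{i-1,j+1}(x) = x_{in+j(n-1)}$ for any sequence satisfying i) and ii); subtracting the same identity for $v$ gives $x_l - v_l = s_{i,j}(x,v) - s_{i-1,j+1}(x,v)$ where $(i,j) = (\eta_v(l),\theta_v(l))$. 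Now I would observe that $l \equiv r \bmod (n-1)$ together with $l = in + j(n-1)$ forces $i \equiv l \equiv r \bmod (n-1)$ (since $in \equiv i \bmod (n-1)$), and conversely moving from $(i,j)$ to $(i-1,j+1)$ changes $l$ by $-n + (n-1) = -1$, hence changes $i \bmod(n-1)$. So summing $x_l - v_l$ over $l \equiv r \bmod(n-1)$ is the same as summing $s_{i,j}(x,v) - s_{i-1,j+1}(x,v)$ over those vertices $(i,j) \in \Delta_v$ with $i \equiv r \bmod(n-1)$.

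The third step is to reorganize this sum into the two sums appearing in the statement. The term $\sum s_{i,j}(x,v)$ over $(i,j) \in \Delta_v$ with $i \equiv r \bmod(n-1)$ is the first block; the term $-\sum s_{i-1,j+1}(x,v)$, reindexed by $(i',j') = (i-1,j+1)$, ranges over vertices of $\Delta_v$ with $i' \equiv r-1 \equiv r \bmod(n-1)$... here I must be careful: the paper's second sum is over $(i-1,j)\in\Delta_v$ with $i\equiv r\bmod(n-1)$, i.e. over $i-1 \equiv r-1$. So I should match the reindexed $-s_{i-1,j+1}$ term against it by a further shift; the cleaner route is to note that both expressions telescope the same way because $\Delta_v$, being a union of ordinary graphs, has the property that each vertex of $\Delta_v$ is joined to exactly one of its upper neighbors (Proposition~\ref{vertgraph}), so the map $(i,j) \mapsto (i-1, j \text{ or } j+1)$ along edges pairs rows $i$ and $i-1$ consistently. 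Finiteness of all sums is guaranteed by Propositions~\ref{upsame} and~\ref{downsame}, which say $s_{i,j}(x,v) = 0$ for $i \ll 0$ and $s_{i,j}(x,v)$ constant (hence, after the differencing, contributing zero) for $i \gg 0$.

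**Main obstacle.** The genuine content is entirely in the index arithmetic of the previous paragraph: matching ``$l \equiv r \bmod(n-1)$ with $l = in+j(n-1)$'' against ``$i \equiv r \bmod(n-1)$'', and verifying that the telescoping $s_{i,j} - s_{i-1,j+1}$ reassembles into $(\sum_{\text{row } i} - \sum_{\text{row } i-1})$ along $\Delta_v$ without double-counting or missing vertices — in particular making sure the residue class of $i-1$ versus $i$ is handled exactly as the statement writes it. Everything else (the action of $G$, the finiteness, the change of coordinates) is routine given the earlier propositions. I expect no deep difficulty, only the need to pin down the modular bookkeeping precisely.
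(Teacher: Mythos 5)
Your overall route is the same as the paper's: read off the $z_r$-exponent as $\sum_{l\equiv r\bmod(n-1)}(x_l-v_l)$ from~(\ref{zweight})/(\ref{affspec}), convert each $x_l-v_l$ into a difference of the coordinates $s_{\cdot,\cdot}(x,v)$, use $l=in+j(n-1)\equiv i\bmod(n-1)$, and get finiteness from Propositions~\ref{upsame} and~\ref{downsame}; this is exactly how the paper argues. The one place where your write-up goes astray is the final reindexing: your ``cleaner route'' via Proposition~\ref{vertgraph} does not do the job, because the pairing of the terms $-s_{i-1,j+1}(x,v)$ with the row-$(i-1)$ sum in the statement has nothing to do with which edges $\Delta_v$ happens to contain. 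The correct justification is the ``further shift'' you mention and then abandon: the vertex $(i-1,j+1)$ carries index $l-1$ and hence is shift-equivalent to the unique vertex $(\eta_v(l-1),\theta_v(l-1))$ of $\Delta_v$, and by~(\ref{shift}) (applied to both $x$ and $v$, so the constants $k$ cancel) the value $s_{i-1,j+1}(x,v)$ equals $s_{\eta_v(l-1),\theta_v(l-1)}(x,v)$; as $l$ runs over the class $r\bmod(n-1)$, these representatives run bijectively over the vertices of $\Delta_v$ in rows $\equiv r-1$, which is precisely the second sum in the statement. (This is in effect what the paper does, by writing $x_l-v_l=s_{\eta_v(l),\theta_v(l)}(x,v)-s_{\eta_v(l-1),\theta_v(l-1)}(x,v)$ from the start, i.e.\ always working with the $\Delta_v$-representatives.) With that one-line replacement your argument is complete.
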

\begin{proof}
Formula~(\ref{zweight}) shows that $G(e^{\bar x-\bar v})$ contains $z_r$ in the power
$$\sum\limits_{l\equiv r\bmod(n-1)}(x_l-v_l)=\\\sum\limits_{l\equiv r\bmod(n-1)} (s_{\eta_v(l),\theta_v(l)}(x,v)-s_{\eta_v(l-1),\theta_v(l-1)}(x,v)).$$ Now it remains to apply $$l=n\eta_v(l)+(n-1)\theta_v(l)\equiv\eta_v(l)\bmod(n-1).$$

Propositions~\ref{upsame} and~\ref{downsame} show that all the sums in consideration have a finite number of nonzero summands.
\end{proof}

\begin{proposition}\label{qpow}
For an integer point $x\in C_v$  we have $$\deg G(e^{\bar x-\bar v})=\sum\limits_{i\equiv 0\bmod(n-1)} \sum\limits_{(i,j)\in\Delta_v}(-s_{i,j}(x,v)+S_{i,j}),$$ where $S_{i,j}=0$ when $in+j(n-1)<0$ and $S_{i,j}=\sum_l(x_l-v_l)$ if $in+j(n-1)\ge 0$.
\end{proposition}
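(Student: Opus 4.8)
The plan is to follow the same route as the proof of Proposition~\ref{zpow}: express the left-hand side directly via formula~(\ref{qweight}), recognise the numbers $s_{i,j}(x,v)$ as partial sums of the sequence $x-v$, and then rearrange by a summation-by-parts argument. Passing from the running index of this rearrangement to the corresponding vertex of $\Delta_v$ will reproduce the double sum in the statement, and the correction term $S_{i,j}$ will turn out to be exactly what absorbs the (divergent) boundary term of the summation by parts.

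Concretely, set $y_l=x_l-v_l$. Since $x$ and $v$ lie in $V\subset\Omega$ they agree termwise for $l\ll 0$ and all their terms vanish for $l\gg 0$, so only finitely many $y_l$ are nonzero; write $Y_l=\sum_{l'\le l}y_{l'}$, so that $Y_l=0$ for $l\ll 0$ and $Y_l$ equals the fixed total $Y_\infty=\sum_{l'}y_{l'}$ for $l\gg 0$. Because the $T^{i+j}$-terms in~(\ref{gtdef}) cancel when one forms a difference, $s_{i,j}(x,v)=Y_{in+j(n-1)}$, and by definition $S_{i,j}$ is $0$ when $in+j(n-1)<0$ and $Y_\infty$ when $in+j(n-1)\ge 0$. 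On the other side, the definition~(\ref{affspec}) of $G$ (equivalently, formula~(\ref{qweight})) gives
$$\deg G(e^{\bar x-\bar v})=\sum_{l\in\mathbb Z}\left\lceil\frac{l}{n-1}\right\rceil y_l .$$

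Now apply summation by parts to this last sum: writing $y_l=Y_l-Y_{l-1}$ and using the elementary fact that $\lceil l/(n-1)\rceil-\lceil (l+1)/(n-1)\rceil$ equals $-1$ when $l\equiv 0\bmod(n-1)$ and $0$ otherwise, one gets, after collecting the boundary terms over a truncation $[-N,N]$ and letting $N\to\infty$,
$$\sum_{l\in\mathbb Z}\left\lceil\frac{l}{n-1}\right\rceil y_l=-\sum_{\substack{l<0\\ l\equiv 0\bmod(n-1)}}Y_l+\sum_{\substack{l\ge 0\\ l\equiv 0\bmod(n-1)}}(Y_\infty-Y_l).$$
The role of the truncation is that the boundary term $\lceil N/(n-1)\rceil Y_\infty$ contains a piece growing with $N$ which cancels exactly against the growing part of $\sum_{0\le l\le N-1,\,l\equiv 0}Y_\infty$ coming from $Y_l=Y_\infty$ for $l$ large; this is the cancellation encoded by $S_{i,j}$ in the statement. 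Finally, recall from Section~\ref{proofintro} that $l\mapsto(\eta_v(l),\theta_v(l))$ is a bijection of $\mathbb Z$ onto the vertex set of $\Delta_v$ with $\eta_v(l)\equiv l\bmod(n-1)$ and $\eta_v(l)n+\theta_v(l)(n-1)=l$. Since $Y_l=s_{\eta_v(l),\theta_v(l)}(x,v)$ and $S_{\eta_v(l),\theta_v(l)}=0$ for $l<0$ while $S_{\eta_v(l),\theta_v(l)}=Y_\infty$ for $l\ge 0$, the right-hand side above becomes $\sum_{l\equiv 0\bmod(n-1)}(-s_{\eta_v(l),\theta_v(l)}(x,v)+S_{\eta_v(l),\theta_v(l)})$, which under the bijection (with $i\equiv 0\bmod(n-1)$ corresponding to $l\equiv 0\bmod(n-1)$) is precisely the double sum in the proposition.

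I expect the main obstacle to be purely bookkeeping: justifying that all the sums in play are finite, so that the rearrangement and the limit are legitimate. The sums over $l'$ are finite because $y_{l'}$ vanishes outside a bounded range. For the double sum over the vertices of $\Delta_v$ one invokes Propositions~\ref{upsame} and~\ref{downsame} together with~\ref{uplimit} and~\ref{downlimit}: for $(i,j)\in\Delta_v$ with $i\ll 0$ one has $s_{i,j}(x,v)=0$ and $in+j(n-1)<0$, so the summand vanishes, and for $i\gg 0$ one has $in+j(n-1)\gg 0$ with $s_{i,j}(x,v)=Y_\infty=S_{i,j}$, so the summand vanishes again; the same propositions force $\eta_v(l)\to\pm\infty$ as $l\to\pm\infty$, so that each row of $\Delta_v$ contains only finitely many vertices and the inner sums are finite as well. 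Once this is in place the summation-by-parts computation goes through verbatim and proves the proposition.
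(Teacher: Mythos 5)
Your proof is correct and is essentially the paper's own argument: both start from formula~(\ref{qweight}), identify $s_{i,j}(x,v)$ with the partial sums $\sum_{l'\le in+j(n-1)}(x_{l'}-v_{l'})$, and rearrange $\sum_l\lceil l/(n-1)\rceil(x_l-v_l)$ into a sum over $l\equiv 0\bmod(n-1)$ of $-Y_l$ plus the total-sum correction, then transport this through the bijection $l\mapsto(\eta_v(l),\theta_v(l))$. Your Abel-summation bookkeeping is just a repackaging of the paper's interchange of the double sum over $l$ and $r$, and your finiteness remarks match the paper's appeal to Propositions~\ref{upsame} and~\ref{downsame}.
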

\begin{proof}
Via~(\ref{qweight}) we have
\begin{multline*}
\deg G(e^{\bar x-\bar v})=-\sum\limits_{r<0}\sum\limits_{l\le r(n-1)}(x_l-v_l)+\sum\limits_{r\ge0}\left(\sum_{l=-\infty}^\infty(x_l-v_l)-\sum\limits_{l\le r(n-1)}(x_l-v_l)\right)=\\\sum\limits_{r\in\mathbb{Z}}\left(S_{\eta_v(r(n-1)),\theta_v(r(n-1))}-s_{\eta_v(r(n-1)),\theta_v(r(n-1))}(x,v)\right).
\end{multline*}
We then apply $\eta_v(r(n-1))\equiv 0\bmod(n-1)$. Note that we again have a finite number of nonzero summands in every sum.
\end{proof}
%
%

Further, the weight $\varphi(f)$ has a nice interpretation in terms of the graph $\Delta_f$.
\begin{proposition}\label{phigraph}
For a face $f$ and integer $l>0$ let $d_l$ be the number of pairs $(\Gamma,i)$ with $\Gamma$ a connected component of $\Delta_f$ and $i$ an integer such that $\Gamma$ has $l$ vertices in row $i$ and $l-1$ vertices in row $i-1$. Then $\varphi(f)=\prod(1-t^l)^{d_l}$.
\end{proposition}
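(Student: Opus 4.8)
The plan is to unwind the definition of $\varphi$ and recast it entirely in the language of the graph $\Theta_f$ and its chosen set of representatives $\Delta_f$; the point is that $\varphi$ was defined via the plane-filling GT-pattern $(s_{i,j}(x))$ for $x$ in the relative interior of $f$, and the adjacency structure of that very pattern is exactly what $\Theta_f$ records. First I would recall that, by the definitions of $\varphi$ and of $p$, one has $\varphi(f)=p(x)=\prod_l(1-t^l)^{d_l}$ with $d_l=|X_l/{\sim}|$, where $X_l$ is the set of pairs $(i,j)$ with $s_{i-1,j}\ne s_{i-1,j+1}=\dots=s_{i-1,j+l-1}\ne s_{i-1,j+l}$ and $s_{i,j-1}\ne s_{i,j}=\dots=s_{i,j+l-1}\ne s_{i,j+l}$ (all $s=s_{\cdot,\cdot}(x)$), and $\sim$ is generated by the shift $(i,j)\mapsto(i-n+1,j+n)$. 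So it suffices to produce a bijection between $X_l/{\sim}$ and the set of pairs $(\Gamma,i)$ with $\Gamma$ a connected component of $\Delta_f$ having $l$ vertices in row $i$ and $l-1$ vertices in row $i-1$.

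Next I would record two elementary consequences of the GT-inequalities~(\ref{gt}) for the pattern $(s_{i,j}(x))$. First, in any fixed row the cells carrying a common value form a single consecutive run, and consecutive equal cells in a row lie in one component of $\Theta_f$: from $s_{i,j}\ge s_{i+1,j}\ge s_{i,j+1}$, equality $s_{i,j}=s_{i,j+1}$ forces $s_{i+1,j}$ to equal both, giving the path $(i,j)$--$(i+1,j)$--$(i,j+1)$ in $\Theta_f$, and an induction finishes it; consequently the intersection of any component of $\Theta_f$ with any row is a (possibly empty) consecutive run of cells all equal to that component's value. Second, for $(i,j)\in X_l$ the cells $(i,j),\dots,(i,j+l-1)$ of row $i$ and (when $l\ge2$) the cells $(i-1,j+1),\dots,(i-1,j+l-1)$ of row $i-1$ all carry the same value $c$ and lie in one component $\Gamma^{0}$: the value identity comes from the chains $s_{i,j}\ge s_{i-1,j+1}$ and $s_{i-1,j+1}\ge s_{i,j+1}\ge s_{i-1,j+2}$ (degenerating suitably when $l=2$), the row-$(i-1)$ cells join $\Gamma^{0}$ because $(i-1,j+1)$ is an upper neighbor of $(i,j+1)$ in $\mathcal R$, and the strict inequalities in the definition of $X_l$ together with the first fact show that $\Gamma^{0}$ meets row $i$ in exactly $\{(i,j),\dots,(i,j+l-1)\}$ and row $i-1$ in exactly $\{(i-1,j+1),\dots,(i-1,j+l-1)\}$. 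Conversely, a component $\Gamma^{0}$ with exactly $l$ vertices $(i,j),\dots,(i,j+l-1)$ in some row $i$ and exactly $l-1$ in row $i-1$ must, by~(\ref{gt}) and the first fact, have those $l-1$ vertices at $(i-1,j+1),\dots,(i-1,j+l-1)$, whence the defining inequalities of $X_l$ hold and $(i,j)\in X_l$. The case $l=1$ is the same, reading the empty run between positions $j$ and $j+1$ and using $s_{i-1,j}>s_{i,j}>s_{i-1,j+1}$.

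Finally I would assemble the bijection. By the second fact, the map $(i,j)\mapsto(\text{the component of }\Theta_f\text{ through }(i,j),\,i)$ carries $X_l$ onto the set of pairs (component $C$ of $\Theta_f$, row $i$) with $C$ having $l$ vertices in row $i$ and $l-1$ in row $i-1$; it is injective since $(i,j)$ is recovered as the leftmost cell of $C$ in row $i$. The shift $(i,j)\mapsto(i-n+1,j+n)$ preserves $X_l$ and, via~(\ref{shift}), corresponds on the target to replacing $C$ by its shifted component and $i$ by $i-n+1$; since $\Delta_f$ consists of exactly one component from each shift-orbit and the shift (subtracting $k\ge1$ from all values) identifies no two components of an orbit, passing to $X_l/{\sim}$ matches precisely the chosen representatives. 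Hence $d_l=|X_l/{\sim}|$ equals the number of pairs $(\Gamma,i)$ with $\Gamma$ a component of $\Delta_f$ having $l$ vertices in row $i$ and $l-1$ in row $i-1$, and $\varphi(f)=\prod_l(1-t^l)^{d_l}$ is exactly the asserted product. I expect the only real work to be the index bookkeeping in the second fact — verifying that once the row-$i$ run of a component sits at positions $[j,j+l-1]$ the inequalities~(\ref{gt}) pin the row-$(i-1)$ run of that same component exactly to $[j+1,j+l-1]$ rather than a shift of it, and handling the degenerate cases $l=1,2$ cleanly; everything else is formal.
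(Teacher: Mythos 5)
Your proof is correct and is exactly the unwinding of definitions that the paper's one-line proof (``Straightforward from the definitions'') has in mind: identifying $X_l/\!\sim$ with pairs (component of $\Delta_f$, row) via the maximal-run structure forced by the inequalities~(\ref{gt}) and the shift relation~(\ref{shift}). Nothing further is needed.
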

\begin{proof}
Straightforward from the definitions.
\end{proof}

\begin{proposition}\label{dimf}
For a finite-dimensional face $f$ we have $$\dim f=|\{\text{components of }\Delta_f\}|-m(\lambda).$$
\end{proposition}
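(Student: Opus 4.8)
The plan is to compute $\dim f$ in coordinates adapted to $f$. First note that $\Pi$ has trivial recession cone (any $y\in\Omega$ with $y_i\ge 0$ and $\chi_i(y)\le 0$ for all $i$ must vanish), so every finite-dimensional face $f$ is a bounded polytope and in particular contains a vertex $v$ of $\Pi$. Fix such a $v$ and pass to the coordinates $x\mapsto(s_l(x))_{l\in\mathbb Z}$, $s_l(x)=s_{\eta_v(l),\theta_v(l)}(x)$, the integrality-preserving nondegenerate change of coordinates on $V$ introduced before Proposition~\ref{imcv}. Combining the characterization of $\Theta_f$ with~(\ref{shift}), a point $x$ lies in the relative interior of $f$ exactly when: (i) $s_l(x)$ depends only on the connected component of $\Delta_f$ represented by $l$ (the value on a shifted copy of a component $\Gamma$ being the value on $\Gamma$ minus the appropriate multiple of $k$); (ii) all remaining inequalities of $\Pi$ are strict; and (iii) $x\in V$, i.e.\ $x_l=a_{l\bmod n}$ for $l\ll 0$ (the condition for $l\gg 0$ being automatic in $\Omega$). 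Condition (i) exhibits such an $x$ as determined by a single real parameter $c_\Gamma$ for each of the $N:=|\{\text{components of }\Delta_f\}|$ components $\Gamma$, and (ii) is open and dense, hence irrelevant to the dimension. So $\dim f=N-r$, where $r$ is the number of independent conditions that (iii) places on the tuple $(c_\Gamma)$, and the proposition amounts to the identity $r=m(\lambda)$.

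To identify these conditions, recall the mechanism behind Proposition~\ref{upsame}: for $l\ll 0$ one has $s_l(x)=s_l(v)$ for every $x\in C_v$. Hence for a component $\Gamma$ of $\Delta_f$ containing some index $l\ll 0$, condition (iii) forces $c_\Gamma$ to equal the fixed value $s_l(v)$ (a single condition, since $s(v)$ is constant on each component of $\Theta_f$ because $\Theta_f\subseteq\Theta_v$), while on components containing no such index (iii) imposes nothing. Thus $r$ equals the number of components of $\Delta_f$ that reach the region $l\ll 0$. Now among indices $l\ll 0$ the edges of $\Theta_f$ are the same as those of $\Theta_v$: a ``$\chi=k$'' edge is always present there because $\chi_l$ is then a full-period sum of the $a_i$, and an ``$x_l=0$'' edge is present iff $a_{l\bmod n}=0$, in both graphs, since these conditions hold on all of $\Pi$. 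So $\Theta_f$ and $\Theta_v$ have the same structure near $-\infty$, namely the $m(\lambda)$ ``threads'' described before Proposition~\ref{uplimit}. By Proposition~\ref{uplimit} these $m(\lambda)$ threads lie in $m(\lambda)$ distinct components of $\Delta_v$; since $v\in f$ we have $\Theta_f\subseteq\Theta_v$, so passing to the subgraph $\Theta_f$ can only split components, and the $m(\lambda)$ threads remain in $m(\lambda)$ distinct components of $\Delta_f$. As every component reaching $l\ll 0$ contains the far-up part of one of these threads, there are exactly $m(\lambda)$ such components, i.e.\ $r=m(\lambda)$, whence $\dim f=N-m(\lambda)$. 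As a consistency check, for $f=\{v\}$ this reads $0=m(\lambda)-m(\lambda)$, in agreement with Proposition~\ref{uplimit}.

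I expect the main obstacle to be making the first paragraph fully rigorous, rather than the component count: one must verify that $(s_l)$ really are affine coordinates on the countable-dimensional space $V$ (not merely on a vertex's tangent cone), and that ``$x\in V$'' translates, via~(\ref{gtdef}) and the forced values of $x_l$ for $l\ll 0$, precisely into the pinning conditions described above — this is where the infinite dimension could create subtleties (e.g.\ one should note that only finitely many of the $x_l$ and $\chi_l$ are nonconstant on $\mathrm{aff}(f)$, which is what makes $f$ a genuine polytope and legitimizes working with finitely many of the inequalities at a time). Once the dictionary of the first paragraph is secured, the rest is the bookkeeping carried out above, all of it resting on Propositions~\ref{upsame} and~\ref{uplimit} together with the monotonicity $\Theta_f\subseteq\Theta_v$.
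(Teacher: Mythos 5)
Your argument is correct and is essentially the paper's own proof: fix a vertex $v\in f$, work in the $s_{i,j}$-coordinates attached to $\Delta_v$ (with $\Delta_f\subseteq\Delta_v$), observe via Propositions~\ref{upsame} and~\ref{uplimit} that exactly $m(\lambda)$ components of $\Delta_f$ reach the far-up region and are pinned to the values of $v$, while each remaining component contributes one free parameter. Your extra observations (trivial recession cone guaranteeing that $f$ is a polytope containing a vertex of $\Pi$, and only finitely many constraints being nonconstant on $\mathrm{aff}(f)$) merely make explicit points the paper takes for granted, so the route is the same.
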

\begin{proof}
If $f$ is a vertex this follows from Proposition~\ref{uplimit}. If $f$ is not a vertex it has a nonempty interior with the same dimension.

Choose a point $x\in V$ from the interior of $f$. For any two vertices $(i_1,j_1)$ and $(i_2,j_2)$ of $\Delta_f$ that are adjacent in $\mathcal R$ we have $s_{i_1,j_1}(x)=s_{i_2,j_2}(x)$ if and only if the two vertices are adjacent in $\Delta_f$.

Consider a vertex $v$ of $f$. Since $\Theta_f\subset\Theta_v$, we may assume that $\Delta_f\subset\Delta_v$. Proposition~\ref{upsame} together with Proposition~\ref{uplimit} then shows that there are $m(\lambda)$ components of $\Delta_f$ that meet arbitrarily high rows. If $(i,j)$ is a vertex in one of these components, then $s_{i,j}(x)=s_{i,j}(v)$. Thus, from the previous paragraph we see that we have exactly $|\{\text{components of }\Delta_f\}|-m(\lambda)$ degrees of freedom when choosing the coordinates $s_{i,j}(x)$ (with respect to vertex $v$).
\end{proof}

We finish this section off by showing, as promised, that $G(e^{\varepsilon_{v,l}})\neq 1$.

Consider a vertex $v$ and an edge $e$ containing $v$. We may assume that $\Delta_e$ is a subgraph of $\Delta_v$. According to Proposition~\ref{dimf} the graph $\Delta_e$ has $m(\lambda)+1$ connected components of which only one does not meet arbitrarily high rows. Denote that component $\Gamma_e\subset\Delta_e$. Let $\varepsilon_{v,l}$ be the generator of $e$.

\begin{proposition}\label{edges}
In the above notations the array $$s_{i,j}(\varepsilon_{v,l})=s_{i,j}(v+\varepsilon_{v,l})-s_{i,j}(v)$$ with $(i,j)$ ranging over the vertices of $\Delta_e$ has the following description. If $(i,j)$ is outside of $\Gamma_e$, then $s_{i,j}(\varepsilon_{v,l})=0$. For all $(i,j)$ in $\Gamma_e$ the value $s_{i,j}(\varepsilon_{v,l})$ is the same and equal to either $-1$ or 1.
\end{proposition}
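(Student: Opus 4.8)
The plan is to analyze the change of coordinates adapted to $\Delta_v$ introduced just before Proposition~\ref{imcv}. Since $e$ is an edge of $\Pi$ containing the vertex $v$, and we have taken $\Delta_e\subset\Delta_v$, the cone $C_v$ is cut out (in the $s_{i,j}$-coordinates indexed by vertices of $\Delta_v$) by the inequalities of type~(\ref{gt}) attached to the non-edges of $\Delta_v$, by Proposition~\ref{imcv}. A point moving along the edge $e$ away from $v$ keeps satisfying as equalities exactly those inequalities that correspond to edges of $\Delta_e$, while for every non-edge of $\Delta_e$ the corresponding inequality becomes strict. Equivalently, $v+\varepsilon_{v,l}$ lies in the relative interior of $e$. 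Two coordinates $s_{i_1,j_1}$ and $s_{i_2,j_2}$ with $(i_1,j_1),(i_2,j_2)$ adjacent in $\mathcal R$ therefore agree at $v+\varepsilon_{v,l}$ if and only if the two vertices lie in the same connected component of $\Delta_e$. Hence the array $s_{i,j}(\varepsilon_{v,l}) = s_{i,j}(v+\varepsilon_{v,l})-s_{i,j}(v)$ is constant on each connected component of $\Delta_e$.

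Next I would pin down on which components this constant is zero. By Proposition~\ref{dimf}, $\Delta_e$ has $m(\lambda)+1$ components, of which $m(\lambda)$ meet arbitrarily high rows and one, namely $\Gamma_e$, does not. For each of the $m(\lambda)$ unbounded components, Proposition~\ref{upsame} (applied in rows $i\ll 0$, noting $\Delta_e\subset\Delta_v$) forces $s_{i,j}(v+\varepsilon_{v,l})=s_{i,j}(v)$ at some vertex $(i,j)$ of that component lying in a sufficiently negative row; since $s_{i,j}(\varepsilon_{v,l})$ is constant on the component, it must vanish identically there. Thus $s_{i,j}(\varepsilon_{v,l})=0$ for every $(i,j)$ outside $\Gamma_e$, and $s_{i,j}(\varepsilon_{v,l})$ equals a single common value $c$ for $(i,j)\in\Gamma_e$.

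It remains to show $c\in\{-1,1\}$. Since $\varepsilon_{v,l}$ is the \emph{minimal} integer vector with $v+\varepsilon_{v,l}\in e$, the vector $\varepsilon_{v,l}$ is primitive in $\mathbb{Z}^{\infty}$; equivalently (because the $s_{i,j}$-coordinates form a lattice-preserving change of coordinates, as noted before Proposition~\ref{imcv}) the array $(s_{i,j}(\varepsilon_{v,l}))$ is a primitive integer vector in the coordinate lattice of $\Delta_v$. But we have just seen this array takes only the values $0$ and $c$, with $c$ attained; primitivity forces $c=\pm 1$. (Alternatively, one can invoke directly the fact extracted from~\cite{me2}, used in the proof of Lemma~\ref{welldef}, that all nonzero coordinates of any edge generator $\varepsilon_{v,i}$ are $\pm 1$, and then translate this through the change of coordinates; either route works.) The main obstacle is the bookkeeping needed to make the "constant on each component" step rigorous — one must be careful that the claimed equivalence between equality of adjacent coordinates and adjacency in $\Delta_e$ holds precisely at an interior point $v+\varepsilon_{v,l}$ of $e$ and not merely at a generic point of $e$, which is where the minimality/interiority of $\varepsilon_{v,l}$ enters; everything else is a direct application of Propositions~\ref{imcv},~\ref{upsame},~\ref{dimf}.
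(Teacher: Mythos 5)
Your argument is correct and essentially the paper's own: constancy of the $s_{i,j}$-coordinates on the connected components of $\Delta_e$ for every point of $e$, Proposition~\ref{upsame} to force the value $0$ on the $m(\lambda)$ components meeting arbitrarily high rows, and integrality plus minimality of the generator to get $\pm 1$ on $\Gamma_e$. The only quibble is that the appeal to $v+\varepsilon_{v,l}$ lying in the relative interior of $e$ is superfluous (and can fail when the edge has lattice length one); the constancy step needs only the one-directional fact that every point of $e$ satisfies the equalities indexed by the edges of $\Delta_e$, which is exactly what the paper uses.
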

\begin{proof}
Proposition~\ref{upsame} shows that for any point $x$ of $e$ for $(i,j)$ outside of $\Gamma_e$ we indeed have $s_{i,j}(x)=s_{i,j}(v)$. Moreover, by definition for $x\in e$ all of its coordinates $s_{i,j}(x)$ with $(i,j)$ within $\Gamma_e$ must be the same. By taking $x=v$ and $x=v+\varepsilon_{v,l}$ we obtain the Proposition.
\end{proof}

\begin{proposition}
For any vertex $v$ and generator $\varepsilon_{v,l}$ we have $G(e^{\varepsilon_{v,l}})\neq 1$.
\end{proposition}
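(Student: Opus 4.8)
The plan is a short argument by contradiction: assuming $G(e^{\varepsilon_{v,l}})=1$, I will show that the component $\Gamma_e$ produced by Proposition~\ref{edges} would have to be empty, which is impossible.

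First I would fix $\varepsilon=\varepsilon_{v,l}$ and recall from Proposition~\ref{edges} that the array $(s_{i,j}(\varepsilon))$ takes one fixed value $c\in\{-1,1\}$ on all vertices of $\Gamma_e$ and is $0$ on every other vertex of $\Delta_v$. Write $g_i=|\{j:(i,j)\in\Gamma_e\}|$; since $\Gamma_e$ is a finite subgraph (it lies in the bounded range of rows where $\Delta_e$ can differ from the standard graph $\Delta_v$ described by Propositions~\ref{uplimit} and~\ref{downlimit}), only finitely many $g_i$ are nonzero and $\sum_i g_i=|\Gamma_e|$. The point $x=v+\varepsilon$ is an integer point of $C_v$ with $s_{i,j}(x,v)=s_{i,j}(\varepsilon)$, so I would feed it into Propositions~\ref{zpow} and~\ref{qpow}. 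One first checks that $\sum_l\varepsilon_l=0$, so that every $S_{i,j}$ appearing in Proposition~\ref{qpow} vanishes: since $\Gamma_e$ is finite, $s_{i,j}(\varepsilon)=0$ for $(i,j)\in\Delta_v$ in rows $i\gg 0$, so the corresponding summands in Proposition~\ref{qpow} reduce to $S_{i,j}=\sum_l\varepsilon_l$, and there are infinitely many such $(i,j)$, whence finiteness of the sum forces $\sum_l\varepsilon_l=0$. With this in hand, Propositions~\ref{zpow} and~\ref{qpow} give that the exponent of $z_r$ in $G(e^{\varepsilon})$ equals $c\bigl(\sum_{i\equiv r}g_i-\sum_{i\equiv r-1}g_i\bigr)$, with residues taken modulo $n-1$, while $\deg G(e^{\varepsilon})=-c\sum_{i\equiv 0\bmod(n-1)}g_i$.

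If now $G(e^{\varepsilon})=1$, then all of these exponents are $0$. Vanishing of $\deg G(e^{\varepsilon})$ gives $\sum_{i\equiv 0\bmod(n-1)}g_i=0$, and vanishing of the $z_r$-exponents gives $\sum_{i\equiv r}g_i=\sum_{i\equiv r-1}g_i$ for every residue $r$ modulo $n-1$; running $r$ once around the cycle of residues then forces $\sum_{i\equiv r}g_i=0$ for all $r$, hence $|\Gamma_e|=\sum_i g_i=0$. This contradicts $\Gamma_e$ being a nonempty connected component, so $G(e^{\varepsilon_{v,l}})\neq 1$. I expect the only genuinely delicate point to be the reduction that kills the $S_{i,j}$ terms (the identity $\sum_l\varepsilon_l=0$), together with the observation that the alternating row-sums coming out of Proposition~\ref{zpow} compare two \emph{different} residue classes modulo $n-1$, so that they do not vanish automatically; everything else is direct bookkeeping.
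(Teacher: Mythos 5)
There is a genuine gap: your whole argument rests on the claim that $\Gamma_e$ is finite, and that claim is false in general. The graph $\Delta_e$ has the same vertex set as $\Delta_v$ and merely fewer edges, so the component $\Gamma_e$ is in no way confined to a ``bounded range of rows''; by Proposition~\ref{downlimit} exactly one component of $\Delta_v$ descends forever, with a single vertex in each row $i\gg 0$, and an edge $e$ of $\Pi$ can be obtained by severing precisely this component, in which case $\Gamma_e$ is its infinite lower portion (bounded above, hence the component not meeting arbitrarily high rows, but with one vertex in every sufficiently low row). This situation really occurs: for $n=2$, $k=1$, $\lambda=\Lambda_0$, the segment joining the highest-weight vertex $T^0$ to the vertex obtained from it by changing the coordinate at position $0$ from $1$ to $0$ is an edge of $\Pi$ (it is the intersection of $\Pi$ with all $E_i$, $i\ge 1$, and all $H_i$, $i\le -1$), and its generator has a single nonzero coordinate, so $\sum_l\varepsilon_l=-1\neq 0$. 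Since $s_{i,j}(\varepsilon)=\sum_{l\le in+j(n-1)}\varepsilon_l$, the array equals this nonzero total on all vertices far enough down, so $\Gamma_e$ is infinite exactly when $\sum_l\varepsilon_l\neq 0$; your derivation of $\sum_l\varepsilon_l=0$ is therefore circular, being deduced from the finiteness of $\Gamma_e$, which is the very thing that fails. This missing situation is case 3 of the paper's proof: there the $S_{i,j}$ of Proposition~\ref{qpow} do not vanish and your exponent formulas break down, and one concludes instead by summing the $z_r$-exponents of Proposition~\ref{zpow} over all $r$: with $g_i$ the number of vertices of $\Gamma_e$ in row $i$, the total is $c\sum_i(g_i-g_{i-1})$, which no longer telescopes to $0$ but to $c\cdot\lim_{i\to+\infty}g_i=\pm 1$, whence $G(e^{\varepsilon_{v,l}})\neq 1$.

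Where $\Gamma_e$ is finite your computation is correct, and in fact slightly tidier than the paper's, which splits that situation into two cases according to whether $\Gamma_e$ meets a row divisible by $n-1$: your observation that equality of all the residue-class sums $\sum_{i\equiv r}g_i$ together with the vanishing of the class $r\equiv 0$ forces $|\Gamma_e|=0$ handles both at once. But as written the proposal proves the proposition only for generators with $\sum_l\varepsilon_l=0$ and must be supplemented by the infinite-tail case described above.
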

\begin{proof}
$$G(e^{\varepsilon_{v,l}})=G(e^{(\bar v+\varepsilon_{v,l})-\bar v}).$$ This monomial may be calculated via Propositions~\ref{zpow} and~\ref{qpow}. More specifically, we see that there are three possible cases.
\begin{enumerate}
\item $\Gamma_e$ is finite and does not intersect any row $i$ with $i\equiv 0\bmod (n-1)$. From Proposition~\ref{zpow} we then we see that $z_{i_0\bmod(n-1)}$ occurs in a nonzero power, where $i_0$ is the highest row containing vertices from $\Gamma_e$.
\item $\Gamma_e$ is finite and intersects some row with $i\equiv 0\bmod (n-1)$. Then $\deg G(e^{\varepsilon_{v,l}})\neq 0$ since for vertices $(i,j)$ of $\Delta_e$ with $i\gg 0$ we have $s_{i,j}(\varepsilon_{v,l})=0$ and thus all the values $S_{i,j}$ from Proposition~\ref{qpow} are zero.
\item $\Gamma_e$ is infinite. This means that for $i\gg 0$ there is a single vertex of $\Gamma_e$ in row $i$. Proposition~\ref{zpow} then shows that the sum of powers in which the $z_r$ occur is 1.
\end{enumerate}
\end{proof}

\section{Proof of Theorem~\ref{contrib}}\label{last}

In this Section we finally apply the tools developed in Part~\ref{tools} combining them with the Propositions from the previous section.

The vertex $v$ of $\Pi$ is fixed throughout this section. Denote $\Gamma_1,\ldots,\Gamma_{m(\lambda)}$ the connected components of $\Delta_v$. For $(i,j)\in\Gamma_r$ all the numbers $s_{i,j}(v)$ are the same, let them be equal to $b_r$.

Choose an integer $M_v$ such that for $i\ge M_v$ row $i$ meets $\Delta_v$ in exactly one vertex while for $i\le -M_v$ row $i$ meets component $\Gamma_r$ of $\Delta_v$ in $l_r$ vertices and, furthermore, one has $in+j(n-1)<0$ for any vertex $(i,j)$ of $\Delta_v$ in row $-M_v$ or above. Propositions~\ref{uplimit} and~\ref{downlimit} show that such a $M_v$ exists.

For $l\ge M_v$ denote $D_l$ the section of $C_v$ comprised of points $x\in C_v$ with the following properties.
\begin{enumerate}
\item For $i\le -l$ we have $s_{i,j}(x)=s_{i,j}(v)$ for all vertices $(i,j)$ of $\Delta_v$.
\item For $i\ge l$ all the coordinates $s_{i,j}(x)$ with $(i,j)$ a vertex of $\Delta_v$ are the same.
\end{enumerate} 
An important observation is that $D_l$ is a finite dimensional face of $C_v$. Indeed, $D_l$ is defined as the intersection of $C_v$ and all the hyperplanes $E_i\ni v$ and $H_i\ni v$ except for a finite number. 

Now, the rational function $G(\sigma_{\bar\varphi}(\widebar{D_l}))$ may be viewed as an element of $\mathfrak S$ which we denote $\sigma_l$.
\begin{lemma}\label{limit}
The series $\sigma_l$ converge coefficient-wise to the series $\tau_{\bar v}$.
\end{lemma}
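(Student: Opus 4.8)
The plan is to compare the two natural finite-dimensional approximations of the tangent cone $C_{\bar v}$ and show that they produce the same limit in $\mathfrak S$. On one hand we have the faces $C_{v,l}$ used in the proof of Lemma~\ref{welldef}, cut out by keeping only finitely many of the hyperplanes $\widebar{H_i},\widebar{E_i}$ that contain $\bar v$; on the other hand we have the faces $\widebar{D_l}$ defined here, which are cut out instead by imposing the equalities $s_{i,j}(x)=s_{i,j}(v)$ for $i\le -l$ and $s_{i,j}(x)$ constant for $i\ge l$ (in terms of the $\Delta_v$-coordinates). Both are increasing sequences of finite-dimensional faces of $C_{\bar v}$ that exhaust it, so by the standard refinement argument it suffices to show that each $\widebar{D_l}$ contains some $C_{v,l'}$ and vice versa; then coefficient-wise convergence of $G(\sigma_{\bar\varphi}(C_{v,l}))=\tau_{\bar v,l}$ to $\tau_{\bar v}$ — already established in the previous section (the remarks preceding the proof of Theorem~\ref{infbrion}) — transfers to $\sigma_l=G(\sigma_{\bar\varphi}(\widebar{D_l}))$.

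First I would make the cofinality explicit. By Propositions~\ref{upsame} and~\ref{downsame}, for $x\in C_v$ the coordinates $s_{i,j}(x)$ with $(i,j)\in\Delta_v$ are automatically equal to $s_{i,j}(v)$ for $i\ll 0$ and automatically constant for $i\gg 0$; combined with Propositions~\ref{uplimit} and~\ref{downlimit} (the structure of $\Delta_v$ far up and far down) and the translation $l=n\eta_v(l)+(n-1)\theta_v(l)$ relating the row index to the sequence index, the finitely many defining equalities of $\widebar{D_l}$ correspond exactly to finitely many of the hyperplanes $\widebar{E_i},\widebar{H_i}$ through $\bar v$, and conversely every such hyperplane is cut by all $\widebar{D_l}$ for $l$ large. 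Hence for every $l$ there is $l'$ with $C_{v,l'}\subseteq\widebar{D_l}\subseteq C_{v,l''}$ for suitable $l'\le l''$, and the two exhausting sequences are cofinal in the face lattice of $C_{\bar v}$. This is essentially the same bookkeeping as in the proof of Lemma~\ref{welldef}, just applied to a differently-indexed family of faces.

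Next I would argue that $\sigma_l$ is well-defined in $\mathfrak S$ and stabilizes. Since $\widebar{D_l}$ is a finite-dimensional face of $C_{\bar v}$, its edges are among the $\varepsilon_{v,i}$, and the numerator/denominator degree bounds derived right before the proof of Theorem~\ref{infbrion} (namely $G((1-e^{\varepsilon_{v,1}})\cdots(1-e^{\varepsilon_{v,m}}))$ has lowest $q$-power $K$, and $G$ of the corresponding numerator has lowest $q$-power $\deg G(e^{\bar v})+K$) apply verbatim to $\widebar{D_l}$, so $\sigma_l\in\mathfrak S$ and contains only powers of $q$ that are at least $\deg G(e^{\bar v})$. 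For the stabilization: fix $N$; choose $l$ large enough that every $\varepsilon_{v,i}$ generating an edge of $\widebar{D_l}$ not already in $\widebar{D_{l-1}}$ has $\deg G(e^{\varepsilon_{v,i}})\ge N-K$ — possible by Proposition~\ref{qbig} — and then the same Proposition~\ref{fundpar} argument as in Lemma~\ref{welldef} shows $Q_{v}$-type numerators for $\widebar{D_l}$ and $\widebar{D_{l-1}}$ agree in all $q$-degrees below $N$, hence the coefficient of each power $q^{<N}$ in $\sigma_l$ is eventually constant. By cofinality this eventual value must coincide with the corresponding coefficient of $\tau_{\bar v}=\lim_l\tau_{\bar v,l}$, which is exactly the claim.

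The main obstacle I anticipate is not any single hard estimate but the careful translation between the three descriptions of faces in play — the hyperplane description ($\widebar{E_i},\widebar{H_i}$), the $\Delta_v$-coordinate description ($s_{i,j}(x)$ fixed or constant), and the edge-generator description ($\varepsilon_{v,i}$) — and verifying that the degree bounds genuinely survive when we pass from the "canonical" exhaustion $C_{v,l}$ to the geometrically natural $\widebar{D_l}$. Once the cofinality is pinned down precisely using Propositions~\ref{upsame}--\ref{downlimit}, everything else is a reprise of the $q$-adic convergence machinery already set up for Theorem~\ref{infbrion}.
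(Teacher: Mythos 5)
Your overall strategy is the paper's: exhaust $C_{\bar v}$ by the finite-dimensional faces $\widebar{D_l}$ and rerun the $q$-adic convergence machinery of Lemma~\ref{welldef}. However, the step you declare to be ``the same Proposition~\ref{fundpar} argument as in Lemma~\ref{welldef}'' is precisely the one that does not transfer verbatim, and you never supply the ingredient that makes it work. The decisive point in Lemma~\ref{welldef} is the inequality $\sum\alpha_i\ge 1$ over the newly appearing edge generators, which is proved there by examining the coordinates of $x$ at the extreme sequence indices whose hyperplanes $\widebar{H_i},\widebar{E_i}$ are released in passing from $C_{v,l-1}$ to $C_{v,l}$, together with the fact quoted from~\cite{me2} that the generators have entries $0,\pm1$ with forced sign at those indices. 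For the exhaustion $\widebar{D_l}$ the released constraints are of a different nature: they are the Gelfand--Tsetlin equalities along the edges of $\Delta_v$ adjacent to the boundary rows, i.e.\ conditions on the coordinates $s_{i,j}$, not single coordinate hyperplanes at two consecutive sequence indices. To obtain the same non-integrality contradiction one must argue in the $s_{i,j}$-coordinates, using Proposition~\ref{edges} (each $\varepsilon_{v,i}$ has $s$-increment zero outside a single component $\Gamma_e$ and a constant value $\pm1$ on it, whose sign is forced at a boundary vertex by the one inequality being released) together with the fact that the passage to the $s_{i,j}$-coordinates preserves integrality. This substitution is exactly what the paper records as the only difference from Lemma~\ref{welldef}; without it, your claim that the numerators for $\widebar{D_l}$ and $\widebar{D_{l-1}}$ agree below $q$-degree $N$ is unjustified --- the $\pm1$ description of generators in the original coordinates does not by itself control the released functionals (for instance, a sum of $n$ consecutive $\pm1$ entries can have absolute value greater than $1$, so same-sign contributions with $\sum\alpha_i<1$ could still add up to an integer), so ``the same argument'' genuinely needs the reformulation via Proposition~\ref{edges}.

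Secondly, your cofinality detour is both unnecessary and, as used, incomplete. Knowing that $\sigma_l$ stabilizes along the chain $\widebar{D_l}$ and that $\tau_{\bar v,l}$ stabilizes along the chain $C_{v,l}$ does not by itself force the two limits to coincide; for that you would need the same difference estimate for interleaved pairs $C_{v,l}\subseteq\widebar{D_m}\subseteq C_{v,l'}$, i.e.\ convergence over the directed family of all finite-dimensional faces. The paper sidesteps this entirely: once the boundary argument above is in place, the numerators of $G(\sigma_{\bar\varphi}(\widebar{D_l}))$ converge coefficient-wise to $Q_v$ itself and the denominators to the product of the $1-G(e^{\varepsilon_{v,i}})$, which identifies the limit as $\tau_{\bar v}$ directly, with no comparison of exhaustions needed. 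So you should drop the cofinality paragraph and instead spell out the Proposition~\ref{edges}-based boundary step.
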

\begin{proof}
We consider $G(\sigma_{\bar\varphi}(\widebar{D_l}))$ to be a fraction the denominator of which is the product of $1-G(e^\varepsilon)$ over all generators $\varepsilon$ of edges of $D_l$. The coefficients of these denominators visibly converge to the coefficients of $$(1-G(e^{\varepsilon_{v,1}})(1-G(e^{\varepsilon_{v,1}})\ldots$$ We are thus left to prove that the numerators converge coefficient-wise to $Q_v$.

This is done in complete analogy with the argument proving Lemma~\ref{welldef}. The only difference is that in the last paragraph we use the characterization of the generators given by Proposition~\ref{edges} rather than the one taken from~\cite{me2}.
\end{proof}

On the other hand, let $\Delta_{v,l}$ be the full subgraph of $\Delta_v$ obtained by removing all rows with number less than $-l$ or greater than $l$. Such a $\Delta_{v,l}$ has $m(\lambda)$ connected components each of which is an ordinary graph. We denote these components $\Gamma^l_r\subset \Gamma_r$.

We can now see that we have a natural bijection $$\xi_l:D_{\Gamma^l_1}(b_1,\ldots,b_1)\times\ldots\times D_{\Gamma^l_{m(\lambda)}}(b_{m(\lambda)},\ldots,b_{m(\lambda)})\rightarrow D_l.$$ (Recall that every factor on the right is a cone with vertex $v_{\Gamma_r^l}(b_r)$.) The coordinate
\begin{equation}\label{imcoord}
s_{i,j}(\xi_l(x_1\times\ldots\times x_{m(\lambda)}))
\end{equation}
is equal to the corresponding coordinate of $x_r$ when $(i,j)\in\Gamma_r^l$. When $(i,j)$ is a vertex of $\Delta_v$ with $i<-l$ the coordinate~(\ref{imcoord}) is equal to $s_{i,j}(v)$ and for $i\ge l$ those coordinates are all the same. Proposition~\ref{imcv} shows that this is indeed a bijection. We also have the corresponding bijection $\widebar{\xi_l}$ with image $\widebar{D_l}$.

Propositions~\ref{zpow} and~\ref{qpow} together with our choice of $l$ show that for a certain specialization $\Psi_l$ substituting each $x_i$ with a monomial in $z_1,\ldots,z_{n-1},q$ the following holds. For any tuple of integer points $x_r\in D_{\Gamma^l_r}(b_r,\ldots,b_r)$ we have
\begin{equation}\label{expprod}
G\left(e^{\widebar{\xi_l}(x_1\times\ldots\times x_{m(\lambda)})}\right)=G(e^{\bar v})\Psi_l\left(\prod_{r=1}^{m(\lambda)}F\left(e^{\left(x_r-v_{\Gamma_r^l}(b_r)\right)}\right)\right).
\end{equation}
It is straightforward to describe $\Psi_l$ explicitly, we, however, will not make use of such a description and therefore omit it.

Proposition~\ref{phigraph} together with $l\ge M_v$ shows that for a face of $D_l$ $$f=\xi_l(f_1\times\ldots\times f_{m(\lambda)})$$ with $f_r$ being a face of $D_{\Gamma_r^l}(b_r,\ldots,b_r)$ we have the following identity.
\begin{equation}\label{weightprod}
\varphi(f)=\prod_{r=1}^{m(\lambda)}\varphi_{\Gamma_r^l}(b_r,\ldots,b_r)(f_r).
\end{equation}

Combining~(\ref{expprod}) and~(\ref{weightprod}) we, finally, obtain
\begin{equation}\label{psiprod}
G\left(\sigma_{\bar\varphi}\left(\widebar{D_l}\right)\right)=G(e^{\bar v})\Psi_l\left(\prod_{r=1}^{m(\lambda)}F\left(e^{-v_{\Gamma_r^l(b_r)}}\right)\psi_{\Gamma_r^l}(b_r,\ldots,b_r)\right).
\end{equation}

Now it is time to define the distinguished set of vertices from Theorem~\ref{contrib} which we again refer to as ``relevant". A vertex $v$ is not relevant if and only if the graph $\Delta_v$ has a connected component $E$ with the following property. $E$ has more vertices in row $i+1$ than in row $i$ for some integer $i$.

This definition together with $l\ge M_v$ immediately implies that if $v$ is non-relevant, then one of the components $\Gamma_r^l$ of $\Delta_{v,l}$ contains more vertices in some row than the row above. Combining~(\ref{psiprod}) with Theorem~\ref{zero} and then employing Lemma~\ref{limit} now proves part b) of Theorem~\ref{contrib}.

We move on to considering a relevant $v$. We first discuss the case of a regular $\lambda$, i.e. all $a_i$ being positive. In this case $\Delta_v$ has $n$ components each of which contains a single vertex in row $i$ for $i\ll 0$ (Proposition~\ref{uplimit}).
\begin{proposition}\label{relvert}
For a regular $\lambda$ vertex $v$ of $\Pi$ is non-relevant if and only if we have an $l$ for which $v_l=0$ and $v_{l+n-1}\neq 0$.
\end{proposition}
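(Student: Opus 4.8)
The plan is to translate the condition ``$v$ is non-relevant'' into the language of the sequence $v=(v_l)$ itself, using the dictionary between $\Delta_v$ and $v$ given by Propositions~\ref{vert},~\ref{vertgraph} and~\ref{uplimit}, together with the local equivalences $s_{i,j}(v)=s_{i-1,j+1}(v)\Leftrightarrow v_{in+j(n-1)}=0$ and $s_{i,j}(v)=s_{i-1,j}(v)\Leftrightarrow \chi_{in+j(n-1)}(v)=k$ recorded right after the first Proposition of Section~3. For regular $\lambda$ every component of $\Delta_v$ meets every low-enough row in a single vertex, so a component $E$ having more vertices in row $i+1$ than in row $i$ must be a component that, reading downward from the single-vertex rows, at some point ``splits''. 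By Proposition~\ref{vertgraph} each vertex of $\Delta_v$ is joined to exactly one of its two upper neighbors, so the number of vertices of $E$ in row $i+1$ exceeds that in row $i$ precisely when some vertex $(i,j)\in E$ has \emph{both} lower neighbors $(i+1,j-1)$ and $(i+1,j)$ lying in $E$ --- equivalently, two consecutive vertices $(i+1,j-1),(i+1,j)$ of row $i+1$ are joined to the same upper vertex, which forces the edge between them, i.e.\ $s_{i+1,j-1}(v)=s_{i+1,j}(v)$, i.e.\ $\chi_{(i+1)n+(j-1)(n-1)}(v)=k$, while simultaneously $(i+1,j)$ (say) is joined upward, meaning $v_{(i+1)n+(j-1)(n-1)}=0$ by the other equivalence.

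So the concrete plan is: First I would show that $v$ non-relevant implies the existence of an edge of $\Delta_v$ of the ``horizontal'' type inside a component --- more precisely, two adjacent vertices $(i,j),(i,j+1)$ in one component with, just below, a further increase. I would then unwind the index bookkeeping: set $l$ to be the number $in+j(n-1)$ (or the appropriate neighbouring index) attached to the relevant vertex/edge via $(\eta_v,\theta_v)$. The equality of the two coordinates $s_{i,j}(v)=s_{i,j+1}(v)$ translates, via $s_{i,j}-s_{i,j+1}=s_{i,j}-s_{i-1,j+1}$-type relations and the explicit formula~(\ref{gtdef}), into $\chi_{l}(v)=k$ for that $l$, while the existence of a split just below translates into $v_{l'}=0$ for an index $l'$ that differs from $l$ by $n-1$ (passing from row $i$ to row $i+1$ shifts the running index $in+j(n-1)$ by exactly $n-1$ when one moves to the lower-left neighbour, by $n$ when moving to the lower-right). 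Combining these, and using that in the regular case $\chi_l(v)=k$ together with $v_l\ge 0$, $\chi_{l+1}(v)\le k$ forces $v_{l-n+1}=0$ and similar, I get the pair $(l_0,l_0+n-1)$ with $v_{l_0}=0$ and $v_{l_0+n-1}\neq 0$. Conversely, I would start from indices $l$ with $v_l=0$ and $v_{l+n-1}\neq 0$, read off via~(\ref{gtdef}) which equalities/inequalities among the $s_{i,j}(v)$ this produces, and check directly that the component of $\Delta_v$ passing through the corresponding vertices has strictly more vertices one row down, i.e.\ $v$ is non-relevant.

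I expect the main obstacle to be purely the combinatorial/notational translation between the three descriptions of a vertex --- as a point of $\Pi$, as the sequence $(v_l)$, and as the lattice subgraph $\Delta_v$ --- keeping straight which index $in+j(n-1)$ corresponds to which edge and in which direction the index jumps (by $n-1$ versus by $n$) as one moves to the lower-left versus lower-right neighbour. The conceptual content is small: it is really the observation that, under Proposition~\ref{vertgraph}, a component grows downward by exactly one vertex each time a ``downward fork'' occurs, and a downward fork at $(i,j)$ is equivalent to $s_{i,j}(v)=s_{i,j+1}(v)$ (a horizontal edge) together with the vertex below being non-split, which is exactly one $v_{l}=0$ alongside the next-$n$-consecutive-sum being $k$; specializing to regular $\lambda$ (no $a_i=0$) is what lets one package this cleanly as the single clause $v_l=0$, $v_{l+n-1}\ne 0$. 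I would also take care that the chosen $l$ can always be normalised into any prescribed range by the shift invariance~(\ref{shift}), and note that regularity of $\lambda$ is used precisely to exclude the ``boundary'' equalities coming from $a_{j\bmod n}=0$ that, by fact~2) in the proof of Proposition~\ref{finite}, would otherwise also produce horizontal edges far to the left without any genuine downward fork.
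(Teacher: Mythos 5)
Your plan follows the same general route as the paper (the dictionary between equalities $s_{i,j}(v)=s_{i-1,j+1}(v)$, $s_{i,j}(v)=s_{i-1,j}(v)$ and the conditions $v_l=0$, $\chi_l(v)=k$; Proposition~\ref{vertgraph}; the ``downward fork'' picture; regularity entering through the one-vertex-per-row behaviour of components high up), but the decisive step of the ``only if'' direction is not actually obtained. A fork at a vertex $(i,j)$ of a component, i.e.\ both lower neighbours $(i+1,j-1)$ and $(i+1,j)$ joined to it, translates (with $l_0=(i+1)n+(j-1)(n-1)$) into $v_{l_0}=0$ together with $\chi_{l_0+n-1}(v)=k$ --- and nothing more. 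This is perfectly compatible with $v_{l_0+n-1}=0$ as well (the right branch of the fork may in addition be joined to \emph{its} upper-right neighbour), so the existence of a fork does not by itself produce the required index with $v_{l_0+n-1}\neq 0$. Your proposed mechanism for extracting the non-vanishing, ``$\chi_l(v)=k$ together with $v_l\ge 0$, $\chi_{l+1}(v)\le k$ forces $v_{l-n+1}=0$'', is false: subtracting the two window sums only gives $v_{l+1}\le v_{l-n+1}$, and no manipulation of the defining inequalities can yield a strict non-vanishing. What is needed (and what the paper does) is a structural choice of the fork: take the \emph{first} row in which the non-relevant component has more than one vertex; then the row above contains a single vertex of the component, so the right branch of the fork cannot be joined to its upper-right neighbour (that neighbour lies outside the component), and this is exactly what forces $v_{l_0+n-1}\neq 0$. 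This selection argument is absent from your sketch, and it is the heart of the forward implication.

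There are secondary issues as well. Proposition~\ref{vertgraph} says each vertex of $\Delta_v$ is joined to \emph{at least} one of its upper neighbours, not exactly one (both $v_l=0$ and $\chi_l(v)=k$ can hold), and your ``precisely when'' is false as a row-by-row statement: a fork at $(i,j)$ need not increase the vertex count from row $i+1$ relative to row $i+1$'s predecessor if that predecessor row already carries extra vertices; the correct global statement again uses that counts equal $1$ for $i\ll 0$. The same point is needed, and not made explicit, in your converse: from $v_l=0$, $v_{l+n-1}\neq 0$ one gets (via Proposition~\ref{vertgraph}) two vertices of one component in the row $\eta_v(l)$ attached to a common upper vertex, and one must then invoke Proposition~\ref{uplimit} (all $l_r=1$ for regular $\lambda$) to conclude that the count increased in some row above, rather than ``check directly that the component has strictly more vertices one row down''. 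Finally, the index bookkeeping in your sketch is off (the horizontal equality $s_{i+1,j-1}(v)=s_{i+1,j}(v)$ corresponds to $v_{(i+1)n+(j-1)(n-1)}=0$ \emph{and} $\chi_{(i+1)n+j(n-1)}(v)=k$, not to $\chi_{(i+1)n+(j-1)(n-1)}(v)=k$); this is repairable, but since you identify the bookkeeping as the main difficulty, it should be carried out correctly.
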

\begin{proof}
If $v$ is non-relevant we have a component of $\Delta_v$ which contains one vertex $(i-1,j)$ in row $i-1$ and two vertices $(i,j-1)$ and $(i,j)$ in row $i$. This, in particular, shows that $v_{in+(j-1)(n-1)}=0$ while $v_{in+j(n-1)}\neq 0$ which proves the ``only if" part.

Conversely, if $v_l=0$ and $v_{l+n-1}\neq 0$, then in $\Delta_v$ the vertex $(\eta_v(l),\theta_v(l))$ is connected to its upper-right neighbor, while $(\eta_v(l+n-1),\theta_v(l+n-1))$ is not. This, however, means that $(\eta_v(l+n-1),\theta_v(l+n-1))$ is connected to its upper-left neighbor (Proposition~\ref{vertgraph}). This upper-left neighbor is then $(\eta_v(l-1),\theta_v(l-1))$ who is also the upper-right neighbor of $(\eta_v(l),\theta_v(l))$. We therefore see that the corresponding component contains two vertices in row $\eta_v(l)$ which leads to $v$ being non-relevant.
\end{proof}

Such an interpretation of relevant vertices for the case of regular $\lambda$ is in accordance with the one found in~\cite{fjlmm2}. The following information can then be extracted from papers~\cite{fjlmm2} and~\cite{me2}.
\begin{proposition}\label{simpvert}
\begin{enumerate}[label=\alph*)]
\item For regular $\lambda$ the relevant vertices are enumerated by elements of the Weyl group $W$. If $v_w$ is the vertex corresponding to $w\in W$, then $e^{\mu_{v_w}}=w\lambda$.
\item All the cones $D_l$ are simplicial and unimodular.
\item The multiset $\{G(e^{\varepsilon_{v_w,i}})\}$ coincides with the multiset $\{e^{-w\alpha},\alpha\in\Phi^+\}$, where each $\alpha$ is counted $m_\alpha$ times.
\end{enumerate}
\end{proposition}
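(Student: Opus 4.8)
My plan is to obtain all three parts from facts already established in~\cite{fjlmm2} and~\cite{me2}, using the dictionary of Section~\ref{proofintro}; the organizing observation is that, for regular $\lambda$, a vertex $v$ is relevant precisely when $\Delta_v$ is a disjoint union of $n$ path graphs. Indeed, when $\lambda$ is regular one has $m(\lambda)=n$ and all $l_r=1$, so by Proposition~\ref{uplimit} each of the $n$ components of $\Delta_v$ has exactly one vertex in every sufficiently high row; relevance forces the number of vertices per row in each component to be non-increasing as the row index grows, so each component has at most one vertex per row, and by Proposition~\ref{vertgraph} together with connectedness it is therefore a path — exactly one of them infinite downward (Proposition~\ref{downlimit}), the other $n-1$ eventually empty. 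Granting this, part a) is essentially the description of extremal monomials in~\cite{fjlmm2}: the combinatorial condition on relevant vertices recorded in Proposition~\ref{relvert} is exactly the one characterizing those basis vectors $v_A$ that lie in an extremal weight space $L_\lambda[\mu]$ with $\mu\in W\lambda$. Since such weight spaces are one-dimensional, the map sending a relevant vertex to its weight is injective; it is surjective onto $W\lambda$ because every extremal weight space is spanned by some $v_A$; and since $\lambda$ is regular, $W_\lambda=\{1\}$ and $|W\lambda|=|W|$, which yields the bijection $w\mapsto v_w$ with $\mu_{v_w}=w\lambda$. One can also read $\mu_{v_w}$ off directly from~(\ref{zweight})--(\ref{qweight}) via the path shape of $\Delta_{v_w}$.

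For part b), the bijection $\xi_l$ of Section~\ref{last} identifies each $D_l$ with a product of cones $D_{\Gamma_r^l}(b_r,\dots,b_r)$, where every $\Gamma_r^l$ is now a finite path. In the coordinates given by the consecutive differences along such a path (the top coordinate being fixed), the cone $D_{\Gamma_r^l}(b_r,\dots,b_r)$ is simply an orthant; this change of coordinates is unimodular, so the cone is simplicial and unimodular, and hence so is the product $D_l$. Alternatively one may transport the explicit description of the cones at vertices of $\Pi^0$ given in~\cite{me2} along the translation $(x_i)\mapsto(x_{i+l})$ used in the proof of Proposition~\ref{vert}.

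Part c) is the substantive assertion and, I expect, the main obstacle. By Proposition~\ref{dimf} the edges of $\Pi$ at $v_w$ correspond to deleting a single edge of the path graph $\Delta_{v_w}$, and by Proposition~\ref{edges} the generator $\varepsilon_{v_w,i}$ of such an edge is supported, with constant value $\pm 1$, on the finite piece cut off by that deletion; feeding this into Propositions~\ref{zpow} and~\ref{qpow} produces $G(e^{\varepsilon_{v_w,i}})$ as an explicit Laurent monomial in $z_1,\dots,z_{n-1},q$. What remains is to verify that, as $i$ ranges over all edges, these monomials form exactly the multiset $\{e^{-w\alpha}\}$ with each $\alpha$ counted $m_\alpha$ times: the deletions inside the unique downward-infinite path of $\Delta_{v_w}$ should account for the real positive roots (all $\delta$-translates of the finite positive roots), while the deletions inside the $n-1$ eventually-empty paths should account, collectively with multiplicity $n-1$, for the imaginary roots $j\delta$. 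The delicate point is precisely this matching — tracking the ceiling $\lceil\cdot/(n-1)\rceil$ and the residue-mod-$(n-1)$ indexing of components in the specialization~(\ref{affspec}) so that the $q$-powers coming from Proposition~\ref{qpow} align with the $\delta$-grading, and confirming that the imaginary roots emerge with the right multiplicity. The cleanest way to do it is to prove the multiset identity once for a vertex of $\Pi^0$, where~\cite{me2} already lists the edge generators and the check amounts to recovering the Kac--Weyl denominator, and then to propagate it to an arbitrary relevant vertex using the $\Pi^l$-shift together with the action of $W$ permuting relevant vertices.
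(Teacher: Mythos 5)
The paper does not actually prove Proposition~\ref{simpvert}: it is stated as information ``extracted from'' \cite{fjlmm2} and \cite{me2}, so your overall strategy --- translate the relevant-vertex condition into the path structure of $\Delta_{v_w}$ and then lean on those two references --- is the same as the paper's. Your arguments for a) and b) are fine: for regular $\lambda$ all $l_r=1$, so Propositions~\ref{uplimit}, \ref{vertgraph} and \ref{downlimit} do force $\Delta_{v_w}$ to be $n$ paths ($n-1$ infinite upward only, one doubly infinite), identification of relevant vertices with extremal FJLMM basis elements is exactly the content the paper itself attributes to \cite{fjlmm2}, and the difference-coordinate/orthant argument for $D_{\Gamma^l_r}(b_r,\dots,b_r)$ with $\Gamma^l_r$ a finite path, combined with the integrality of the $s_{i,j}$-coordinate change from Section~\ref{proofintro}, gives simpliciality and unimodularity of $D_l$.

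The gap is in c), which you yourself flag as unverified, and the road map you propose for it contains a concrete error. It is not true for $n\ge 3$ that the edges deleted from the $n-1$ upward-infinite paths account only for the imaginary roots: if such a deletion cuts off a finite piece $\Gamma_e$ occupying rows $i_0,\dots,i_1$, then by Propositions~\ref{edges} and~\ref{zpow} the monomial $G(e^{\varepsilon})$ carries the $z$-content $z_{i_0\bmod(n-1)}^{\pm1}z_{(i_1+1)\bmod(n-1)}^{\mp1}$, which is trivial only when $i_1-i_0+1$ is divisible by $n-1$. So most deletions in those paths produce monomials that must be matched with \emph{real} roots, and only the pieces whose row span is a multiple of $n-1$ can give the $q^j$'s; a per-$q$-degree count ($n(n-1)$ real plus $n-1$ imaginary roots) confirms that the doubly-infinite path alone cannot supply all the real roots. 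Your fallback --- verify the multiset identity at a vertex of $\Pi^0$ and then ``propagate'' via the $\Pi^l$-shift and a $W$-action permuting relevant vertices --- also needs care: no geometric $W$-action on $\Pi$ transforming edge generators is established in this paper, and the shift $(x_i)\mapsto(x_{i+l})$ only relates some vertices. The route consistent with the paper is simply to quote the description in \cite{me2} of the edge generators at \emph{every} relevant vertex (this is what the proof of the Kac--Weyl formula there requires), rather than to derive it from the single vertex of $\Pi^0$.
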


All we, essentially, are left to prove is the following.
\begin{proposition}
Let $\lambda$ be regular and $v$ be a relevant vertex. For any face $f$ of cone $D_l$ we have $$\varphi(f)=(1-t)^{\dim f}.$$
\end{proposition}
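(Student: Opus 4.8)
\begin{proof}
The plan is to first show that for regular $\lambda$ and relevant $v$ every connected component $\Gamma_r$ of $\Delta_v$ is a path graph, and then to read off the weight of a face of $D_l$ directly from the definition of $\varphi_{\Gamma_r^l}$ via the product decomposition.

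We claim each $\Gamma_r$ contains at most one vertex in every row. Since $\lambda$ is regular we have $l_r=1$, so by Proposition~\ref{uplimit} the component $\Gamma_r$ has exactly one vertex in row $i$ for $i\ll 0$. Because $v$ is relevant, no component of $\Delta_v$ has more vertices in row $i+1$ than in row $i$ for any $i$; in other words, the number of vertices of $\Gamma_r$ in row $i$ is a nonincreasing function of $i$. Finally, $\Gamma_r$ is connected and every edge of $\mathcal R$ joins consecutive rows, so the set of rows meeting $\Gamma_r$ is an interval. Combining these three facts, the number of vertices of $\Gamma_r$ in row $i$ equals $1$ on that interval and $0$ elsewhere, i.e. $\Gamma_r$ is a (possibly infinite) path. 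Consequently $\Gamma_r^l$, being the full subgraph of $\Gamma_r$ on rows $[-l,l]$, is itself a path, is an ordinary graph, and satisfies $l_{\Gamma_r^l}=1$, so $D_{\Gamma_r^l}(b_r,\ldots,b_r)$ is a cone with a single top-row coordinate.

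Now write the given face as $f=\xi_l(f_1\times\cdots\times f_{m(\lambda)})$ with $f_r$ a face of $D_{\Gamma_r^l}(b_r,\ldots,b_r)$, as in the discussion preceding~(\ref{weightprod}). By~(\ref{weightprod}) we have $\varphi(f)=\prod_{r=1}^{m(\lambda)}\varphi_{\Gamma_r^l}(b_r,\ldots,b_r)(f_r)$, so it suffices to prove $\varphi_{\Gamma_r^l}(b_r,\ldots,b_r)(f_r)=(1-t)^{\dim f_r}$ for each $r$. Fix $r$. By Proposition~\ref{faces} the graph $\Delta_{f_r}$ is obtained from the path $\Gamma_r^l$ by deleting some edges, so each connected component of $\Delta_{f_r}$ is a path and contains at most one vertex in every row. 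Hence in the expression $\varphi_{\Gamma_r^l}(b_r,\ldots,b_r)(f_r)=\prod_{l'}(1-t^{l'})^{d_{l'}}$ the integer $d_{l'}$ vanishes for $l'\ge 2$, while $d_1$ counts the pairs $(E,i)$ with $E$ a component of $\Delta_{f_r}$ and $i>a_{\Gamma_r^l}$ such that $E$ has no vertex in row $i-1$ and one vertex in row $i$; equivalently, $i$ is the top row of $E$ and $E$ does not contain the unique vertex of $\Gamma_r^l$ lying in its top row $a_{\Gamma_r^l}$. Thus $d_1$ is the number of connected components of $\Delta_{f_r}$ not meeting row $a_{\Gamma_r^l}$, which is exactly $\dim f_r$ by the dimension statement of Proposition~\ref{faces}. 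Therefore $\varphi_{\Gamma_r^l}(b_r,\ldots,b_r)(f_r)=(1-t)^{\dim f_r}$, and multiplying over $r$ and using $\dim f=\sum_r\dim f_r$ yields $\varphi(f)=(1-t)^{\dim f}$.
\end{proof}

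The one genuinely substantive step is the first claim, namely that relevance, together with the one-vertex-per-row behaviour at the top furnished by Proposition~\ref{uplimit} and the connectedness of the components of $\Delta_v$, forces each $\Gamma_r$ to be a path; once this is known, the rest is a direct unwinding of the definition of $\varphi_{\Gamma_r^l}$ through the product decomposition~(\ref{weightprod}).
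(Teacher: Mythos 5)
Your proof is correct and rests on the same core observation as the paper's: for regular $\lambda$ and relevant $v$ the components of $\Delta_v$ are paths with at most one vertex per row, so only $(1-t)$-factors can occur and their number equals the number of components not reaching the top, which is the dimension. The only differences are cosmetic: you do the bookkeeping through the product decomposition~(\ref{weightprod}) and the finite-dimensional Proposition~\ref{faces}, while the paper applies Propositions~\ref{phigraph} and~\ref{dimf} directly to $\Delta_f$, and you supply an explicit argument (relevance plus Proposition~\ref{uplimit} plus connectedness) for the path-structure claim that the paper simply asserts.
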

\begin{proof}
We may assume that $\Delta_f$ is a subgraph of $\Delta_v$.

All $n$ connected components of $\Delta_v$ are infinite path graphs, $n-1$ of them infinite in one direction (``up") and one infinite in both directions. This together with Proposition~\ref{phigraph} shows that $\varphi(f)=(1-t)^d$, where $d$ is the number of vertices in $\Delta_f$ not adjacent to any vertex in the row above. 

However, Proposition~\ref{dimf} shows that $\dim f=d$ as well.
\end{proof}

The above Proposition together with part b) of Proposition~\ref{simpvert} shows that $G(\sigma_{\bar\varphi}(\widebar{D_l}))$ is the product of $F(e^{\bar v})=e^{\mu_v-\lambda}$ and the quotients $$\frac{1-tF(e^\varepsilon)}{1-F(e^\varepsilon)}$$ over all generators $\varepsilon$ of edges of $D_l$. Applying parts a) and c) of Proposition~\ref{simpvert} and then Lemma~\ref{limit} now proves part a) of Theorem~\ref{contrib} in the case of regular $\lambda$.

On to the case of $\lambda$ being singular, i.e. having at least one $a_i=0$. This case will be deduced from the regular case, so we introduce $\lambda^1$, an arbitrary integral dominant regular weight. We denote the objects corresponding to $\lambda^1$ by adding a $^1$ superscript, e.g. $\Pi^1$, $\varphi^1$, $E_l^1$ etc.

Due to Proposition~\ref{relvert} the relevant vertices of $\Pi^1$ are parametrized by sequences $y=(y_i)$ infinite in both directions with $y_i\in\{0,1\}$ and having the following properties.
\begin{enumerate}
\item For $l\gg 0$ one has $y_l=0$.
\item For $l\ll 0$ one has $y_l=1$.
\item One has $y_{l+n-1}=0$ whenever $y_l=0$.
\end{enumerate}
The vertex $v_y^1$ corresponding to such a sequence is uniquely defined by $v_y^1\in E_l^1$ whenever $y_l=0$ and $v_y^1\in H_l^1$ whenever $y_l=1$. The fact that the $a_i^1$ are all positive implies that different $y$ define different $v_y^1$. Since the relevant vertices of $\Pi^1$ are also parametrized by the affine Weyl group $W$ for each $y$ we may define $w_y\in W$ such that $v_y^1=v_{w_y}^1$. Clearly, $w_y$ does not depend on $\lambda_1$ but only on $y$.

Each sequence $y$ also defines a vertex $v_y$ of $\Pi$ by the same rule. However, some of these $v_y$ may coincide. 
\begin{proposition}
The vertices $v_y$ are precisely the relevant vertices of $\Pi$. For any $y$ we have $\mu_{v_y}=w_y\lambda$.
\end{proposition}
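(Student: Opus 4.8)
The statement has two parts: (i) the $v_y$ exhaust the relevant vertices of $\Pi$, and (ii) $\mu_{v_y}=w_y\lambda$. I would prove (i) first. Given a sequence $y$ with the three listed properties, define $v_y$ as the unique point of $\Pi$ lying in $E_l$ whenever $y_l=0$ and in $H_l$ whenever $y_l=1$; I should first check this is well-defined, i.e. that the linear conditions $\{x_l=0\}_{y_l=0}\cup\{\chi_l(x)=k\}_{y_l=1}$ cut out a single point of $V$. This is a routine triangular back-substitution using that $y_l=1$ for $l\ll0$ (forcing $v_l=a_{l\bmod n}$ there, consistent with membership in $V$) and $y_l=0$ for $l\gg0$; the third property of $y$ guarantees compatibility with the plane-filling GT-pattern structure. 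Once $v_y$ is a point of $\Pi$, Proposition~\ref{vert} shows it is a vertex, since for every $i$ either $y_i=0$, giving $(v_y)_i=0$, or $y_i=1$, giving $\chi_i(v_y)=k$ — wait, I need $\chi_i(v_y)=0$ or $(v_y)_i=0$; here instead one has that each $i$ lies in $E_i$ or $H_i$, and the argument of Proposition~\ref{vert} (via the description of vertices from~\cite{me2}) applies verbatim after the shift $\Pi^l$ reduction, so $v_y$ is genuinely a vertex.

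**Relevance.** Next I would show $v_y$ is relevant and, conversely, that every relevant vertex of $\Pi$ arises this way. The key input is Proposition~\ref{relvert}, but that proposition is stated only for regular $\lambda$; for singular $\lambda$ I would argue directly from the definition of relevance. A vertex $v$ is non-relevant iff some component of $\Delta_v$ has more vertices in row $i+1$ than in row $i$, which as in the proof of Proposition~\ref{relvert} translates into the existence of an $l$ with $v_l=0$ and $v_{l+n-1}\neq 0$. For $v=v_y$ we have $v_l=0 \iff y_l=0$ outside the finitely many indices where membership in $H_l$ might also force $(v_y)_l=0$; but property (3) of $y$ says $y_l=0\Rightarrow y_{l+n-1}=0$, and one must check this forces $(v_y)_{l+n-1}=0$ as well — this is where the compatibility built into the defining conditions of $v_y$ is used, essentially the implication $s_{\eta_v(l),\theta_v(l)}(v)=s_{\eta_v(l)-1,\theta_v(l)+1}(v)\Rightarrow$ the analogous equality one row down, as in Proposition~\ref{relvert}'s proof. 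Hence $v_y$ is relevant. Conversely, given a relevant vertex $v$ of $\Pi$, define $y_l=0$ if $v\in E_l$ and $y_l=1$ if $v\in H_l$ (choosing $y_l=0$ if both, say); relevance plus Proposition~\ref{vertgraph} gives property (3), and the asymptotics of $\Pi$ give (1) and (2). Then $v$ and $v_y$ satisfy the same defining conditions, so $v=v_y$.

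**The weight identity.** For (ii), the clean approach is a degeneration argument, parallel to the finite-case discussion in Section~8: $\Pi$ (with $\lambda$ singular) is obtained from $\Pi^1$ (with $\lambda^1$ regular) by a degeneration that shifts the hyperplanes $H_l$ — concretely, replacing $a_i^1$ by $a_i$ — and the corresponding map on faces sends $v_{w_{y_1}}^1$ and $v_{w_{y_2}}^1$ to the same vertex iff $w_{y_1}\lambda=w_{y_2}\lambda$, i.e. iff $y_1,y_2$ define the same $v_y$. Granting this, since $\mu_{v_{w_y}^1}=w_y\lambda^1$ by Proposition~\ref{simpvert}(a) and the weight is linear in the highest weight along the degeneration (the coordinates~(\ref{zweight}),~(\ref{qweight}) of $\mu_{v_y}-\lambda$ depend only on the combinatorial type $\Delta_{v_y}=\Delta_{v_{w_y}^1}$, not on the actual values $a_i$), one gets $\mu_{v_y}-\lambda = \mu_{v_{w_y}^1}-\lambda^1 = w_y\lambda^1-\lambda^1$, and applying the same linearity to $w_y$ acting on weights yields $\mu_{v_y}=w_y\lambda$. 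Alternatively, and perhaps more cleanly, I would compute $\mu_{v_y}$ directly: $\Delta_{v_y}$ is determined by $y$ via the correspondence "edge $E_l\in\Delta$ iff $y_l=0$, edge $H_l\in\Delta$ iff $y_l=1$" exactly as for $\Delta_{v_{w_y}^1}$, so $\mu_{v_y}-\lambda$ and $\mu_{v_{w_y}^1}-\lambda^1$ are given by the same formula~(\ref{zweight})--(\ref{qweight}) applied to the same graph, hence $\mu_{v_y}-\lambda=w_y\lambda^1-\lambda^1$; since $w_y$ is an element of $W$ independent of the chosen regular weight and $W$ acts affinely, subtracting gives $\mu_{v_y}=w_y\lambda$.

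**Main obstacle.** The technical heart is checking that $v_y$ is well-defined and a genuine vertex of $\Pi$ (not merely of the cone structure), together with verifying that property (3) of $y$ really does force the "one row down" implication needed for relevance — these are exactly the points where the structure theory of $\Pi$ from~\cite{me2} and Propositions~\ref{vert},~\ref{vertgraph},~\ref{uplimit} must be invoked carefully rather than quoted loosely. The degeneration/linearity argument for $\mu_{v_y}=w_y\lambda$ is conceptually straightforward once one observes that $\Delta_{v_y}$ depends only on $y$; the care needed is to confirm that the identification $\Delta_{v_y}=\Delta_{v_{w_y}^1}$ holds as subgraphs of $\mathcal R$ (equivalently, that the set of hyperplanes $\{E_l,H_l\}$ through $v_y$ matches that through $v_{w_y}^1$ under the identification of indices), which again follows from the common defining conditions in terms of $y$.
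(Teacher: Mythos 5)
Your argument breaks precisely at the two places where the singular case differs from the regular one. First, the relevance criterion you import from Proposition~\ref{relvert} (``non-relevant iff there is an $l$ with $v_l=0$ and $v_{l+n-1}\neq 0$'') is false for singular $\lambda$: its proof uses that for regular $\lambda$ every component of $\Delta_v$ has exactly one vertex per row for $i\ll 0$, so two vertices in one row force the row counts to increase somewhere; for singular $\lambda$ a component legitimately has $l_r\ge 2$ vertices per row high up. Concretely, for $n=2$, $\lambda=(k,0)$ the highest-weight vertex $T^0$ equals $v_y$ for the step sequence $y$ and is relevant (its single component of $\Delta_{T^0}$ has two vertices in every row $i\le 0$ and one in every row $i\ge 1$), yet $T^0_{-1}=0$ while $T^0_{0}=k\neq 0$. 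So your plan of verifying ``$(v_y)_l=0\Rightarrow (v_y)_{l+n-1}=0$'' cannot succeed, and your converse recipe also fails: choosing $y_l=0$ whenever $v\in E_l$ violates property (2) as soon as some $a_i=0$ (then $v_l=0$ for infinitely many $l\ll 0$), and property (3) fails in the same example. The paper instead deduces everything from the regular case: every hyperplane $E^1_l$ or $H^1_l$ through $v^1_y$ yields the corresponding hyperplane through $v_y$, so $\Delta_{v^1_y}\subset\Delta_{v_y}$ with the same vertex set; each component of $\Delta_{v_y}$ is then a union of components of $\Delta_{v^1_y}$, and the weakly-decreasing-row-count property is preserved under such unions.

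Second, the weight identity. Your parenthetical claim that the coordinates~(\ref{zweight}),~(\ref{qweight}) of $\mu_{v_y}-\lambda$ ``depend only on the combinatorial type, not on the actual values $a_i$'' is wrong: they are linear, not constant, in $\lambda$, and $\Delta_{v_y}=\Delta_{v^1_{w_y}}$ does not hold in general (only the inclusion does). The resulting equality $\mu_{v_y}-\lambda=w_y\lambda^1-\lambda^1$ is false whenever $w_y\lambda\neq\lambda$; it would give $\mu_{v_y}=\lambda+w_y\lambda^1-\lambda^1$, not $w_y\lambda$, and no subsequent ``linearity of the $w_y$-action'' repairs this. The correct argument is the linearity you only gesture at: $v_y$ depends linearly on $\lambda$ (its defining linear system has $\lambda$-linear right-hand sides) and $\mu_{v_y}$ depends linearly on $v_y$, so $\mu_{v_y}$ is a linear function of $\lambda$ which coincides with the linear function $w_y\lambda$ for all regular integral dominant weights, hence for all $\lambda$. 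Your preliminary checks (well-definedness of $v_y$ and that it is a vertex via Proposition~\ref{vert}) are fine, but they are not where the content of the proposition lies.
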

\begin{proof}
For any $v_y^1$ we see that if a hyperplane $H_l^1$ or $E_l^1$ contains $v_y^1$, then the corresponding hyperplane $H_l$ or $E_l$ must contain $v_y$. This means that we may assume that the graph $\Delta_{v_y^1}$ is a subgraph of $\Delta_{v_y}$ (with the same set of vertices).

However, visibly, if a component $E$ of $\Delta_{v_y}$ contained more vertices in some row $i$ than in row $i-1$, then so would one of the components of $\Delta_{v^1_y}$ contained in $E$. This would contradict $v^1_y$ being relevant.

Conversely, since every component of $\Delta_{v^1_y}$ contains no less vertices in any row than in the row below, the same holds for every component of $\Delta_{v_y}$. That is because every component of $\Delta_{v_y}$ is obtained by joining components of $\Delta_{v^1_y}$.

For the second part, note that whether $\lambda$ is regular or not, the point $v_y$ depends linearly on $\lambda$ and $\mu_{v_y}$ depends linearly on $v_y$. Thus $\mu_{v_y}$ depends linearly on $\lambda$.
\end{proof}

We now see that the relevant vertices of $\Pi$ do indeed correspond to elements of the orbit $W\lambda$. 

To prove part a) of Theorem~\ref{contrib} for singular $\lambda$ it now suffices to show that 
\begin{equation}\label{decomp1}
\tau_{\bar v}=\frac 1{[l_1]_t!\ldots[l_{m(\lambda)}]_t!}\sum_{v_y=v}G\left(e^{\bar v-\bar v_y^1}\right)\tau_{\bar v_y^1}.
\end{equation}

For a vertex $v_y^1$ with $v_y=v$ and integer $l\ge M_{v_y^1}$ denote $D_{y,l}^1$ the corresponding face of $C_{v_y^1}$. Choose an $l$ greater than $M_v$ and all of the $M_{v_y^1}$. Due to Lemma~\ref{limit}, identity~(\ref{decomp1}) will follow from
\begin{equation}\label{decomp2}
G\left(\sigma_{\bar\varphi}\left(\widebar{D_l}\right)\right)=\frac 1{[l_1]_t!\ldots[l_{m(\lambda)}]_t!}\sum_{v_y=v} G\left(e^{\bar v-\bar v_y^1}\right)G\left(\sigma_{\bar\varphi^1}\left(\widebar{D_{y,l}^1}\right)\right).
\end{equation}

For all $v_y^1$ with $v_y=v$ the coordinate $s_{i,j}^1(v_y^1)$ is the same when $(i,j)\in\Delta_v$ and $i\le-l$. Denote the sequence of numbers $s_{-l,j}^1(v_y^1)$ with $(-l,j)\in\Gamma_r$ via $c_1^r,\ldots,c_{l_r}^r$. Also, for any such $v_y^1$ the coordinates $s_{i,j}^1(v_y^1)$ with $(i,j)\in\Delta_v$ and $i\ge l$ are all the same. Now consider the polyhedron $D_l^1\subset V^1$ consisting of such $x^1$ that
\begin{enumerate}
\item For any $i\le -l$ the $l_r$ coordinates $s_{i,j}^1(x^1)$ with $(i,j)\in\Gamma_r$ are equal to $c_1^r,\ldots,c_{l_r}^r$ from left to right.
\item All the coordinates $s_{i,j}^1(x^1)$ in rows $i\ge l$ are the same.
\item The coordinates $s_{i,j}^1(x^1)$ satisfy all the inequalities corresponding to edges of $\Delta_v$.
\end{enumerate}

Any vertex of $D_l^1$ is a vertex of $\Pi^1$ and the faces of $D_l^1$ correspond naturally to faces of $\Pi^1$ which allows one to define $\varphi_1$ in faces of  $D_l^1$. The vertices of $D_l^1$ that are relevant vertices of $\Pi^1$ are precisely the $v_y^1$ with $v_y=v$. The weighted Brion Theorem for $D_l^1$ (after application of $G$) reads
$$
G\left(\sigma_{\bar\varphi^1}\left(\widebar{D_l^1}\right)\right)=\sum_{v_y} G\left(\sigma_{\bar\varphi^1}\left(\widebar{D_{l,y}^1}\right)\right).
$$
We know that the contributions of other vertices are zero, having already discussed non-relevant vertices.

Clearly, $D_l$ is a degeneration of $D_l^1$, let $\pi$ be the corresponding map between face sets. With previous paragraph taken into account, Lemma~\ref{wdegen} provides
$$
G\left(\sigma_{\bar\varphi'}\left(\widebar{D_l}\right)\right)=\sum_{v_y=v} G\left(e^{\bar v-\bar v_y^1}\right)G\left(\sigma_{\bar\varphi^1}\left(\widebar{D_{y,l}^1}\right)\right),
$$
where for a face $f$ of $D_l$
$$
\varphi'(f)=\sum_{g\in\pi^{-1}(f)}(-1)^{\dim g-\dim f}\varphi^1(g).
$$
All that remains to be shown is that for any $f$ we have
\begin{equation}\label{phirel}
\varphi'(f)=[l_1]_t!\ldots[l_{m(\lambda)}]_t!\varphi(f).
\end{equation}

Now, visibly, we have a bijection 
$$
\xi^1_l:D_{\Gamma_1^l}\left(c_1^1,\ldots,c_{l_1}^1\right)\times\ldots\times D_{\Gamma_{m\left(\lambda\right)}^l}\left(c_1^{m\left(\lambda\right)},\ldots,c_{l_{m\left(\lambda\right)}}^{m\left(\lambda\right)}\right)\rightarrow D_l^1.
$$ 
Moreover, for a face $g$ of $D_l^1$ we have $$\varphi^1(g)=\prod_{r=1}^{m(\lambda)}\varphi_{\Gamma_r^l}(c_1^r,\ldots,c_{l_r}^r)(g_r),$$
where $g=\xi_l^1(g_1\times\ldots\times g_{m(\lambda)})$.

Recall that $D_{\Gamma_r^l}(b_r,\ldots,b_r)$ is a degeneration of $D_{\Gamma_r^l}(c_1^r,\ldots,c_{l_r}^r)$, let $\pi_r$ be the corresponding map between face sets. Visibly, for $g=\xi_l^1(g_1\times\ldots\times g_{m(\lambda)})$ a face of $D_l^1$ we have $$\pi(g)=\xi_l(\pi_1(g_1)\times\ldots\times\pi_{m(\lambda)}(g_{m(\lambda)})).$$ This shows that~(\ref{phirel}) for $f=\xi_l(f_1\times\ldots\times f_{m(\lambda)})$ may be obtained by multiplying together the identities provided by Lemma~\ref{graphsum} for degenerations $\pi_r$ and faces $f_r$ respectively.

We have proved Theorem~\ref{contrib} and, via Theorem~\ref{infbrion}, the main Theorem~\ref{main} follows.

\addtocontents{toc}{\vspace{4pt}}

Boris Feigin: National Research University Higher School of Economics, International Laboratory of Representation Theory and Mathematical Physics, Myasnitskaya
ulitsa 20, 101000 Moscow, Russia and Landau Institute for Theoretical Physics, prospekt Akademika Semenova 1A, 142432 Chernogolovka, Russia. 

Email: borfeigin@gmail.com.
\\

Igor Makhlin: National Research University Higher School of Economics, International Laboratory of Representation Theory and Mathematical Physics, Myasnitskaya ulitsa 20, 101000 Moscow, Russia and Landau Institute for Theoretical Physics, prospekt Akademika Semenova 1A, 142432 Chernogolovka, Russia. Supported in part by the Simons Foundation and the Moebius Contest Foundation for Young Scientists. 

Email: imakhlin@mail.ru.

\end{document}